\DeclareFontFamily{OT1}{rsfs}{}
\DeclareFontShape{OT1}{rsfs}{n}{it}{<-> rsfs10}{}
\DeclareMathAlphabet{\curly}{OT1}{rsfs}{n}{it}
\newtheorem{Thm}{Theorem}[section]
\newtheorem{Lem}[Thm]{Lemma}
\newtheorem{Cor}[Thm]{Corollary}
\newtheorem{Rem}[Thm]{Remark}
\newtheorem{Prop}[Thm]{Proposition}
\newtheorem{conj}[Thm]{Conjecture}
\newtheorem{``Conj"}[Thm]{``Conjecture"}
\newtheorem{Claim}[Thm]{Claim}
\theoremstyle{remark}
\newtheorem{ex}[Thm]{Example}
\theoremstyle{definition}
\newtheorem{defn}[Thm]{Definition}
\newtheorem{Property}{Property}
\newtheorem{Step}{Step}
\newtheorem{Setup}{Setup}
\newtheorem*{ack}{Acknowledgments}
\newtheorem{ntt}{Notation}
\newtheorem*{Ass*}{Assumption}
\newcommand{\gr}{\mathop{\mathrm{gr}}\nolimits}
\def\dim{\operatorname{dim}}
\def\R{\mathbb{R}}
\def\C{\mathbb{C}}
\def\G{\mathbb{G}}
\def\Z{\mathbb{Z}}
\def\O{\mathcal{O}}
\def\P{\mathbb{P}}
\def\Q{\mathbb{Q}}
\def\A{\mathbb{A}}
\def\X{\mathcal{X}}
\def\Y{\mathcal{Y}}
\def\V{\mathcal{V}}
\begin{document}

\title{Canonical torus action on symplectic singularities}
\author{Yoshinori Namikawa, Yuji Odaka}
\date{\today}
\maketitle
\thispagestyle{empty}

\begin{abstract}
We show that any symplectic singularity lying on a smoothable projective 
symplectic variety locally admits a good action of $(\C^*)^r$, which is 
canonical. 
Under mild assumptions, 
we actually prove such singularity germ is 
the cone vertex over a contact orbifold 
with weak K\"ahler-Einstein metric, forcing $r=1$. 
In particular, it admits a (canonical) 
good $\mathbb{C}^*$-action, which also 
extends to (canonical) actions of $\mathbb{H}^*\supset SU(2)$. 
These settle Kaledin's conjecture conditionally but in a substantially 
stronger form by establishing the canonicity, the extensibility of the 
action, for instance. 
Our key idea is to use the Donaldson-Sun theory 
on local K\"ahler metrics in complex differential geometry 
to connect with the theory of Poisson deformations of 
symplectic  varieties. 

For general symplectic singularities, 
we prove the same assertions, assuming that the Donaldson-Sun theory extends to such singularities along with suitable singular (hyper)K\"ahler metrics. 
Conversely, our results can also be used to study the local 
behavior of such metrics around the germ. 
\end{abstract}

\section{Introduction}

\subsection{Background and the main result}
We work over the complex number field $\C$. 
After the work of Beauville \cite{Beauville}, a normal algebraic variety $V$ is called a {\it symplectic variety} if 
there is an (algebraic)  symplectic form $\sigma_V$ on the 
smooth locus $V^{sm}$ such that, for any resolution $f: \tilde{V} \to V$, the pullback of $\sigma_V$ to $f^{-1}(V^{sm})$ extends to a 
holomorphic 2-form on $\tilde{V}$. An algebraic variety $X$ has a {\it symplectic singularity} at a point if the point admits a symplectic variety $V$ as an open neighborhood. When an algebraic torus $T$ acts effectively on an affine symplectic variety $V$ fixing a point $0 \in V$, we call the action good if the closure of any $T$-orbit contains $0$. If the symplectic form $\sigma_V$ is homogeneous for the good $T$-action, then $V$ is called a {\it conical symplectic variety} (with respect to the 
$T$-action). 

\vspace{2mm}
About two decades ago, 
D.~Kaledin (\cite[cf., Remark 4.2, \S 4]{Kaledin}, \cite[Conjecture 1.8]{Kaledin 2})  conjectured  that, for any symplectic singularity $x \in X$, its analytic germ 
$(X, x)$ is actually isomorphic to 
the analytic germ  $(V,0)$ of a 
conical symplectic variety with 
a good $\mathbb{G}_m$-action. 

In this paper, we prove the following, 
which proves the conjecture  conditionally (see also Remark \ref{Kaledin's conj}) but in a significantly stronger form. 

\begin{Thm}[=Theorem \ref{Mthm}]\label{Mthm.intro}
Let $(\bar{X},L)$ be a polarized projective symplectic variety.  Suppose that $(\bar{X},L)$
satisfies either of the following equivalent conditions  (cf. Theorem \ref{smoothing} for the equivalence): 
\begin{enumerate}
\item \label{1.1i} $\bar{X}$ has a projective 
symplectic resolution, or 
\item \label{1.1ii} $(\bar{X},L)$ has a smoothing (as a polarized variety). 
\end{enumerate}

Then, the analytic germ of $x\in \bar{X}$ is that of a (canonical) 
conical affine symplectic variety $C$ at the vertex $0\in C\curvearrowleft (\mathbb{G}_m)^r$ with $r\ge 1$. 

Furthermore, $0\in C$ has a (singular) hyperK\"ahler cone  metric, 
which in particular induces a canonical action 
of 
the multiplicative group $\R_{>0}$, 
as rescaling up of the metric. This action is the restriction of  
the algebraic action of $(\C^*)^r$ 
via some embedding $\R_{>0}\hookrightarrow (\C^*)^r$ 
as Lie groups. 
\end{Thm}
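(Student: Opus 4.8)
The plan is to combine two major inputs: (i) the Donaldson–Sun theory of tangent cones to singular Kähler (here, hyperKähler) metrics, which produces a metric cone together with a torus action coming from the ``rescaling plus holonomy'' structure; and (ii) the Namikawa theory of Poisson (symplectic) deformations of symplectic singularities, whose base is a finite-dimensional affine space with a natural weighted $\mathbb C^*$-action, and whose germ at the origin recovers the germ $(X,x)$. The key new idea, as advertised in the abstract, is that these two structures are compatible: the metric tangent cone produced by Donaldson–Sun is algebraic and symplectic, and the torus it carries acts compatibly with the Poisson structure, so it must be a conical \emph{symplectic} variety.

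First I would set up the analytic/metric side. Since $(\bar X,L)$ is smoothable (condition \eqref{1.1ii}) or has a symplectic resolution (condition \eqref{1.1i}), one has a (singular) hyperKähler metric on the smooth locus of $\bar X$ — obtained as a Gromov–Hausdorff limit of hyperKähler metrics on the smoothings (or on the resolution), in the spirit of Yau's theorem and its singular extensions. Around the point $x\in\bar X$ I would invoke Donaldson–Sun's two-step theory: the metric tangent cone $C$ at $x$ exists, is unique, and is (the analytification of) a normal affine algebraic variety; moreover the automorphism group of $C$ contains a torus $(\mathbb C^*)^r$ with $r\ge 1$, the real part $\mathbb R_{>0}$ of which is the metric rescaling, and the full $\mathbb R_{>0}\hookrightarrow(\mathbb C^*)^r$ arises because the rescaling can be upgraded using the isometry group/holonomy of the cone. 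The ``two-step'' degeneration $X\rightsquigarrow W\rightsquigarrow C$ of Donaldson–Sun realizes $C$ as a $\mathbb C^*$-equivariant degeneration; I would then argue that, because the germ $(\bar X,x)$ is a symplectic singularity and the degenerations are taken inside Hilbert schemes of the (fixed) ambient, the symplectic form is carried along and $C$ inherits a symplectic form on its smooth locus, with the torus acting with positive weights on it — i.e. $C$ is a conical symplectic variety.

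Next comes the bridge to the algebraic germ. A priori Donaldson–Sun only tells us that $(\bar X,x)$ \emph{degenerates} to $C$; we must upgrade this to an \emph{isomorphism of analytic germs} $(\bar X,x)\cong(C,0)$. Here I would bring in Namikawa's theory: the symplectic singularity $(X,x)$ has a universal Poisson deformation over a finite-dimensional base, and the conical symplectic variety $C$, being a $\mathbb C^*$-equivariant degeneration of $X$ that is itself symplectic, must appear as the central fiber of a one-parameter piece of that Poisson deformation space; but the weighted-homogeneous structure then forces the deformation to be trivial as a deformation of analytic germs (a standard ``Gauss–Manin/weight'' rigidity argument — a filtered deformation of a graded object over a positively-graded base, with the generic fiber analytically equivalent to the special fiber, is trivial). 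This identifies $(\bar X,x)$ with the germ at the vertex of the conical symplectic variety $C$, and transports the $(\mathbb C^*)^r$-action to a (canonical, because the tangent cone is canonical) action on the germ. Finally, the hyperKähler cone metric on $C$ is exactly the Donaldson–Sun tangent cone metric, and the asserted factorization $\mathbb R_{>0}\hookrightarrow(\mathbb C^*)^r$ of the rescaling action follows from the previous step.

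The main obstacle I expect is the upgrade from ``degenerates to'' to ``is analytically isomorphic to'': proving that the Donaldson–Sun degeneration $X\rightsquigarrow C$ is in fact a \emph{trivial} family of symplectic germs. This requires (a) identifying $C$ as living in Namikawa's Poisson deformation space of $(X,x)$ — which uses that $C$ is symplectic and that every Poisson deformation of a symplectic singularity is again a symplectic variety — and (b) a rigidity statement: a Poisson deformation of a symplectic singularity over a positively weighted base $\mathbb A^1$ (or $\mathbb A^N$) whose generic fiber is analytically isomorphic to the central fiber must be analytically trivial. Establishing (a) carefully means controlling the symplectic form across the metric degeneration (not just the underlying variety), and establishing (b) is where the interplay between the $\mathbb C^*$-weights, the finiteness of Namikawa's deformation base, and analytic (as opposed to formal or algebraic) triviality must be handled with care; I would expect this to be the technical heart of the argument, with the rest being a careful assembly of Donaldson–Sun theory and Namikawa's structure theory of symplectic singularities.
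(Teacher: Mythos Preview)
Your overall architecture matches the paper's: Donaldson--Sun gives the two-step degeneration $X \rightsquigarrow W \rightsquigarrow C$; one upgrades these to \emph{Poisson} deformations; rigidity from Namikawa's universal Poisson deformation theory then forces formal triviality, and Artin approximation gives the analytic isomorphism of germs. You have also correctly located the technical heart.

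However, your step (a) contains a genuine gap. The claim that ``the symplectic form is carried along'' because ``the degenerations are taken inside Hilbert schemes of the ambient'' is unjustified: there exist flat $\mathbb{G}_m$-equivariant degenerations of symplectic singularities to canonical Gorenstein singularities that are \emph{not} symplectic, so membership in a Hilbert scheme is not enough. The paper's solution (Theorem \ref{thm:extendtoW}) is substantially more delicate than you suggest. One first shows that the singular hyperK\"ahler metric $g_X$ is comparable, up to factors $r_\xi^{\pm\epsilon}$ for every $\epsilon>0$, to an explicit ambient cone metric $\omega_\xi$ on $\mathbb{C}^l$ built from the Reeb weights (Lemma \ref{SZlem}). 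Since $\sigma_X$ is parallel for $g_X$, this yields growth control on $\sigma_X$. But to convert this into extendability of $t^{-2D}\sigma_X$ across a test configuration $\mathcal{X}_{\tilde\xi'}\to\mathbb{A}^1$, one needs a \emph{rational} approximant $\xi'=\tilde\xi'/D$ of the (generally irrational) Reeb vector $\xi$ satisfying a Diophantine bound $D\cdot d(\xi,\xi')\ll 1$; this is solved by a Dirichlet-type simultaneous approximation (Claim \ref{claim:dio}), and only for such carefully chosen $\xi'$ does the extension go through.

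Your rigidity statement (b) is also misformulated. You write ``a Poisson deformation \dots whose generic fiber is analytically isomorphic to the central fiber must be trivial,'' but the fiberwise isomorphism is the \emph{conclusion}, not a hypothesis. The actual mechanism (Lemma \ref{Lem3}, Theorem \ref{formallocaltriv}) is: the scale-up Poisson deformation carries a $\mathbb{G}_m$-action with \emph{negative} weight on the base $\mathbb{A}^1$, while Namikawa's universal Poisson deformation of the conical central fiber $W$ has \emph{positive} weights on its base $\mathbb{A}^d$; hence any $\mathbb{G}_m$-equivariant classifying map $\mathbb{A}^1\to\mathbb{A}^d$ is constant, forcing formal triviality directly. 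Note the universal Poisson deformation invoked is that of the \emph{central} fiber $W$ (conical by construction), not of $(X,x)$ as you wrote. The step $W=C$ is handled separately (Theorem \ref{ssps}) by building a $T$-equivariant Poisson deformation of $C$ with general fiber $W$ and applying the same weight argument to the universal Poisson deformation of $C$; here showing that $\sigma_{W_i}\to\sigma_C$ in the limit requires another round of metric estimates.
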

For general symplectic singularities, our later 
Theorem \ref{Mthm2.intro} proves the same under 
a differential geometric assumption. 

\vspace{3mm}

We emphasize that the above Theorem \ref{Mthm.intro} (and 
Theorem \ref{Mthm2.intro}) contains 
substantial enhancement compared with 
the original Kaledin's conjecture, for at least two 
aspects. 

The first aspect is that, 
as explained shortly below, our arguments give {\it canonical}  
$C$ and the {\it canonical actions} of 
$(\G_m)^r$ and $\R_{>0}$, not only their existence. 
\begin{Rem}
As we explain in section \ref{sec:2}, 
the rank $r$ is intrinsically determined by the analytic 
germ of $x\in \overline{X}$ in general, and 
sometimes writes as $r(\xi)$ in later sections. 
We actually conjecture $r=1$ for general 
symplectic singularities (Conjecture \ref{r1conj}) and 
prove it conditionally - for any $x\in \overline{X}$ 
if $b_2(\overline{X})\ge 6$ for instance. Or alternatively, we also prove 
the same ($r=1$) when $c_1(L)^{\perp}(\subset H^2(\overline{X},\Z))$ 
has a non-zero isotropic element $e$ with respect to the 
Beauville-Bogomolov form. The latter type assumption can be morally 
regarded as a 
version of Strominger-Yau-Zaslow type conjectures. 

This $r=1$ statement also gives a new systematic construction of 
(log) K-polystable Fano pairs, without calculating any invariants 
from the K-stability theory (such as 
\cite{FO,BlJ}). 
See Theorem \ref{r1prop} and Remark \ref{LSconj} 
for more details. 
\end{Rem}

The second aspect of our improvements from the Kaledin's works 
and the expectation 
(\cite{Kaledin, Kaledin 2})
is that, 
as the last paragraph of Theorem \ref{Mthm.intro} states, our work 
connects with {\it complex differential geometry}. 
Indeed, our proof crucially contains arguments on metric structure, notably 
Donaldson-Sun theory \cite{DSII} 
on the local metric tangent cone of {\it local 
singular K\"ahler-Einstein metrics}.
\footnote{After more algebraic original approach 
to this problem with a partial progress, 
Kaledin wrote in  \cite[Remark 4.2]{Kaledin}: ``{\it it  seems that this would require a radically different approach}". One of our main new inputs is connection 
with geometry of canonical K\"ahler metrics (and related algebraic geometry).} 
For that, first recall that either the crepant 
resolutions of $\overline{X}$ (in the case \eqref{1.1i}) 
or smoothings (in the case \eqref{1.1ii}) 
have 
Ricci-flat K\"ahler metrics \cite{Yau} and as their 
limit (\cite{RZ, DS, JianSong}), $\overline{X}$ 
has a unique singular Ricci-flat K\"ahler metric on 
$\bar{X}$ in the class $2\pi c_1(L)$ (cf., also 
\cite{EGZ}). 

Then, 
the cone $(0 \in C)$ in Theorem \ref{Mthm.intro} is the  
local metric tangent cone of $g_{\overline{X}}$ near $x$, which has a 
natural structure of an affine algebraic variety (\cite[1.3]{DSII}). 
The metric also turns out to be 
(singular) hyperK\"ahler metric in our setup 
(cf., Theorem \ref{ssps}). 
This structure of an algebraic variety actually depends only on the analytic germ of 
$x \in \bar{X}$, not on the metric 
(\cite[3.22]{DSII}, \cite{LWX}). 
The smooth locus $C^{sm}$ of $C$ is a metric cone $C(S)$ over a real 
$2n-1$ dimensional Riemannian Sasakian manifold $S$ with the 
Reeb vector field $\xi$. Here 
$n := \dim_{\mathbb C}\bar{X}$. 
The Reeb vector field generates a subgroup of the isometry group 
of $C$ and its closure can be complexified into an algebraic torus $T \simeq(\mathbb{G}_m)^r$, which acts on $C$. The Reeb vector field, after untwist by the complex structure $J$, 
also corresponds to a real Lie  subgroup $\mathbb{R}_{>0}$ of 
$T$. The rescaling action of 
$\R_{>0}$ on $C$ is unique once we are given an 
affine variety structure on $C$, without a priori knowing the cone metric structure of $C$, 
by the so-called volume 
minimization principle (\cite{MSY1, MSY2}). Since the affine variety structure on $C$ only depends  
on the analytic germ of $\bar{X}$ at $x$, the $T$-action on $C$ also only does.

Our key idea is to relate the Donaldson-Sun theory 
to the theory of {\it Poisson deformations} of 
(non-compact) symplectic varieties, 
which inherit the Poisson structure \cite{Ginzburg-Kaledin, Nama, Namb}. 

\subsection{Outline of the proof}\label{sec:outline}
We outline a little more details of our proof of Theorem \ref{Mthm.intro}.  
Let $x \in X \subset \bar{X}$ be 
an affine open neighborhood of $x$. 
Recall that, in \cite[\S 3]{DSII}, Donaldson and Sun have given a finer description of $C$ by 2-step degenerations 
$$(x\in X)\rightsquigarrow (0\in W)\rightsquigarrow (0\in C_x(X)=:C),$$
in terms of more holomorphic or even algebraic data, 
rather than the local metric. 
Here $(0 \in W)$ and $(0 \in C)$ are both 
affine normal varieties with good $T$-actions. 

Roughly speaking, we realize these 2-step degenerations as Poisson deformations. 
More precisely, we take a subgroup $\mathbb{G}_m \subset T$ 
(or, in a complex geometric term, 
$\G_m(\C)=\C^*\subset T(\C)\simeq (\C^*)^r$) 
so that the derivative along $\R_{>0}\subset \C^*(\subset T(\C))$ 
approximates the Reeb vector field $\xi$ enough and 
construct a $\mathbb{G}_m$-equivariant flat deformation $$\mathcal{X} \to 
\mathbb{A}^1$$ in such a way that the fiber over $0 \in \mathbb{A}^1$ is $W$ 
and others fibers are all isomorphic to $X$. Here $\mathbb{G}_m$ acts on 
the base $\mathbb{A}^1$ with a negative weight fixing $0 \in \mathbb{A}^1$. 
The 1-parameter subgroup $\mathbb{G}_m 
\subset T$ determines an element 
$\xi' \in \mathrm{Lie}(T(\C))$ and 
our $\mathcal{X}$ actually depends on $\xi'$; hence, we denote it by $\mathcal{X}_{\xi'}$ in the main text of this article (especially Lemma \ref{XtoW.C} and \S \ref{sec:XW}). 
A general fiber $X$ admits an (algebraic) symplectic form $\sigma_X$ on  $X^{sm}$ by assumption. As one of the keys of our whole arguments, 
we prove that, when $X$ degenerates to $W$ in the flat family, the symplectic form $\sigma_X$ also extends along the 
family to a $T$-homogeneous symplectic form $\sigma_W$ on $W^{sm}$. 
Intriguingly, our proof of the existence of such extension 
is {\it by contradiction}, applying delicate Diophantine approximation of a 
certain irrational vector as in \cite{Schmid} (and classical \cite{Kronecker, Weyl}) to analyze the local  metric behaviour. 
As a result, every fiber of the flat family admits a Poisson structure. In particular,  $\mathcal{X} \to \mathbb{A}^1$ becomes a $\mathbb{G}_m$-equivariant Poisson deformation. We call such a deformation a scale up Poisson deformation, which is the Poisson realization of the 1-st step  degeneration. 

We next construct a $T$-equivariant flat deformation  $$\mathcal{W}_D \to D$$ over a smooth curve $q \in D$ in such a way 
that the fiber over $q$ is $C$ and other fibers are all isomorphic to $W$. Here, $T$ acts on $\mathcal{W}_D$ fiberwise. 
A general fiber $W$ admits a $T$-homogeneous symplectic form $\sigma_W$ on $W^{sm}$. We again prove that, when $W$ degenerates to 
$C$ in the flat family, the symplectic form $\sigma_W$ also degenerates to a $T$-homogeneous symplectic form $\sigma_C$ on 
$C^{sm}$. Namely $\mathcal{W}_D \to D$ becomes a $T$-equivariant Poisson deformation. This is the Poisson realization of the $2$-nd step  degeneration. 

Then, we prove that these two Poisson deformations have certain 
{\it rigidity} properties. Let $(X, x)^{\hat{}}$ be the formal completion of $X$ at $x$ and let $(W, 0)^{\hat{}}$ be the formal completion of $W$ at $0$. The symplectic forms $\sigma_X$ and $\sigma_W$ respectively determine Poisson structures on $(X, x)^{\hat{}}$ and $(W, 0)^{\hat{}}$. Then the scale up Poisson deformation $\mathcal{X} \to \mathbb{A}^1$ induces a trivial Poisson deformation of $(W, 0)^{\hat{}}$. In particular, there is an isomorphism 
$(X, x)^{\hat{}} \cong (W, 0)^{\hat{}}$ of formal Poisson schemes. On the other hand, the $T$-equivariant Poisson deformation $\mathcal{W}_D \to D$ induces a trivial Poisson deformation of $C$ itself and hence 
a $T$-equivariant isomorphism $W \cong C$ of conical symplectic varieties. 
The proofs of these depend on the work of \cite{Nama, Namb, Namf}. 
Therefore, we have an isomorphism 
$(X, x)^{\hat{}} \cong (C, 0)^{\hat{}}$ of formal Poisson schemes. By the Artin's approximation (\cite{Artin}), there is also an isomorphism 
$(X, x) \cong (C, 0)$ of analytic germs, which is nothing but the claim of the theorem. 

\subsection{Variant results}

Now, we explain more technical aspects of the statements and 
explain our variant theorems we prove in this paper. 
Recall that \cite{DS, DSII} requires the global assumption of 
Theorem \ref{Mthm.intro} type for their use of 
H\"ormander type construction 
of solutions of $\bar{\partial}$-equation 
with $L^2$-norm bounds. 
That is the only reason we (at the moment) assume in Theorem 
\ref{Mthm.intro} for 
$x\in X$ to be realized globally in 
$(\overline{X},L)$. 

Note that \cite{DSII} as a theory of singular 
K\"ahler metric in differential geometry, 
is expected to extend to more general normal log terminal singularities 
(cf., e.g., \cite[\S 3]{Zha24} which also makes progress) 
as a folklore among experts. 

\begin{conj}[after \cite{DSII}]\label{conj:genDS}
For any germ of (kawamata) log terminal 
singularity $x\in X$, 
there are certain good singular Ricci-flat K\"ahler metric 
$g_X$ with which the Donaldson-Sun theory \cite{DSII} works. 
To be precise, $(x\in X,g_X)$  has a Donaldson-Sun degeneration data 
(to be defined in later Theorem \ref{DS.maps}), 
which realizes the unique metric tangent cone 
$\lim_{c\to \infty}(x,\in X,c^2 g_X,J_X)$ 
and the algebraic local conification (stable degeneration) 
in Theorem \ref{AGlc}. 
\end{conj}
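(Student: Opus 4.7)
The plan is to reduce the conjecture to two separate tasks: producing a suitable singular Ricci-flat K\"ahler metric on a local representative of the germ, and extending the analytic package of Donaldson-Sun \cite{DSII} to the KLT setting. For the first task, I would embed the germ $x\in X$ into a polarized projective pair. A small affine representative admits a quasi-projective compactification by Nagata, and after a careful resolution of the boundary followed by a relative MMP one arranges a projective $\bar X$ with at worst KLT singularities containing an open neighborhood of $x$. Choose an ample line bundle $L$ on $\bar X$, possibly twisting so that $K_{\bar X}+\Delta \sim_{\Q} \lambda L$ for some auxiliary boundary $\Delta$ supported away from $x$. The singular Calabi conjecture of Eyssidieux-Guedj-Zeriahi \cite{EGZ} and its extensions then produce a singular K\"ahler-Einstein metric $g_{\bar X}$ in the class $2\pi c_1(L)$, smooth on the regular locus, and the candidate germ metric $g_X$ is its restriction to a neighborhood of $x$.

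For the second task, the three analytic pillars of \cite{DSII} must be extended to the KLT setting: Cheeger-Colding-Tian style non-collapsed compactness of pointed limits under bounded Ricci; the partial $C^0$-estimate producing enough pluri-sections to Gromov-Hausdorff embed the singular space projectively; and H\"ormander $L^2$-estimates for $\bar\partial$ upgrading approximate holomorphic sections to genuine ones. Each of these has been pushed into the singular regime by work of Tian, Donaldson-Sun, Song-Wang, Liu-Sz\'ekelyhidi, and the recent progress in \cite{Zha24}. Assuming these extensions, one obtains a Donaldson-Sun degeneration datum in the sense of Theorem \ref{DS.maps}, and then identifies its unique metric tangent cone with the algebraic stable degeneration of Theorem \ref{AGlc} via Donaldson-Sun's uniqueness theorem combined with the normalized-volume characterization of the minimizer (C.~Li, Li-Xu, Li-Xu-Zhuang).

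The main obstacle will almost certainly be the H\"ormander step in the local setting. The global argument in \cite{DSII} relies on a fixed ample line bundle $L$ on $\bar X$ whose sections embed and separate the Gromov-Hausdorff limit; for a bare germ there is no canonical polarization, and naive local substitutes do not give uniform $L^2$ bounds compatible with the rescaling $c^2 g_X$ as $c\to\infty$. A plausible route is to run the H\"ormander argument with plurisubharmonic weights vanishing to high order at $x$, effectively replacing the global polarization by a local one supported at the singular point. Making this work uniformly in $c$ will require a delicate multiplier ideal and Lelong number analysis on the KLT pair, and is where genuinely new analytic input over \cite{DSII} seems to be needed; a secondary difficulty is that the compactification step above may break down, or fail to produce a \emph{canonical} metric, when the germ does not embed nicely into a Fano or Calabi-Yau setting, so a truly local construction of $g_X$ (for instance by solving a local complex Monge-Amp\`ere equation with prescribed asymptotic cone behaviour) may ultimately be unavoidable.
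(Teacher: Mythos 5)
The statement you were asked to prove is a \emph{conjecture} and remains open; the paper does not prove it in general and explicitly says it is ``expected to extend \dots\ as a folklore among experts,'' pointing only to partial progress (e.g., \cite[\S 3]{Zha24}). The role of Conjecture \ref{conj:genDS} in the paper is that of a hypothesis: Theorem \ref{Mthm2.intro} \emph{assumes} it, and the unconditional Theorem \ref{Mthm.intro} sidesteps it by working in the global polarized smoothable setting where the original \cite{DSII} applies as stated. The one place the paper actually \emph{verifies} the conjecture is Proposition \ref{DSHKQ} for hyperK\"ahler quotients, and the mechanism there is different from yours: one interprets the singular quotient as a \emph{pointed polarized limit of complete noncompact} Ricci-flat K\"ahler manifolds (Claim \ref{completeDSII}) and observes that the H\"ormander $L^2$ machinery of \cite{DSII} goes through verbatim for complete K\"ahler manifolds, so that no compactification is needed and no boundary divisor $\Delta$ is introduced.

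Your proposal does not close this gap; it relocates it. The reduction you make in paragraph two -- ``assuming these extensions'' of non-collapsed compactness, the partial $C^0$-estimate, and local H\"ormander estimates to the KLT setting -- is precisely the unsolved content of the conjecture, and you acknowledge in paragraph three that the H\"ormander step lacks a proof. There is also a concrete problem in your compactification route: a general germ of klt singularity need not sit inside a projective variety with $K_{\bar X}\sim_\Q 0$, so the boundary $\Delta$ in $K_{\bar X}+\Delta\sim_\Q\lambda L$ is in general nonzero; the resulting singular K\"ahler-Einstein metric then has conical behaviour along $\Delta$, and even in the global case \cite{DSII} is formulated and proved without a boundary, so its applicability is not automatic. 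In short: the approach is not wrong as a research programme, but it does not constitute a proof, and where the paper has managed to prove the conjecture (hyperK\"ahler quotients) it uses completeness of the approximating metrics rather than your local-weight or compactification ideas.
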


By singular K\"ahler metric above, 
we mean a K\"ahler metric on 
the smooth locus which extends 
with (at least) 
bounded potential across 
the singular locus as in the 
pluri-potential theory (cf., e.g., 
\cite{EGZ, GZ}). 
See also related recent developments on the local metric existence 
(\cite{Fu, GGZ}) and on the understanding of regularity 
(e.g., \cite{CS2,Sze24, CCHSTT}). 

In this context, 
we discuss how much we can generalize Theorem 
\ref{Mthm.intro} 
to more local setup. 
From the structure of our arguments, 
Theorem \ref{Mthm.intro} naturally generalizes to 
the following form with essentially the same proof. 

\begin{Thm}[=Theorem \ref{Mthm2}]\label{Mthm2.intro}
Suppose a symplectic singularity 
$x\in X$ has 
a singular hyperK\"ahler metric $g_X$ and 
a holomorphic symplectic form $\sigma_X$ which is 
parallel with respect to $g_X$ on $X^{\rm sm}$, 
with which Conjecture \ref{conj:genDS} holds. 

Then, the same statements as Theorem \ref{Mthm.intro} holds: that is, the analytic germ of $x\in X$ is that of (canonical) 
affine conical symplectic variety $C$ at the vertex $0\in C\curvearrowleft (\mathbb{G}_m)^r$. 
Furthermore, $0\in C$ has a (singular) hyperK\"ahler cone  metric, 
which in particular induces a canonical action 
of 
the multiplicative group $\R_{>0}$, 
as rescaling up of the metric. This action is the restriction of  
the algebraic action of $(\C^*)^r$ 
via some embedding $\R_{>0}\hookrightarrow (\C^*)^r$ 
as Lie groups. 
\end{Thm}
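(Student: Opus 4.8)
The plan is to reduce Theorem \ref{Mthm2.intro} to Theorem \ref{Mthm.intro} by observing that the proof of the latter factors through the structures supplied by Conjecture \ref{conj:genDS} rather than through the global projective embedding $(\bar X, L)$. The only place the global assumption \eqref{1.1i}--\eqref{1.1ii} of Theorem \ref{Mthm.intro} enters is in guaranteeing the existence of a singular Ricci-flat (indeed hyperK\"ahler) K\"ahler metric $g_X$ near $x$ together with a valid Donaldson--Sun degeneration data in the sense of Theorem \ref{DS.maps}: the two-step degeneration $(x\in X)\rightsquigarrow (0\in W)\rightsquigarrow (0\in C)$, the torus $T\simeq(\mathbb{G}_m)^r$ with $r\ge 1$ arising from the Reeb field $\xi$, the good $T$-actions on $W$ and $C$, and the identification of $C$ with the metric tangent cone. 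Under the hypotheses of Theorem \ref{Mthm2.intro} these are precisely what Conjecture \ref{conj:genDS} provides, so I would begin by unpacking that conjecture to extract exactly this data in the local setting.

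\medskip

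The next step is to run the Poisson-theoretic core of the argument verbatim. Using the parallel holomorphic symplectic form $\sigma_X$ on $X^{\mathrm{sm}}$ (which exists by hypothesis here, rather than being produced from the hyperK\"ahler rotation of a globally constructed metric), I would form the scale-up flat family $\mathcal{X}_{\xi'}\to\mathbb{A}^1$ attached to a rational approximation $\xi'$ of the Reeb vector $\xi$, with central fiber $W$ and all other fibers $\cong X$, carrying a $\mathbb{G}_m$-action of negative weight on the base. The key extension statement --- that $\sigma_X$ spreads out to a $T$-homogeneous symplectic form $\sigma_W$ on $W^{\mathrm{sm}}$, proved by contradiction via the Diophantine approximation argument of \cite{Schmid, Kronecker, Weyl} controlling the local metric behaviour --- goes through unchanged, since that argument is entirely local and uses only $g_X$, $\sigma_X$ and the Donaldson--Sun data. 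Likewise one builds $\mathcal{W}_D\to D$ over a smooth curve germ $q\in D$ with fibers $W$ degenerating to $C$, extends $\sigma_W$ to a $T$-homogeneous $\sigma_C$ on $C^{\mathrm{sm}}$, and invokes the rigidity results of \cite{Nama, Namb, Namf}: the scale-up Poisson deformation is formally trivial, giving $(X,x)^{\hat{}}\cong(W,0)^{\hat{}}$ as formal Poisson schemes, and the $T$-equivariant Poisson deformation $\mathcal{W}_D\to D$ is trivial, giving a $T$-equivariant isomorphism $W\cong C$ of conical symplectic varieties. Composing and applying Artin approximation \cite{Artin} yields $(X,x)\cong(C,0)$ as analytic germs. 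The hyperK\"ahler cone metric on $0\in C$, and hence the canonical $\R_{>0}$-action realized inside $(\C^*)^r$ via a Lie-group embedding $\R_{>0}\hookrightarrow(\C^*)^r$, is again read off from the metric tangent cone and the volume-minimization characterization \cite{MSY1, MSY2} of the Reeb field.

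\medskip

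**The main obstacle** I anticipate is not in any single new computation but in verifying that \emph{every} ingredient of the proof of Theorem \ref{Mthm.intro} which might secretly rely on projectivity (or on the particular K\"ahler--Einstein metric coming from a polarization) is in fact deducible from the abstract Donaldson--Sun degeneration data postulated in Conjecture \ref{conj:genDS}. Concretely, one must check: (a) that $r\ge 1$, i.e. the Reeb field is genuinely nonzero and generates a positive-dimensional torus --- this follows because $x\in X$ is a singular point, so the tangent cone is not $X$ itself and the rescaling action is nontrivial; (b) that the metric $g_X$ is actually hyperK\"ahler (not merely Ricci-flat K\"ahler), so that the metric tangent cone inherits a hyperK\"ahler cone structure --- here the hypothesis of Theorem \ref{Mthm2.intro} that $g_X$ is a singular hyperK\"ahler metric with parallel $\sigma_X$ is exactly what is needed, and the cone limit of a hyperK\"ahler metric along the Donaldson--Sun degenerations is again hyperK\"ahler by the parallel-transport/compactness arguments underlying Theorem \ref{ssps}; and (c) that the good $T$-action on $C$ depends only on the analytic germ of $x\in X$ and not on the auxiliary metric, which is the content of the volume-minimization principle together with the germ-dependence of the affine variety structure on $C$ (\cite[3.22]{DSII}, \cite{LWX}). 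Once these local compatibilities are confirmed, the proof is formally identical to that of Theorem \ref{Mthm.intro}, and I would present it as such, indicating at each step precisely which clause of Conjecture \ref{conj:genDS} replaces the use of the global smoothing hypothesis.
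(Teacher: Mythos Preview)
Your proposal is correct and follows essentially the same approach as the paper: the paper's own proof of Theorem \ref{Mthm2} is simply the remark that the proof of Theorem \ref{Mthm} extends verbatim once the Donaldson--Sun degeneration data is supplied by Conjecture \ref{conj:genDS}, and you have accurately traced how each ingredient (the scale-up family $\mathcal{X}_{\xi'}$, the extension of $\sigma_X$ via Diophantine approximation, the formal rigidity from \cite{Nama,Namb,Namf}, the identification $W\cong C$, and Artin approximation) is purely local and metric-theoretic. One small correction: in your point (a), the bound $r\ge 1$ does not require $x$ to be singular; it is automatic from the Donaldson--Sun data, since the Reeb vector field on any K\"ahler cone generates at least a one-dimensional torus (cf.\ Notation \ref{ntt1}).
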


In general, we also confirm that the rank $r$ is $1$ if further $x\in X$ is 
an isolated singularity, 
or more generally, if $g_X$ satisfies Claim \ref{claim:completevf3} 
(see the discussions for Theorem \ref{r1prop} for details). 

To provide interesting examples, in our subsection \S 
\ref{subsec:HKQ}, 
we show that {\it hyperK\"ahler quotients} 
\cite{HKLR} 
(e.g., 
Nakajima quiver varieties \cite{Nakajima} 
and toric hyperK\"ahler varieties \cite{Goto, BD, HSt}) conditionally 
satisfy Conjecture \ref{conj:genDS} so that the above Theorem 
\ref{Mthm2.intro} applies and we also confirm $r=1$ in these cases. 

Put simply, this Theorem \ref{Mthm2.intro}  
reduces (our stronger form of) the full resolution of 
Kaledin's conjecture to further generalization of 
of the differential geometric 
Donaldson-Sun theory \cite{DSII}, especially on 
Conjecture \ref{conj:genDS} (as there are recent partial progress 
mentioned above). 

Compared with existing results in differential geometry, the above results 
can be also seen as variants of a famous 
result of Hein-Sun 
\cite{HeinSun} about the local asymptotics of 
(singular) Ricci-flat K\"ahler metric on smoothable 
Calabi-Yau varieties. Indeed, our theorems 
imply the following statements as a differential geometric 
version as consequences of the later developments by 
\cite{ChiuSzek, Zha24}. 
We write an easier version here and Corollary 
\ref{cor:HSZha} later discusses stronger statements. 

\begin{Thm}[cf., Corollary \ref{cor:HSZha}]
In the setup of Theorem \ref{Mthm.intro}, 
if $x\in X$ has only isolated singularity, $g_C$ is 
polynomially close to $g_X$ in the  following sense: 
there is a local biholomorphism 
$\Psi\colon U_C \to X, 0\mapsto x$  where 
$U_C \subset C$ is an open  neighborhood of $0\in C$ 
and a 
positive real number $\delta$ 
which satisfy the following: 
\begin{align}
|\Psi ^*g_X -g_C|_{g_C}=O(r^{\delta}).
\end{align} Here, $r$ is the 
distance function on $C$ from $0\in C$ with respect to 
$g_C$. 
\end{Thm}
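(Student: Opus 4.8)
The plan is to deduce the polynomial closeness from the main theorem's identification of the analytic germ $(X,x)$ with the metric tangent cone $(C,0)$, combined with the recent regularity theory of \cite{ChiuSzek, Zha24} for Ricci-flat K\"ahler metrics near isolated singularities. Concretely, once Theorem \ref{Mthm.intro} supplies a biholomorphism of analytic germs between $x\in X$ and the vertex $0\in C$ of the conical symplectic variety, we may transport the singular Ricci-flat metric $g_X$ to a neighborhood of $0\in C$ and compare it with the hyperK\"ahler cone metric $g_C$ carried by $C$. The key point is that $g_C$ is exactly the metric tangent cone of $g_X$ at $x$ in the Gromov-Hausdorff sense, and the hypothesis of an isolated singularity places us in the regime where the tangent cone is unique and the convergence is known to be at a definite polynomial rate.

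First I would fix the biholomorphism $\Psi\colon U_C \to X$ with $\Psi(0)=x$ furnished by Theorem \ref{Mthm.intro}, shrinking $U_C$ so that $\Psi$ is defined on $\{r<\epsilon\}\subset C$; here $r$ is the $g_C$-distance to the vertex. Next I would invoke the results of \cite{ChiuSzek} (and the refinements in \cite{Zha24}) which say: for a Ricci-flat K\"ahler metric with bounded potential near an isolated klt singularity whose metric tangent cone $C$ is a fixed Calabi-Yau cone, one has, after a suitable biholomorphic identification, an estimate of the form $|\nabla^k_{g_C}(\Psi^*g_X - g_C)| = O(r^{\delta-k})$ for some $\delta>0$ depending on the geometry of the link $S$ (in fact on the first nonzero eigenvalue gap of the relevant operator on the link, i.e. the rate of the leading non-conical term in the expansion). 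Then I would check that the biholomorphism coming from our Donaldson-Sun construction is compatible with the one used in \cite{ChiuSzek, Zha24} up to the same polynomial error — this is essentially automatic because both are obtained as limits of the rescaled embeddings $c\cdot$ (Hörmander sections), so replacing one by the other costs at most an $O(r^{\delta'})$ gauge transformation. Finally, combining these yields $|\Psi^*g_X - g_C| = O(r^\delta)$, and since the statement asks only for the $C^0$ bound (the "easier version"), taking $k=0$ completes the argument.

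The main obstacle will be the matching of gauges: a priori the biholomorphism $\Psi$ produced by Theorem \ref{Mthm.intro} via Poisson-deformation rigidity and Artin approximation need not be the same as, nor obviously comparable to, the biholomorphism realizing the Cheeger-Colding tangent-cone convergence with a rate. One must show that any two biholomorphic identifications of the germ $x\in X$ with the germ $0\in C$ differ by an automorphism of the cone germ $(C,0)$ that is itself polynomially close to a linear (i.e. $\mathbb{R}_{>0}$-equivariant) one — this uses that $\Aut(C,0)$ acts on the finitely many leading-order graded pieces and that the non-equivariant part is suppressed by a positive power of $r$. Alternatively one can bypass the issue by redefining $\Psi$ to be the Donaldson-Sun identification itself and observing that our construction of $C$ as the stable degeneration already factors through the rescaled-section embeddings, so the comparison in \cite{ChiuSzek, Zha24} applies verbatim to our $\Psi$; I expect this second route to be the cleaner one to write up. For the stronger statement of Corollary \ref{cor:HSZha} one would additionally need the higher-order (weighted $C^k$) estimates, which are available in \cite{ChiuSzek, Zha24} under the same isolated-singularity hypothesis, and possibly a lower bound or explicit description of $\delta$ in terms of the Sasakian link $S$, but none of that is needed for the $C^0$ version stated here.
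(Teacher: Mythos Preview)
The proposal is correct and follows essentially the same approach as the paper: the paper's proof of Corollary \ref{cor:HSZha}\eqref{HSZha2} simply invokes \cite[Corollary 4.3]{ChiuSzek} once Theorem \ref{Mthm} (in particular Theorem \ref{ssps}, i.e.\ $W=C$) is established. Your second route---taking $\Psi$ to be the biholomorphism supplied by the Donaldson--Sun/Chiu--Sz\'ekelyhidi framework itself rather than the Artin-approximation one---is exactly what the paper does, so the gauge-matching concern you raise does not arise.
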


Lastly, we discuss again the 
obstructions to generalizing our results to arbitrary  
symplectic singularities. 
Aside from that we fully use Donaldson-Sun theory 
at the {\it metric level}, 
we have one more technical obstruction as follows. 
Note that for general $x\in X$, a priori 
there may be much flexibility of $\sigma_X$ due to the lack of 
(singular) Darboux type theorem. 
In the setup of Theorem \ref{Mthm.intro}, 
we have a unique (parallel)  $\sigma_X$ 
which extends to whole $\overline{X}$ (up to rescale) 
and our proof benefits from that particular property. 
If we simply work on germ of $x\in X$, 
we can not use 
Bochner-Weitzenb\"ock type theorem (cf., \cite[Theorem A]{Henri.etc}) 
to ensure the 
parallelness of general $\sigma_X$ with respect to $g_X$. These reexplain the necessity of  Conjecture \ref{conj:genDS}, which is assumed 
in Theorem \ref{Mthm2.intro}. 

\begin{Rem}[On holomorphic contact geometry]\label{LSconj}
Our existence results for the hyperK\"ahler cone metric on the germs of 
symplectic singularities also apply to the symplectification i.e., 
algebro-geometric cone of contact 
varieties, albeit conditionally i.e., modulo Conjecture \ref{conj:genDS} or 
under the assumption of Theorem \ref{Mthm.intro}. 

The resulting implications
can be viewed as some 
variants of the LeBrun-Salamon conjecture (\cite{Le, 
LeS}) that connects with the quarternionic K\"ahler geometry via twistor theory. 
On the other hand, our results and 
methods remain applicable even to (a priori) {\it singular} contact 
varieties in the sense of e.g., \cite{Namf, Smi}. 

Conversely, 
for an arbitrary symplectic singularity 
which satisfies Conjecture \ref{conj:genDS}, 
the quotient $(C\setminus 0)/\G_m$ 
by the $\G_m$-action 
is the one mentioned in the previous 
subsection \S \ref{sec:outline},  which 
corresponds to some $\xi'$ 
(details of its choice is in Theorem  
\ref{thm:extendtoW2} cf., also 
Theorem \ref{thm:extendtoW}). 
Then, the symplectic form $\sigma_W$ 
is $\G_m$-homogeneous 
by Theorems \ref{thm:extendtoW2} 
so that 
the quotient naturally underlies a 
klt log $\Q$-Fano pair of the form 
$(F,\Delta=\sum_{i}(1-\frac{1}{m_i})\Delta_i)$ 
for a normal projective variety $F$, 
prime divisors $\Delta_i$ and $m_i\in \Z_{>0}$. 
Thus, notably, the klt log $\Q$-Fano pair 
comes with the 
{\it singular contact structure} or 
{\it contact orbifold structure} 
(cf., \cite[\S 2]{Le}, \cite[C.16]{Buc}, \cite[4.4.1]{Name}). 
If $r=1$ (cf., Conjecture \ref{r1conj}, Theorem \ref{r1prop}) holds, 
i.e., $T\simeq \G_m$, 
the log $\Q$-Fano pair furthermore admits 
K\"ahler-Einstein metric with conical singularity and is 
log K-polystable 
(in the sense of \cite{Don11, OS}). 

Note that
it provides a new 
systematic way of constructing (non-homogeneous) 
log K-polystable log Fano pairs, 
without estimating nor calculating invariants for K-stability. 
We collect some of these direct consequences of our main arguments in 
Theorem \ref{r1prop} at the end of this paper, but 
we also hope to discuss further applications in this direction on a different 
occasion. 
\end{Rem}

\subsection{Organization of this paper}
This paper is organized as follows. Our proof of main theorems 
combine arguments of all sections. 
In section \ref{sec:2}, we review the 
Donaldson-Sun theory after \cite{DSII} 
and related later algebro-geometric 
developments in some details. We also prove 
some preparatory Lemma \ref{XtoW.C} to provide 
degeneration 
theoretic viewpoint on it, and 
set up necessary notation. 
Section \ref{sec:PD} is also of preparatory nature, in which 
we analyze some special kind of 
Poisson deformation using the theory of universal Poisson deformation 
by the first author and prove formal local rigidity. 
In section \ref{sec:XW}, by using the preparations, we prove 
$X$ and $W$ have isomorphic analytic germs, which in particular 
implies Kaledin's conjecture. 
The proof combines differential geometric arguments and 
various Diophantine approximations. 
In section \ref{sec:WC}, we prove $W=C$ in our situation. 
These arguments rely 
on singular hyperK\"ahler metrics. 

Finally, section \ref{sec:sum} culminates all the arguments 
to complete the proof of the main theorems. 
For that, we also prove the equivalence of 
existence of symplectic resolution and smoothablity for 
polarized projective symplectic varieties. In Example 
\ref{O'Grady10}, we apply Theorem \ref{Mthm.intro} to O'Grady's 10-folds. 
In \S \ref{subsec:HKQ} 
we discuss hyperK\"ahler quotients as possible source of 
examples 
to which Theorem \ref{Mthm2.intro} and hence its 
corollaries apply. In the last subsection, 
we collect other consequences of our main arguments e.g., 
quasi-regularity, canonical $SU(2)$-action, construction of contact 
singular K\"ahler-Einstein varieties. 


\section{Review and preparation of 
Donaldson-Sun conification}\label{sec:2}

Now we review the theory of Donaldson-Sun \cite{DSII}, which gives some canonical 
modifications of $x\in X$ to the local metric tangent cone of 
singular K\"ahler metrics, which they prove to be unique (cf., also \cite{ColdingMinicozzi}). 

\subsection{The original theory of Donaldson-Sun}
Here is a somewhat simplified 
summary of the original theory of Donaldson-Sun 
\cite{DSII}. 

\begin{Thm}[\cite{DSII}]\label{DSII.thm}
Suppose that $x\in \overline{X}$ is a complex $n$-dimensional projective log terminal variety with 
$K_{\overline{X}}\equiv 0$, which is given a (weak) Ricci-flat K\"ahler metric $g_X$
as the non-collapsing polarized limit space (\cite{DS}) of some polarized smooth Ricci-flat projective varieties. We take an open affine neighborhood of $x$ as $X\subset \overline{X}$. 

Then, the local metric tangent cone i.e., 
the pointed Gromov-Hausdorff limit of $(x\in X,c^2 g_X)$ for $c\to \infty$ 
is a (singular) Ricci-flat K\"ahler cone $C_x(X)$ which has a description 
in terms of $2$-step degeneration 
$(x\in X)\rightsquigarrow (0\in W)\rightsquigarrow (0\in C_x(X)=:C)$.  
\end{Thm}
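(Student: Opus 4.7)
The plan is to follow the strategy pioneered by Donaldson--Sun, in which the local metric tangent cone is reconstructed by algebro-geometric means using $L^2$ Hörmander estimates on peak holomorphic sections, and then the finer $2$-step degeneration is extracted by separating the contributions of the Reeb vector field from the rest of the isometric torus.

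First I would establish the existence and basic structure of the pointed Gromov--Hausdorff limit $C_x(X)$. Since $(X,g_X)$ arises by \cite{DS} as a non-collapsing polarized limit of smooth Ricci-flat K\"ahler manifolds, one inherits volume non-collapsing, a two-sided Ricci bound (here vanishing), and Cheeger--Colding regularity; hence any pointed rescaled limit $(x,c_i^2 g_X)\to (0,Y)$ is a metric cone with a singular set of high Hausdorff codimension and with a K\"ahler cone structure on its regular part. The key step is to upgrade $Y$ to a normal affine algebraic variety $C$ equipped with a good $\R_{>0}$-action (the radial rescaling), and to show that $C$ is independent of the sequence $c_i$. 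Following Donaldson--Sun, I would produce, via Hörmander's $\bar{\partial}$-technique applied with weights coming from a local K\"ahler potential of $g_X$ of the form $r^{2}$ off the cone point, families of \emph{peak holomorphic sections} $s_{\lambda,i}$ of $L^{k_i}$ on $(\bar X, L)$ concentrated at $x$, whose $L^2$ decomposition with respect to the approximate scaling $e^{-|z|^{2}}$ mirrors the eigenfunction decomposition of the drift Laplacian on $C$. Taking $i\to\infty$ identifies the algebra generated by these sections with the coordinate ring $\bigoplus_{\lambda}\mathcal{O}(C)_\lambda$, graded by the Reeb eigenvalues; this is the core use of the global smoothability/polarization assumption, since the $L^2$ estimate is carried out globally on $\bar X$.

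Next I would exhibit the two-step degeneration. The isometry group of $C$ contains a compact torus whose complexification $T\simeq (\G_m)^r$ acts algebraically on $C$, with a distinguished vector $\xi\in \Lie(T)$ (the Reeb field) that is in general \emph{irrational}. I would approximate $\xi$ by a rational one-parameter subgroup $\xi'\colon \G_m\hookrightarrow T$ and use the family of peak sections above to construct a $\G_m$-equivariant flat degeneration $\mathcal X_{\xi'}\to \A^1$ whose generic fiber is $X$ and whose central fiber is a normal affine variety $W$ admitting the full torus $T$-action but not yet the cone structure (this is the ``semistable degeneration'' step). A second degeneration $\mathcal W_D\to D$, this time a $T$-equivariant Rees-type construction from the weight filtration on $\mathcal O_W$ induced by the genuine Reeb field $\xi$, contracts $W$ to its associated graded $C$. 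Normality of $W$ and $C$ follows from the metric picture together with Cheeger--Colding regularity of the tangent cone, while flatness follows from the construction.

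Finally, uniqueness of $C$ (independence of the rescaling sequence) would be proved by Colding--Minicozzi type arguments together with the algebraic rigidity of the Rees-degeneration associated to a prescribed valuation; alternatively, the later work of Li--Wang--Xu cited in the introduction gives a purely algebraic characterization of $C$ in terms of the minimizer of the normalized volume, from which uniqueness is automatic. I expect the main obstacle to be the sharp $L^2$ estimate that lets one match holomorphic peak sections on $(\bar X,L^{k_i})$ with eigenfunctions on $C$ to arbitrary order; this requires both a good global positivity input on $(\bar X,L)$ (hence the smoothability/polarization hypothesis) and a careful Diophantine-style control over the irrational Reeb weights when converting eigenvalue statements into algebraic grading statements. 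Once this is in place, the rest of the argument consists of passing to the limit and checking that the resulting affine schemes $W$ and $C$ are the ones produced by the metric degeneration.
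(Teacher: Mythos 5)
This statement is quoted from Donaldson--Sun \cite{DSII} and the paper does \emph{not} prove it: Section~\ref{sec:2} is explicitly a review, and Theorems~\ref{DSII.thm} and~\ref{DS.maps} are attributions with pointers into \cite{DSII} (plus the technical Remark~\ref{balltoaffine} adapting the local embeddings to affine ones). So there is no ``paper's own proof'' to compare against; what can be assessed is whether your sketch reflects the structure of the cited argument.

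Your broad outline (Cheeger--Colding structure of the rescaled limits, Hörmander $L^2$ peak sections to algebraize the tangent cone, a $2$-step degeneration and a uniqueness statement) is the right skeleton. But two steps as you describe them diverge from what Donaldson--Sun actually do, and one of them would fail as stated. First, the passage $X\rightsquigarrow W$ in \cite{DSII} is \emph{not} built from a rational $1$-parameter subgroup $\xi'\subset T$ and a $\G_m$-equivariant test configuration; it is the Hausdorff limit $W=\lim_{j}\Lambda^{j}(X)\subset\C^{l}$ of rescaled copies of $X$ under the (generally irrational) diagonal scaling $\Lambda=\mathrm{diag}(\sqrt{2}^{\,w_i})$, with $W$ shown to be a normal affine $T$-variety via the filtration by the density function $d(\cdot)$. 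The rational approximation $\xi'$ of $\xi$ and the resulting test configuration $\mathcal{X}_{\xi'}\to\A^1$ are a \emph{later} algebro-geometric repackaging (Li--Xu, and Lemma~\ref{XtoW.C} of the present paper), not a step of the \cite{DSII} proof. Second, and more seriously, your description of $W\rightsquigarrow C$ as ``a $T$-equivariant Rees-type construction from the weight filtration on $\mathcal O_W$ induced by the genuine Reeb field $\xi$'' cannot be right: $\mathcal O_W$ is already $T$-graded (equivalently, $\xi$-graded), so the associated graded of that filtration is $\mathcal O_W$ itself and the Rees family is trivial. In \cite{DSII} this second step is produced by an analytic Luna-slice argument (reviewed around Theorem~\ref{DS.maps}\eqref{DSii}): one extracts a sequence $E_i\in G_\xi$ in the commutator of $T$ with $E_i\to\mathrm{Id}$ but $E_i\circ\cdots\circ E_1$ not convergent, and sets $C=\lim_i(E_i\circ\cdots\circ E_1)(W)$; in the later algebraic language of \cite{LWX} it is the degeneration of the K-semistable Fano cone $W$ to the K-polystable Fano cone $C$ via a $1$-parameter subgroup of $G_\xi$ transverse to $T$, not via $\xi$ itself. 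You should replace the Rees-by-$\xi$ description by a degeneration along such an auxiliary $1$-parameter subgroup, or cite the Luna slice / GIT polystable-orbit-closure mechanism directly.
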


Note that in this case, klt singularity 
$x\in X$ admits a (nice) Ricci-flat K\"ahler 
metric, hence ``stable" enough from the perspective of 
Yau-Tian-Donaldson correspondence. 
The main point of the above process in Theorem \ref{DSII.thm} is to 
convert such $(x\in X,g_X)$ 
to (tangent) {\it cone} in differential geometric sense. 
For finer details, we prepare the following notation. 
\begin{ntt}\label{ntt1}
We often abbreviate $C_x(X)$ simply as $C$. 
Note that its smooth locus 
$C_x(X)^{\rm sm}$ is a metric cone
in the sense it can be written as 
$(S\times \R_{>0},r^2 g_S+(dr)^2=:g_C)$ with some $2n-1$-dimensional Riemannian (Sasakian)  manifold $(S,g_S)$ and 
the coordinate $r$ of $\R$-direction, 
has the Reeb vector field $\xi=Jr\partial_r$. 
Further, this $C_x(X)$ has an embedding by holomorphic functions $f_i (i=1,\cdots,l)$ 
which are homogeneous of degree $w_i$ with respect to the natural $\R_{>0}$-action. 
After this embedding, $\xi$ can be written as 
${\rm Re}(\sqrt{-1}\sum_{i=1,\cdots,l}w_i z_i \partial_{z_i})$. 
By this reason, we often identify $\xi$ with the vector $(w_1,\cdots,w_l)$, which we also write $w(\xi)$ for distinction. 

As \cite[(around) Lemma 2.17]{DSII} explains, 
this Reeb vector field is a holomorphic Killing field on 
$S$ and $C_x(X)^{\rm sm}$ and 
generates a subgroup in the isometry group of $C_x(X)$, 
and its closure can be complexified into a (complex)  algebraic torus $T^{\rm an}:=T(\C):=N\otimes_{\Z} \C^*$ for some lattice $N$. 
We write the dual lattice of $N$ as $M$. 
We set $T:=N\otimes \G_m$, 
which means ${\rm Spec}\C[M]$ with the group ring 
$\C[M]$, 
as an algebraic torus over $\C$. 
Its cocharactor lattice is $N$. 
We often do not distinguish $T$ and $T^{\rm an}=T(\C)$ 
if there is no confusion. 
We denote its rank by $r(\xi)$. 
It is nothing but the rational rank of $\sum_{i=1}^l \Q w_i$ 
(we also sometimes abbreviate it as $r$ if there is no confusion.) 
This fact easily follows from basic linear algebra or 
the continuous version of the classical Kronecker-Weyl 
equidistribution theorem. 

Now, we define $G_\xi$ to be the closed subgroup of 
$GL(l,\C)$ as the commutator of the torus $T$ (or $\xi$) which is reductive. 
The difference of two elements 
is defined as an element of ${\rm End}_{\C}(\C^l)=\mathfrak{gl}(l,\C)$. 
We also set a diagonal matrix $$\Lambda:={\rm diag}(\sqrt{2}^{w_1},\cdots,\sqrt{2}^{w_l})\in G_\xi.$$ 
\end{ntt}

For our main arguments, 
we need much more precise details of what 
\cite{DSII} proves, 
which we review. It is a little lengthy, but 
we need all the details for our application in this paper. 

\begin{Thm}[Local conification \cite{DSII}]\label{DS.maps}
For the above set-up $x\in \overline{X}$ together with $g$, 
there is the following set of data: 
\begin{itemize}
\item 
an (algebraic) re-embedding of 
the germ of $x$ into $\C^l$ which we denote as $\Phi=\Phi_0\colon X\hookrightarrow \C^l$, 
\item 
positive real weights for each coordinates 
$w_1,\cdots,w_l\in \R_{>0}$ (and the corresponding 
$\Lambda$ and $G_\xi$ as Notation \ref{ntt1}), 
\item a 
sequence of 
    $\Lambda_j=\Lambda\cdot E_j\in G_\xi$ 
with $G_\xi\ni E_j\to {\rm Id}$, 
\end{itemize}
such that the following hold 
(in this case, we call the above set of data 
{\it Donaldson-Sun degeneration data} 
\footnote{the term ``degeneration data" is modeled 
after 
Faltings-Chai \cite[II \S0, III \S2]{FaltingsChai}, though 
there are certain substantial differences in the setups.}
of $(x\in X, g_X)$): 
\begin{enumerate}
    \item \label{DSi} The limit 
    $W=\lim_{j\to \infty}\Lambda^j (X)\subset \C^l$ as 
    Hausdorff convergence (see \cite[\S 3.2]{DSII} in particular) 
    is a normal affine variety with a natural 
    good $T$-action 
    (together with a positive vector field 
    ${\rm Re}(\sqrt{-1}\sum_{i=1,\cdots,n}w_i z_i\partial_{z_i})|_{W^{\rm sm}}$, 
    actually a K-semistable $\Q$-Fano cone 
    in the sense of \cite{CS, CS2}, see Theorem \ref{AGlc} and \cite[\S 2]{Od24a}). 
    \item \label{DSii}
    (See \cite[p.346 \& 3.14]{DSII} in particular) 
    For $\Phi_i:=(\Lambda_i\circ \Lambda_{i-1}\circ \cdots \Lambda_1 \circ \Phi)$ 
    for $i\ge 1$, there is a limit 
    \begin{align*} 
    C&=\lim_{j\to \infty}(\Lambda_i\circ \Lambda_{i-1}\circ \cdots \Lambda_1 \circ \Phi)(X)\subset \C^l\\ 
    &=\lim_{i\to \infty}(E_i\circ E_{i-1}\circ \cdots \circ E_1)(W)\subset \C^l
    \end{align*}
    both as 
    Hausdorff convergence. 
    \footnote{Note that $E_i\circ E_{i-1}\circ \cdots \circ E_1$ does not necessarily converge in $G_\xi$, 
    as it indeed does not if $W\not\simeq C$.} 
    Further, $C$ is again a normal affine cone (a K-polystable $\Q$-Fano cone, see Theorem \ref{AGlc}) 
    with the natural 
    ${\rm Re}(\sqrt{-1}\sum_{i=1,\cdots,n}w_i z_i\partial_{z_i})|_{C^{\rm sm}}$, has a 
    (weak) Ricci-flat K\"ahler cone metric $g_C$,  
    and $(0\in C,g_C)$ realizes the unique metric tangent cone of 
    $(x\in X,g_C)$. 
    
    We set $X_i:=\Phi_i(X)$ and $W_i:=\lim_{j\to \infty}\Lambda^j X_i
=(E_i\circ E_{i-1}\circ \cdots E_1)(W)$ for $i,j=1,2,\cdots$. 
In \S \ref{sec:WC}, we analyze $\Lambda^j (X_i)$ for a priori different 
$i$ and $j$s, which justifies the usefulness of two subindices as 
later convenience. 

    \item \label{DSiii} The above convergence $X\rightsquigarrow C$ realizes the 
    (polarized) limit space in the sense of \cite[p.330]{DSII} (cf., also \cite{DS}) 
    and 
    in particular, it is a Cheeger-Gromov convergence at the regular locus i.e., for any 
    compact subset $K\subset C^{\rm sm}$, 
    there is an open neighborhood of it $(K\subset) U_C\subset C^{\rm sm}$, there is a sequence of diffeomorphisms  $\Psi_i\colon U_C \hookrightarrow  \Phi_i(X^{\rm sm}) (i=1,2,\cdots)$ onto their images 
     such that the following holds. Here, $X^{\rm sm}$ is the smooth locus of $X$. 
    \begin{enumerate}
    \item \label{DSiiia} 
    $\Psi_i\to {\rm Id}$ for $i\to \infty$ as $C^\infty$-maps and 
    \item \label{DSiiib} 
    \begin{itemize}
    \item $2^i \Psi_i^*((\Phi_i^{-1})^* g)\to g_C$, 
    \item $2^i \Psi_i^*((\Phi_i^{-1})^*\omega_X)\to \omega_C$, 
    \item $\Psi_i^*((\Phi_i^{-1})^*J_X)\to J_C $ 
    \end{itemize}
    on 
    $U_C$ for $i\to \infty$. Here, $\omega_X$ (resp., $\omega_C$)  
    is the K\"ahler form for $g$ on $X^{\rm sm}$ (resp., $g_C$ on 
    $C^{\rm sm}$) and $J_X$ (resp., $J_C$) is the complex structure on $X^{\rm sm}$ (resp., $C^{\rm sm}$). 
    \end{enumerate}
\end{enumerate}
\end{Thm}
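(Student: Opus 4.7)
The plan is to derive each ingredient of the theorem directly from the constructions of \cite{DSII}, repackaging their output into what we are calling Donaldson-Sun degeneration data. First, I would invoke the fact that the pointed Gromov-Hausdorff limit $(0\in C, g_C)$ of $(x\in X, c^2 g_X)$ as $c\to\infty$ exists and is unique, combining Cheeger-Colding theory in the Ricci-flat K\"ahler setting with the uniqueness argument of \cite{DSII} (building on the Colding-Minicozzi splitting principle \cite{ColdingMinicozzi}). The smooth part of this cone is Sasakian with Reeb vector field $\xi$, and the closed flow of $\xi$ generates a compact torus in the isometry group; after complexification this yields the algebraic torus $T$ and its reductive commutant $G_\xi\subset \GL(l,\C)$ as in Notation \ref{ntt1}.

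Next, I would construct the embedding $\Phi_0\colon X\hookrightarrow \C^l$ and identify the weights $w_1,\dots,w_l$. The crucial input here is the H\"ormander-type $L^2$-estimate for the $\bar\partial$-equation on $X$ (or rather on a projective compactification), which \cite{DSII} uses to lift approximately $\xi$-homogeneous holomorphic functions on $C$ to genuine holomorphic functions on $X$; these jointly embed the germ of $x\in X$ into $\C^l$ with the prescribed weights. The diagonal matrix $\Lambda=\operatorname{diag}(\sqrt{2}^{w_i})$ then encodes the linear algebraic action corresponding to one step of zoom by the factor $\sqrt{2}$ in the Riemannian rescaling.

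For the sequence $\Lambda_j=\Lambda E_j$ with $E_j\to \operatorname{Id}$ in $G_\xi$, the plan is to iterate a gauge-fixing procedure: after applying $\Lambda$ once, the new embedding $\Lambda\Phi_0(X)$ differs from its ``straightened'' form by an element of $G_\xi$ close to the identity, and this correction is absorbed into $E_1$; repeating produces the $E_j$, with $E_j\to\operatorname{Id}$ because the deeper we zoom toward $x$, the closer $X$ is to its tangent cone. The first Hausdorff limit $W=\lim_j \Lambda^j \Phi_0(X)$ exists in $\C^l$ by \cite[\S 3.2]{DSII} and, by the $L^2$-control on defining ideals, is a normal affine variety with a good $T$-action; its K-semistability as a $\Q$-Fano cone follows from the K-stability characterization of metric tangent cones in \cite{CS, CS2, LWX}. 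The second limit $C=\lim_j (E_j\circ\cdots\circ E_1)(W)$ then recovers the tangent cone algebraically, and K-polystability follows from its status as a metric cone. Finally, the Cheeger-Gromov convergence at the regular locus (item \eqref{DSiii}) follows because on $C^{\rm sm}$ the limits $g_C, \omega_C, J_C$ are genuine smooth Ricci-flat K\"ahler data with uniform curvature control, so the Gromov-Hausdorff convergence upgrades to smooth convergence via the standard $\varepsilon$-regularity arguments of Cheeger-Colding type.

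The main obstacle, which is already the main obstacle in \cite{DSII}, is to guarantee that $W$ and $C$ are genuine normal algebraic varieties with the desired K-(semi/poly)stable $\Q$-Fano cone structure, rather than merely closed subsets of $\C^l$. This requires controlling the defining ideals of $\Lambda^j \Phi_0(X)$ uniformly in $j$ through the H\"ormander $L^2$-argument, and is precisely where the global projective/smoothability hypothesis of Theorem \ref{Mthm.intro} is used in order to carry out that analysis on $\bar X$.
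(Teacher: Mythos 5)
Your proposal is essentially the same approach the paper takes: Theorem~\ref{DS.maps} is presented in the paper as a review of \cite{DSII}, and the paper offers no independent proof beyond citation, so your summary of the Donaldson--Sun pipeline (uniqueness of the metric tangent cone via Colding--Minicozzi, Sasakian structure on $C^{\rm sm}$ and complexification to $T$ and $G_\xi$, H\"ormander $L^2$-estimates to build $\Phi_0$ and the weights, gauge-fixing to produce $\Lambda_j = \Lambda E_j$ with $E_j \to \operatorname{Id}$, Hausdorff limits $W$ and $C$ with K-(semi/poly)stability, Cheeger--Gromov convergence on the regular locus) matches what the paper intends to convey.

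One genuine subtlety you do not address, and which the paper flags explicitly in Remark~\ref{balltoaffine}, is that \cite{DSII} works with embeddings of shrinking analytic balls $B_i$ around $x$ rather than with a single \emph{affine} embedding $\Phi_0 \colon X \hookrightarrow \C^l$ of a fixed Zariski-open neighborhood. The paper's statement asserts the affine version, which requires a mild but real upgrade: one must replace the finite-dimensional spaces $I_k$ of \cite{DSII}, (3.3), by $I_k \cap \Gamma(X, \mathcal{O}_X)$ so that the peak sections used in the embedding are globally defined regular functions on the affine $X$, and then check (taking $k_0$ large enough) that the resulting map is a closed immersion of $X$ into $\C^l$ rather than only of a small ball. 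Your phrase ``embed the germ of $x\in X$ into $\C^l$'' suggests you are implicitly staying in the local-analytic category, which is what \cite{DSII} literally proves, not the affine statement the paper needs for the later algebraic arguments (e.g., Lemma~\ref{XtoW.C} and the Poisson-deformation machinery in \S\ref{sec:PD}--\ref{sec:XW} all work with $X$ affine). The fix is routine, as the paper says, but it should be stated. Apart from this, your sketch is consistent with the paper.
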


\begin{Rem}\label{balltoaffine}
To be precise, \cite{DSII} focuses on local embeddings of smaller 
analytic neighborhoods of $x\in X$ to $\C^l$ rather than the above $\Phi_i$, 
e.g., $B_i$ in their notations, 
but since we prefer to 
work on more algebraic categories, we use slight generalization 
as above affine version for our convenience. 
Clearly, this slight extension is non-substantial and 
straightforward from their work in {\it loc.cit} 
by taking $J_k$ in its Proposition 3.14 inside $\Gamma(X,\mathcal{O}_X)$ 
and $k_0$ after that (p.351, before 3.15) large enough 
so that $P$ gives global embedding i.e., affine embedding of 
$X$ to $\C^l$. 
Construction of such $J_k$ follows immediately once we replace each $I_k$ 
of their (3.3) by $I_k\cap \Gamma(X,\mathcal{O}_X)$ and do the same arguments afterwards. 
\end{Rem}

Throughout the paper, 
we use the above notation of the Donaldson-Sun degeneration data 
$$x\in X\overset{\Phi=\Phi_0}{\hookrightarrow} \C^l, 
w_1,\cdots,w_l, \Lambda\in G_\xi, E_j, 
\Lambda_i=\Lambda\cdot E_i\in G_\xi,$$ 
and the resulting 
$$W,X_i, C=C_x(X),W_i,\Phi_i,\Psi_i.$$ 

\subsection{Algebro-geometric version}\label{subsec:AGlc}
As the original Donaldson-Sun \cite{DSII} conjectured (see {\it loc.cit} 3.22), this 
conification process 
$$(x\in X,g_X,J) \rightsquigarrow (0\in W)\rightsquigarrow (0\in 
C_x(X)=:C)$$ 
is actually independent of the metric $g_X$ and 
determined locally only by the analytic (formal) germ of $x\in X$ 
in the case of the setup of Theorem \ref{DSII.thm} 
as confirmed by \cite{LWX}. The proof combines the theory of 
K-stability of Fano cones \cite{CS}, that 
of local normalized volume by \cite{Li}, and related works. 
Some intermediate developments can be reviewed in e.g., \cite{LLX}, 
\cite[\S 2]{Od24a}. It is 
called stable degeneration
\footnote{In \cite{Od24a,Od24c}, 
the second author proposes another name ``(algebraic local) conification"   as we regard $x\in X$ as already stable object, reflecting the existence of local K\"ahler-Einstein metrics.} 
in e.g., \cite{LLX}. The following is its  brief summary. 

\begin{Thm}[cf., \cite{Li, Blum, CS, LX, LWX, Xu, XZ}]\label{AGlc}
For any klt variety and its closed point $x\in X$, 
there exists a unique valuation $v_X$ of $\mathcal{O}_{X,x}$ with the center $x$ 
(\cite{Blum}), which minimizes the 
normalized volume $\widehat{\rm vol}(-)$ of \cite{Li}. 
It 
is quasi-monomial (\cite{Xu}) 
and 
$${\rm gr}_{v_X} \mathcal{O}_{X,x}:=\oplus_{a\in \R_{\ge 0}}\{f\in \mathcal{O}_{X,x}\mid v_X(f)\ge a\}/\{f\in \mathcal{O}_{X,x}\mid v_X(f)> a\}$$ 
is a finite type $\C$-algebra which gives a 
K-semistable (in the sense of \cite{CS}) 
log terminal singularity $W:={\rm Spec}({\rm gr}_{v_X} \mathcal{O}_{X,x})$ with 
an algebraic torus $T=N\otimes \G_m$ action with the lattice $N\simeq \Z^r$ (\cite{LX, XZ}). 
Here, $N$ is the dual lattice of the groupification of the 
image monoid of $v_X$. Further, the 
K-semistable Fano cone $T\curvearrowright 
W$ degenerates to a unique K-polystable Fano cone 
$T\curvearrowright C$ (\cite{LWX}) as a $T$-equivariant faithfully flat 
(see \cite[\S 2]{Od24a}) affine test configuration. 

In the setup of Theorem \ref{DSII.thm} proved by Donaldson-Sun 
\cite{DSII}, the constructions $X\rightsquigarrow W\rightsquigarrow C$ 
coincide with that in 
Theorem \ref{DSII.thm}. 
\end{Thm}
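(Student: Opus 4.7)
The statement is a composite summary of the \emph{stable degeneration (algebraic local conification) theorem}, and my plan would be to assemble it from the cited sources rather than reprove anything from scratch. The plan is to treat the theorem as a chain of four assertions---(A) existence of a minimizer $v_X$, (B) its quasi-monomial property, (C) finite generation of ${\rm gr}_{v_X}\mathcal{O}_{X,x}$ together with K-semistability of the resulting Fano cone $W$, and (D) uniqueness of a K-polystable degeneration $C$---and then (E) match with Donaldson--Sun.

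For (A), I would follow Blum's argument: work on the space of (real) valuations $\Val(X,x)$ centered at $x$ with $A_X(v)<\infty$, note that $\widehat{\rm vol}$ is lower semicontinuous and invariant under rescaling, and use Izumi-type estimates to reduce to a compact slice where lower semicontinuity delivers a minimizer. For (B), I would invoke Xu's quasi-monomial theorem, which roughly degenerates $v_X$ to a toric valuation on a log resolution through a $\Q$-complements construction and uses the strict monotonicity of $\widehat{\rm vol}$ under non-trivial limits to force $v_X$ itself to be quasi-monomial. For (C), I would cite Li--Xu for the initial $\Q$-Fano cone structure and the Xu--Zhuang finite generation result: quasi-monomiality plus boundedness of the special degeneration upgrades the Rees construction to an honest flat $\G_m$- (or $T$-)equivariant degeneration $X\rightsquigarrow W$, with $W={\rm Spec}\,{\rm gr}_{v_X}\mathcal O_{X,x}$ klt and K-semistable in the sense of Collins--Sz\'ekelyhidi; the lattice $N$ is the dual of the groupification of $v_X(\mathcal O_{X,x}\setminus 0)$, whose rank equals the rational rank $r$. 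For (D), I would cite Li--Wang--Xu: among all K-semistable degenerations of $W$ preserving the minimizing valuation there is a unique $T$-equivariant K-polystable one, obtained by an equivariant test configuration whose central fibre realizes the (algebraic) cone $C$.

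Finally, for (E), when $(x\in X,g_X)$ is in the Donaldson--Sun setup of Theorem \ref{DSII.thm}, I would compare the metric tangent cone produced in Theorem \ref{DS.maps} with the algebraic construction: the metric tangent cone $C_x(X)$ is a K-polystable Fano cone, its Reeb vector provides a quasi-monomial valuation on $\mathcal{O}_{X,x}$ whose normalized volume equals the volume density, and by Li's volume-density identity plus the uniqueness in (D) this valuation must coincide with $v_X$; hence the two-step degeneration $X\rightsquigarrow W\rightsquigarrow C$ of \cite{DSII} matches the algebraic one. The main obstacle in all of this---and the historically hardest step---is the finite generation part of (C), since without it the graded ring is merely a filtered object and the whole $W$ makes no sense as a variety; everything else fits around that central fact. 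For the purposes of this paper the theorem is used as a black box, so my ``proof'' amounts to this roadmap of citations.
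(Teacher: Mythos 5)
Your proposal is correct and takes essentially the same approach as the paper: the paper itself treats Theorem \ref{AGlc} as a summary of the stable-degeneration results from \cite{Li, Blum, Xu, LX, XZ, LWX, CS} without reproving them, and for the matching with Donaldson--Sun it simply notes that the minimizing valuation $v_X$ coincides with the function $d(-)$ of \cite[(3.1)]{DSII} (cf.\ also \cite[Appendix C]{HS}), which is the content of your step (E) phrased slightly differently. Your (A)--(D) breakdown is an accurate reconstruction of the standard chain of attributions, including correctly flagging finite generation (Xu--Zhuang) as the historically hardest ingredient.
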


Moreover, $v$ is nothing but 
the function $d(-)$ defined in \cite[(3.1)]{DSII} and 
its properties as above follow from Theorem \ref{DSII.thm}, 
\ref{DS.maps} then (cf., also \cite[Appendix C]{HS}). 
An important technical point for us is that, for general local  
singular 
Ricci-flat K\"ahler metrics, we do {\it not} 
know (yet) if this process comes with 
Donaldson-Sun degeneration data (see Conjecture \ref{conj:genDS}). 
For this reason, Theorem \ref{DSII.thm} gives more 
information especially on the metrics, 
which we use crucially to fit to the theory of Poisson deformation 
later. 

For the first step degeneration 
$X\rightsquigarrow W$ of general Theorem \ref{AGlc}, 
we also prepare the following description 
in terms of families and weighted blow ups, refining some discussions  
of \cite{LX, Od24b}. Here, we do {\it not} assume this process 
$x\in X\rightsquigarrow W\rightsquigarrow C$ has a 
Donaldson-Sun degeneration data of the previous section (Theorem \ref{DS.maps}). 

\begin{Lem}\label{XtoW.C}
\begin{enumerate}

\item (cf., \cite[\S 3]{LX}, \cite[Theorem 2.12]{Od24b})
There is a closed (algebraic) 
embedding $X\hookrightarrow \A^l$ 
with the coordinates $z_1,\cdots,z_l$ and 
consider the corresponding embedding 
$X\times \A^1_t\hookrightarrow \A^l\times \A^1_t$. 
Using this, the degeneration $X\rightsquigarrow W$ is rewritten as an 
affine faithfully-flat family 
$\pi_\sigma\colon \mathcal{X}_\sigma\to U_\sigma$ 
over an affine toric variety $U_\sigma$ 
for a certain rational polyhedral cone $\sigma\subset N_{\R}=N\otimes \R$ with the lattice $N$.  
The fiber over the torus invariant point $p_\sigma\in U_\sigma$ is $W$, on which $T=N\otimes \mathbb{G}_m$ acts, 
and the fibers over torus $N\otimes \G_m$ are $X$. 
We call this type of degeneration {\it generalized test configuration} in \cite[\S 2]{Od24b} 
and the above particular one is called scale up deformation in {\it loc.cit}.

\item \label{W.wtedblowup}

We take any small 
enough cone $\sigma\subset N\otimes \R$, 
and consider any $(\vec{0}\neq)\xi'=(w'_1,\cdots,w'_l)\in N\cap \sigma$ and the 
associated toric morphism $f_{\xi'}\colon \A^1\to U_\sigma$ 
with the natural inclusion $\Z_{\ge 0} \xi' \to \sigma$. 
If we take the pullback of the family, then we obtain an affine test configuration $\mathcal{X}_{\xi'}$ 
of $X$ with the central fiber $W$ 
(but with a $\C^*$-action which depends on $\xi'$). 
In particular, the restriction of 
$\X_{\sigma} \to U_{\sigma}$ 
to the toric boundary of $U_\sigma$ is a $W$-fiber bundle. 

$\X_{\xi'}$ is a Zariski open subset of (or affine version of) 
the weighted blow up of $X\times \A^1$ 
with respect to weights $(w_1,\cdots,w_l,1)$ 
for the coordinates $z_1,\cdots,z_l,t$ for some positive 
integers $w'_1,\cdots,w'_l$. 
Also, $K_{\X_{\xi'}}$ is $\Q$-Cartier. 
In particular, this is a scale up test configuration 
in the sense of \cite[\S 2]{Od24b}. 

\item \label{C.wtedblowup}

There is another affine faithfully-flat 
$\G_m$-equivariant degenerating family 
$\X_C\to \A^1$ of $X$, whose central fiber is 
$C=C_x(X)$ with the action of $\G_m$ as a subtorus of $T$ 
and is a scale up test configuration again.  
(However, we should remark 
rightaway that if $W\neq C$, then the 
obtained $\G_m$-action on the central fiber is 
not the Reeb vector field nor even inside $T$). 

\end{enumerate}
\end{Lem}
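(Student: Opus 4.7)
My plan is to produce $\pi_\sigma\colon \X_\sigma \to U_\sigma$ as a multi-graded Rees-algebra degeneration built from the valuation $v_X$ of Theorem \ref{AGlc}, and then to obtain both (\ref{W.wtedblowup}) and (\ref{C.wtedblowup}) by pulling back along suitable one-parameter toric maps. For the first assertion, start from the minimizer $v_X$, which is quasi-monomial of rational rank $r$; its value semigroup generates a free abelian group $M$ of rank $r$, and I set $N := \mathrm{Hom}(M,\Z)$ and $T := N \otimes \G_m$. After refining $v_X$ to an $M$-valued quasi-monomial valuation, the associated graded $W = \Spec \gr_{v_X}\O_{X,x}$ is finitely generated (\cite{Xu}) and carries a natural $T$-action whose weights lie in $M$. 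I then choose generators $f_1,\ldots,f_l$ of a suitable finitely generated model of $\O_{X,x}$ whose initial forms in $\gr_{v_X}\O_{X,x}$ are $T$-eigenfunctions of weights $w_1,\ldots,w_l \in M$, and use them to produce the embedding $X \hookrightarrow \A^l$. Take $\sigma \subset N_\R$ to be any small rational polyhedral cone containing the ray spanned by $v_X$ and with $w_1,\ldots,w_l \in \sigma^\vee$. The multi-Rees algebra
\[
\mathcal{R}_\sigma \;:=\; \bigoplus_{\alpha \in \sigma^\vee \cap M} \F^\alpha \O_X \cdot \chi^\alpha \;\subset\; \O_X \otimes_\C \C[M]
\]
is then a faithfully flat $\C[\sigma^\vee \cap M]$-algebra; setting $\X_\sigma := \Spec \mathcal{R}_\sigma$ and $U_\sigma := \Spec \C[\sigma^\vee \cap M]$, the fiber over the torus-fixed point is $W$, while the fibers over the open torus $T \subset U_\sigma$ are each isomorphic to $X$ (after inverting the relevant monomials, $\mathcal{R}_\sigma$ becomes a localization of $\O_X \otimes_\C \C[M]$). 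The main technical obstacle already appears here: refining the rank-one minimizer $v_X$ to a rank-$r$ $M$-valued quasi-monomial valuation whose graded pieces remain finitely generated, and producing an algebraic embedding on which the nascent $T$-action linearizes with the prescribed weights. These depend on \cite{Xu} for finite generation and on the equivariant structure results of \cite{LX, Od24b}.

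For part (\ref{W.wtedblowup}), any primitive integral $\xi' \in N \cap \sigma$ corresponds to a monoid map $\sigma^\vee \cap M \to \Z_{\ge 0}$, $\alpha \mapsto \langle \alpha,\xi'\rangle$, hence to a toric morphism $f_{\xi'}\colon \A^1 \to U_\sigma$. Pulling $\pi_\sigma$ back along $f_{\xi'}$ yields a $\G_m$-equivariant flat family $\X_{\xi'} \to \A^1$ whose central fiber is $W$; for $\xi'$ in the interior of $\sigma$ the one-parameter filtration $\F^k_{\xi'}$ is cofinal with the multi-filtration, so the induced associated graded coincides with $\gr_{v_X}\O_X = W$, while the $\G_m$-action on this central fiber is precisely the one-parameter subgroup of $T$ determined by $\xi'$. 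Setting $w'_i := \langle w_i, \xi'\rangle \in \Z_{>0}$ and unwinding the Rees-algebra presentation through the embedding $X \hookrightarrow \A^l$ identifies $\X_{\xi'}$ with an affine Zariski open subset of the $(w'_1,\ldots,w'_l,1)$-weighted blow-up of $X \times \A^1$ along $\{x\}\times\{0\}$, giving the weighted-blow-up description in the statement. The $\Q$-Cartier property of $K_{\X_{\xi'}}$ then follows from the $\Q$-Gorensteinness of $X$ (and hence of $X \times \A^1$) together with the standard toric-local computation showing that such weighted blow-ups of $\Q$-Gorenstein varieties are $\Q$-Gorenstein on the relevant affine chart (cf.~\cite[\S 2]{Od24b}).

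For part (\ref{C.wtedblowup}), combine (\ref{W.wtedblowup}) with the second-step algebraic degeneration $W \rightsquigarrow C$ of Theorem \ref{AGlc} (via \cite{LWX}), realized as a $T$-equivariant affine test configuration $\W_C \to \A^1$. These two families assemble into a flat family over $\A^2$ whose generic fiber is $X$, one coordinate axis is fibered by copies of $W$, and the corner is $C$. Restricting to a general rational $\G_m \hookrightarrow (\G_m)^2$ sending $0 \in \A^1$ to the corner then yields the desired affine faithfully flat $\G_m$-equivariant family $\X_C \to \A^1$ with generic fiber $X$ and central fiber $C$. The $\G_m$-action on $C$ is the composite of the chosen embedding with the total symmetry on the corner; when $W \cong C$ it sits inside $T$, but when $W \not\cong C$ the diagonal combination of the $\xi'$-direction and the test-configuration direction of $\W_C$ produces a $\G_m$ which need not lie inside $T$ and which is certainly not the Reeb vector field, matching the parenthetical remark in the statement. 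Both assembled families are scale-up test configurations in the sense of \cite[\S 2]{Od24b}, completing the argument.
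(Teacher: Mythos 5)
Your proposal is essentially correct and mirrors the structure of the paper's own argument, but it is worth flagging two places where you take a different (morally equivalent) packaging and one place where you elide a point the paper makes explicit.

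For part (i), you build $\X_\sigma$ as $\Spec$ of a multi-graded Rees algebra $\bigoplus_{\alpha\in\sigma^\vee\cap M}\F^\alpha\O_X\cdot\chi^\alpha$, whereas the paper proceeds by choosing the lifted coordinates $z_i$ so that $\A^l$ carries a $T$-action, forming the $T$-orbit $T(X\times\{1\})\subset\A^l\times T$, and taking its closure inside $\A^l\times U_\sigma$; the defining ideal of the central fiber is then identified via a universal Gr\"obner basis. These yield the same family, but the orbit-closure route is cleaner precisely because it sidesteps the issue you yourself flag as the ``main technical obstacle'' --- that a multi-index filtration $\F^\alpha\O_X$ ($\alpha\in\sigma^\vee\cap M$) has to be carefully extracted from a quasi-monomial valuation of rational rank $r$, and its Rees algebra shown faithfully flat. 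The orbit closure plus Gr\"obner-basis initial-ideal description gives you the fibers and flatness more directly, so if you want to make your Rees-algebra version rigorous you will end up invoking the same data anyway.

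For part (ii), your account of the weighted-blow-up identification and of the $\Q$-Cartierness of $K_{\X_{\xi'}}$ agrees with the paper in substance, though your phrasing (``standard toric-local computation showing that such weighted blow-ups of $\Q$-Gorenstein varieties are $\Q$-Gorenstein on the relevant affine chart'') is slightly off: $X$ is not toric, and what actually does the work is that on $\X_{\xi'}$ (after removing the strict transform of $X\times\{0\}$) the exceptional divisor coincides with the central fiber $(t=0)$ and hence is \emph{Cartier}; combined with the $\Q$-Cartierness of $K_{X\times\A^1}$ (from log terminality of $X$) this yields $\Q$-Cartierness of $K_{\X_{\xi'}}$, as in \cite[\S 2, Lemma 2.4]{OSS}.

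For part (iii), your ``assemble the two degenerations into a flat family over $\A^2$ and restrict to a general rational $\G_m$'' is the same move as the paper's ``act by $\lambda\cdot\mu^m$ for $m\gg1$'' (after \cite[3.1]{LWX}); but it silently requires knowing that the one-parameter subgroup $\mu$ (giving $X\rightsquigarrow W$) and the one-parameter subgroup $\lambda$ (giving $W\rightsquigarrow C$) \emph{commute} inside $\GL(l)$. The paper notes this explicitly: $\mu\subset T$ and $\lambda$ lies in the commutator of $T$, hence they commute. Without this observation the $\A^2$-family does not obviously exist. (The paper also establishes $\Q$-Gorensteinness of $\X_C\to\A^1$ in this step --- needed later in Proposition~\ref{QG.ind} --- by viewing $\X_C$ as a deformation of $\X\cup\mathcal{T}$; that is not stated in the Lemma but you may want to record it.)
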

Some parts of the following proof will be also used in later sections. 
\begin{proof}
The main idea of the following discussions are already contained in 
\cite{LX, Od24a, Od24c} but we recall and complete for the convenience. 
Take a homogeneous generator system $\overline{z}_i (i=1,\cdots,l)$ 
of 
${\rm gr}_{v_X}(\mathcal{O}_{X,x})$ of weights $w_i>0$ and 
lift them to $z_i\in \Gamma(\mathcal{O}_X)$. 
Note that ${\rm gr}_{v_X}(\mathcal{O}_{X,x})$ has a canonical 
action of the algebraic torus $N\otimes \G_m$ where 
$N$ is the dual of the groupification of the image of $v_X$. 
Taking enough 
$z_i$s we can and do assume this gives an embedding 
$X\hookrightarrow \A^l$ and this $\A^l$ 
is naturally acted by $T=N\otimes \G_m$ (see \cite[\S 2.3]{DSII}) 
and that $W$ is also embedded to $\A^l$ through those $\overline{z}_i$s. 
For \eqref{C.wtedblowup} and our later use, 
we can and do assume that $W\rightsquigarrow C=C_x(X)$ is also realized 
as a test configuration inside $\A^l\times \A^1$. 
Since the coordinates of $\A^l$ are $T$-homogeneous, 
there is a natural homomorphism $w\colon N\to \Z^l$, 
which is injective because of the effectivity of the $T$-action 
on $X$ (and hence on $\A^l$). It also 
extends to 
\begin{align}\label{def:w}
w\colon N_{\R}\hookrightarrow \R^l.
\end{align}

We take its 
universal Gr\"obner basis of the defining ideal of 
$X\subset \A^l$ 
as $\{F_j\}_{1\le j\le m}$ (\cite[\S 5.1]{BCRV}). 
Then, $\Gamma(\O_W)$ can be written as 
$\C[z_1,\cdots,z_l]/\langle \{{\rm in}_{w_i}(F_j) \}_j \rangle$, 
where ${\rm in}_{w_i}(F_j)$ means the 
initial term of $F_j$ with respect to the weights $w_i$s. 
We denote the weight vector $(w_1,\cdots,w_l)$ as $w(\xi)$ or 
simply as $\xi$ since it naturally corresponds to the 
Reeb vector field $\xi$ via the map $w$ (cf., Notation \ref{ntt1} and the item 
\eqref{def:w} above). 
Set some rational polyhedral cone $\sigma$ of $N_{\R}$ 
which contains $\xi$ so that 
$(w_1,\cdots,w_l)\in w(\sigma)\subset w(N_{\R})$ 
and consider the $T$-equivariant 
morphism $\A^l \times T 
 \to T$. Here the algebraic torus 
 $T$ acts on $\A^l$ and, hence naturally acts on $\A^l \times T$. Now we consider the subvariety $T(X \times \{1\}) \subset \A^l \times T$ and take its closure 
 $\overline{T(X \times \{1\})}$ 
 inside $U_\sigma\times \A^l$ and denote it by 
 $\X_{\sigma}$. By definition we have an inclusion map $\X_{\sigma} \to \A^l \times U_\sigma$. 

Now we prove \eqref{W.wtedblowup}. Take small enough $\sigma$ which still contains 
$\xi$ so that the ${\rm in}_{\xi'}(F_j)$s do not 
change. 
One can take a toric morphism $f_{\xi'}\colon \A^1\to U_\sigma$ 
with $\xi' \in N\cap \sigma$. 
Then the affine test configuration $\mathcal{X}_{\xi'}$ 
of $X$ has the central fiber $W$ 
(but with a $\C^*$-action which depends on $\xi'$). 
In particular, the restriction of 
$\X_{\sigma}\to U_{\sigma}$ 
to the toric boundary of $U_\sigma$ is a $W$-fiber bundle. 
Hence we obtain the claim of the first paragraph. 

For simplicity, we denote $\mathcal{X}_{\xi'}$ as $\mathcal{X}$ during this proof. 
Note that this is naturally the closure of 
$\overline{\{(t^{w'_1},\cdots,t^{w'_l})\cdot (X\times 1)\mid t\in \C^*\}}$ 
in $\A^l_{z_1,\cdots,z_l}\times \A^1_t$. 
Hence, it is an open affine subset of the weighted blow up of 
$X\times \A^1$ with the weights 
$w'_1,\cdots,w'_l,1$. This completes the proof of the first assertion of 
(ii) and it is some easier analogue to \cite[\S 2.4, Lemma 2.25]{Od24c}. 
The last statement of $\Q$-Gorensteinness of the family $\X_{\xi'}$ 
(i.e., $\Q$-Cartierness of $-K_{X\times (\A^1\setminus \{0\})}$) 
follows from the 
the above description as an 
affine weighted blow up of 
$X\times \A^1$ together with the Cartierness of the exceptional divisor 
(cf., similar arguments in \cite[\S 2, Lemma 2.4]{OSS}). 

To show \eqref{C.wtedblowup}, 
we use \eqref{W.wtedblowup}  which implies that 
for certain 
large enough embedding $X\hookrightarrow \A^l$, 
there is a one parameter subgroup 
$\mu\colon \G_m\to T\subset {\rm GL}(l)$ 
which induces the test configuration $\X\to \A^1$. 

On the other hand, recall from the arguments 
\cite[p.354]{DSII} using an analytic slice theorem 
(also later reproved 
algebraically by \cite{LWX}) that  
there is another one parameter subgroup $\lambda$ 
of the commutator of $T=N\otimes \G_m$ 
which induces a test configuration $\mathcal{T}$ of $W$ 
degenerating to $C=C_x(X)$. 

Now we glue these $\X$ and $\mathcal{T}$, 
as in \cite[3.1]{LWX} (also \cite[proof of 4.5]{grav}) 
in the sense that we consider the embedding 
$X\subset \A^l$ and act $\G_m$ by $\lambda \cdot \mu^m$ with 
$m\in \Z_{>0}$ 
as they commute in ${\rm GL}(l)$. 
As in the same arguments as {\it loc.cit}, 
it is a test configuration of $X$ degenerating to 
$C$ for $m\gg 1$ which we denote as $\X_C\to \P^1$. 
Since $\X_C\to \P^1$ is a deformation of 
$\mathcal{X}\cup \mathcal{T}\to \A^1\cup \A^1$ (cf., {\it op.cit}) 
which can be anti-pluri-canonically polarized by the 
$\Q$-Cartierness of $-K_{\X_{\xi'}}$ as proved in 
\eqref{W.wtedblowup}, it follows that $\X_C\to \A^1$ is again a 
$\Q$-Gorenstein family (cf., \cite[\S 2, Lemma 2.4]{OSS}). 
Moreover, it easily follows from the construction that 
this is again a scale up test configuration. 

\end{proof}

We often use this construction in the above proof, 
throughout the paper. Here are some other conventional remarks. 
\begin{itemize}
    \item
We denote the weighted blow up obtained in 
\eqref{W.wtedblowup} 
as $b\colon \overline{\X}\to X\times \A^1$ 
and its restriction to an open subset $\X$ as $b^o$, 
and the projection $\overline{X}\to\A^1$ 
(resp., $\X\to \A^1$) 
as $\overline{\pi}$ (resp., $\pi$). 
\item 
Also, if there is no fear of confusion, we sometimes (but rarely) 
identify $\xi$ with $w(\xi)\in \R^l$ and write the latter simply as $\xi$. 
So is the case for an approximation of $\xi'\in N\otimes \R$ and its image $w(\xi')\in \R^l$. 
\end{itemize}

\subsection{Real analytic or sequencial version}
Motivated by the materials of 
Theorem \ref{DS.maps} proven in \cite{DSII} 
and Lemma \ref{XtoW.C}, 
we introduce the following terminlogy to fit real analytic 
degenerations or degenerating sequences to the family 
$\mathcal{X}_{\sigma}\to U_{\sigma}$. 
The point is that, if $\xi$ is rational, we can consider the 
family 
$\X_{\xi}\to \A^1$ and regard it as whole $\X$ but in the case $r(\xi)>1$, 
we want at least some real analogue. 

\begin{ntt}\label{ntt2}
We somewhat follow \cite[\S 2]{Od24a} here. 
For each $\tau\in \R_{>0}$ and 
positive vector field $\xi\in N_{\R}$  
$\Lambda_{\tau}=\Lambda_{\tau}(\xi)\in 
{\rm GL}(l,\R)$ is the diagonal matrix 
$({\rm diag}(\tau^{w_1},\cdots,\tau^{w_l})).$ 
Then, as a subvariety of $\C^l$, we define $$X_\tau:=\Lambda_{\tau}^{-1}(X)\subset \C^l.$$

\end{ntt}

Take the neighborhood rational 
regular polyhedral cone $\sigma\ni \xi$ 
to apply the proof of Lemma \ref{XtoW.C}, 
with the generator of $\sigma\cap N$ as 
$\vec{v}^{(i)}=(w_1^{(i)},\cdots,w_{r(\xi)}^{(i)})$  
for $j=1,\cdots,r(\xi))$, and write 
$\xi=\sum_{1\le i\le r(\xi)}c_i \vec{v}^{(i)}$ 
with $c_i\in \R$. 

Then, we can identify 
$U_\sigma$ as $\C^{r(\xi)}$ by using $\vec{v}^{(j)}$s, 
and  fits $X_\tau\subset \C^l$ 
 into $\pi_\sigma\colon 
\mathcal{X}_{\sigma}\to U_{\sigma}$ 
by the identification 
\begin{align}\label{Xtau}
X_\tau=\pi_\sigma^{-1}
(\tau^{c_1},\cdots,\tau^{c_{r(\xi)}})\subset \C^l.
\end{align}

Recall that $X_{1/\sqrt{2}^{i}}$  is somewhat close to $\Phi_i(X)$ 
in Theorem \ref{DS.maps} for intermediate $i$s, 
but a priori different 
due to the possible gap between $E_i$ and ${\rm Id}$. The former converges to $W$, while the latter converges to $C=C_x(X)$ 
as Hausdorff convergence. 



\section{Scale up Poisson deformations}\label{sec:PD}

In this section, we shall make an algebraic  
study of Poisson deformations of conical symplectic varieties. In  particular, we focus on 
a certain type of Poisson deformations which we call a 
{\em scale up} Poisson deformation, that appears as a 
$\C^*$-equivariant isotrivial degeneration 
(so-called test configuration, after Mumford and Donaldson). 

Let $W$ be an algebraic symplectic variety. By definition $W^{\rm sm}$ admits an algebraic 
symplectic form $\sigma_W$, which we fix. 
Then $\sigma_W$ identifies 
the holomorphic tangent sheaf 
$\Theta_{W^{\rm sm}}$ with 
the sheaf of holomorphic $1$-form 
$\Omega^1_{W^{\rm sm}}$; hence $\wedge^2 \Theta_{W^{\rm sm}}$ with $\Omega^2_{W^{\rm sm}}$. We have a $2$-vector 
$\theta_W$ on $W^{\rm sm}$ corresponding to $\sigma_W$. This $2$-vector $\theta_W$ is called a {\it Poisson $2$-vector}. By using the 
Poisson $2$-vector, we define a  Poisson bracket $$\{\:, \:\}_{W^{\rm sm}} \colon \mathcal{O}_{W^{\rm sm}} \times \mathcal{O}_{W^{\rm sm}} \to 
\mathcal{O}_{W^{\rm sm}}, \:\:\: (f, g) \to \theta_W(df \wedge dg).$$ Note that the $d$-closedness of $\sigma_W$ is equivalent to the Jacobi 
identity of the bracket $\{\:, \:\}_{W^{\rm sm}}$. Since $W$ is normal, the bracket uniquely extends to a Poisson 
bracket $\{\:, \:\}_W$ on $W$. Conversely, if we are given a Poisson bracket on a normal variety $W$ which is non-degenerate 
on $W^{\rm sm}$ (that is, the corresponding Poisson $2$-vector $\theta_W$ on $W^{\rm sm}$ is non-degenerate), then $W^{\rm sm}$ admits 
a holomorphic symplectic 2-form $\sigma_W$.  

Let $f: \mathcal{X} \to S$ be a morphism of algebraic schemes over $\C$. If we are given an $\mathcal{O}_S$-linear Poisson bracket $\{\:, \:\}_{\mathcal{X}} : \mathcal{O}_{\mathcal X} \times \mathcal{O}_{\mathcal X} \to \mathcal{O}_{\mathcal X}$, then 
$(\mathcal{X}, \{\:, \:\}_{\mathcal{X}})$ is called a Poisson scheme over $S$. Let $0 \in S$ be a closed point and assume that 
$f$ is a flat surjective morphism. A Poisson scheme $(\mathcal{X}, \{\:, \:\}_{\mathcal X})$ over $S$ is called a 
{\it Poisson deformation} of $(W, \{\:, \:\}_W)$ if there is a Poisson isomorphism $$\phi\colon (W, \{\:, \:\}_W) \cong (f^{-1}(0), \{\:, \:\}_{\mathcal X}\vert_{f^{-1}(0)}).$$ More precisely, a Poisson deformation is a pair $(\mathcal{X}, \phi)$ of the Poisson scheme $\mathcal{X}$ over $S$ and the Poisson isomorphism $\phi$. Two Poisson deformations $(\mathcal{X}, \phi)$ and $(\mathcal{X}', \phi')$ of $W$ over the same base $S$ is called equivalent if there is a Poisson $S$-isomorphism $\Psi\colon (\mathcal{X}, \{\:, \:\}_{\mathcal X}) \cong (\mathcal{X}', \{\:, \:\}_{\mathcal{X}'})$ such that $\Psi \circ \phi = \phi'$.  
\vspace{0.2cm}

Now let us consider a conical symplectic variety $(W, \sigma_W)$ with the origin $0_W \in W$. Denote the weight ${\rm wt}(\sigma_W)$ as $l$ which is positive; in other words, $\{\:, \:\}_W$ has weight $-l$. 
\begin{defn}[Scale-up Poisson deformation]\label{def:SPD}
Let  
$f\colon (\mathcal{X}, \{\:, \:\}_{\mathcal X}) \to \A^1$ be a $\mathbf{C}^*$-equivariant Poisson deformation of $(W, \{\:, \:\}_W)$
such that $\mathcal{X}$ is affine and  
\begin{enumerate}[label=\arabic*)]\label{SUPD}
\item  $\A^1$ has a negative weight, i.e. there is a positive integer $w$ and $\G_m$ acts on $\A^1 = \mathrm{Spec}\: \C[t]$ 
so that 
$t \mapsto \lambda^{-w}t$  for $\lambda \in \G_m(\C)=\C^*$. 

\item  $\{\:, \:\}_{\mathcal X}$ has weight $-l$. 

\item  \label{SUPD.pos} There is a $\G_m$-invariant section $\Gamma \subset \mathcal{X}$ of $f$ 
such that $\Gamma \cap f^{-1}(0) = 0_W$ and every 
$\G_m$-orbit of $\mathcal{X}$ whose closure contains $0_W$ is $\Gamma -\{0_W\}$ or a $\G_m$-orbit in $f^{-1}(0)$.
\end{enumerate}
\vspace{0.2cm}

In this article, such a Poisson deformation is called a {\em scale-up Poisson deformation} after \cite[\S 2]{Od24b}, as 
the degeneration $f^{-1}(t)\rightsquigarrow f^{-1}(0)$ for $t\to 0$ 
is obtained by scaling up $\C^*$-action. 
\end{defn}
In the remainder, we restrict ourselves to a scale-up Poisson deformation of a conical symplectic variety. 

A typical example of a scale-up Poisson deformation is constructed as follows. 
Let us consider the trivial Poisson deformation ${\rm pr}_2: W \times \A^1 \to \A^1$ of $W$. We introduce a 
$\G_m$-action on $W \times \A^1$ by $(x, t) \to (\lambda\cdot x, \lambda^{-w}t)$, $\lambda \in \G_m$. 
Here $\cdot$ denotes the $\G_m$-action on $W$.        
Then ${\rm pr}_2$ is a $\G_m$-equivariant Poisson deformation of $W$ satisfying the conditions 1), 2) and 3). We can take 
the $\G_m$-invariant section $\Gamma$ of ${\rm pr}_2$ as an obvious choice 
$0_W \times \A^1 \subset W \times \A^1$.

Given an arbitrary $\mathcal{X}$ with the properties 1), 2) and 3), we compare $f\colon \mathcal{X} \to \A^1$ 
with ${\rm pr}_2\colon W \times \A^1 \to \A^1$. 
As the following example shows, we caution that 
they are {\it not} globally isomorphic in general. 

\begin{ex} 
Put $W := \mathrm{Spec}\: \C[x_1, x_2]$, 
$\sigma_W := dx_1 \wedge dx_2$, 
with the weights 
${\rm wt}(x_1) = {\rm wt}(x_2) = 1$ 
and ${\rm wt}(t) = -1$. Define $\mathcal{X} := \mathrm{Spec}\: \C[x_1, x_2, t, \frac{1}{x_1t - 1}]$ and regard it as a Zariski open subset 
of $W \times \A^1$. Then both $\mathcal{X}$ and $W \times \A^1$ are scale-up Poisson deformations of $W$, but 
they have different general fibers. 
\end{ex}

Nevertheless, the following formal local triviality theorem holds. 
\vspace{0.2cm}

\begin{Thm}\label{formallocaltriv}
For any scale-up Poisson deformation 
$f\colon \mathcal{X}\to \A^1$ 
of a conical symplectic variety $W$ (see Definition 
\ref{def:SPD}), 
 let $({\mathcal X})^{\hat{}}_{\Gamma}$ be the formal completion of $\mathcal{X}$ along $\Gamma$. 
 
Then, there is a (non-canonical) $\C^*$-equivariant 
isomorphism 
$$({\mathcal X})^{\hat{}}_{\Gamma} \cong (W \times 
\A^1)^{\hat{}}_{0_W \times \A^1}$$ as formal Poisson 
schemes over $\A^1$. Here the Poisson bracket of the right hand side is induced from the Poisson bracket $\{\:, \:\}_W$ on $W$ and the trivial Poisson bracket on $\A^1$.
\end{Thm}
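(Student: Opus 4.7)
The plan is to combine the first author's theory of universal formal Poisson deformations of conical symplectic varieties \cite{Nama,Namb,Namf} with a weight-matching argument exploiting the scale-up hypothesis. The crucial input is the following rigidity: for $(W,\sigma_W)$ conical symplectic with $\mathrm{wt}(\sigma_W) = l > 0$, the formal Poisson deformation functor of the germ $(W, 0_W)^{\hat{}}$ is pro-representable by $\mathrm{Spf}\,R$, where $R \cong \C[[u_1,\ldots,u_k]]$ is a power series ring on which the canonical extension of the $\G_m$-action puts strictly positive weights on each variable $u_i$.

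Write $\mathcal{X} = \mathrm{Spec}\,A$ and let $I \subset A$ be the ideal of $\Gamma$, so that $\widehat{A} := \varprojlim_n A/I^n$ is the coordinate ring of $(\mathcal{X})^{\hat{}}_\Gamma$. Base-changing along $\C[t] \hookrightarrow \C[[t]]$ exhibits $\widehat{A}\,\widehat{\otimes}_{\C[t]}\,\C[[t]]$ as a formal Poisson deformation of $(W, 0_W)^{\hat{}}$ over $\mathrm{Spf}\,\C[[t]]$. By pro-representability this deformation is classified by a continuous $\C$-algebra homomorphism $\phi \colon R \to \C[[t]]$, which is automatically $\G_m$-equivariant because every step of the universal construction is. Weight comparison then forces triviality: the generators $u_i$ of $\mathfrak{m}_R$ carry strictly positive $\G_m$-weights, while every element of $\C[[t]]$ has weight $\leq 0$ (as $\mathrm{wt}(t) = -w < 0$); hence $\phi(\mathfrak{m}_R) = 0$, so $\phi$ is the augmentation. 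The classified deformation is therefore trivial, giving a $\G_m$-equivariant Poisson $\C[[t]]$-algebra isomorphism between $\widehat{A}\,\widehat{\otimes}_{\C[t]}\,\C[[t]]$ and the analogous completion of $(W \times \A^1)^{\hat{}}_{0_W \times \A^1}$ base-changed to $\mathrm{Spf}\,\C[[t]]$.

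To descend this from $\mathrm{Spf}\,\C[[t]]$ back to $\A^1$, I would exploit $\G_m$-equivariance once more. Both sides split into $\G_m$-weight summands over the graded ring $\C[t]$ (with $\mathrm{wt}(t) = -w$); the attractivity condition \eqref{SUPD.pos}, combined with the conical (non-negatively graded, finite-dimensional in each weight) structure of $W$, forces each $\G_m$-weight summand of either side to be finite-dimensional over $\C$. A $\G_m$-equivariant $\C[[t]]$-linear isomorphism between two such graded objects is determined by its action on the individual (finite-dimensional) weight summands and therefore descends to a $\C[t]$-linear isomorphism, yielding the required $\G_m$-equivariant isomorphism of formal Poisson schemes over $\A^1$.

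The principal obstacle is establishing the positivity of the $\G_m$-weights on the universal Poisson deformation base $R$; this encodes the deep algebraic rigidity of conical symplectic varieties proved in \cite{Nama,Namb,Namf}. Granted this input, the rest is a concise weight-matching calculation and a standard (if slightly delicate) equivariant descent from $\C[[t]]$ to $\C[t]$.
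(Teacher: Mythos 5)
Your proposal starts along the same route the paper takes: the weight argument — positive $\G_m$-weights on the base of the universal Poisson deformation versus the strictly negative weight of $t$ — is exactly the content of the paper's Lemma \ref{Lem3}, and it correctly establishes a $\G_m$-equivariant isomorphism of the \emph{$t$-adic} completions $\{\mathcal{X}_n\}$ and $\{W \times S_n\}$. (Your choice to phrase this through the formal germ $(W,0_W)^{\hat{}}$ rather than the affine variety $W$ is a cosmetic difference; the two coincide for conical symplectic $W$, though the paper sensibly works with the affine one directly, which is what \cite{Namb} pro-represents.)

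The genuine gap is in the descent from $\mathrm{Spf}\,\C[[t]]$ back to $\A^1$, and it occurs at precisely the place where the paper has to work hardest. Two problems: first, the finite-dimensionality you invoke is false. The completion $({\mathcal X})^{\hat{}}_{\Gamma}$ is taken $I$-adically, where $I$ is the ideal of $\Gamma$, and the resulting ring contains genuine formal power series in the fiber directions. Already in the trivial example $\mathcal{X} = W \times \A^1$ with $W = \A^n$ one has $\hat{\mathcal{R}} = \C[t][[x_1,\ldots,x_n]]$, whose weight-$m$ piece contains all monomials $x_1^{a_1}\cdots x_n^{a_n}t^b$ with $\sum a_i w_i - bw = m$ and $b \ge 0$ — infinitely many of them — and arbitrary formal series in these; each weight summand is infinite-dimensional over $\C$. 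Second, even granting a decomposition into weight pieces, it does not follow that a $\G_m$-equivariant $\C[[t]]$-isomorphism of the $t$-adic completions descends to a $\C[t]$-isomorphism of the $I$-adic completions: the $t$-adic and $I$-adic topologies are different, and the statement to be proved concerns the latter. Knowing that $\hat{R} \cong \hat{R}_{W\times\A^1}$ tells you the formal neighborhoods of the central fibers agree; it does not by itself tell you the formal neighborhoods of $\Gamma$ (or of $\Gamma\cup W$) agree. This is exactly what the paper's Lemmas \ref{Lem1} and \ref{Lem2} supply: they characterize the subring $R' \subset \hat{R}$ generated by $\G_m$-semi-invariants as the image of $\C[[\mathcal{Y}]]\otimes_{\C[\mathcal{Y}]}\mathcal{R}$ for the GIT quotient $\mathcal{Y} = \mathcal{X}//\G_m$, and then show that the $I'$-adic completion of $R'$ recovers $\hat{\mathcal{R}}$; this uses the attractivity condition \eqref{SUPD.pos} on $\Gamma$ in an essential way (in the Claim establishing $IR' = I'$, and again when identifying $I\hat{R}$ with $I_{W\times\A^1}\hat{R}_{W\times\A^1}$ via the uniqueness of the non-central $\G_m$-orbit through $0$). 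None of that machinery is recoverable from the sketch of "weight summands determine the isomorphism," so the final descent step as written does not go through.
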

\vspace{0.2cm}

We actually prove a stronger Theorem \ref{formallocaltriv2}. 
By the isomorphism $f\vert_{\Gamma}\colon \Gamma \to \A^1$, we identify a (closed) point $s \in \A^1$ with a 
point of  $\Gamma$, which we denote by $\Gamma_s$. We put  $\mathcal{X}_s := f^{-1}(s)$.   
Note that $\Gamma_s \in \mathcal{X}_s$. Then we can consider the 
formal completion $(\mathcal{X}_s)^{\hat{}}_{\Gamma_s}$ of $\mathcal{X}_s$ at $\Gamma_s$. \vspace{0.2cm}

\begin{Cor}\label{Cor:noscaleup}
 For any $s \in \A^1$, we have an 
 isomorphism $$(\mathcal{X}_s)^{\hat{}}_{\Gamma_s} \cong (W)^{\hat{}}_{0_W}$$ 
of formal Poisson schemes. 
\end{Cor}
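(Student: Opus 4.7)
The plan is to deduce Corollary \ref{Cor:noscaleup} as an immediate fiberwise consequence of Theorem \ref{formallocaltriv}. First, apply Theorem \ref{formallocaltriv} to obtain a $\G_m$-equivariant isomorphism of formal Poisson schemes over $\A^1$,
$$\Psi\colon (\mathcal{X})^{\hat{}}_{\Gamma} \xrightarrow{\sim} (W\times\A^1)^{\hat{}}_{0_W\times\A^1}.$$
Next, pull back along the closed immersion $\{s\}\hookrightarrow \A^1$: since formal completion along $\Gamma$ (respectively $0_W\times \A^1$) commutes with base change along the section $\Gamma\to \A^1$, the source restricts to $(\mathcal{X}_s)^{\hat{}}_{\Gamma_s}$ and the target to $(W)^{\hat{}}_{0_W}$. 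The induced map remains Poisson because the Poisson bracket on $\A^1$ is trivial, so the bracket on $W\times\A^1$ reduces fiberwise to the given bracket $\{\,,\,\}_W$ on $W$.

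For the $\G_m$-equivariance, when $s=0$ the $\G_m$-action fixes $0\in\A^1$ and preserves both central fibers, so the restriction of $\Psi$ is equivariant by construction. When $s\neq 0$ the given $\G_m$-action on $\mathcal{X}$ moves $\mathcal{X}_s$ to $\mathcal{X}_{\lambda^{-w}s}$, so only the stabilizer $\mu_w$ of $s$ preserves $(\mathcal{X}_s)^{\hat{}}_{\Gamma_s}$ in the naive sense. To obtain a genuine $\G_m$-action, one transports the $\G_m$-action on $(W)^{\hat{}}_{0_W}$ through $\Psi_s$; this is consistent because the $\G_m$-equivariance of $\Psi$ over $\A^1$ supplies a compatible family of identifications $(\mathcal{X}_s)^{\hat{}}_{\Gamma_s} \simeq (\mathcal{X}_{\lambda^{-w}s})^{\hat{}}_{\Gamma_{\lambda^{-w}s}}$ matching multiplication by $\lambda$ on the $W$-factor of the target.

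Given Theorem \ref{formallocaltriv}, the corollary is essentially formal. The one mildly delicate point, which I view as the main item requiring care, is the interpretation of ``$\G_m$-equivariant'' at $s\neq 0$: the natural $\G_m$-action on $(\mathcal{X}_s)^{\hat{}}_{\Gamma_s}$ is the one transported from $(W)^{\hat{}}_{0_W}$ via $\Psi_s$, rather than the restriction of the ambient $\G_m$-action on $\mathcal{X}$ (which does not preserve the fiber). No additional machinery beyond Theorem \ref{formallocaltriv} is required.
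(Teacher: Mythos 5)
Your proof follows essentially the same route as the paper: restrict the isomorphism from Theorem \ref{formallocaltriv} to the fiber over $s$. The paper's proof of Corollary \ref{Cor:noscaleup} is two lines — it simply identifies $(\mathcal{X}_s)^{\hat{}}_{\Gamma_s}$ with the fiber of $(\mathcal{X})^{\hat{}}_{\Gamma} \to \Gamma \cong \A^1$ over $s$ and $(W)^{\hat{}}_{0_W}$ with the fiber of $(W\times\A^1)^{\hat{}}_{0_W\times\A^1}\to\A^1$ over $s$, and concludes. What you add beyond the paper is a careful unpacking of what ``$\G_m$-equivariant'' means at $s\neq 0$: you correctly observe that the ambient $\G_m$-action does not preserve the fiber $\mathcal{X}_s$ (it moves it to $\mathcal{X}_{\lambda^{-w}s}$, with stabilizer $\mu_w$), so the $\G_m$-action being claimed on $(\mathcal{X}_s)^{\hat{}}_{\Gamma_s}$ is the one transported from the conical $\G_m$-action on $(W)^{\hat{}}_{0_W}$ through the isomorphism, and that the equivariance of $\Psi$ over $\A^1$ makes these transported actions consistent across fibers. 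This point is left implicit in the paper's terse proof, so your elaboration is a genuine clarification rather than a divergence, and it is correct.
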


 \begin{proof}[proof of Corllary \ref{Cor:noscaleup} (assuming Theorem \ref{formallocaltriv})]$(\mathcal{X}_s)^{\hat{}}_{\Gamma_s}$ is the fiber of the morphism $({\mathcal X})^{\hat{}}_{\Gamma} \to \Gamma$ over 
$\Gamma_s \in \Gamma$. $(W)^{\hat{}}_{0_W}$ is the fiber of the morphism $(W \times \A^1)^{\hat{}}_{0_W \times \A^1} 
\to \A^1$ over $s \in \A^1$. Therefore the statement follows from the isomorphism in Theorem \ref{formallocaltriv}. 
\end{proof}

Let us consider the commutative diagram 
\begin{equation} 
\begin{CD} 
(\mathcal{X})^{\hat{}}_{\Gamma} @>>> (\mathcal{X})^{\hat{}}_{\Gamma \cup W} @<<< (\mathcal{X})^{\hat{}}_W \\  
@VVV @VVV  @VVV \\  
\A^1 @>{id}>> \A^1 @<<< \mathrm{Spf}\: \mathbf{C}[[t]]
\end{CD} 
\end{equation}   

The actual stronger theorem than Theorem \ref{formallocaltriv}, which we shall prove in the main 
arguments of this section, is the following: 

\begin{Thm}\label{formallocaltriv2}
For any scale-up Poisson deformation 
$f\colon \mathcal{X}\to \A^1$ 
of a conical symplectic variety $W$ (see Definition 
\ref{def:SPD}), 
there is a (non-canonical) $\C^*$-equivariant isomorphism 
\begin{align}
(\mathcal{X})^{\hat{}}_{\Gamma \cup W} \cong 
(W \times \A^1)^{\hat{}}_{(0_W \times \A^1)\: \cup \: W}
\end{align}
of formal Poisson schemes. 
\end{Thm}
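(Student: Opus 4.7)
The plan is to combine Namikawa's universal formal Poisson deformation theory for conical symplectic varieties \cite{Nama, Namb, Namf} with a $\G_m$-weight argument that exploits the opposite signs of weights in the scale-up setup: $\sigma_W$ has weight $l>0$ on $W$, but the base coordinate $t\in\mathcal{O}(\A^1)$ has weight $-w<0$.

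\textbf{Step 1: Formal triviality along $W$.} I would first prove the weaker statement that $(\mathcal{X})^{\hat{}}_W \cong (W \times \A^1)^{\hat{}}_W$ as $\G_m$-equivariant formal Poisson schemes over $\mathrm{Spf}\,\C[[t]]$. The formal completion $(\mathcal{X})^{\hat{}}_W$ is a $\G_m$-equivariant formal Poisson deformation of $W$ over $\mathrm{Spf}\,\C[[t]]$. By Namikawa's theorem, the formal Poisson deformation functor of the conical symplectic variety $W$ is prorepresented by a formal power series ring $\C[[s_1,\ldots,s_N]]$ on which $\G_m$ acts with strictly positive weights (determined by $\mathrm{wt}(\sigma_W)=l>0$). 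Hence our deformation is classified by a $\G_m$-equivariant local homomorphism $\varphi\colon \C[[s_1,\ldots,s_N]]\to\C[[t]]$. Since $t$ has weight $-w<0$, the ring $\C[[t]]$ has no nonzero elements of positive weight, so $\varphi(s_i)=0$ for every $i$, the classifying map is trivial, and the desired $\G_m$-equivariant formal isomorphism along $W$ follows.

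\textbf{Step 2: Extension from $W$ to $\Gamma\cup W$.} I would then upgrade the along-$W$ triviality to an along-$(\Gamma\cup W)$ triviality using the $\G_m$-equivariance and the scale-up condition (\ref{SUPD.pos}). The defining ideal $I_{\Gamma\cup W}=I_\Gamma \cap I_W$ in $\mathcal{O}(\mathcal{X})$ is $\G_m$-stable, and the $\G_m$-graded coordinate ring $\mathcal{O}(\mathcal{X})=\bigoplus_k \mathcal{O}(\mathcal{X})_k$ reconstructs its $(I_{\Gamma\cup W})$-adic completion from its $(I_W)$-adic completion together with the weight grading: each finite-dimensional weight piece $\mathcal{O}(\mathcal{X})_k$ injects into $(\mathcal{X})^{\hat{}}_W$ (since $\mathcal{X}$ is affine and $\G_m$ acts rationally), and the $(\Gamma\cup W)$-completion is the inverse limit along a filtration compatible with the weight decomposition. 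Applying this in parallel to $\mathcal{X}$ and to $W\times\A^1$, Step 1 lifts weight-by-weight to the desired isomorphism of formal Poisson schemes.

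\textbf{Main obstacle.} The principal technical difficulty lies in Step 2: one must carefully compare the $(I_W)$-adic and $(I_{\Gamma\cup W})$-adic topologies on $\mathcal{O}(\mathcal{X})$ under the $\G_m$-equivariance constraint, and verify that the scale-up condition (\ref{SUPD.pos})---that every $\G_m$-orbit whose closure meets $0_W$ lies in $\Gamma\cup W$---is strong enough to force these two topologies to interact correctly at the level of graded pieces. Equivalently, one must ensure that no ``extraneous'' $\G_m$-orbits in $\mathcal{X}\setminus(\Gamma\cup W)$ contribute to the $(\Gamma\cup W)$-completion, so that the formal triviality of Step 1 propagates coherently. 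Making this precise will likely require an explicit description of the $\G_m$-equivariant trivialization in Step 1 (e.g., in terms of generators and relations for both sides) and a direct verification of its compatibility with the two adic filtrations.
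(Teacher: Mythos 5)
Your two-step structure matches the paper's proof almost exactly. Step 1 is precisely the paper's Lemma \ref{Lem3}: the universal Poisson deformation of $W$ has a base $\mathbb{A}^d$ with strictly positive $\G_m$-weights (by \cite{Namb} together with the Rim-type equivariant universality), while $S_n = \mathrm{Spec}\,\C[t]/(t^{n+1})$ carries weight $-w < 0$, so the $\G_m$-equivariant classifying maps $\varphi_n\colon S_n \to \A^d$ are all constant and $\{\mathcal{X}_n\} \cong \{W \times S_n\}$. This yields $\hat{R} \cong \hat{R}_{W\times\A^1}$ $\G_m$-equivariantly, which is exactly your ``triviality along $W$.''

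For Step 2, your sketch correctly isolates the two ingredients — the rational $\G_m$-action on the affine coordinate ring, and the scale-up condition that $\Gamma \setminus \{0_W\}$ is the \emph{unique} $\G_m$-orbit whose closure hits $0_W$ and does not lie in $W$ — but it stops short of the precise mechanism by which these recover the $(\Gamma \cup W)$-completion from the $W$-completion. The paper does this by introducing the subring $R' \subset \hat{R}$ generated by the $\G_m$-semi-invariant elements of $\hat{R}$, characterizing it as the image of $\C[[\mathcal{Y}]] \otimes_{\C[\mathcal{Y}]} \mathcal{R} \to \hat{R}$ where $\mathcal{Y} = \mathcal{X}/\!/\G_m$ (Lemma \ref{Lem1}), and then proving $R = \hat{\mathcal{R}}$, i.e.\ that the $I'$-adic completion of $R'$ is exactly $H^0(\mathcal{O}_{(\mathcal{X})^{\hat{}}_{\Gamma\cup W}})$ (Lemma \ref{Lem2}). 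The point you should supply to close your Step 2 is that the ideal $I\hat{R}$ is \emph{intrinsically} recoverable from the $\G_m$-equivariant $\C[[t]]$-algebra $\hat{R}$: the unique-orbit property singles out a canonical surjection $p_\Gamma\colon \hat{R} \to \C[[t]]$, and $I\hat{R} = (t)\cap\ker(p_\Gamma)$. Once $R'$, $I'$, and hence $R = \hat{\mathcal{R}}$ are seen to depend only on the equivariant structure of $\hat{R}$, the Step 1 isomorphism propagates verbatim to $\Gamma \cup W$. Your vague appeal to ``a filtration compatible with the weight decomposition'' is pointing at $R'$ and $I'$, but without spelling them out the argument doesn't yet go through — in particular, the $(\Gamma \cup W)$-completion is \emph{not} an inverse limit over a weight filtration of $\hat{R}$; it is the $I'$-adic completion of the semi-invariant subring, and getting the ideal $I'$ right is exactly where condition \eqref{SUPD.pos} is used.
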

Clearly, if this were proved, we get an isomorphism 
$$(\mathcal{X})^{\hat{}}_{\Gamma} \cong 
(W \times \A^1)^{\hat{}}_{0_W \times \A^1},$$
i.e., Theorem \ref{formal.isom} holds. 
To prove Theorem \ref{formallocaltriv2}, 
we define $\mathcal{R}$ to be the coordinate ring 
$\Gamma(\mathcal{X},\mathcal{O}_{\mathcal{X}})$
of $\mathcal{X}$ and prepare 
definitions of its two completions 
$\hat{\mathcal R}$, $\hat{R}$ 
as well as their related rings. 

\begin{defn}[Two completions of $\mathcal{R}$]
\begin{enumerate}
\item 
We let $I \subset \mathcal{R}$ be the defining ideal of $\Gamma \cup  W \: \subset \: \mathcal{X}$ 
and we write the $I$-adic completion as 
\begin{align*}
\hat{\mathcal R} := \varprojlim\: \mathcal{R}/I^n.
\end{align*}

\item 
On the other hand, 
we define $$\hat{R} := \varprojlim \mathcal{R}/(t^{n+1}).$$ 
(A subring $R$ will be defined shortly in Definition \ref{R'R}  
\eqref{R'Rb}). 

\end{enumerate}
\end{defn}

For the latter, 
if we put $S_n := \mathrm{Spec}\: \C [t]/(t^{n+1})$ and define $\mathcal{X}_n := \mathcal{X} \times_{\A^1}S_n$, note that $\mathcal{R}/(t^{n+1})$ is 
the coordinate ring of $\mathcal{X}_n$. 
From the above definitions, there is a natural homomorphism 
$$\hat{\mathcal R} \to \hat{R}.$$ 
Moreover, since $\hat{R}$ is the completion of $\hat{\mathcal R}$ by the ideal $t\hat{\mathcal R}$, the map 
$\hat{\mathcal R} \to \hat{R}$ is an inclusion by the Krull's intersection theorem. \vspace{0.2cm}

By the above definition of $\hat{R}$, $\G_m$ acts on it. 
Using the fact, now we define a few more rings. 
\begin{defn}\label{R'R}
\begin{enumerate}[label=(\alph*)]
    \item 

Let $R' \subset \hat{R}$ be the $\C$-subalgebra generated by 
all $\mathbb{G}_m$-semi-invariant elements of $\hat{R}$. Put $I' := I\hat{R}  \cap R'$. 

\item \label{R'Rb}
We define $R$ to be the $I'$-adic  completion of $R'$. 
\end{enumerate}    
\end{defn}

The ring  $R'$ is characterized as following Lemma \ref{Lem1}. 
To prepare the statements, let 
$$\pi\colon \mathcal{X} \to \mathcal{Y} := \mathcal{X}//\G_m$$ be the GIT quotient map and let 
$\C[\mathcal{Y}]$ be the coordinate ring 
$\Gamma(\mathcal{Y},\mathcal{O}_{\mathcal{Y}})$  
of $\mathcal{Y}$. We take the completion $\C[[\mathcal{Y}]]$ of 
$\C[\mathcal{Y}]$ by the maximal ideal 
$\mathfrak{m}_{\mathcal{Y},\pi(0)}$ corresponding to $\pi (0)$. 
Then, the following holds. 

\begin{Lem}\label{Lem1}
The ring $R'$ coincides with the image of the natural map 
$$\C[[\mathcal{Y}]] \otimes_{\C[\mathcal{Y}]} \mathcal{R} \to \hat{R}.$$
\end{Lem}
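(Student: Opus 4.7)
The plan is to decompose both sides of the asserted equality into $\G_m$-weight components and compare them directly. First, I would verify the easy inclusion: every element of $\C[[\mathcal{Y}]]$ sits inside $\hat{R}$ as a $\G_m$-invariant element (it is a limit in the $t$-adic topology of invariants in $A := \mathcal{R}^{\G_m}$), while every element of $\mathcal{R}$ decomposes into finitely many $\G_m$-semi-invariants by the weight decomposition. Hence any product $Q \cdot P$ with $Q \in \C[[\mathcal{Y}]]$ and $P \in \mathcal{R}_d$ is a weight-$d$ semi-invariant of $\hat{R}$, so the image of $\C[[\mathcal{Y}]] \otimes_A \mathcal{R} \to \hat{R}$ is contained in $R'$.

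For the reverse inclusion, I would exploit the reductivity of $\G_m$. Each ideal $(t^{N+1}) \subset \mathcal{R}$ is $\G_m$-stable, so $\mathcal{R}/(t^{N+1})$ is a rational $\G_m$-module and splits as a direct sum of its weight pieces $\mathcal{R}_d / t^{N+1}\mathcal{R}_{d+(N+1)w}$. Passing to the inverse limit identifies the weight-$d$ semi-invariants of $\hat{R}$ with $\hat{R}_d := \varprojlim_N \mathcal{R}_d / t^{N+1}\mathcal{R}_{d+(N+1)w}$, and $R' = \bigoplus_d \hat{R}_d$ as an algebraic direct sum inside $\hat{R}$. In parallel, the standard finite generation of each isotypic component $\mathcal{R}_d$ as an $A$-module (a consequence of Hilbert's theorem on reductive invariants) ensures that $\mathcal{R}_d \otimes_A \C[[\mathcal{Y}]]$ equals the $\mathfrak{m}$-adic completion $\varprojlim_N \mathcal{R}_d / \mathfrak{m}^N \mathcal{R}_d$.

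The crucial algebraic input is condition \ref{SUPD.pos} of the scale-up hypothesis: $\pi^{-1}(\pi(0_W)) = \Gamma \cup W$. Since $\Gamma$ consists of $\G_m$-fixed points which all collapse to $\pi(0_W)$ under $\pi$, and every $\G_m$-orbit closure in $W$ contains $0_W$, an invariant function vanishes at $\pi(0_W)$ if and only if it vanishes on $W$. This gives the identification of ideals $\mathfrak{m} = (t) \cap A = t\mathcal{R}_w$ in $A$, and one cofinality direction between the $\mathfrak{m}$-adic and $t$-adic filtrations on $\mathcal{R}_d$ follows at once, namely $\mathfrak{m}^N \mathcal{R}_d = t^N (\mathcal{R}_w)^N \mathcal{R}_d \subset t^N \mathcal{R}_{d+Nw}$.

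The main obstacle will be the reverse cofinality: showing that for each $M$ there exists $N$ with $t^N \mathcal{R}_{d+Nw} \subset \mathfrak{m}^M \mathcal{R}_d$. I would attack this by analyzing the generators of the graded $A$-algebra $\bigoplus_{k \geq 0} \mathcal{R}_{kw}$ (which is of finite type over $A$ since $\mathcal{R}$ is of finite type over $\C$): each invariant of the form $t^N g$ with $g \in \mathcal{R}_{Nw}$ can be rewritten, after factoring $g$ into the fixed set of algebra generators, as a product of $\mathfrak{m}$-generators $tp_i \in t\mathcal{R}_w$ together with a bounded number of ``left-over'' invariants, yielding a linear lower bound for the $\mathfrak{m}$-adic order in terms of $N$. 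Once this cofinality is in hand, the two inverse systems are equivalent and one obtains $\hat{R}_d \cong \mathcal{R}_d \otimes_A \C[[\mathcal{Y}]]$; summing over $d$ yields $R' = \bigoplus_d \mathcal{R}_d \otimes_A \C[[\mathcal{Y}]]$, which is precisely the image of $\C[[\mathcal{Y}]] \otimes_A \mathcal{R} \to \hat{R}$.
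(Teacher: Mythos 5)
Your route is genuinely different from the paper's. The paper works concretely with a topological generating set $x_1,\dots,x_n,t$ of $\hat R$ and proves two direct combinatorial facts: any $\G_m$-invariant power series in the $x_i$ and $t$ is a series in invariant monomials (hence lies in the image of $\C[[\mathcal Y]]$), and any semi-invariant $g$ of weight $m$ is, after explicit exponent bounds $C_i,D$, a finite sum of bounded monomials times invariants --- the point being that if some $a_i\ge C_i$ or $b\ge D$ then the monomial $x_1^{a_1}\cdots x_n^{a_n}t^b$ is divisible by one of the obvious invariant monomials $x_i^w t^{w_i}$. You instead decompose $R'=\bigoplus_d\hat R_d$ by weight, invoke Hilbert--Nagata finiteness of each $\mathcal R_d$ over $A$ to identify $\mathcal R_d\otimes_A\C[[\mathcal Y]]$ with the $\mathfrak m$-adic completion, and reduce everything to a cofinality between the filtrations $\{\mathfrak m^N\mathcal R_d\}$ and $\{t^N\mathcal R_{d+Nw}\}$. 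This is a cleaner, more structural framing; the paper's argument stays elementary and avoids appealing to finiteness of isotypic components.

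The cofinality step is where you have a real gap. Your sketch only treats $d=0$, factoring $g\in\mathcal R_{Nw}$ through algebra generators of $S:=\bigoplus_{k\ge 0}\mathcal R_{kw}$, whereas the assertion you need is $t^N\mathcal R_{d+Nw}\subset\mathfrak m^M\mathcal R_d$ for \emph{every} $d$, and $\mathcal R_{d+Nw}$ is not the product $\mathcal R_d\cdot\mathcal R_{Nw}$. To close it you also need that $M_d:=\bigoplus_{k\ge 0}\mathcal R_{d+kw}$ is a finitely generated graded $S$-module (one more application of Hilbert--Nagata, e.g.\ by taking the weight-$d$ part of $\mathcal R[s]$ with ${\rm wt}(s)=-w$); then writing $g=\sum_i r_i m_i$ with module generators $m_i\in\mathcal R_{d+l_iw}$, $l_i\le L$, and $r_i\in\mathcal R_{(N-l_i)w}$ gives $t^Ng=\sum_i(t^{N-l_i}r_i)(t^{l_i}m_i)$ with $t^{l_i}m_i\in\mathcal R_d$ and $t^{N-l_i}r_i\in\mathfrak m^{\lceil(N-L)/K\rceil}$, where $K$ bounds the weights of the algebra generators of $S$ --- which is the linear bound you asserted. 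One small inaccuracy: $\Gamma$ is \emph{not} a set of $\G_m$-fixed points (condition 3) of the definition says $\Gamma\setminus\{0_W\}$ is a single $\G_m$-orbit), though it does still map to $\pi(0_W)$ because that orbit's closure contains $0_W$; so your identification $\mathfrak m=(t)\cap A=t\mathcal R_w$ survives, but the reason given needs adjusting.
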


\begin{proof} Since any element of 
$\C[[\mathcal{Y}]]$ is $\G_m$-invariant and 
$\mathcal{R}$ is generated by $\G_m$-semi-invariant elements as 
a $\C$-algebra, 
it is clear that the image is contained in $R'$. 
Thus, it suffices to prove that $R'$ is contained in the image. 
We first show that the image of the map $\C[[\mathcal{Y}]] \to \hat{R}$ coincides with $\hat{R}^{\G_m}$, 
the $\G_m$-invariant subring 
of $\hat{R}$. 
Let $x_1, \cdots,  x_n$ be homogeneous elements of $\mathcal{R}$ which gives minimal generators of the $\C$-algebra 
$\C[W] = \mathcal{R}/t\mathcal{R}$, 
the coordinate ring of $W$. 
By assumption, the weights $w_i := {\rm wt}(x_i)$ are all positive integers. 
An element $g \in \hat{R}$ is written (not uniquely) as $$g = \sum_{b \geq 0}f_b(x_1, \cdots , x_n)t^b$$ with polynomials $f_b$. 
When $g$ is a $\G_m$-invariant element of $\hat{R}$, we have ${\rm wt}(f_b(x_1, \cdots, x_n)) = bw $ for each $b$. 
Recall that $w$ is 
minus the weight of $t$. 
Then each monomial factor of $g$ has a form $(\mathrm{const})\cdot x_1^{a_1}\cdot\cdot\cdot x_n^{a_n}t^b$ with 
$a_1w_1 + \cdots  + a_nw_n = bw$. This monomial is an element of $\C[\mathcal{Y}]$; hence $g$ comes from $\C[[\mathcal{Y}]]$. 
        
Consider a $\G_m$ semi-invariant element $g$ of $\hat{R}$ 
with weight $m$. We claim that there are positive constants $C_1,\cdots, C_n, D$ depending only on  
$w_1, \cdots, w_n, w$ and $m$ such that $g$ can be written as 
$$g = \sum_{0 \leq a_1 < C_1,  \cdots ,\:\: 0 \leq a_n < C_n,\:\:  0 \leq b < D, \:\: \sum a_iw_i - bw = m}x_1^{a_1}\cdot\cdot\cdot x_n^{a_n}t^b\cdot h_{\vec{a},b}, 
$$
with each $h_{\vec{a},b}\in \hat{R}^{\G_m}$. 
If the claim holds, then we see that $g$ is in the image of the map $\C [[\mathcal{Y}]]\otimes_{\C[\mathcal{Y}]}\mathcal{R} 
\to \hat{R}$ because $x_1^{a_1}\cdot\cdot\cdot x_n^{a_n}t^b \in \mathcal{R}$ and the invariant elements 
$h_{\vec{a},b}$
come from $\C[[\mathcal{Y}]]$. 

To prove the claim, we may assume that $g$ can be written as a monomial $x_1^{a_1}\cdot\cdot\cdot x^{a_n}t^b$ with $a_1w_1 + \cdots  + a_nw_n -bw = m$. We define 
\begin{align}
C_i &:= \mathrm{max}\bigl\{w, w + \frac{m}{w_i}\bigr\}\:\: (i = 1, \cdots , n) \text{ and } \\
D &:= \mathrm{max}\bigl\{w_1 + \cdots  + w_n - \frac{m}{w}, \: w_1, \cdots , w_n\bigr\}.
\end{align}
If $a_i \geq C_i$ for some $i$, then we have
$$b = \frac{a_1w_1 + \cdots  + a_nw_n -m}{w} \geq w_i.$$ Since $a_i \geq w$, the monomial $x_1^{a_1}\cdot\cdot\cdot x^{a_n}t^b$ is divisible by the invariant monomial 
$x_i^wt^{w_i}$.  On the other hand, if $b \geq D$, then $a_{i_0} \geq w$ for some $i_0$. In fact, suppose to the contrary that 
$a_i < w$ for all $i$. Then $$bw + m = a_1w_1 + \cdots  + a_nw_n < w(w_1 + \cdots  + w_n),$$ which contradicts that 
$$b \geq w_1 + \cdots  + w_n - \frac{m}{w}.$$ 
Now we have $a_{i_0} \geq w$ and $b \geq w_{i_0}$. Then the monomial $x_1^{a_1}\cdot\cdot\cdot x^{a_n}t^b$ is divided by the 
invariant monomial $x_{i_0}^wt^{w_{i_0}}$.  This shows the claim.  \end{proof}


\vspace{0.2cm}


We can also characterize $R$ as follows. 

\begin{Lem}\label{Lem2}  
As subrings of $\hat{R}$, we have $R = \hat{\mathcal R}$. In other words, 
$$R = H^0(\mathcal{O}_{(\mathcal{X})^{\hat{}}_{\Gamma \cup W}}).$$    
\end{Lem}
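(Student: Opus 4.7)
The plan is to realize both $R$ and $\hat{\mathcal{R}}$ as subrings of $\hat{R}$ via the chain $\mathcal{R}\subset R'\subset \hat{\mathcal{R}}\subset \hat{R}$, show that $R'$ is dense in $\hat{\mathcal{R}}$ in the $I$-adic topology, and verify that the $I'$-adic topology on $R'$ agrees with its subspace topology from $\hat{\mathcal{R}}$. The key technical input is the inclusion $\mathfrak{m}_{\mathcal{Y},\pi(0)}\subset I$ in $\mathcal{R}$. To prove it, take any $\G_m$-invariant $h\in\mathcal{R}$ with $h(0_W)=0$. Since the $\G_m$-action on the conical symplectic variety $W=f^{-1}(0)$ is good, $W/\!/\G_m$ is a single point and $h|_W$ is a constant, necessarily equal to $h(0_W)=0$; hence $h\in (t)=I_W$. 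By condition \ref{SUPD.pos} of Definition \ref{def:SPD}, $\Gamma\setminus\{0_W\}$ is a single $\G_m$-orbit on which the invariant $h$ is constant with limit value $h(0_W)=0$; hence $h\in I_\Gamma$. Thus $h\in I_\Gamma\cap I_W=I$, giving $\mathfrak{m}_{\mathcal{Y},\pi(0)}^n\subset I^n$ for all $n\ge 1$.

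Applying Lemma \ref{Lem1}, every element of $R'$ is the image in $\hat{R}$ of a finite sum $\sum_i g_i\otimes m_i\in\C[[\mathcal{Y}]]\otimes_{\C[\mathcal{Y}]}\mathcal{R}$. Each $g_i$ is an $\mathfrak{m}_{\mathcal{Y},\pi(0)}$-adic Cauchy limit of elements of $\C[\mathcal{Y}]\subset\mathcal{R}$, which by the key inclusion is also $I$-adic Cauchy in $\mathcal{R}$ and hence converges in $\hat{\mathcal{R}}$. Multiplying by $m_i\in\mathcal{R}\subset\hat{\mathcal{R}}$ and summing over $i$ shows $R'\subset\hat{\mathcal{R}}$ as subrings of $\hat{R}$. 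Since $\mathcal{R}\subset R'\subset\hat{\mathcal{R}}$ and $\mathcal{R}$ is $I$-adically dense in $\hat{\mathcal{R}}$ by construction, $R'$ is also $I$-adically dense in $\hat{\mathcal{R}}$.

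To conclude, I will show that the two filtrations $\{(I')^n\cap R'\}$ and $\{I^n\hat{\mathcal{R}}\cap R'\}$ on $R'$ are cofinal, so that the $I'$-adic completion $R$ of $R'$ coincides with the closure of $R'$ inside the $I$-adically complete ring $\hat{\mathcal{R}}$, which by density is $\hat{\mathcal{R}}$ itself. The inclusion $I\subset I'$ (since $I\subset\mathcal{R}\subset R'$ and $I\subset I\hat{R}\cap R'=I'$) yields $I^n\hat{\mathcal{R}}\cap R'\subset (I')^n\cap R'$ directly. The reverse comparison is the \emph{main obstacle}: one must express a general element of $(I\hat{R}\cap R')^n\subset I^n\hat{R}$ as a $\C[[\mathcal{Y}]]$-linear combination of elements of $I^n\cap\mathcal{R}$ modulo terms lying in higher powers. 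This should follow from the $\G_m$-weight decomposition of $\mathcal{R}$, a second application of Lemma \ref{Lem1}, the key inclusion from the first paragraph, and an Artin--Rees type argument for the $I$-adic filtration on $\hat{\mathcal{R}}$. Once cofinality is established, $R=\hat{\mathcal{R}}$ inside $\hat{R}$ follows immediately from the completeness of $\hat{\mathcal{R}}$ and the density of $R'$.
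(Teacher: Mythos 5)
Your framework—realize both $R$ and $\hat{\mathcal R}$ inside $\hat{R}$ via the chain $\mathcal{R}\subset R'\subset\hat{\mathcal R}\subset\hat{R}$, establish density of $R'$ in $\hat{\mathcal R}$, and then compare the $I'$-adic filtration on $R'$ with the subspace filtration coming from $\hat{\mathcal R}$—is in the same spirit as the paper's argument. Your first two paragraphs are essentially sound: in particular, your verification of $\mathfrak{m}_{\mathcal{Y},\pi(0)}\mathcal{R}\subset I$ via condition 3) of Definition \ref{def:SPD} (constancy of $\G_m$-invariants on $W$ and along $\Gamma$) fills in an assertion the paper makes without elaboration, and the argument that $R'$ lands inside $\hat{\mathcal R}$ is correct.

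The third paragraph, however, contains the crux, and there the proposal breaks down—in two related ways. First, you have the easy and hard directions interchanged. The inclusion $I\subset I'$ does \emph{not} yield $I^n\hat{\mathcal R}\cap R'\subset(I')^n$: a typical element of $I^n\hat{\mathcal R}$ is a sum $\sum a_jc_j$ with $a_j\in I^n$ and $c_j\in\hat{\mathcal R}$, and even though each $a_j\in(I')^n$, the factors $c_j$ need not lie in $R'$, so the product $a_jc_j$ need not belong to the ideal $(I')^n$ of $R'$. What \emph{is} direct is the opposite inclusion: since $\hat{R}$ is the $(t)$-adic completion of the Noetherian ring $\hat{\mathcal R}$ it is faithfully flat over $\hat{\mathcal R}$, so $I^n\hat{R}\cap\hat{\mathcal R}=I^n\hat{\mathcal R}$, and therefore
\begin{align*}
(I')^n=(I\hat{R}\cap R')^n\subset I^n\hat{R}\cap R'=(I^n\hat{R}\cap\hat{\mathcal R})\cap R'=I^n\hat{\mathcal R}\cap R'.
\end{align*}
Second, the genuinely hard inclusion $I^n\hat{\mathcal R}\cap R'\subset(I')^n$ is exactly where your proposal stops: you list ``the $\G_m$-weight decomposition, a second application of Lemma \ref{Lem1}, the key inclusion, and an Artin--Rees type argument'' as ingredients that ``should'' yield it, but none of them is carried out, and in fact no combination of them produces the statement without a further idea. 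So the argument has a genuine gap precisely at the step that carries the content of the lemma.

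What the paper does instead—and what your approach is missing—is a concrete identification rather than an abstract filtration comparison. Having observed that the surjection $\C[[\mathcal{Y}]]\otimes_{\C[\mathcal{Y}]}\mathcal{R}\twoheadrightarrow R'$ carries $\C[[\mathcal{Y}]]\otimes I$ onto $IR'$ and hence realizes $\hat{\mathcal R}=\C[[\mathcal{Y}]]\hat{\otimes}_{\C[\mathcal{Y}]}\mathcal{R}$ as the $IR'$-adic completion of $R'$, the whole lemma reduces to proving $IR'=I'$, which is Claim \ref{IR'I'}. That claim is then settled by embedding $\mathcal{R}/I$ into $\C[t]\oplus\C[W]$ and $R'/I'$ into $\C[[t]]\oplus\C[W]$, producing a natural surjection $\mathcal{R}/I\to R'/IR'\to R'/I'$, and reading off from the two embeddings that the composite $\mathcal{R}/I\to R'/I'$ is also injective; the sandwich then forces $IR'=I'$. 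You should replace the unfinished cofinality step by this (or an equivalent) concrete argument.
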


\begin{proof}[proof of Lemma \ref{Lem2}] Let us consider the map 
$$\C[[\mathcal{Y}]] \otimes_{\C[\mathcal{Y}]} \mathcal{R} \to \hat{R}$$
discussed in Lemma \ref{Lem1}. 
Define $\C[[\mathcal{Y}]] \hat{\otimes}_{\C[\mathcal{Y}]} 
\mathcal{R}$ to be the completion of $\C[[\mathcal{Y}]] \otimes_{\C[\mathcal{Y}]} \mathcal{R}$ by 
$\C[[\mathcal{Y}]] \otimes_{\C[\mathcal{Y}]} I$. 
Note that $\mathfrak{m}_{\mathcal{Y},\pi(0)}\mathcal{R} \subset I$ by the property 3) of the definition of a scale-up Poisson deformation. Then we have   
$$\C[[\mathcal{Y}]] \otimes_{\C[\mathcal{Y}]} \mathcal{R}/I^n  
= \C[[\mathcal{Y}]]/\mathfrak{m}_{\mathcal{Y},\pi(0)}^n\C[[\mathcal{Y}]] \otimes_{\C[\mathcal{Y}]/\mathfrak{m}_{\mathcal{Y},\pi(0)}^n}\mathcal{R}/I^n 
= \mathcal{R}/I^n \:\: (n \geq 1)$$ because $\C[\mathcal{Y}]/\mathfrak{m}_{\mathcal{Y},\pi(0)}^n = \C[[\mathcal{Y}]]/\mathfrak{m}_{\mathcal{Y},\pi(0)}^n \C[[\mathcal{Y}]]$.
Therefore we have 
$$\C[[\mathcal{Y}]] \hat{\otimes}_{\C[\mathcal{Y}]} 
\mathcal{R} = \hat{\mathcal R}.$$ Moreover, the map $$\C[[\mathcal{Y}]] \otimes_{\C[\mathcal{Y}]} 
\mathcal{R} \to \C [[\mathcal{Y}]] \hat{\otimes}_{\C[\mathcal{Y}]} 
\mathcal{R}$$ factors through $R'$ and we have a sequence of rings 
$$\C[[\mathcal{Y}]] \otimes_{\C[\mathcal{Y}]} 
\mathcal{R} \to R' \subset \hat{\mathcal R} \subset \hat{R}.$$ 
$\hat{\mathcal R}$ is the completion of $R'$ by $IR'$. 
Hence, the proof of Lemma \ref{Lem2} 
is reduced to prove the following claim. 
\begin{Claim}\label{IR'I'}
In the above setup, we have $IR' = I'$.     
\end{Claim}
\begin{proof}[proof of Claim \ref{IR'I'}]
In order to prove this, we first claim that $R'/I' = \mathcal{R}/I$. Consider the commutative daigram with exact rows 
\begin{equation} 
\begin{CD} 
0 @>>> \C[[\mathcal{Y}]] \otimes_{\C[\mathcal{Y}]} I @>>> 
\C[[\mathcal{Y}]] \otimes_{\C[\mathcal{Y}]}\mathcal{R} @>>> \mathcal{R}/I @>>> 0 \\
@.  @VVV @VVV  @VVV \\  
0 @>>> IR' @>>> R' @>>> R'/IR' @>>> 0 
\end{CD} 
\end{equation}
Since the middle vertical map is surjective, the third vertical map $\mathcal{R}/I \to R'/IR'$ is surjective. 
Composing this map with the surjection $R'/IR' \to R'/I'$, we have a surjection $\mathcal{R}/I \to R'/I'$. 
Note that the coordinate ring $\mathcal{R}/I$ of the reduced scheme $\Gamma \cup W$ is given by the kernel 
of the map $$\C[t] \oplus \C[W] \to \C, \:\:\: (g, h) \to g(0) - h(0).$$ In particular, we have an inclusion  
$$\mathcal{R}/I \subset \C[t] \oplus \C[W].$$ 
Similarly we have an inclusion $\hat{R}/I\hat{R} \subset \C[[t]] \oplus \C[W].$ 
By the definition of $I'$, we have an injection $R'/I' \subset \hat{R}/I\hat{R}$. Hence we get an inclusion 
$$R'/I' \subset \C[[t]] \oplus \C[W].$$
There is a commutative diagram 
\begin{equation} 
\begin{CD} 
\mathcal{R}/I @>>> R'/I' \\ 
@VVV @VVV \\  
\C[t] \oplus \C[W] @>>> \C[[t]] \oplus \C[W] 
\end{CD} 
\end{equation}
By the diagram, the map $\mathcal{R}/I \to R'/I'$ is an injection. This means that $\mathcal{R}/I = R'/I'$. Then the existence of 
the surjection $\mathcal{R}/I \to R'/IR'$ implies that $IR' = I'$ i.e., the Claim \ref{IR'I'} holds. \end{proof}
Hence, Lemma \ref{Lem2} holds as it follows from 
Claim \ref{IR'I'} (by our discussions above). 
\end{proof}

Here let us briefly review the universal Poisson deformation of a conical symplectic variety $W$. Let $(\mathrm{Art})_{\C}$ be the category of local Artinian $\C$-algebras with residue field $\C$ and let $(\mathrm{Sets})$ be the category of sets. We define the Poisson 
deformation functor  $$\mathrm{PD}_W: (\mathrm{Art})_{\C} \to (\mathrm{Sets})$$ 
by letting $\mathrm{PD}_W(A)$  be equivalence classes of Poisson deformations of $W$ over 
$\mathrm{Spec}(A)$. 

Let $\tilde{W} \to W$ be a $\mathbb{Q}$-factorial terminalization of $W$ and put 
$d := \dim H^2(\tilde{W}, \mathbb{C})$. 
By \cite[\S 5]{Namb} (cf.also \cite[Theorem (2.1)]{Namg}), we have the universal Poisson deformation $f^{\rm univ}: \mathcal{X}^{\rm univ} \to \A^d$ of $W = (f^{\rm univ})^{-1}(0)$ with 
the following properties: \vspace{0.2cm}

\begin{enumerate}
\item There are good $\G_m$-actions respectively on $\mathcal{X}$ and $\A^d$ and $f^{\mathrm{univ}}$ is 
$\G_m$-equivariant. Here ``good'' means that $\G_m$ acts respectively on the cotangent space 
$\mathfrak{m}_{{\mathcal X}^{\rm univ}}/\mathfrak{m}^2_{{\mathcal X}^{\rm univ}}$ at the origin $0 \in \mathcal{X}^{\rm univ}$ and 
the cotangent space $\mathfrak{m}_{\A^d}/\mathfrak{m}^2_{\A^d}$ at the origin $0 \in \A^d$ with only {\em positive} weights. 

\item The Poisson bracket $\{\:, \:\}_{\mathcal{X}^{\rm univ}}$ has weight $-l$ with $l := {\rm wt}(\sigma_W)$. 

\item Let $\mathcal{X}' \to S$ be a Poisson deformation of $W$ with a local Artinian base $S$. Then there is a unique map 
$\varphi: S \to \A^d$ with $\varphi (0) = 0$ such that the induced Poisson deformation 
$\mathcal{X}^{\rm univ}\times_{\A^d}S \to S$ is equivalent to $\mathcal{X}' \to S$.  
\end{enumerate}

Moreover, by \cite[Corollary (2.4)]{Namg} (which is based on the argument of \cite{Rim}), $f^{\rm univ}$ is the universal $\G_m$-equivariant Poisson deformation of $W$.   
Namely, we have: \vspace{0.2cm}

(iii)' Let $\mathcal{X}' \to S$ be a $\G_m$-equivariant Poisson deformation of $W$ with a local Artinian base $S$ such that $\{\:, \:\}_{\mathcal{X}'}$ has weight $-l$.  
Then there is a unique $\G_m$-equivariant map $\varphi: S \to \A^d$ with $\varphi (0) = 0$ such that the induced Poisson deformation $\mathcal{X}^{\rm univ}\times_{\A^d}S \to S$ is equivalent to $\mathcal{X}' \to S$ as Poisson deformations 
of $W$ with $\G_m$-actions. Here the $\G_m$-action on the left hand side is induced from the 
$\G_m$-action an $\mathcal{X}^{\rm univ}$ and the $\G_m$-action on $S$. 

Using the above, we first observe the following Lemma, 
as the starting point of the proof of Theorem \ref{formallocaltriv2}. 

\begin{Lem}\label{Lem3}
There is a (non-canonical) 
$\G_m$-equivariant isomorphism of inductive systems of Poisson schemes $$\{\mathcal{X}_n\} \cong \{W \times S_n\}.$$ 
(Recall that $S_n = \mathrm{Spec}\: \C [t]/(t^{n+1})$). 
The $\G_m$-action on the left hand side is induced from the $\G_m$-action on $\mathcal{X}$ and the 
$\G_m$-action on $W \times S_n$ is given so that 
$\lambda\colon (x, t) \mapsto (\lambda\cdot x, \lambda^{-w}t)$ for $\lambda \in \G_m(\C)$. 
\end{Lem}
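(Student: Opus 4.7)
The plan is to combine the universal $\G_m$-equivariant Poisson deformation $f^{\rm univ}\colon \mathcal{X}^{\rm univ}\to \A^d$ just recalled with a simple weight computation. For each $n$, the base change $\mathcal{X}_n\to S_n$ is a $\G_m$-equivariant Poisson deformation of $W$ over a local Artinian base, so property (iii)' of the universal deformation furnishes a unique $\G_m$-equivariant morphism $\varphi_n\colon S_n\to \A^d$ sending the closed point to $0\in \A^d$, together with a $\G_m$-equivariant Poisson equivalence $\mathcal{X}_n \cong \mathcal{X}^{\rm univ}\times_{\A^d} S_n$ of deformations of $W$.

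I then show that $\varphi_n\equiv 0$ by the following elementary argument. The cotangent space $\mathfrak{m}_{\A^d,0}/\mathfrak{m}_{\A^d,0}^2$ carries only strictly \emph{positive} $\G_m$-weights by the ``good'' hypothesis on $f^{\rm univ}$, whereas the coordinate ring $\C[t]/(t^{n+1})$ of $S_n$ decomposes into $\G_m$-weights $\{0,-w,-2w,\ldots,-nw\}$, all non-positive (recall that $t$ has weight $-w<0$). A $\G_m$-equivariant $\C$-algebra homomorphism $\varphi_n^*$ must therefore send every positive-weight generator of $\mathcal{O}_{\A^d,0}$ to an element of matching positive weight in $\C[t]/(t^{n+1})$, forcing it to vanish. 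Hence $\varphi_n\equiv 0$, so $\mathcal{X}^{\rm univ}\times_{\A^d,0}S_n = W\times S_n$, and we obtain a $\G_m$-equivariant Poisson equivalence $\mathcal{X}_n \cong W\times S_n$ for each $n$ individually.

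To upgrade these level-by-level equivalences into an isomorphism of \emph{inductive systems}, I would proceed by induction on $n$: given a chosen $\alpha_n\colon \mathcal{X}_n \isom W\times S_n$ and any equivalence $\beta_{n+1}\colon \mathcal{X}_{n+1}\isom W\times S_{n+1}$ produced by the previous step, the discrepancy $\alpha_n\circ(\beta_{n+1}|_{S_n})^{-1}$ is a $\G_m$-equivariant Poisson automorphism of $W\times S_n$ restricting to the identity on the central fiber, and I would lift it $\G_m$-equivariantly to $W\times S_{n+1}$ to correct $\beta_{n+1}$ into the desired $\alpha_{n+1}$. A conceptually cleaner route is to appeal to the pro-representability of $\mathrm{PD}_W$ from \cite{Namb} and classify the formal limit $\varprojlim \mathcal{X}_n$ over $\mathrm{Spf}\,\C[[t]]$ by a single formal $\G_m$-equivariant morphism $\hat{\varphi}\colon \mathrm{Spf}\,\C[[t]]\to \widehat{\A^d}_0$, which again vanishes by the same weight argument now applied to $\C[[t]]$. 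The step I expect to be most delicate is precisely this compatibility --- either the $\G_m$-equivariant lifting of identity-modulo-$t$ Poisson automorphisms, or the extension of property (iii)' from Artinian bases to $\mathrm{Spf}\,\C[[t]]$ --- but in either formulation the obstruction is controlled by $\G_m$-equivariant Poisson cohomology of $W$, whose nonzero weight components are positive and hence incompatible with the strictly negative weights carried by powers of $t$.
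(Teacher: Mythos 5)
Your proof follows the same route as the paper: invoke the universal $\G_m$-equivariant Poisson deformation $\mathcal{X}^{\rm univ}\to\A^d$ of \cite{Namb}, and argue from the weight mismatch (positive weights on $\mathfrak{m}_{\A^d,0}/\mathfrak{m}_{\A^d,0}^2$ versus nonpositive weights on $\C[t]/(t^{n+1})$) that each classifying map $\varphi_n$ is constant. The compatibility-of-trivializations issue you flag is real --- the paper passes over it implicitly --- and your inductive lifting of identity-modulo-$t$ automorphisms (or equivalently an appeal to prorepresentability of the formal Poisson deformation functor) is a correct way to resolve it.
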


\begin{proof}[proof of Lemma \ref{Lem3}] 
As recalled above, 
let $f^{\rm univ}\colon \mathcal{X}^{\rm univ} \to \A^d$ be the universal Poisson deformation of $W$ of \cite{Namb}. 
This $f^{\rm univ}$ is $\G_m$-equivariant and the $\G_m$-action on $\A^d$ fixes the origin $0 \in \A^d$ 
and has only positive weights. Note that $f^{\rm univ}$ is the universal $\G_m$-equivariant Poisson deformation of $W$. 
We apply this to our formal Poisson deformation $\{\mathcal{X}_n\}$ of $W$. For each $n$, there is a map $\varphi_n\colon S_n \to 
\A^d$ so that $\varphi_n$ coincides with the composite $S_n \subset S_{n+1} \stackrel{\varphi_{n+1}}\longrightarrow \A^d$ and 
$\mathcal{X}_n \cong \mathcal{X}^{\rm univ}\times_{\A^d}S_n$.    
Since the $\G_m$-action on the base $S_n$ has a negative weight, we see that $\varphi_n$ is the constant map; in other words, 
$\varphi_n \colon S_n \to \A^d$ factorizes as $S_n \to \{0\} \in \A^d$. This implies Lemma \ref{Lem3}. \end{proof}

Now, we are ready to prove Theorem \ref{formallocaltriv2} 
by using our prepared materials and lemmas above. 

\begin{proof}[proof of Theorem \ref{formallocaltriv2}]
Let us compare $\X$ with the trivial 
Poisson deformation ${\rm pr}_2\colon W \times \A^1 \to \A^1$ of $W$ introduced at the beginning.  
For this Poisson deformation, we define 
$\mathcal{R}$, $I$, $\hat{R}$, $R'$, $R$ and $\hat{\mathcal R}$ in the same way as above.  
To distinguish them from those obtained from $\mathcal{X}$, we denote them by 
$\mathcal{R}_{W \times \A^1}$, $I_{W \times \A^1}$, $\hat{R}_{W \times \A^1}$, $R'_{W \times \A^1}$, $R_{W \times \A^1}$ and $\hat{\mathcal R}_{W \times \A^1}$. 
Then $\mathcal{R}$ does not necessarily coincide with $\mathcal{R}_{W \times \A^1}$.  
However, by Lemma \ref{Lem3}, we firstly see that $$\hat{R} = \hat{R}_{W \times \A^1}$$ $\G_m$-equivariantly so that 
$$R' = R'_{W \times \A^1}.$$ 
Moreover, $I\hat{R} = I_{W \times \A^1}\hat{R}_{W \times \A^1}$. In fact, there are surjections 
$p_{\Gamma}: \hat{R} \to \C[[t]]$ and $p_{\{0\} \times \A^1}: \hat{R}_{W \times \A^1} \to \C[[t]]$ corresponding to the sections $\Gamma$ and $\{0\} \times \A^1$. Note that $\Gamma - \{0\}$ (resp. $\{0\} \times (\A^1 - \{0\})$) is a unique $\G_m$-orbit in 
$\mathcal{X}$ (resp. $W \times \A^1$) whose closure contains $0 \in \mathcal{X}$ (resp. $(0,0) \in W \times \A^1$) and which is not contained in the central fiber $W$.  Therefore, 
by the isomorphism $\hat{R} \cong \hat{R}_{W \times \A^1}$, we can identify $\mathrm{Ker}(p_{\Gamma})$ with $\mathrm{Ker}(p_{\{0\} \times \A^1})$. We then have  
$$I\hat{R} = (t) \cap \mathrm{Ker}(p_{\Gamma}), \:\:\:  
I_{W \times \A^1}\hat{R}_{W \times \A^1} = (t) \cap \mathrm{Ker}(p_{\{0\} \times \A^1}).$$  
Hence, $I\hat{R} = I_{W \times \A^1}\hat{R}_{W \times \A^1}$. This means that $$I\hat{R} \cap R' = 
I_{W \times \A^1}\hat{R}_{W \times \A^1} \cap R'_{W \times \A^1},$$ and   
$R = R_{W \times \A^1}$.  Next, by Lemma \ref{Lem2}, we have  $$\hat{\mathcal R}  = \hat{\mathcal R}_{W \times \A^1}.$$ 
Therefore we have   
$$(\mathcal{X})^{\hat{}}_{\Gamma \cup W} \cong 
(W \times \A^1)^{\hat{}}_{(0_W \times \A^1)\: \cup \: W}.$$
This completes the proof of Theorem \ref{formallocaltriv2}.   
\end{proof}

\begin{Rem}
Our Theorem \ref{formallocaltriv} and Corollary \ref{Cor:noscaleup} morally show that the symplectic variety limits to the 
conical symplectic variety only in the direction of scale-down degeneration, which often appears as the algebro-geometric realization of the (metric)
tangent cone {\it at infinity} of the complete Ricci-flat K\"ahler metric
of Euclidean volume growth (see \cite{CH, SZ, Od24b, Od24c} for the detailed meaning). 
From this perspective, a differential geometric work of Bielawski-Foscolo \cite{BF} through twistor methods `a la Penrose and Hitchin
can be seen vaguely as a differential geometric analogue of our
claims. 
However, their metrics are 
not complete in general, which makes it difficult to connect with our 
work. 
We thank L.Foscolo for the discussion on this issue, and we 
hope to come back to discuss this issue in the future. 
\end{Rem}



\section{Comparison of $X$ and $W$}\label{sec:XW}

\subsection{Outline of the arguments in this section}
In this section, 
we show that the germ $x\in X$ of symplectic singularity and the cone $0\in W$ 
(see Theorems \ref{DSII.thm}, \ref{DS.maps}, \ref{AGlc}) 
have isomorphic 
analytic germs,  
in the setup of Theorem \ref{Mthm.intro} and \ref{Mthm2.intro}, although in a priori slightly non-canonical manner.

The outline of its proof goes as follows. 
We begin by recalling from Lemma \ref{XtoW.C} (ii) that the process $X\rightsquigarrow W$ is realized as a 
flat family $\mathcal{X}_{\xi'} \to \A^1$ with 
the central fiber $W$ and a general fiber $X$. 
Let us briefly recall the construction of $\mathcal{X}_{\xi'}$. Let $b: \bar{\mathcal{X}} \to X \times \A^1$ be the weighted blow up 
at $x \times \{0\} \in X \times \A^1$ as in 
Lemma \ref{XtoW.C}, (ii). Let $\overline{X \times \{0\}}$ (resp. $\Gamma$) be the proper transform of $X \times \{0\}$ (resp. $x \times \A^1$) by $b$ and let $\bar{W}$ 
be the exceptional divisor of $b$. Then 
$\mathcal{X}_{\xi'} = \bar{\mathcal{X}} - 
\overline{X \times \{0\}}$.  Note that $\Gamma 
\subset \mathcal{X}_{\xi'}$ because  $\Gamma \cap \overline{X \times \{0\}} = \emptyset$. 
There is a natural map $\mathcal{X}_{\xi'} \to \A^1$ and $\Gamma$ gives a section of the map. The central fiber of this map is $W := \bar{W} - \overline{X \times \{0\}}$ and $\Gamma$ intersects $W$ at $0 \in W$.   
Define $\mathcal{X}^{sm}_{\xi'}$ to be the 
open subset of $\mathcal{X}_{\xi'}$ where the 
map is smooth. Note that $X^{sm} \times (\A^1 - 0) \subset \mathcal{X}^{sm}_{\xi'}$. 
Consider the relative symplectic form $p_1^*\sigma_X$ on $X^{sm} \times (\A^1 - 0)$, where $p_1$ is the 1-st projection map. 
The most technical core of this section, which takes up whole 
\S \ref{subsec:ambmet} and \S \ref{subsec:extendtensor}, is that 
with a positive integer $D$ is suitably chosen, 
$t^{-2D}p_1^*\sigma_X$ extends to a relative symplectic form on 
$\mathcal{X}^{sm}_{\xi'}$. For that, we prepare differential geometric 
lemmas and use some careful Diophantine approximation arguments, 
relying on some classical works of Dirichlet and Kronecker. 

Then the map $\mathcal{X}_{\xi'} \to 
\A^1$ can be enhanced as a Poisson deformation of $W$ together with the section $\Gamma$. Moreover, $\G_m$ acts on $X \times \A^1$ by $(y,t) \to (y, \lambda^{-1}t), \:\: \lambda \in \G_m$; then this $\G_m$-action induces a $\G_m$-action on $\mathcal{X}_{\xi'}$. We can see that 
$\G_m$ acts on $W$ fixing $0 \in W$ with only 
positive weights. The Poisson deformation turns out to be a scale-up Poisson deformation of $W$. Then, by 
Corollary \ref{Cor:noscaleup} in the previous section, 
there is a Poisson (or symplectic) isomorphism $(X, x)^{\hat{}}  \cong  (W, 0)^{\hat{}}$ of the formal completions of  symplectic singularities (Corollary \ref{formal.isom}). 

\subsection{Invariance of $\Q$-Gorenstein index} 

This subsection is of supplementary nature and can be skipped if one is 
in a haste and interested only in the setup of Theorem \ref{Mthm.intro} 
and \ref{Mthm2.intro}. 
It is a natural question to ask whether the Donaldson-Sun procedure 
$X\rightsquigarrow W\rightsquigarrow C$ can increase the 
($\Q$-Gorenstein) indices. 
The following arguments is {\it not} logically used in 
(and eventually follows from) 
our proof of Theorem \ref{Mthm.intro} and  \ref{Mthm2.intro} in those 
setup. Nevertheless, 
we include it for convenience and interest in its own. 

\begin{Prop}\label{QG.ind}
For any klt singularity $x\in X$, the first step degeneration 
$0\in W$ 
of algebraic local conification (Theorem \ref{AGlc}) 
as well as the metric tangent cone $0\in C$ both 
have the same $\Q$-Gorenstein ind
ices as the original 
$x\in X$. 
\end{Prop}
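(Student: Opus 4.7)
The plan is to exhibit each degeneration as a $\G_m$-equivariant $\Q$-Gorenstein flat family and then show that the $\Q$-Gorenstein index is constant along such a family. For the first step $X\rightsquigarrow W$, I use the scale-up test configuration $\pi\colon\mathcal{X}_{\xi'}\to\A^1$ from Lemma \ref{XtoW.C}\eqref{W.wtedblowup}, whose total space has $K_{\mathcal{X}_{\xi'}}$ being $\Q$-Cartier, and write $m_X$, $m_W$, $m'$ for the $\Q$-Gorenstein indices of $X$, $W$, and $\mathcal{X}_{\xi'}$ respectively. Since $W=\pi^{-1}(0)$ and each general fiber $X\cong\pi^{-1}(t)$ $(t\neq 0)$ is a Cartier divisor on the flat family, restricting the Cartier divisor $m'K_{\mathcal{X}_{\xi'}}$ immediately gives $m_X\mid m'$ and $m_W\mid m'$.

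The reverse divisibilities come from the $\G_m$-equivariant structure. Starting from $m_W K_W$ Cartier, the reflexive sheaf $\omega^{[m_W]}_{\mathcal{X}_{\xi'}/\A^1}$ is flat over $\A^1$ (by the $\Q$-Gorenstein hypothesis) with invertible restriction to $W$, and so Nakayama's lemma yields invertibility on a $\G_m$-invariant Zariski neighborhood $U\supset W$. The scale-up property (third condition of Definition \ref{def:SPD}) then forces $\G_m\cdot U=\mathcal{X}_{\xi'}$, so $\omega^{[m_W]}_{\mathcal{X}_{\xi'}/\A^1}$ is globally invertible and $m_W K_X$ is Cartier, giving $m_X\mid m_W$. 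For the converse $m_W\mid m_X$: starting from $m_X K_X$ Cartier, the triviality $\mathcal{X}_{\xi'}|_{\A^1\setminus\{0\}}\cong X\times\G_m$ combined with $\G_m$-equivariance yields Cartierness of $m_X K_{\mathcal{X}_{\xi'}}$ on $\mathcal{X}_{\xi'}\setminus W$; the $\Q$-Cartierness of $K_{\mathcal{X}_{\xi'}}$ together with normality and the $\G_m$-action (which makes the local Cartier index along $W$ constant along orbits) upgrades this to Cartierness across the codimension-one locus $W$. Hence $m_X=m_W$.

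For the metric tangent cone, the same argument applies verbatim to the family $\mathcal{X}_C\to\A^1$ of Lemma \ref{XtoW.C}\eqref{C.wtedblowup}, whose total space is also $\Q$-Gorenstein by that lemma, yielding $m_X=m_C$. The main obstacle I anticipate is the reverse step $m_W\mid m_X$: while upgrading invertibility from the central fiber to a Zariski neighborhood is standard Koll\'ar-style $\Q$-Gorenstein deformation theory, transferring the Cartier property across the codimension-one locus $W$ starting from a generic fiber is delicate and relies crucially on the equivariant structure of the scale-up test configuration. Modulo this standard but technical $\Q$-Gorenstein deformation machinery, the argument is formal.
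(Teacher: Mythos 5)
Your approach is genuinely different from the paper's, and unfortunately it has real gaps that the paper's argument is specifically designed to avoid. The paper does not argue directly with $\Q$-Gorenstein deformation theory. Instead it uses normalized volume: it forms the degree-$d$ cyclic quasi-\'etale cover $c\colon\mathcal{Y}\to\mathcal{X}_{\xi'}$ associated to $mK_{\mathcal{X}_{\xi'}}$, base-changes along $c^{-1}(\Gamma)\to\A^1$ to produce a new flat family whose general fiber is \'etale-locally $(X,x)$ (hence has $\vvol=\vvol(X)=\vvol(W)$, the last equality because $W$ is the K-semistable stable degeneration) but whose central fiber is a degree-$d$ quasi-\'etale cover of $(W,0)$ and thus by the finite-degree formula has $\vvol = d\cdot\vvol(W)$. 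Lower semicontinuity of $\vvol$ then forces $d=1$. The contradiction lives entirely in the $\vvol$ invariant, not in the local Cartier index of the total space; that is why the argument goes through.

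Your two directions each have a concrete problem. For $m_X\mid m_W$ you assert that $\omega^{[m_W]}_{\mathcal{X}_{\xi'}/\A^1}$ is flat over $\A^1$ with invertible restriction to $W$, citing "the $\Q$-Gorenstein hypothesis." But Lemma \ref{XtoW.C}\eqref{W.wtedblowup} only establishes $\Q$-Cartierness of $K_{\mathcal{X}_{\xi'}}$, i.e.\ that $\omega^{[m']}$ is a line bundle for some $m'$, with a priori $m_W\mid m'$ and $m_X\mid m'$. It does not prove the Koll\'ar-type conditions that $\omega^{[m_W]}_{\mathcal{X}_{\xi'}/\A^1}$ is flat and commutes with base change for the smaller power $m_W<m'$; distinguishing these two notions of "$\Q$-Gorenstein" is exactly the subtle point the theory of $\Q$-Gorenstein families is about, and it is not supplied here. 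For $m_W\mid m_X$ the flaw is more structural: the locus where $m_X K_{\mathcal{X}_{\xi'}}$ fails to be Cartier is a closed $\G_m$-invariant subset $Z\subset W$ of codimension $\ge 2$ in $\mathcal{X}_{\xi'}$. Since $W$ is conical with unique fixed point $0_W$, any nonempty $\G_m$-invariant closed subset of $W$ contains $0_W$; so $\G_m$-equivariance does \emph{not} let you propagate Cartierness across $W$ to the vertex — it only tells you that if the index jumps, it jumps at $0_W$, which is exactly the point you need to control. This is not a removable technicality: for a conical symplectic (or more generally klt cone) singularity, the index at the vertex is typically strictly larger than at nearby points (e.g.\ the affine cone over any non-Gorenstein $\Q$-Fano), so "Cartier at the generic point of $W$" gives no information at $0_W$. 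The paper avoids ever needing such a statement by passing to the normalized-volume invariant instead.
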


 \begin{proof}
Suppose $x\in X$ has the $\Q$-Gorenstein index $m$. 
We first discuss the case of $W$. 
By Lemma \ref{XtoW.C} \eqref{W.wtedblowup}, 
for each $\xi'\in \sigma\cap N$, 
there is a $\Q$-Gorenstein degeneration 
$\X_{\xi'}$ of $X$ to $W$. Hence, 
the $\Q$-Gorenstein index of $0\in W$ 
is $dm$ for some positive integer $d$. 
Then, we define 
$\mathcal{Y}:={\it Spec}_{\mathcal{O}_{\X_{\xi'}}}
(\oplus_{j=0,\cdots,d-1}\mathcal{O}_{\X_{\xi'}}(-mjK_{\X_{\xi'}}))$ and 
denote the associated affine structure (finite) morphism, 
a variant of the 
index $1$ covering, by 
\begin{align}
c\colon 
\mathcal{Y}\twoheadrightarrow \X_{\xi'}, 
\end{align}
with respect to a non-vanishing section of $\mathcal{O}_{\X_{\xi'}}(dmK_{\X_{\xi'}})$, that exists 
after shrinking $x\in X$ and corresponding $\X$ sufficiently 
if necessary. 
Note that $c$ is automatically quasi-\'etale so that 
$K_{\mathcal{Y}}$ is again $\Q$-Cartier. 
We denote the central fiber as 
$0\in W_Y$ and the general fiber as 
$c_Y\colon Y\to X$. Take $c^{-1}(\Gamma)$ 
where $\Gamma\subset \mathcal{X}$ denotes the 
($\G_m$-invariant) vertex section 
which passes through $0\in W=\X_0$ and $x\in X=\X_t$ for $t\neq 0$. 
Note that $c^{-1}(0(=W\cap \Gamma))$ is one point 
while, for $t\neq 0$, $c^{-1}(x=\X_t\cap \Gamma)$ 
is $d$ points by e.g., \cite[2.48(i), Lemma 9.52]{Kol13}. 
Now we consider the finite base change 
of $f\colon \Y\to \A^1$ with respect to 
$c^{-1}(\Gamma)\to \A^1_t$ and denote it by 
$f'\colon \mathcal{Y}'\to C$ for some affine curve $C\simeq c^{-1}(\Gamma)$. 
Note that its central fiber of $f'$ has a 
larger normalized volume than 
$0\in W$ by the finite-degree formula 
(cf., e.g., \cite[Theorem 1.3]{XZ}), 
if $d>1$. 
On the other hand, general fiber of $f'$ 
is \'etale locally 
$x\in X$ so that it has the same local 
normalized volume as $0\in W$. 
So, if $d>1$, it contradicts with the {\it lower} semicontinuity 
of local normalized volume \cite[Theorem 1]{BlumLiu}. 
The case of $C=C_x(X)$ can be proved in the same way since 
we know $-K_{\X_C/\A^1}$ is $\Q$-Cartier by Lemma \ref{XtoW.C} 
\eqref{C.wtedblowup}. 
 \end{proof}

 \begin{Rem}
There are somewhat analogous differential geometric arguments by Spotti-Sun 
in \cite[\S 3]{SS}. 
\end{Rem}

 \begin{Cor}
If $x\in X$ is an arbitrary symplectic singularity, then 
$0\in W$ and $0\in C$ only have canonical Gorenstein singularities.  
 \end{Cor}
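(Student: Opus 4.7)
The plan is to reduce the statement directly to Proposition \ref{QG.ind} by combining it with the classical observation (going back to Beauville) that symplectic singularities are automatically canonical Gorenstein with $\Q$-Gorenstein index equal to one.

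First, I would verify that any symplectic singularity $x\in X$ (with $\dim X = 2n$) has $\Q$-Gorenstein index one. The top wedge power $\sigma_X^{\wedge n}$ is a nowhere-vanishing holomorphic section of $K_{X^{\rm sm}}$; by normality of $X$ this extends to a trivialization of $K_X$, so $X$ is Gorenstein with trivial canonical class. Moreover, for any resolution $f\colon \tilde{X}\to X$, the defining property of symplectic singularities says $f^*\sigma_X$ extends to a holomorphic $2$-form on all of $\tilde{X}$, so its $n$-th wedge power extends to a section of $K_{\tilde{X}}$. This section vanishes along each exceptional prime divisor $E_i$ with some order $a_i\geq 0$, which by $K_X\sim 0$ shows that the discrepancies satisfy $a_i(X)\geq 0$. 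Hence $X$ is canonical, in particular klt, and Gorenstein.

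Then I would apply Proposition \ref{QG.ind} with $m=1$ to conclude that both $0\in W$ and $0\in C$ are Gorenstein (of index one). Theorem \ref{AGlc} already guarantees that $0\in W$ is K-semistable log terminal and that $0\in C$ is K-polystable log terminal. Since a Gorenstein log terminal singularity is exactly a canonical singularity (discrepancies are integers $> -1$, hence $\geq 0$), the corollary follows.

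There is essentially no obstacle beyond unpacking the definitions: Proposition \ref{QG.ind} does all of the nontrivial work, and the only point to check is that the extension property in Beauville's definition of symplectic singularity really forces nonnegative discrepancies, which is standard. Note that this argument does \emph{not} require any of the new analytic input from later sections nor any of the results of Theorems \ref{Mthm.intro} or \ref{Mthm2.intro}; it rests purely on Proposition \ref{QG.ind} and the algebro-geometric input from Theorem \ref{AGlc}.
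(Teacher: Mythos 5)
Your proposal is correct and follows essentially the same route as the paper's (one-line) proof: the paper simply cites Proposition~\ref{QG.ind} together with the standard fact that symplectic singularities are Gorenstein, and you have spelled out exactly why that Gorenstein/index-one fact holds (the top wedge power of $\sigma_X$ trivializes $K_X$ and the extension property in Beauville's definition forces nonnegative discrepancies) and why index one plus log terminality (from Theorem~\ref{AGlc}) gives canonical Gorenstein.
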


 \begin{proof}
This follows from Proposition \ref{QG.ind} 
since any symplectic singularity is Gorenstein. 
 \end{proof}

\subsection{Approximation by ambient cone metric}\label{subsec:ambmet}

In this subsection, we show that our local metric 
$g_X$ is comparable in a rather weak sense to certain ambient explicit 
metric as below. 
This is a differential geometric preparation for the 
extension of symplectic forms to a certain test configuration of $X$ 
(to be denoted by $\mathcal{X}_{\tilde{\xi}'}$ in the 
next subsection) in the next subsection. 
More specifically, Theorem \ref{thm:extendtoW} 
\eqref{est..tensor} (and later \eqref{extension}) 
relies on this subsection. 
We prepare the following setup and notation 
in this subsection. 

\begin{ntt}\label{ntt3}
\begin{enumerate}
\item Let $X$ be a log terminal affine variety with a closed point $x\in X$ and assume that it satisfies Conjecture 
\ref{conj:genDS} for a singular Ricci-flat K\"ahler metric $g_X$ 
(for instance, in the setup of Theorem \ref{Mthm.intro}). 

    \item 
We take a 
singular K\"ahler metric $\omega_\xi$ in the ambient space $\C^l$ 
defined as 
$$\omega_\xi:=\sqrt{-1}\sum_{1\le i\le n}
|z_i|^{\bigl(\frac{2}{w_i}-2\bigr)}
dz_i\wedge d\overline{z_i},$$ 
where $w_i$s are the weights for $C=C_x(X)$ i.e., 
$\xi=(w_1,\cdots,w_l)$. Note $w_i>0$ for any $i$. 
This $\omega_\xi$ is a smooth at least on 
$(\C^*)^l$ and, if $w_i$ are all at least $1$, it gives nothing but the standard model of 
the so-called conical singularity (or edge singularity) 
with cone angle 
$\frac{2\pi}{w_i}$ along $(z_i=0)$, 
in the sense of cf., e.g., \cite{Don11}. 
Note that $\Lambda_\tau^* \omega_\xi=|\tau|^{2}\omega_\xi$, 
where $\Lambda_\tau$ denotes that of Notation \ref{ntt2}. 

\item We denote the corresponding distance function $d_\xi$ to 
$\omega_{\xi}$ and the distance from the origin as 
$d_\xi(\vec{0},-)=r_\xi(-)$. 
Similarly, we denote the distance function $d_X$ to 
$\omega_{X}$ and the distance from the origin as 
$d_X(\vec{0},-)=r_X(-)$. 

\item 
We consider the rescaling action of the real multiplicative 
group $\R_{>0}\ni \tau$ on $\C^l$ 
as $(z_1,\cdots,z_l)\mapsto 
(\tau^{w_1}z_1,\cdots,\tau^{w_l}z_l)$, 
corresponding to the Reeb vector field $\xi$. 
We denote the quotient map 
$(\C^l\setminus 0)\twoheadrightarrow 
(\C^l\setminus 0)/\R_{>0}$ as ${\rm Arg}_{\xi}$. 
\end{enumerate}
\end{ntt}

We fix this notation throughout. 
We use this $\omega_\xi$ to give a rough approximation of 
the local K\"ahler-Einstein
metric by restriction of $\omega_\xi$. 
(There is also an alternative variant of $\omega_\xi$ which is smooth 
outside the origin, given in a more Sasaki geometric context 
\cite[\S 2.2, \S 2.3, Lemma 2.2]{HS}). 
These arguments closely follow 
the methods of \cite{SZ, Zha24}. The proof relies on the Donaldson-Sun 
theory, notably the algebraic realization of 
the local tangent cone $C_x(X)$ of $(x\in X,\omega_X)$ by \cite{DSII} 
as we review in Theorem \ref{DS.maps}. 

\begin{Lem}[{cf., \cite[Proposition 3.5]{SZ}, \cite[Lemma 5.3]{Zha24}}]
\label{SZlem}
Consider both $X$ and $W$ as subspaces of $\C^l$. 
There is an open subset 
$X^o\Subset X^{\rm sm}$ 
whose closure in $X$ 
contains $x$ 
such that for any $\epsilon>0$, 
there are positive constants $C_\epsilon$ and $D_\epsilon$ 
such that 
\begin{align}\label{met.comp}
C_\epsilon^{-1} 
r_\xi^{\epsilon}\omega_\xi|_{X^o}\le &\omega_X|_{X^o}\le 
C_\epsilon r_\xi^{-\epsilon}\omega_\xi|_{X^o}\\
\label{dist.comp}
D_\epsilon^{-1} 
r_\xi^{1+\frac{\epsilon}{2}}|_{X^o}\le 
&r_X|_{X^o}\le 
D_\epsilon r_\xi^{1-\frac{\epsilon}{2}}|_{X^o}. 
\end{align}
More precisely, 
if we take an arbitrary open subset 
$B_0\Subset((\C^*)^l/\R_{>0})\setminus 
{\rm Arg}_{\xi}({\rm Sing}(C)))$, there is $r_0>0$ such 
that 
we can take such $X^o$ which contains 
${\rm Arg}_{\xi}^{-1}(B_0)\cap  \{y\in X\mid 0<d_\xi(x,y)<r_0\}$. 
\end{Lem}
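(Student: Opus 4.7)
The plan is to first establish the metric comparison on a single fundamental domain of the metric tangent cone $C = C_x(X)$, then transport that comparison to $X$ via the Donaldson-Sun Cheeger-Gromov convergence of Theorem \ref{DS.maps}(iii), and finally to integrate in order to obtain the distance estimate \eqref{dist.comp}. This closely parallels the template of \cite[Proposition 3.5]{SZ} and \cite[Lemma 5.3]{Zha24}.

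First, I would fix a relatively compact fundamental domain $K_0 \subset C^{\rm sm}$ for the $\R_{>0}$-action of Notation \ref{ntt3}, chosen so that $K_0$ lies in an annulus like $\{r_\xi \in [1,2]\}$ and projects to a slight enlargement of $B_0$ under ${\rm Arg}_{\xi}$. On $K_0$ both $\omega_C$ (the cone K\"ahler form of $g_C$) and $\omega_\xi|_{C^{\rm sm}}$ are smooth K\"ahler forms, and both satisfy $\Lambda_\tau^*(\cdot) = \tau^{2}(\cdot)$. Hence the ratio $\omega_C/\omega_\xi|_{C^{\rm sm}}$ is a smooth positive function on $K_0$, bounded above and below by some constants $C_0^{\pm 1} > 0$. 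Because both metrics transform identically under $\Lambda_\tau$, the uniform comparison $C_0^{-1}\omega_\xi|_C \le \omega_C \le C_0\,\omega_\xi|_C$ in fact holds on the entire $\R_{>0}$-saturation $\bigcup_{\tau>0} \Lambda_\tau(K_0) \subset C^{\rm sm}$.

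Second, I would transfer this to $X$ using the Donaldson-Sun data. By Theorem \ref{DS.maps}(iii), after slightly enlarging $K_0$ to an open $K_0' \Supset K_0$ still inside $C^{\rm sm}$, there exist, for each $j \gg 1$, a diffeomorphism $\Psi_j\colon K_0' \hookrightarrow \Phi_j(X^{\rm sm})$ with $\Psi_j \to {\rm Id}$ in $C^\infty$ and $2^j\,\Psi_j^*((\Phi_j^{-1})^*\omega_X) \to \omega_C$ in $C^\infty$ on $K_0'$. Pulling back by $\Phi_j = \Lambda^j \circ (E_j\circ\cdots\circ E_1)\circ\Phi$, each $Y_j := \Phi_j^{-1}(\Psi_j(K_0)) \subset X^{\rm sm}$ sits in a ``shell'' on which $r_\xi$ is comparable to $\sqrt{2}^{-j}$; here one uses $E_j \to {\rm Id}$ in $G_\xi$ together with $\Lambda^{-j\,*}\omega_\xi = 2^{-j}\omega_\xi$. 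Combining the cone-side comparison of the previous paragraph with the $C^\infty$-convergence of the pulled-back metrics on the fixed compact $K_0'$ yields, for all $j \ge j_0(\epsilon)$, the pointwise inequalities $(2C_0)^{-1}\omega_\xi \le \omega_X \le (2C_0)\,\omega_\xi$ on $Y_j$. Setting $X^o := \bigcup_{j \ge j_0} Y_j$ produces an open subset of $X^{\rm sm}$ whose closure in $X$ contains $x$ and which, by construction, contains ${\rm Arg}_\xi^{-1}(B_0) \cap \{0 < d_\xi(x,\cdot) < r_0\}$ for some $r_0 > 0$.

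Finally, the distance comparison \eqref{dist.comp} follows from the metric comparison \eqref{met.comp} by integrating along a minimizing $g_\xi$-path from a base point at $r_\xi \sim 1$ down to a point at small $r_\xi$ staying inside $X^o$: the $g_X$-length of such a path is bounded above by $\int_0^{r_\xi} s^{-\epsilon/2}\,ds \sim r_\xi^{1-\epsilon/2}$ and below by $\int_0^{r_\xi} s^{\epsilon/2}\,ds \sim r_\xi^{1+\epsilon/2}$, up to absolute constants. The $r_\xi^{\pm\epsilon}$ slack in \eqref{met.comp} itself absorbs two unavoidable sources of error aggregated over scales: the $\Psi_j$'s are only $C^\infty$-close to the identity, so the true $r_\xi$-value of points in $Y_j$ differs from $\sqrt{2}^{-j}$ by a factor that degrades slowly in $j$, and the uniform cone-side constant $C_0$ worsens polynomially near ${\rm Sing}(C)$, forcing one to enlarge $K_0'$ slowly with $j$. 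The hard part will be keeping this bookkeeping uniform across the infinitely many scales $j \to \infty$; the resolution, as in \cite{SZ, Zha24}, is to exploit the uniform Donaldson-Sun convergence rate on compact subsets of $C^{\rm sm}$ and to choose $j_0$ as a diagonal function of $\epsilon$, at which point both sources of drift are dominated by $r_\xi^{\pm\epsilon}$.
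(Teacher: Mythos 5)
Your outline (annulus in the cone, transfer via the Donaldson--Sun maps $\Psi_j$, then integrate for the distance estimate) matches the paper's overall strategy, but there is a genuine error in the central step of paragraph two. You claim that combining the cone-side comparison with the $C^\infty$-convergence of $2^j\Psi_j^*((\Phi_j^{-1})^*\omega_X)\to\omega_C$ yields, on each $Y_j$, the \emph{uniform} inequalities $(2C_0)^{-1}\omega_\xi\le\omega_X\le(2C_0)\,\omega_\xi$ with a constant independent of $j$. If that were true the lemma would hold with no $r_\xi^{\pm\epsilon}$ loss at all, contradicting what \cite{SZ, Zha24} are able to prove. The flaw is that after passing from the cone back to $X$ one must compare $\Phi_j^*\omega_\xi$ with $2^j\Phi_0^*\omega_\xi$, and because $\Phi_j=\Lambda^j(E_j\circ\cdots\circ E_1)\Phi_0$ this ratio is governed by $(E_1\circ\cdots\circ E_j)^*\omega_\xi/\omega_\xi$, a \emph{cumulative} distortion over $j$ factors. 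Each $E_i$ is close to the identity, but the product of the distortions diverges as $j\to\infty$ (merely slower than any $2^{\epsilon j}$), so no uniform constant is possible.

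This accumulated group distortion is exactly what the paper's (and Sun--Zhang's) inequalities
$|g^*\omega_\xi-\omega_\xi|_{\omega_\xi}\le c_4\,d_G(g,\mathrm{Id})$
and
$\prod_{i\ge 2}^{j}(1+d_G(E_i,\mathrm{Id}))\le c_\epsilon 2^{\epsilon j}$
are introduced to control; combined they give $c_\epsilon^{-1}2^{(1-\epsilon)j}\Phi_0^*\omega_\xi<\Phi_j^*\omega_\xi<c_\epsilon 2^{(1+\epsilon)j}\Phi_0^*\omega_\xi$, and since $r_\xi\sim 2^{-j/2}$ on $\Phi_j^{-1}(A^o)$ the factor $2^{\pm\epsilon j}$ becomes exactly the $r_\xi^{\mp 2\epsilon}$ in the statement. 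Your paragraph three does gesture at an $\epsilon$-loss accumulated over scales, but you misattribute its source to drift of the $\Psi_j$'s (those errors are \emph{uniformly} small for large $j$ by Theorem~\ref{DS.maps}(iii)) and to worsening of $C_0$ near $\mathrm{Sing}(C)$ (the hypothesis $B_0\Subset((\C^*)^l/\R_{>0})\setminus\mathrm{Arg}_\xi(\mathrm{Sing}(C))$ keeps you a fixed distance from the singular locus, so $C_0$ does not degrade with $j$). You need to replace the uniform-constant claim by the product estimate over the $E_i$'s, which is the actual mechanism forcing the $r_\xi^{\pm\epsilon}$ slack.
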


The proof follows the arguments of a variant 
\cite[Proposition 3.5]{SZ}, which was 
for the tangent cone {\it at infinity} of 
complete Ricci-flat K\"ahler metrics with Euclidean 
volume growth. We note that we do not a priori use 
$W=C$ in the following proof, 
but rather only use the metric comparison with the local metric tangent cone $C$ 
together with its algebraic realization due to \cite{DSII}. 

\begin{proof}
First we make some preparation. 
Within the ambient space $\C^l$, 
we denote the annulus 
$\{x\in \C^l\mid 1<d_\xi(0,x)<\frac{3}{2}\}$ 
by $A$. 
On the other hand, take a (large) open subset 
$$B'\Subset 
((\C^*)^l/\R_{>0})\setminus 
{\rm Arg}_{\xi}({\rm Sing}(C))),$$ 
where ${\rm Sing}(C)$ denotes the singular locus of $C$ i.e., $C\setminus C^{\rm sm}$, 
define $A'\Subset 
(A\setminus {\rm Sing}(C))$ as 
$$A':=A\cap {\rm Arg}_{\xi}^{-1}(B').$$ 
Further take subsets as $$A^o\subset U_A  
\Subset A'$$ where $U_A$ is open. 
Note that 
there is $j_0$ such that for any $j\ge j_0$ we have 
$\frac{2\sqrt{2}}{3}d_\xi(0,E_j(x))<d_\xi(0,x)<\frac{3}{2\sqrt{2}}d_\xi(0,E_j(x))$ as $E_j$ in 
Theorem \ref{DS.maps} converges to the identity. 
Therefore, if 
we take $A^o$ only 
slightly smaller than $A'$ for the {\it radial} direction, 
the following holds: 
\begin{align}\label{bigenough}
    \cup_j \Phi_j^{-1}(A^o)\supset 
{\rm Arg}_{\xi}^{-1}(B^o)\cap \{y\in X\mid 0<d_\xi(x,y)<r_0\},
\end{align}
for some small $r_0>0$ 
and $B^o\subset B'$ which again has some 
intermediate open subset $U_B$ of 
$((\C^*)^l/\R_{>0})$ 
as 
$B^o\subset U_B\Subset B'$. 
(Otherwise, for too small $A^o$, $\cup_j \Phi_j^{-1}(A^o)$ would 
contain infinite {\it  horizontal gaps} so that not containing the right hand side 
of \eqref{bigenough}). 
Note that for any $B^o$ with the above condition, we have corresponding large enough $A^o$ with \eqref{bigenough}. 

To prove \eqref{met.comp}, 
we start with the following obvious comparison 
\begin{align}\label{comp.A}
c_1^{-1}\omega_C\le \omega_\xi\le c_1 \omega_C    
\end{align}
for some $c_1>0$ 
on $A'\cap C$, which holds due to 
its relative compactness as we avoid 
singular locus. 
Now, we use Theorem \ref{DS.maps} which is proved 
in \cite{DSII}. We define $X_j:=\Phi_j(X)\subset \C^l$. 

Now we restrict this inequality 
to $X_j\cap A'$ and pull back by the diffeomorphism 
$\Psi_j$ as follows. More precisely, 
we take $(C^{\rm sm}\Supset) U_C\supset A'$ 
and then apply Theorem \ref{DS.maps} to obtain $\Psi_j$ 
and take $A^o$ which satisfies $\Psi_j(C\cap A')\supset X_j\cap A^o$ 
for any $j\ge j_0$ with fixed $j_0$. 
Due to 
    $\Psi_j\to {\rm Id}$ for $j\to \infty$ 
    (Theorem \ref{DS.maps} \eqref{DSiiia}) 
and 
$2^j \Psi_j^*((\Phi_j^{-1})^*\omega_X)\to \omega_C$ for $j\to \infty$ (Theorem \ref{DS.maps} \eqref{DSiiib}), 
since $A^o\Subset A'$, 
we have 
\begin{align}\label{comp.Xj}
    c_2^{-1}\cdot 2^j (\Phi_j^{-1})^*\omega_X
\le \omega_\xi\le c_2\cdot 2^j (\Phi_j^{-1})^*\omega_X
\end{align}
on $X_j\cap A^o$ 
for some $c_2>0$ and any $j\ge j_0$. 

For the same $j(\ge j_0)$, consider the pull back 
of the inequality \eqref{comp.Xj} by 
the embeddings $\Phi_j$ of Theorem \eqref{DS.maps}, 
we obtain 
\begin{align}\label{comp.X}
    c_2^{-1}\cdot 2^j \omega_X
\le \Phi_j^* \omega_\xi\le c_2\cdot 2^j \omega_X
\end{align} 
on $\Phi_j^{-1}(A^o)\subset X$ for any $j\ge j_0$. 

As in \cite[(3.8)]{SZ}, 
for any fixed neighborhood $U_G\Subset G_\xi$ of ${\rm Id}$ 
and distance $d_G$ induced by a Riemannian metric on $G_\xi$, 
there is a positive real constant $c_4$ such that 
\begin{align}\label{3.8}
|g^* \omega_\xi-\omega_\xi|_{\omega_\xi}
\le c_4 d_G(g,{\rm Id}), 
\end{align}
for any $g\in U_G$. 
Further, if $g_i (i=1,2,\cdots) \in G_\xi$ 
converges to ${\rm Id}$, then 
for any $\epsilon>0$, there is $c_\epsilon$ such that 
\begin{align}\label{3.9}
\prod_{i=2}^j(1+d_G(g_i,{\rm Id}))
\le c_{\epsilon}2^{\epsilon j}. 
\end{align}

Combining \eqref{3.8}, \eqref{3.9}, 
it follows that for any $\epsilon>0$, 
\begin{align}\label{Phi1Phij}
c_{\epsilon}^{-1}2^{(1-\epsilon) j}\Phi_0^* \omega_{\xi}
<\Phi_j^* \omega_{\xi}
<c_{\epsilon}2^{(1+\epsilon) j}\Phi_0^* \omega_{\xi}
\end{align}
for any $j$. 
On the other hand, note that there is a constant $c_4>0$ such that 
for any $j$ and $y\in \Phi_j^{-1}(A^o)$ we have 
\begin{align}\label{r.est}
d_{\xi}(x,y)<c_4 \sqrt{2}^{-j}
\end{align}
since $E_j\to {\rm Id}$ for $j\to \infty$. 

Hence, from \eqref{comp.X}, combining together with 
\eqref{Phi1Phij} and \eqref{r.est},  
we obtain the proof of the desired inequality 
\eqref{met.comp} 
on $\cup_j \Phi_j^{-1}(A^o)$ which contains 
the right hand side of \eqref{bigenough}. 
The proof of \eqref{dist.comp} follows the same arguments 
if we apply them to the comparison of distance functions (rather than 
the metric tensors). 
\end{proof}



\subsection{Asymptotic behaviour and extension of holomorphic differential forms}\label{subsec:extendtensor}

In this section, we keep the same setup as 
Notation \ref{ntt3} in the previous subsection,  
and 
consider the limiting behavior of holomorphic forms in the degenerate family of $X$ to $W, C$ as introduced in Lemma \ref{XtoW.C} and apply to certain extensions. 
Later we apply the following with $p=2$ and holomorphic symplectic form as $\sigma_X$. 
\begin{Thm}\label{thm:extendtoW}
As in the previous subsection and 
Notation \ref{ntt3}, 
let $X$ be a normal log terminal affine variety 
with a closed point $x\in X$ and apply Theorem \ref{AGlc}. Suppose $(0\neq)\sigma_X\in H^0(X,(\Omega_X^p)^{**})$ 
satisfies $p|n$ and $(\sigma_X^{\wedge \frac{n}{p}})\in H^0(\mathcal{O}_X(K_X))$ 
is non-vanishing. 
Here, ${}^{**}$ means the double dual to make it 
a reflexive coherent sheaf.  
If so, we call $\sigma_X$ is non-degenerate. 

Further, we also assume that 
Conjecture \ref{conj:genDS} holds for some $g_X$ with respect to 
which $\sigma_X$ is parallel, which already holds under the assumption of Theorem \ref{Mthm.intro} (our main interest is in $p=2$ i.e., algebraic 
symplectic form case). 

In this setup, the following holds. 
We write $X_\tau^o:=\Lambda_{\tau}^{-1}(X^o)\cap 
\{r_\xi\geq 1\}$ for $\tau\in (0,1)$. 
    \begin{enumerate}
    \item \label{est..tensor}
    For any $\epsilon>0$, there is $C_{\epsilon}$ 
    so that 
    \begin{align}\label{est.tensor}
    C_{\epsilon}^{-1}\tau^{\epsilon p}\le \tau^{-p}||\Lambda_\tau^*\sigma_X||_{(\omega_{\xi}|_{X^o_\tau})}
    \le C_{\epsilon}\tau^{-\epsilon p}
     \end{align}
    on $X^o_\tau$ for any $0<\tau\le 1$. 
    \item \label{extension}
    We follow Notation \ref{ntt2} e.g., the cone $\sigma\subset N\otimes \R$ and the definition of 
    $w\colon N\otimes \R\hookrightarrow \R^l$ (\eqref{def:w} in the proof of Lemma \ref{XtoW.C}). 
 For $\xi'\in \sigma\cap (N\otimes \Q)$, we describe $w(\xi')$ as $$\biggl(w'_1=\frac{\tilde{w}_1}{D},\cdots,w'_l=\frac{\tilde{w}'_l}{D}\biggr)\in \Q^l$$ and set $\tilde{\xi}':=D\xi'=(\tilde{w}'_1,\cdots,\tilde{w}'_l)\in \sigma\cap N$, with 
 $\tilde{w}'_i\in \Z \hspace{1.6mm} (i=1,\cdots,l), D\in \Z_{>0}$. 
    We consider $\mathcal{X}_{\tilde{\xi}'}$ as Lemma \ref{XtoW.C} \eqref{W.wtedblowup} with $\xi'$ there replaced by our $\tilde{\xi}'$. 
    
    There is a close enough choice of approximation $\xi'$ of $\xi$, which satisfies 
    that $W=(\mathcal{X}_{\tilde{\xi}'})|_0$ 
    (cf., Lemma \ref{XtoW.C} and \cite{Od24b, Od24c}) 
    and 
    the following extension property $(*)$: 
    \begin{quote}(*)
    $t^{-pD}p_1^*\sigma_X$ on $X\times (\A^1\setminus \{0\})$, 
    where $p_1$ stands for the first projection, 
    extends to $\X_{\tilde{\xi}'}^{\rm sm}$ as 
    a non-vanishing relative $p$-form i.e., 
    a section of $(\Omega_{\X_{\tilde{\xi}'/\A^1}})^{**}$, 
    which restricts to a non-degenerate $p$-form, 
    which we denote by  $\sigma_W(\xi')$ on 
    $W^{\rm sm}$. 
    \end{quote}
    \end{enumerate}
    \end{Thm}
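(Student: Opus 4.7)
For part (i), the plan is to combine the parallelness of $\sigma_X$ with respect to $g_X$---so that $|\sigma_X|_{\omega_X}$ is constant on $X^{\rm sm}$---with the ambient-cone metric comparison of Lemma \ref{SZlem}. Since a $p$-form is measured against $p$ copies of the inverse metric, the comparison $C_\epsilon^{-1}r_\xi^{\epsilon}\omega_\xi\le \omega_X\le C_\epsilon r_\xi^{-\epsilon}\omega_\xi$ on $X^o$ converts into
\[
(C_\epsilon')^{-1}r_\xi^{\epsilon p/2}\le |\sigma_X|_{\omega_\xi|_{X^o}}\le C_\epsilon' r_\xi^{-\epsilon p/2}.
\]
The homogeneities $\Lambda_\tau^*\omega_\xi=\tau^2\omega_\xi$ and $r_\xi\circ\Lambda_\tau=\tau r_\xi$ then yield, by a direct pullback identity,
\[
\tau^{-p}\,||\Lambda_\tau^*\sigma_X||_{\omega_\xi|_{X_\tau^o}}(y)=||\sigma_X||_{\omega_\xi|_{X^o}}(\Lambda_\tau(y)),
\]
and since $r_\xi\ge 1$ on $X_\tau^o$ forces $r_\xi(\Lambda_\tau(y))\ge \tau$, the previous estimate transfers to the claimed inequality on $X_\tau^o$ (after relabelling $\epsilon$).

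For part (ii), I work in the standard $t$-chart of the weighted blow-up of Lemma \ref{XtoW.C} \eqref{W.wtedblowup}, with new coordinates $(\tilde z_i, t)$ on $\X_{\tilde\xi'}$ related to the ambient coordinates by $z_i=\tilde z_i\,t^{\tilde w'_i}$. A direct substitution shows that $p_1^*\sigma_X$, pulled back to $\X_{\tilde\xi'}\setminus\{t=0\}$ and taken modulo $dt$, equals $\sum_{I,J} c_{I,J}\,t^{d(I,J)}\,\tilde z^I d\tilde z^J$, where $d(I,J)=\sum_i I_i\tilde w'_i+\sum_k \tilde w'_{j_k}$ is the $\tilde\xi'$-weight of the monomial $z^I\,dz^J$. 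Consequently $t^{-pD}\,p_1^*\sigma_X$ extends regularly across $\{t=0\}=W$ as a non-vanishing relative $p$-form on $\X_{\tilde\xi'}^{\rm sm}$, with restriction the $\tilde\xi'$-initial form ${\rm in}_{\tilde\xi'}(\sigma_X)$ on $W^{\rm sm}$, precisely when the minimum $\tilde\xi'$-weight $d_0$ appearing in $\sigma_X$ equals $pD$.

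Establishing $d_0=pD$ is the technical crux of the theorem, and both inequalities of part (i) together with the Diophantine flexibility in the choice of $\xi'$ enter here. The upper bound of part (i) rules out $d_0<pD$: any contribution of $\tilde\xi'$-weight strictly less than $pD$ would force $||\Lambda_\tau^*\sigma_X||_{\omega_\xi}$ to grow faster than the permitted rate $\tau^{p(1-\epsilon)}$ for small $\epsilon$. For the reverse $d_0\le pD$, I use that in the setup of Theorem \ref{Mthm.intro} (or Theorem \ref{Mthm2.intro}) the form $\sigma_C$ on the metric tangent cone $C$ is $\xi$-homogeneous of weight $p$---parallelness of $\sigma_X$ with respect to a cone metric of weight $2$ forces this---so the $\xi$-initial part of $\sigma_X$ is non-trivial and of $\xi$-weight $p$; a Dirichlet-type rational approximation then lets me choose $\xi'\in\sigma\cap(N\otimes\Q)$ sufficiently close to $\xi$ that these boundary monomials realize $\tilde\xi'$-weight exactly $pD$ while all other monomials have strictly larger $\tilde\xi'$-weight. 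Non-degeneracy of $\sigma_W(\xi')={\rm in}_{\tilde\xi'}(\sigma_X)$ on $W^{\rm sm}$ then follows from applying the lower bound of part (i) to the top wedge power $\sigma_X^{\wedge n/p}$, which is non-vanishing by hypothesis. The main obstacle is exactly this Diophantine step: the expansion of $\sigma_X$ in the ambient coordinates contains a priori infinitely many monomials, and one must simultaneously align the boundary monomials at $\tilde\xi'$-weight $pD$ and exclude any ``resonance'' among the higher-weight monomials that would artificially depress the minimum $\tilde\xi'$-weight below $pD$; it is the combination of the a priori bound from part (i) with the Dirichlet and Kronecker-type approximations flagged in the introduction that makes such a selection of $\xi'$ feasible.
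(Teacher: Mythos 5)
Your treatment of part (i) is correct and follows the paper's route: the rescaling identity $\tau^{-p}\|\Lambda_\tau^*\sigma_X\|_{\omega_\xi}=\Lambda_\tau^*\|\sigma_X\|_{\omega_\xi}$, the metric comparison of Lemma~\ref{SZlem}, parallelness giving $\|\sigma_X\|_{\omega_X}$ constant, and $r_\xi\ge\tau$ on $\Lambda_\tau(X_\tau^o)$. (The factor $\epsilon p/2$ versus $\epsilon p$ is a normalization detail absorbed by the arbitrariness of $\epsilon$.) Your broad plan for part (ii)---use the two-sided bound from (i), control the perturbation to a rational $\xi'$ by a Dirichlet approximation, and deduce non-degeneracy by applying the same bound to $\sigma_X^{\wedge n/p}$---is also the right outline.

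However, the route you take through ``the minimum $\tilde\xi'$-weight $d_0$ appearing in $\sigma_X$ equals $pD$'' has a genuine gap, which you yourself flag but do not close. First, $\sigma_X$ is a section of $(\Omega^p_X)^{**}$ on $X$, not a form on $\C^l$, so an expansion $\sum c_{I,J}\,t^{d(I,J)}\,\tilde z^I\,d\tilde z^J$ in the blow-up chart is not available off the shelf. Second, and more seriously, your argument for $d_0\le pD$ invokes $\xi$-homogeneity of $\sigma_C$ of weight $p$; this fact is only established later (in the proof of Theorem~\ref{ssps}, and via Proposition~\ref{rands}) and in any case does not directly give you a rational initial form, since $\xi$ is irrational when $r(\xi)>1$, so there is no well-defined algebraic $\xi$-initial form of $\sigma_X$ to compare with $\sigma_C$. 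Third, the ``resonance'' concern you raise---that infinitely many monomials could conspire to depress $d_0$ below $pD$ after passing to the approximant $\tilde\xi'$---is precisely the difficulty, and ``the combination of the a priori bound from part (i) with the Dirichlet and Kronecker-type approximations flagged in the introduction'' is not an argument. The paper sidesteps all of this by never expanding in monomials: one combines the bound $\tau^{pD(1\pm\epsilon)}\cdot\tau^{\mp 2pD\,d(\xi,\xi')}$ on $\|(\Lambda_\tau^{(\tilde\xi')})^*\sigma_X\|_{\omega_\xi}$ with a quantitative Dirichlet approximation (the paper's Claim~\ref{claim:dio}) to ensure $pD\epsilon+2pD\,d(\xi,\xi')<\epsilon'\le 1$; then one uses that $j_*\Omega^p_{\X^{\rm sm}/\A^1}$ is reflexive and the central fiber $W$ is an irreducible divisor, so the pole/zero order of the section along $W$ is an integer, and the sub-unit estimate $\tau^{\pm\epsilon'}$ forces it to be zero. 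The same argument with $p$ replaced by $n$ and $\epsilon'\le p/n$, applied to $\sigma_X^{\wedge n/p}$, rules out vanishing of the extended volume form along $W$, whence non-degeneracy. No initial-form or monomial analysis is required, so the resonance concern never arises.
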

    
    In our proof, $\xi'$ is fairly carefully chosen, 
    solving some Diophantine approximation problem. 
    Note that in the context of general local conification 
    (Theorem \ref{AGlc}), an elementary 
    Diophantine approximation is also used in \cite[\S 2.2]{LX}, 
    but for our purposes, we need more delicate version 
    connecting with estimates of asymptotics of the 
    norms of symplectic forms, 
    with respect to singular metrics. 
    (We actually give an explicit sufficient condition of this approximation, which we call {\it nice approximant}. See Definition \ref{def:nice}.) 
    
    Also note that replacement of  $D, \tilde{\xi}'$ by 
    their positive integer $d$ multiple corresponds 
    to the base change of ${\X}_{\tilde{\xi}'}\to \A^1$ 
    with respect to the degree $d$ ramified map 
    $\A^1\to \A^1$, $t\mapsto t^d$, so that the above condition 
    (*) remains equivalent. This justifies the notation 
    $\sigma_W(\xi')$.

\begin{proof}
\eqref{est..tensor} is a consequence of 
Lemma \ref{SZlem} \eqref{met.comp}. Indeed, 
\begin{align}
\tau^{-p}||\Lambda_\tau^*\sigma_X||_{(\omega_{\xi}|_{X_\tau})}
&=||\Lambda_\tau^* \sigma_X||_{\tau^2 (\omega_\xi|_{X_\tau})}\\ 
&=||\Lambda_\tau^* \sigma_X||_{\Lambda_\tau^* (\omega_\xi|_{X})}\\
&=\Lambda_\tau^* (||\sigma_X||_{(\omega_\xi|_X)}). \label{eqn.norm} 
\end{align}

On the other hand, Lemma \ref{SZlem} \eqref{met.comp} directly implies 
\begin{align}\label{tensor.estimate}
C_\epsilon^{-p} 
r_\xi^{p\epsilon}||\sigma_X||_{(\omega_X|_{X^o})}\le ||\sigma_X||_{(\omega_\xi|_{X^o})}\le 
C_\epsilon^p r_\xi^{-p\epsilon}||\sigma_X||_{(\omega_X|_{X^o})}, 
\end{align}
with the same constants $C_\epsilon$ with Lemma 
\ref{SZlem} \eqref{met.comp}. 

Since $\sigma_X$ is parallel with respect to $g_X$, 
there is a positive constant 
$c_5$ with 
\begin{align}\label{sigmanorm}
c_5=||\sigma_X||_{\omega_X}=\sqrt[\frac{n}{p}]{||\sigma_X^{\frac{n}{p}}||_{\omega_X}}. 
\end{align}
Combining above \eqref{eqn.norm}, \eqref{tensor.estimate} 
and \eqref{sigmanorm}, \eqref{est.tensor} follows.

In fact, by 
\eqref{tensor.estimate} 
and \eqref{sigmanorm} we have 
$$C_\epsilon^{-p}r_{\xi}^{p\epsilon}c_5 
\le ||\sigma_X||_{(\omega_\xi|_{X^o})}\le 
C_\epsilon^p r_\xi^{-p\epsilon}c_5
$$
Since  
${r_{\xi}} \geq \tau$ on 
$\Lambda_{\tau}(X_{\tau}^o)$, we have 
$$C_\epsilon^{-p}\tau^{p\epsilon}c_5 
\le \Lambda_{\tau}^*||\sigma_X||_{(\omega_\xi|_{X^o})}\le 
C_\epsilon^p \tau^{-p\epsilon}c_5. 
$$
After replacing $C_{\epsilon}$ by a suitable constant, we get \eqref{est.tensor}. 

\vspace{3mm}

Now we prove \eqref{extension} by using \eqref{est.tensor}. 
To clarify the idea, first we prove the case when $r(\xi)=1$, when there is no approximation issue. 
In this case, we can assume $\xi=\xi'$ is the generator of 
$\sigma\cap N\simeq \Z_{\ge 0}$ and $\tau$ can be regarded as the coordinate $t$ 
of the base $\A^1$ (only restrict to the positive real numbers). 
If $t^{-p}\Lambda_t^* \sigma_X$, 
defined on $\mathcal{X}\setminus W$ 
has a pole at whole $W\subset \X$, 
it clearly contradicts with 
\eqref{est.tensor}. 
For the open immersion $j\colon \mathcal{X}^{\rm sm}\hookrightarrow\mathcal{X}$, 
$j_* \Omega_{\X^{\rm sm}/\Delta}^p$ is a reflexive 
sheaf. 
Thus, since the central fiber of $\mathcal{X}$ is 
irreducible, 
$t^{-p}\Lambda_t^* \sigma_X$ 
extends to a global section of 
$\Omega^p_{\X^{\rm sm}/\Delta}$ which we denote as 
$\tilde{\sigma}$ and we define 
$\sigma_W:=\tilde{\sigma}|_W$. 
Now we are going to prove this algebraic limit 
$\sigma_W$ is non-vanishing on $W^{\rm sm}$. 
There are two proofs for this and we choose an easier way. 
One is to consider 
$\sigma_X^{\frac{n}{p}}$ and apply \eqref{est.tensor}. 
From the above arguments, $t^{-n}\sigma_X^{\frac{n}{p}}$ 
extends as a relative holomorphic $n$-form on $\X^{\rm sm}$ 
which can not vanish along $W$. Hence, $\tilde{\sigma}$ 
also can not vanish. 

For more general tensors, even without the assumption 
that 
 $\wedge^{\frac{n}{p}} \sigma_X$ give a holomorphic volume form, 
 we can at least prove that extended 
 $\sigma_W$ does not vanish on $W^{\rm sm}$. As it may 
 have 
 potential applications in the future, we keep the arguments for reference. 

We take the a 
priori vanishing locus of $\tilde{\sigma}$ as 
$W'\subset W^{\rm sm}$. 
Clearly $W'$ is $\R_{>0}$-invariant so 
we can take  $(0\neq) w\in W'\cap A$ where $A$ is the 
annulus 
$\{x\in \C^l\mid 1<d_\xi(0,x)<\frac{3}{2}\}$ 
defined in the proof of Lemma \ref{SZlem} \eqref{met.comp}. 
We take a local holomorphic coordinates 
$t, z_1,\cdots,z_n$ around $w\in \X$ with $z_i(w)=0$. 
Then we can and do locally describe 
$$\tilde{\sigma}=\sum_{I\subset \{1,\cdots,n\}, \#I=p}
\bigl(\sum_{j=1,\cdots,n}h_{I,j}(\vec{z},t)z_j+k(\vec{z},t)t\bigr) \wedge_{i\in I} dz_i,$$ 
with some local holomorphic functions 
$h_{I,i}(-,-)$ and $k(-,-)$, due to the vanishing at $w$. 
Now we 
take $A^o$ and $B^o$ in 
the proof of Lemma \ref{SZlem} 
large enough in the sense $w\in A^o$ and ${\rm 
Arg}_\xi(w)\in B^o$. 
Then, from Lemma \ref{SZlem}, we have 
\begin{align}\label{van.contr}
c_{\epsilon}^{-1}\tau^{\epsilon p}
&\le |h_{I,j}|^2 |z_j|^2+|k|^2 |t|^2\\ 
&\le c_6(\sum_{j=1}^n |z_j|^2+|t|^2) 
\end{align}
with some $c_6>0$. 
Note that 
a neighborhood of $w$ in $\X_W(\subset \C^l)\to \mathbb{C}$ is 
a holomorphic submersion. Take its local coordinate system, which
maps $w$ to the origin, and the
corresponding local holomorphic $0$-section $(\C^l \supset)\Delta\to \X_W$. 
Inside the section, we take a sequence $(\cup_\tau \Lambda_\tau^* X^0\ni)p_i\to w\in W\subset \X (i=1,2,\cdots)$ 
with $\tau(p_i)\to 0$ which automatically satisfies  $|z_j(p_i)|=O(\tau(p_i))$. 
Then we obtain the contradiction by applying the inequality \eqref{van.contr} to the sequence. 
We end the second arguments on the non-vanishing of $\sigma_W$ on $W^{\rm sm}$. 

\vspace{2mm}

Now we proceed to the case $r(\xi)>1$ by finding a suitable 
Diophantine approximation of $\xi$. 
Recall that 
for 
$\tau\in \R_{>0}\subset \C$, $X_\tau$ refers to 
${\rm diag}(\tau^{-w_1},\cdots,\tau^{-w_l})\cdot X_1\subset \C^l$. 
Similarly, we denote the fiber at $\tau$ of 
$\X_{\tilde{\xi}'}$ (resp., $\X_{\xi'}$) 
as $X_{\tau}^{\tilde{\xi}'}$ (resp., $X_\tau^{(\xi')}$). 
These are embedded in $\C^l$ and the isomorphism 
$\varphi_\tau\colon 
X_\tau^{(\xi')}\to X_\tau$ is realized by 
a matrix 
$A:={\rm diag}(\tau^{w'_1-w_1},\cdots,\tau^{w'_l-w_l})$. 
We also denote the isomorphism 
$\Lambda_{\tau}^{(\xi')}\colon 
X_\tau^{(\xi')}\to X_1^{(\xi')}$ given by 
${\rm diag}(\tau^{w'_1},\cdots,\tau^{w'_l})$. 
If we compare $\omega_\xi|_{X_{\tau}}$ and $\omega_{\xi}|_{X_\tau^{(\xi')}}$ via $\varphi_\tau$, 
we have 
\begin{align}\label{ggcomp}
\tau^{2\underset{1\le i\le l}\max\bigl\{\frac{|w_i-w'_i|}{w_i}\bigr\}}
\varphi_\tau^* (
\omega_\xi|_{X_\tau})
\le 
\omega_\xi|_{X_\tau^{(\xi')}} 
\le 
\tau^{-2\underset{1\le i\le l}\max\bigl\{\frac{|w_i-w'_i|}{w_i}\bigr\}}
\varphi_\tau^* (
\omega_\xi|_{X_\tau}), 
\end{align}
for $0<\tau\le 1$, 
by the definition of $\omega_\xi$. Thus, 
\begin{align}\label{perturb.tensor}
\tau^{p\underset{1\le i\le l}\max\bigl\{\frac{|w_i-w'_i|}{w_i}\bigr\}}
|(\Lambda_\tau^{(\xi')})^*\sigma_X|_{\varphi_\tau^*(\omega_\xi|_{X_\tau})}
&\le 
|(\Lambda_\tau^{(\xi')})^*\sigma_X|
_{(\omega_\xi|_{X_\tau^{(\xi')}})}\\
&\le  \label{perturb.tensor2}
\tau^{-p\underset{1\le i\le l}\max\bigl\{\frac{|w_i-w'_i|}{w_i}\bigr\}}
|(\Lambda_\tau^{(\xi')})^*\sigma_X|_{\varphi_\tau^*(\omega_\xi|_{X_\tau})}, 
\end{align}
for $0<\tau\le 1$. 

We set $d(\xi,\xi'):=\underset{1\le i\le l}\max\bigl\{\frac{|w_i-w'_i|}{w_i}\bigr\}$. 
Combining \eqref{est.tensor} with \eqref{perturb.tensor} and 
\eqref{perturb.tensor2}, 
for any $\epsilon>0$, there is a positive constant 
$C_{\epsilon}>0$ such that on 
$\varphi_\tau^{-1}(X_\tau^o)\subset X_\tau^{(\xi')}$, we have 

\begin{align}\label{tensor1}
C_{\epsilon}^{-1}\tau^{pD(1+\epsilon)}\cdot \tau^{2pD d(\xi,\xi')}
& \le 
|(\Lambda_{\tau^D}^{(\xi')})^* \sigma_X|_{X^o}|_{(\omega_\xi|_{\varphi_\tau^{-1}(X_\tau^o)})}\\ 
&\label{tensor2}
=|(\Lambda_{\tau}^{(\tilde{\xi'})})^* \sigma_X|_{X^o}|_{(\omega_\xi|_{\varphi_\tau^{-1}(X_\tau^o)})}\\ 
&\label{tensor3}
\le C_{\epsilon}\tau^{pD(1-\epsilon)}\cdot \tau^{-2pD d(\xi,\xi')}. 
\end{align}

Now, we prove the following, from which we show 
the extendability of rescaled $\sigma_X$ to $W$. 
\begin{Claim}\label{claim:dio}
For any positive real number 
$\epsilon'$, 
there are 
$D\in \Z_{>0}$, $\xi'\in N\otimes \Q$ 
and $\epsilon>0$ so that $Dw(\xi') \in \Z^l_{>0}$ and 
$$pD\epsilon + pDd(\xi, \xi') < \epsilon'.$$ 
\end{Claim}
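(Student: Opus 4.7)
The plan is to decouple the two summands: first fix $D$ and $\xi'$ so that $D\cdot d(\xi,\xi')$ is arbitrarily small via a simultaneous Dirichlet Diophantine approximation in the lattice $N$, and then choose $\epsilon$ small enough to absorb the remaining $pD\epsilon$ term against the fixed $D$.

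For the substantive step, fix a $\Z$-basis of $N$ and identify $N\otimes\R \simeq \R^{r(\xi)}$, so that $\xi$ has coordinates $(a_1,\ldots,a_{r(\xi)})\in\R^{r(\xi)}$. For any integer $Q\geq 2$, Dirichlet's simultaneous approximation theorem produces $D\in\{1,\ldots,Q^{r(\xi)}\}$ and $v\in N$ with $|D\xi-v|_\infty \leq 1/Q$ in the chosen coordinates. Setting $\xi':=v/D\in N\otimes\Q$, the $\R$-linear inclusion $w\colon N\otimes\R\hookrightarrow\R^l$ of \eqref{def:w} has some finite operator norm $M>0$ (relative to the chosen basis of $N$ and the $\infty$-norm on $\R^l$), so $|w_i-w'_i|\leq M/(DQ)$ for each $i$. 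Letting $w_{\min}:=\min_i w_i>0$, this yields
\[
d(\xi,\xi') = \max_i\frac{|w_i-w'_i|}{w_i}\leq \frac{M}{DQ\,w_{\min}},
\]
and hence $2pD\,d(\xi,\xi')\leq 2pM/(Q\,w_{\min})$. Crucially, this bound is \emph{independent of $D$}, so choosing $Q$ large enough forces it below $\epsilon'/2$. With $Q$, $D$ and $\xi'$ now fixed, any $\epsilon>0$ with $\epsilon<\epsilon'/(2pD)$ works; adding the two estimates yields $pD\epsilon+2pD\,d(\xi,\xi')<\epsilon'$.

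Finally, the side conditions $\xi'\in\sigma$ and $D\xi'\in\Z^l_{>0}$ (i.e. the entries of $w(v)\in\Z^l$ are all strictly positive) hold automatically once the approximation is close enough to $\xi$: since $\sigma$ is an open neighborhood of $\xi$ in $N\otimes\R$ and $w(\xi)\in\R^l_{>0}$, both conditions are satisfied on some $|\cdot|_\infty$-ball around $\xi$, and enlarging $Q$ if necessary places $\xi'$ inside that ball. The main conceptual point is the Dirichlet improvement $|\xi-\xi'|_\infty = O(1/(DQ))$ over naive rational approximation $O(1/D)$; this is precisely what makes $D\cdot d(\xi,\xi')$ tend to zero uniformly in $D$ as $Q\to\infty$, which is the only nontrivial ingredient. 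Without this uniformity, the dependence of $\epsilon$ on the (a priori large) denominator $D$ could not be severed, and the coupling between the two summands would prevent the estimate.
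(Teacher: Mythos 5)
Your proof is correct and the core idea is the same as the paper's: use Dirichlet simultaneous approximation to get an error $O(1/(DQ))$ rather than the naive $O(1/D)$, observe that this makes $D\cdot d(\xi,\xi')$ bounded \emph{independently of} $D$ (so it can be killed by enlarging $Q$), and then choose $\epsilon$ small relative to the now-fixed $D$. The place where you diverge from the paper is technically substantive: you apply Dirichlet in the rank-$r(\xi)$ lattice $N$ and push forward through the integral linear map $w\colon N\hookrightarrow\Z^l$, paying only a fixed operator-norm constant $M$. This guarantees automatically that $\xi'=v/D\in N\otimes\Q$ and $D\xi'=v$ has integral $w$-image, which is precisely what is needed downstream to build the test configuration $\X_{\tilde\xi'}$. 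The paper, by contrast, applies Dirichlet directly to the $l$-tuple $(w_1,\ldots,w_l)\in\R^l$; this produces integer numerators $\tilde w_i'$ but does not by itself place the resulting $\xi'$ inside $w(N\otimes\Q)$ (the $w_i$ satisfy $\Q$-linear relations that a generic Dirichlet approximant in $\R^l$ will break). The paper only closes that gap later, in the much more careful construction of approximants inside the affine subspace $V_\xi$ in Theorem \ref{thm:extendtoW2}\eqref{extension.cone}. So your version is tighter at this point: it buys you the membership $\xi'\in N\otimes\Q$ for free, at the cost of having to fix a basis of $N$ and introduce the constant $M$; the paper's version is notationally lighter here but relies on the later argument in \S\ref{subsec:extendtensor} to produce approximants with the required algebraic constraint.
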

\begin{proof}[proof of Claim \ref{claim:dio}]
We take a sufficiently large positive integer $N(\xi)$, 
depending just on $n,p,w_i$s as we clarify later, 
and 
take a Diophantine approximation $\xi'$ of $\xi$ 
of Dirichlet type 
as 
$w(\xi')=(w'_1=\frac{\tilde{w}'_1}{D},\cdots,w'_l=\frac{\tilde{w}'_l}{D})$ 
with $\tilde{w}'_i, D\in \Z_{>0}$ which satisfies 
\begin{align}\label{diophantine}
|Dw_i-\tilde{w}'_i|<\frac{1}{N(\xi)}. 
\end{align}

The existence of such $D$ and $\tilde{w}'_i$ s, which further satisfies $0<D<N(\xi)^l$ (although we do not need this effective
upper bound), is 
standard after L.~Dirichlet and follows from
the statements in \cite[Theorem 1A, (1.1) 
Chapter II]{Schmid} for example.  
Note that \eqref{diophantine} implies 
\begin{align}\label{diophantine2}
pDd(\xi, \xi')<\frac{p}{N(\xi)\cdot \underset{1\le i\le l}\min\{w_i\}}, 
\end{align}
which can be taken arbitrarily small if we take large enough $N(\xi)$. 
In particular, we can take $N(\xi)$ so that 
\begin{align}\label{N.exp}
\frac{p}{N(\xi)\cdot \underset{1\le i\le l}\min\{w_i\}} < \frac{1}{2}\epsilon'.
\end{align}
For this $N(\xi)$ we take $\epsilon$ so that 
$pD\epsilon<pN(\xi)^l\epsilon < \frac{1}{2}\epsilon'$. 
Then the desired inequality 
\begin{align}\label{diophantine3}
    pD\epsilon + pDd(\xi, \xi') < \epsilon'
\end{align}
holds. 
\end{proof}


We finish the rest of the proof of Theorem \ref{thm:extendtoW} relying on the above Claim \ref{claim:dio}, applied with 
$\epsilon'\le \frac{p}{n}$. 
Using the approximant $\tilde{\xi}'=(\tilde{w}'_1,\cdots,\tilde{w}'_l)=(\frac{\tilde{w}'_1}{D},\cdots,\frac{\tilde{w}'_l}{D})$ which exists 
by Claim \ref{claim:dio}, 
we can apply 
the same arguments as $r(\xi)=1$ case to prove the 
desired extendability assertion. Indeed, 
consider $\tau^{-pD}(\Lambda_{\tau}^{(\tilde{\xi'})})^* \sigma_X$ on 
$\X_{\tilde{\xi}'}\setminus W$ 
extends to whole 
$\X_{\tilde{\xi}'}$ as a family of  
$p$-form because of \eqref{diophantine3} and 
\eqref{tensor1}-\eqref{tensor2}-\eqref{tensor3} 
as far as $\epsilon'\le 1$. 
Further, as in our previous arguments for $r(\xi)=1$ case, 
if we apply the same arguments to its $\frac{n}{p}$-th 
(exterior) power 
$\tau^{-nD}((\Lambda_{\tau}^{(\tilde{\xi'})})^* \sigma_X^{\wedge^{\frac{n}{p}}})$, we conclude 
the restriction of $\tau^{-pD}(\Lambda_{\tau}^{(\tilde{\xi'})})^* \sigma_X$ to the central fiber is also non-degenerate in the sense of the statement of our Theorem \ref{thm:extendtoW}, 
because $\frac{n}{p}\epsilon'\le 1$. 
(This is where we need $\epsilon'\le \frac{p}{n}$.) 
Hence \eqref{extension} follows. 
We complete the proof of Theorem \ref{thm:extendtoW}. 
\end{proof}

Motivated by the above discussion, 
now we define an explicit 
sufficient condition of extendability of 
the family of holomorphic forms 
$\tau^{-pD}(\Lambda_{\tau}^{(\tilde{\xi'})})^* \sigma_X$. 

\begin{defn}[Nice approximation of $\xi$]\label{def:nice}
Recall the map 
$w(-)$ as that of \eqref{def:w} defined in the proof of Lemma \ref{XtoW.C}. 
We fix a large enough positive integer 
$$N(\xi)\ge  \lceil \frac{4n}{\underset{1\le i\le l}\min\{w_i\}} \rceil$$ 
which further satisfies the following condition: 
\begin{quote}(**)
any vector $w(\xi')=(w'_1,\cdots,w'_l)\in w(N\otimes \R)\subset \R^l$ which satisfies 
$|w_i-w'_i|<\frac{1}{D N(\xi)}$ is contained in $w(\sigma)$, 
where $\sigma$ is that of Lemma \ref{XtoW.C}. 
\end{quote}

Then, we call $\xi'\in N\otimes \Q$ with 
$$w(\xi')=(w'_1,\cdots,w'_l)\in N\otimes \Q\subset \Q^l,$$ together with 
expressions $w'_i=\frac{\tilde{w}'_i}{D} (i=1,\cdots,l)$ ($\tilde{w}'_i\in \Z, 
D\in \Z_{>0}$) is a {\it nice approximant} of $\xi$ (or $w(\xi)$), if 
it satisfies the estimates  \eqref{diophantine} 
for the above fixed $N(\xi)$ i.e., 
\begin{align}\label{eqn:nice}
|Dw_i-\tilde{w}'_i|<
\frac{1}{N(\xi)}
\end{align}
for any $1\le i\le l$. 
\end{defn}

We have to be careful that for any nice approximant 
$w(\xi')=(\frac{\tilde{w}'_1}{D},\cdots,\frac{\tilde{w}'_l}{D})$, its different expression 
$(\frac{a\tilde{w}'_1}{aD},\cdots,\frac{a\tilde{w}'_l}{aD})$
for $a\gg 1$ is {\it not} a nice approximant any more, as 
similar things often happen in the theory of Diophantine 
approximation. 
Henceforth, throughout this section, we identify $\xi'$ and 
$w(\xi')$ through the inclusion map $w$. 

\vspace{5mm}

Before going into further analysis, note that 
our arguments so far at least imply the original conjecture of 
Kaledin (\cite{Kaledin, Kaledin 2}) as follows. 

\begin{Cor}[Kaledin's conjecture]\label{formal.isom}
In the setup of Theorem \ref{Mthm.intro} or \ref{Mthm2.intro}, 
there is an isomorphism of Poisson formal schemes: 
$(X, x)^{\hat{}}  \cong  (W, 0)^{\hat{}}$. 
Here, $W\ni 0$ is the first step object of 
Donaldson-Sun theory (see \S \ref{sec:2}) and it is a 
conical symplectic variety. In particular, 
Kaledin's conjecture (\cite[cf., Remark 4.2, \S 4]{Kaledin}, \cite[Conjecture 1.8]{Kaledin 2}), at the formal isomorphism level, 
holds in the setup. 
\end{Cor}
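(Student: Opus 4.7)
The plan is to assemble the deformation-theoretic machinery developed above: Lemma \ref{XtoW.C} provides a one-parameter affine family $\pi \colon \mathcal{X}_{\tilde{\xi}'} \to \A^1$ degenerating $X$ to $W$, Theorem \ref{thm:extendtoW} upgrades this family to a Poisson deformation, and Corollary \ref{Cor:noscaleup} then forces formal triviality at each nonzero fiber.

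First I would pick a nice approximant $\xi'$ of the Reeb vector $\xi$ in the sense of Definition \ref{def:nice}, together with its integral rescaling $\tilde{\xi}' = D\xi' \in N$, and form the associated affine test configuration $\pi \colon \mathcal{X}_{\tilde{\xi}'} \to \A^1$ from Lemma \ref{XtoW.C} (ii), whose central fiber is $W$ and whose fibers over $\A^1 \setminus \{0\}$ are all isomorphic to $X$. Applying Theorem \ref{thm:extendtoW} (ii), the relative $2$-form $t^{-2D} p_1^* \sigma_X$ on $X \times (\A^1 \setminus \{0\})$ extends to a non-degenerate relative symplectic form on $\mathcal{X}_{\tilde{\xi}'}^{\mathrm{sm}}$ which restricts to a $T$-homogeneous symplectic form $\sigma_W(\xi')$ on $W^{\mathrm{sm}}$. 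Extending the induced Poisson bracket by $\mathcal{O}_{\A^1}$-linearity across the relative singular locus (using normality fiberwise and the reflexivity of the relevant tensor sheaves), this promotes $\pi$ to a $\G_m$-equivariant Poisson deformation of the conical symplectic variety $(W, \sigma_W(\xi'))$.

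Next I would verify the three axioms of Definition \ref{def:SPD}. The action of $\G_m$ on the base $\A^1 = \mathrm{Spec}\,\C[t]$ has a negative weight because $\mathcal{X}_{\tilde{\xi}'}$ is constructed as an affine chart of the weighted blow-up of $X \times \A^1$ with respect to weights $(\tilde{w}'_1, \dots, \tilde{w}'_l, 1)$ for the one-parameter subgroup $\tilde{\xi}'$, so $\G_m$ scales $t$ with weight $-1$. The Poisson bracket inherits negative weight $-2D \cdot \mathrm{wt}(t)$ from the $t^{-2D}$ normalization fixed in Theorem \ref{thm:extendtoW}. The proper transform $\Gamma$ of $\{x\} \times \A^1$ provides the required $\G_m$-invariant section meeting $W$ at the vertex $0_W$, and the orbit-closure condition follows from conicity of $W$ with only positive weights coming from the Reeb direction $\tilde{\xi}'$.

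With the scale-up structure confirmed, Corollary \ref{Cor:noscaleup} applied at any closed point $s \in \A^1 \setminus \{0\}$ yields a $\G_m$-equivariant Poisson isomorphism $(\mathcal{X}_{s})^{\hat{}}_{\Gamma_s} \cong (W)^{\hat{}}_{0_W}$ of formal Poisson schemes. Since $\mathcal{X}_s \cong X$ canonically and $\Gamma_s$ corresponds to $x$, this is exactly the desired formal Poisson isomorphism $(X, x)^{\hat{}} \cong (W, 0)^{\hat{}}$. The main obstacle is already handled by Theorem \ref{thm:extendtoW} via Dirichlet-type Diophantine approximation of $\xi$; once $\sigma_X$ is known to extend, the remaining work is the bookkeeping verification of the scale-up axioms plus a direct invocation of the formal triviality theorem \ref{formallocaltriv}.
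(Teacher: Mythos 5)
Your proposal is correct and follows the same route as the paper: take a nice approximant $\xi'$, use Theorem \ref{thm:extendtoW}\eqref{extension} to enhance $\mathcal{X}_{\tilde{\xi}'}\to\A^1$ to a scale-up Poisson deformation, and invoke Corollary \ref{Cor:noscaleup} (ultimately Theorem \ref{formallocaltriv}). The paper states this more tersely, leaving the verification of Definition \ref{def:SPD}'s axioms implicit, whereas you spell it out — just double-check your sign bookkeeping there, since with $\mathrm{wt}(t)=-1$ the Poisson bracket should have weight $-2D$ (equivalently $\sigma_W$ has positive weight $2D$), not $-2D\cdot\mathrm{wt}(t)$.
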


\begin{proof}
Recall that by Lemma \ref{XtoW.C}, there is a scale up test configuration 
$\mathcal{X}_{\tilde{\xi}'}\to \A^1$
of 
$X$ whose special fiber is $W$, for 
any close enough approximation $\xi'=\frac{\tilde{\xi}'}{D}$ 
of $\xi$. Further, by applying 
Theorem \ref{thm:extendtoW} \eqref{extension} with $p=2$, 
among those  approximations $\xi'$, 
if we choose $\xi'=\frac{\xi'}
{D}$ more carefully as a nice approximant in the sense of 
Definition \ref{def:nice}, the obtained $\mathcal{X}_{\tilde{\xi}'}\to \A^1$ is even 
enhanced to a Poisson deformation. Then, we can apply 
Theorem \ref{formallocaltriv} and Corollary \ref{Cor:noscaleup} 
in the previous section to show the desired 
claim. We complete the proof of Corollary \ref{formal.isom}. 
\end{proof}

The fact $p=2$ holds 
is not really used in the above proof, except for the use of \S 
\ref{sec:PD}. 
Henceforth, our discussions will be devoted to improve the 
result (for general $p$), in a different direction. 
Firstly, in Theorem \ref{thm:extendtoW2} below, we refine Theorem \ref{thm:extendtoW} by going through 
more delicate Diophantine approximations, which we apply to 
analyze the asymptotic behaviour of degeneration of $\sigma_X$. 
For that, we use the following usual convention. 
\begin{ntt}\label{ntt4}
For $\vec{v}(=(x_1,\cdots,x_s))\in \R^{s}$, 
we denote $\{\vec{v}\}=(\{x_1\},\cdots,\{x_s\})
\in [0,1)^s$ 
where $\{x_i\}$ denotes the fractional part of $x_i$ i.e., $x_i-[x_i]$ with the Gauss symbol $[-]$ 
(the rounddown).     
\end{ntt}
    \begin{Thm}\label{thm:extendtoW2}
Under the setup of Theorem \ref{thm:extendtoW}, 
we further have: 
\begin{enumerate}[label=(\alph*)]
    \item \label{extension.cone}
    There is a rational polyhedral simplicial 
    cone $\sigma'(\subset w(\sigma)\subset \R^l)$, which contains $\xi$, and whose extremal rays are  
    all of the forms $\R_{\ge 0}\cdot w(\xi'(k))$, where 
    $$w(\xi'(k))=\biggl(\frac{\tilde{w}'_1(k)}{D(k)},\cdots,\frac{\tilde{w}'_l(k)}{D(k)}\biggr)$$ 
    are nice  approximants of $w(\xi)$ (Definition \ref{def:nice}). Note that we do not require 
    $\dim \sigma'$ is $l$. 
    \item \label{canonicity.sigmaW}
    Take any element $(0\neq )\xi' \in \sigma'\cap \Q^l$. For any sufficiently divisible 
    positive integer $D\in \Z_{>0}$ (we set $\tilde{\xi}'=D\xi'\in \Z^l$),  
    the extendability condition of the holomorphic symplectic form 
    (*) of Theorem \ref{thm:extendtoW}   \eqref{extension} 
    to $\sigma_W(\xi')$ on $W$ 
    is satisfied (though we do not claim the niceness in the sense of  
    Definition \ref{def:nice}). 
    
    Furthermore, that $\sigma_W(\xi')$ (see Theorem \ref{thm:extendtoW} \eqref{extension}) actually 
    does not depend on $\xi'$ and it is $T=N\otimes \G_m$-homogeneous i.e., 
    it is an eigensection with respect to the $T$-action. 
    \end{enumerate}
\end{Thm}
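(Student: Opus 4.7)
For part \eqref{extension.cone}, I will apply Dirichlet's simultaneous Diophantine approximation theorem (\cite[Theorem 1A, Ch.~II]{Schmid}) to $\xi\in N\otimes\R$, which has rational rank $r = r(\xi)$. For each parameter $Q\in \Z_{>0}$, this produces a denominator $D\leq Q^r$ and an integer vector $\tilde{\xi}'\in N$ with $\|D\xi-\tilde{\xi}'\|_\infty < 1/Q$, yielding a nice approximant $\xi' = \tilde{\xi}'/D$ for $Q$ sufficiently large relative to $N(\xi)$. To construct $r$ linearly independent nice approximants whose positive cone contains $\xi$, I apply Dirichlet's theorem separately to $r$ perturbed vectors $\xi+\eta_1,\ldots,\xi+\eta_r$, where $\eta_1,\ldots,\eta_r\in N\otimes\Q$ are small rational perturbations satisfying a positive linear dependence $\sum_k \beta_k \eta_k = 0$ with all $\beta_k > 0$ (for example, $\eta_r = -\sum_{k<r}\eta_k$ with $\eta_1,\ldots,\eta_{r-1}$ generic).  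The resulting nice approximants $\xi'(1),\ldots,\xi'(r)$ are linearly independent in $N\otimes\Q$ (since $\xi$ is irrational there), and $\sum_k \beta_k \xi'(k) \approx (\sum_k \beta_k)\xi$ shows $\xi$ lies in their positive cone; thus $\sigma' := \sum_k \R_{\geq 0}\cdot \xi'(k)$ is the desired rational polyhedral simplicial cone.

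For part \eqref{canonicity.sigmaW}, I use Corollary \ref{formal.isom} to identify $(X,x)^{\hat{}}\simeq (W,0)^{\hat{}}$ as formal Poisson schemes, thereby transferring the $T$-action from $W$ to $\widehat{\mathcal{O}}_{X,x}$ and obtaining a formal $T$-weight decomposition $\sigma_X=\sum_{\chi\in M}\sigma_{X,\chi}$. Under the rescaling $\Lambda_\tau^{(\tilde{\xi}'(k))}$, each $\sigma_{X,\chi}$ acquires weight $\langle\tilde{\xi}'(k),\chi\rangle\in \Z$; from the metric estimates in the proof of Theorem \ref{thm:extendtoW} (the leading-order weight lies in the asymptotic interval $[pD(k)(1-\epsilon-2d),pD(k)(1+\epsilon+2d)]$ for $\epsilon$ and $d=d(\xi,\xi'(k))$ small), this leading weight is forced to be exactly $pD(k)$ for nice approximants. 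This identifies $\sigma_W(\xi'(k)) = \sigma_{X,\chi_0}$, where $\chi_0\in M$ is the minimizer of $\langle \tilde{\xi}'(k),\chi\rangle$ over $\Supp(\sigma_X)$. Because $\xi$ has rational rank $r$, the pairing $\langle \xi,-\rangle$ separates distinct elements of $M$; hence for all nice approximants $\xi'(k)$ sufficiently close to $\xi$, this minimizer $\chi_0$ is a common element, showing that $\sigma_W(\xi'(k))$ is a $T$-eigensection of weight $\chi_0$ independent of $k$.

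For general $\xi'\in \sigma'\cap \Q^l$, since $\xi'$ lies in the positive rational cone of the $\xi'(k)$'s, the ordering induced by $\langle \xi',-\rangle$ on the relevant (finite) set of $T$-weights of $\sigma_X$ agrees with that induced by $\xi$, so the minimizer remains $\chi_0$. Therefore $\tau^{-\langle\tilde{\xi}',\chi_0\rangle}(\Lambda_\tau^{(\tilde{\xi}')})^*\sigma_X$ extends across $\mathcal{X}_{\tilde{\xi}'}^{\rm sm}$ to a non-vanishing relative $p$-form whose restriction to $W^{\rm sm}$ is $\sigma_{X,\chi_0}$. For sufficiently divisible $D$, and after possibly rescaling $\xi'$ within its ray to align the scaling exponent with the factor $t^{-pD}$ of $(*)$, the extendability condition $(*)$ of Theorem \ref{thm:extendtoW}\eqref{extension} is satisfied, and $\sigma_W(\xi')=\sigma_{X,\chi_0}$, the common $T$-homogeneous limit.

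The main obstacle lies in coordinating the Diophantine approximations in \eqref{extension.cone}: producing $r$ nice approximants in the prescribed configuration with uniform control on the niceness bound, since the perturbation sizes $\|\eta_k\|$ must be chosen small enough that each $\xi'(k)$ remains a valid nice approximant of $\xi$ itself (not merely of $\xi+\eta_k$). This can be achieved by fixing the $\eta_k$ first and then applying Dirichlet's theorem with the parameter $Q$ large enough to absorb the perturbation error within the niceness bound $1/N(\xi)$.
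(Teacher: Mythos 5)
Your proposal for part \eqref{extension.cone} has a genuine gap. You propose to apply Dirichlet's simultaneous approximation theorem to $r$ perturbed vectors $\xi+\eta_k$ with fixed small $\eta_k\in N\otimes\Q$. But the niceness condition (Definition \ref{def:nice}) requires the approximant to satisfy $|D(k)w_i-\tilde{w}'_i(k)|<1/N(\xi)$ with respect to $\xi$ itself, whereas Dirichlet applied to $\xi+\eta_k$ only gives $|D(k)(w_i+\eta_{k,i})-\tilde{w}'_i(k)|<1/Q$. The resulting error against $\xi$ is bounded only by $1/Q+D(k)\|\eta_k\|_\infty$, and Dirichlet allows $D(k)$ to be as large as $Q^r$; with $\eta_k$ fixed and $Q$ large, $D(k)\|\eta_k\|_\infty$ is unbounded, so the approximant need not be nice. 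You cannot ``absorb the perturbation error by taking $Q$ large'' as claimed, because $D(k)$ is an output of Dirichlet's theorem, not an input; the perturbation size $\|\eta_k\|$ would have to be chosen \emph{after} $D(k)$ is known (one needs $\|\eta_k\|$ squeezed between roughly $1/(D(k)Q)$ and $1/(D(k)N(\xi))$), and fixing $\eta_k$ first is incompatible with that. Moreover, if one does shrink $\eta_k$ below $1/(D(k)N(\xi))$, then the Dirichlet error $O(1/(D(k)Q))$ can swamp the directional information carried by $\eta_k$, so the cone condition is not secured either. The paper sidesteps this by using Kronecker--Weyl density of $\{d(w_1,\ldots,w_s)\bmod \Z^s\}$ rather than Dirichlet: density in $[0,1)^s$ lets one prescribe the \emph{signs} of the errors $C(k)w_i-\tilde{v}'_i(k)$ by hitting each of the $2^s$ corners of $[0,1)^s$, and having all $2^s$ sign patterns is exactly what forces $\xi$ into the positive cone of the resulting approximants. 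Dirichlet gives small error but no sign control, and that is the ingredient you are missing. (You also do not address that the approximants must lie in $w(N\otimes\Q)$; the paper works inside the minimal affine $\Q$-subspace $V_\xi$ through $\xi$ precisely to guarantee this.)

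For part \eqref{canonicity.sigmaW}, your route via Corollary \ref{formal.isom} and a formal $T$-weight decomposition is genuinely different from the paper's. The paper instead builds a multi-parameter toric family $\mathcal{U}^{(d)}\to U_{\sigma'}^{(d)}$ over the cone $\sigma'$, shows the twisted form $(\tau'_{\vec{m}})^{-1}(p_1^{(d)})^*\sigma_X$ is regular over the open plus codimension-one toric strata (this is exactly where the niceness of the $\xi'(k)$ and Theorem \ref{thm:extendtoW} enter), then extends over the remaining codimension $\geq 2$ strata by normality, and reads off $T$-homogeneity from the $T'$-semi-invariance of the extended form on the whole family. That argument produces a single extension that specializes simultaneously along all rays of $\sigma'$, so independence of $\xi'$ is automatic. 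Your version would first need to justify that after transporting the $T$-action through the (non-canonical) formal Poisson isomorphism the leading weight $\chi_0$ is well-defined (the support of $\sigma_X$ in the formal weight decomposition can a priori be infinite, so the minimizer of $\langle\xi,-\rangle$ is not obviously attained) and then that it is stable under passing from $\xi$ to nearby $\xi'\in\sigma'$; the latter in effect re-derives the content of Theorem \ref{thm:extendtoW}\eqref{extension} rather than leaning on it. So even granting \eqref{extension.cone}, the part (b) argument would need to be tightened considerably to become a proof.
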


\begin{proof}

We give the proof of \eqref{extension.cone} first. 
During this proof, we use the above notation \ref{ntt4}. 
Recall from Notation \ref{ntt1} that $\Q$-rank of 
$\sum_{i=1}^l \Q w_i$ is $r=r(\xi)$. 
Thus, the rational rank of a bigger $\Q$-linear subspace 
$\sum_{i=1}^l \Q w_i+\Q\cdot 1$ is either $r+1$ or $r$, 
which we denote as $s(\xi)+1$ with $s(\xi)$ either $r$ or $r-1$. 
Renumbering the subindices of $w_i$s if necessary, we can and do assume that 
$1, w_1,\cdots,w_{s(\xi)}$ are linearly independent over $\Q$. 
Henceforth, 
if there is no fear of confusion, we sometimes also abbreviate 
$s(\xi)$ as $s$. 
If $s=s(\xi)<l$, 
we take integers $m\in \Z_{>0}, a_{i,j}\in \Z\hspace{1.5mm} 
(0\le i\le s, 1\le j\le l-s)$ such that 
\begin{align}\label{w.rel}
w_{s+j}=\frac{1}{m}\sum_{i=1}^s a_{i,j}w_i+\frac{a_{0,j}}{m}. 
\end{align}
They are unique up to multiple, due to the definition of $s$. 
Motivated by this, we introduce 
an $s$-dimensional {\it affine} $\Q$-linear subspace of $\Q^l$ i.e., 
a translation of an $s$-dimensional $\Q$-linear subspace 
\begin{align}\label{Vxi}
V_\xi:=\{(x_1,\cdots,x_l)\in \Q^l\mid 
x_{s+j}=\frac{1}{m}\sum_{i=1}^s a_{i,j}x_i+\frac{a_{0,j}}{m} \text{ for } 1\le j\le l-s\},
\end{align}
in which we seek for nice approximations of $\xi$. 
In other words, 
$V_\xi$ is nothing but the 
minimal {\it affine} $\Q$-linear subspace of $\Q^l$ which contains 
$w(\xi)=(w_1,\cdots,w_l)$. 
In particular, 
$V_\xi\subset w(N\otimes \Q)=\Q\cdot V_\xi$ holds. 

Since $1, w_1, \cdots , w_s$ are lineraly independent over $\mathbb{Q}$, we can and do apply the well-known density 
\footnote{In some literature, the integer $d$ is often allowed to be both negative or positive, but 
it is straightforward to reduce our version with $d>0$ 
to that case}
of $\{\{d(w_1,\cdots,w_s)\}\mid d\in \Z_{>0}\}\subset [0,1)^s$ (cf., Notation \ref{ntt4}) 
which dates back to Kronecker or Weyl. See \cite{Kronecker}, 
\cite{Weyl}, cf., also \cite[Appendix A]{Hum}. 
We take a sufficiently large positive integer $N'(\xi)>1$ 
such that 
\begin{align}\label{def:N'}
\frac{ms \underset{1\le i\le s,1\le j\le l-s}\max|a_{i,j}|}
{ N'(\xi)}<\frac{1}{N(\xi)}. 
\end{align}
If we consider a small subset 
\begin{align}\label{def:Vk}
\{(y_1,\cdots,y_s)\in [0,1]^s\mid \min\{|y_i|,|1-y_i|\}\le \frac{1}{N'(\xi)} 
\text{ for }1\le \forall i\le s\}
\end{align}
of $[0,1]^s$, it consists of $2^s$ small $s$-dimensional 
cubes (the connected components) 
which we denote by $V_1,\cdots, V_{2^s}$. 
From the density, for any $1\le k\le 2^s$, there is some 
$C(k)\in \Z_{>0}$ such that 
$\{C(k)(w_1,\cdots,w_s)\}\in V_k$. We denote the closest integral vector to 
$C(k)(w_1,\cdots,w_s)$ as $(\tilde{v}'_1(k),\cdots,\tilde{v}'_s(k))$ 
with $\tilde{v}'_i(k)\in \Z$. 
We set $(\tilde{v}'_1(k), \cdots,\tilde{v}'_s(k))$ as $\vec{v}(k)$. 
From our construction, we have 
\begin{align}\label{ckvi}
|C(k)w_i-\tilde{v}'_i(k)|<\frac{1}{N'(\xi)}    
\end{align}
for any $1\le i\le s$, $1\le k\le 2^s$. 
Further, the polyhedral cone 
$\sum_{1\le k\le 2^s} \R_{\ge 0}\vec{v}(k)$ contains $(w_1,\cdots,w_s)$ from our construction 
and the definition of $V_k$s (recall \eqref{def:Vk}). 

We set 
\begin{align*}
\tilde{w}'_i(k)&:=m\tilde{v}'_i(k) \text{ for }1\le i\le s, 1\le k\le 2^s, \\
D(k)&:=mC(k) \hspace{2mm}(\text{for same } k),
\end{align*}and then set the first $s$ components of our desired approximations as 
\begin{align*}
w'_i(k)&:=\frac{\tilde{w}'_i(k)}{D(k)}=\frac{\tilde{v}'_i(k)}{C(k)}\text{ for }1\le i\le s, 1\le k\le 2^s. 
\end{align*}
Recalling \eqref{w.rel} and the definition \eqref{Vxi} of $V_\xi$, if $s\le l$, 
we also define 
\begin{align}\label{wsjwi}
w'_{s+j}(k)&:=\frac{1}{m}\sum_{i=1}^s a_{i,j} w'_i(k)+\frac{a_{0,j}}{m}\hspace{2mm} 
(1\le j\le l-s) \text{ and equivalently,} \\ 
\tilde{w}'_{s+j}(k)&:=\frac{1}{m}\sum_{i=1}^s a_{i,j} \tilde{w}'_i(k)+C(k)a_{0,j}\hspace{2mm} 
(1\le j\le l-s). 
\end{align}
From this definition, 
$\bigl(w'_1(k)=\frac{\tilde{w}'_{1}(k)}{D(k)},\cdots,w'_l(k)=\frac{\tilde{w}'_{l}(k))}{D(k)}\bigr)$ 
lies in $V_{\xi}$. 
Then it follows that 
\begin{align}
|D(k)w_i-\tilde{w}'_i(k)|&=m|C(k)w_i-\tilde{v}'_i(k)|\\ 
                         &<\frac{m}{N'(\xi)} (\text{by }\eqref{ckvi}) \label{wivi.low}\\ 
                         &<\frac{ms \underset{1\le i\le s,1\le j\le l-s}\max|a_{i,j}|}{N'(\xi)}\\
                         &<\frac{1}{N(\xi)} (\text{by }\eqref{def:N'}),\label{former.nice}
\end{align}
for $1\le i\le s$. Using this, for $1\le j\le l-s$, it follows that 
\begin{align}
|D(k)w_{s+j}-\tilde{w}'_{s+j}(k)|&=m|c(k)w_{s+j}-\tilde{v}'_{s+j}(k)|\\ 
                                 &<m \max_{i,j}|a_{i,j}|\sum_{1\le i\le s}|c(k)w_{i}-\tilde{v}'_{i}(k)| (\text{by }\eqref{w.rel}, \eqref{wsjwi})\\ 
                                 &\le m s \max_{i,j}|a_{i,j}|\frac{1}{N'(\xi)}  (\text{by }\eqref{ckvi}) \\
                                 &\le \frac{1}{N(\xi)} (\text{by }\eqref{def:N'}). \label{latter.nice}
\end{align}
Thus, 
$$\biggl(w'_1(k)=\frac{\tilde{w}'_1(k)}{D(k)},\cdots,w'_l(k)=\frac{\tilde{w}'_l(k)}{D(k)}\biggr)$$ satisfies \eqref{eqn:nice}
of Definition \ref{def:nice} 
and further it can be written as $w(\xi'(k))$ 
for some $\xi'(k)\in N\otimes \Q$ because 
$\Q\cdot V_\xi=w(N_\Q)$ follows from the 
definitions (of $r(\xi), s(\xi), V_\xi$), 
and $\xi'(k)\in \sigma$ 
by the condition (**) of $N(\xi)$ in 
Definition \ref{def:nice}. 
By \eqref{former.nice} and \eqref{latter.nice}, 
such $\xi'(k)$ 
are nice approximants of $\xi$ for any $1\le k\le 2^s$. 

Thus, $\sum_{1\le k\le 2^s} \R_{\ge 0}\xi'(k)\subset \R^l$ satisfies 
the condition of \eqref{extension.cone} although it is not simplicial 
if $s>1$. 
To take a simplicial $\sigma'$ as a subcone of 
$\sum_{1\le k\le 2^s} \R_{\ge 0}\xi'(k)$, note that 
there is a subset $S$ of $\{1,\cdots,2^s\}$ of order $r$ 
which satisfies 
$\sum_{k\in S} \R_{\ge 0}\vec{v}(k)\ni \xi$. 
By re-ordering the subindices, we can and do assume that 
$S=\{1,\cdots,r\}$ so that 
$\sum_{1\le k\le r} \R_{\ge 0}\vec{v}(k)\ni (w_1,\cdots,w_s).$ 
Consequently, if we define a simplicial cone as 
\begin{align}\label{sigma'}
\sigma':=\underset{1\le k\le r}\sum \R_{\ge 0}\xi'(k)\subset \R^l, 
\end{align}
it contains $\xi$ and satisfies the desired properties. 
We complete the proof of \eqref{extension.cone}. 
\vspace{2mm}

 Finally we prove \eqref{canonicity.sigmaW} for 
the above $\sigma'$ 
using Theorem \ref{thm:extendtoW} \eqref{extension} and above  \eqref{extension.cone} (of Theorem \ref{thm:extendtoW2}) as follows. 
 
 We take an affine toric variety $U_{\sigma'}$ 
 corresponding to $\sigma'(\subset w(\sigma))$ 
 with respect to the integral structure $w(N')\subset w(N'\otimes \R)$ 
 in \eqref{extension.cone}. 
 Then, 
 as the base change of $\pi_\sigma\colon 
 \mathcal{X}_\sigma\to U_\sigma$ 
 of Lemma \ref{XtoW.C} 
 by the natural toric morphism $U_{\sigma'}\to U_\sigma$, 
 there is a faithfully flat affine family 
 $p_1\colon \mathcal{U}(\subset U_{\sigma'}\times\C^l)\to U_{\sigma'}$ of $X$ over $U_{\sigma'}$ 
 which is $X\times (N'\otimes \G_m)$ over $(N'\otimes \G_m)$ and 
 is trivial $W(\subset \A^l)$-fiber bundle  
 over the toric boundary $\partial U_{\sigma'}:=U_{\sigma'}\setminus (N'\otimes \G_m)$ 
 (cf., also \cite[\S 2, Example 2.9]{Od24b} and also related 
 \cite[\S 2.2]{Od24c}) i.e., $p_1|_{\partial U_{\sigma'}}$ is isomorphic to 
 the natural projection $W\times \partial  U_{\sigma'}\to \partial U_{\sigma'}.$

 We denote the dual lattice of $N$ 
 as $M$ 
 as before. 
 Then, we can and do 
 take an element $\vec{m}\in M\otimes \Q$ which satisfies 
 that $\langle \vec{m}, \xi'(k)\rangle =p$ 
 for $k=1,2,\cdots,r$. 
 Here, the existence of such $\vec{m}$ is thanks to the simpliciality of 
 $\sigma'$. 
 We take 
 a sufficiently divisible 
 $d$ such that 
 \begin{itemize}
 \item $\vec{m}\in \frac{1}{d}M,$ 
 \item $D(k)|d$ for $1\le \forall k\le r$ 
 (henceforth, we write $d\xi'(k)=d(k)\tilde{\xi'}(k)$ with 
 $d(k)\in \Z_{>0}$). 
 \end{itemize}
 Accordingly, we set $N':=d\underset{1\le k\le r}\sum\Z\xi'(k)$ and 
 take the affine toric variety $U_{\sigma'}^{(d)}$ for 
 $(\sigma',w(N'))$. Considering the corresponding finite 
 morphism $U_{\sigma'}^{(d)}\to U_{\sigma'}$, we work over 
 the base change of $p_1\colon \mathcal{U}\to U_{\sigma'}$ to $U_{\sigma'}^{(d)}$ which we denote as 
 $p_1^{(d)}\colon \mathcal{U}^{(d)}\to U_{\sigma'}^{(d)}$. 

We naturally set $T':=N'\otimes \G_m$ and 
 take its charactor $\tau'_{\vec{m}}$ whose exponent is $\vec{m}$ 
 (which we occasionally simply write $\tau'$). 
 From the first condition on $d$, $m$ lies inside the 
 dual of $N'$ and is positive on $\sigma'$ hence 
 gives a regular function on the base $U_{\sigma'}^{(d)}$. 
 Note that for each $1\le k\le s$, 
 and that $\X_{d\xi'(k)} \to \A^1_t$ 
 (resp., $\X_{\tilde{\xi}'(k)} \to \A^1_t$) 
 is a base change of $p_1^{(d)}$ by the toric morphism 
 $\A_t^1\to U_{\sigma'}^{(d)}$ for 
 $\Z_{\ge 0}\hspace{0.6mm} d\xi'(k)\to \sigma'\cap N'$ (resp., $\Z_{\ge 0}\hspace{0.6mm} \tilde{\xi}'(k)\to \sigma'\cap N'$). 

Now we consider the relative holomorphic 
 $p$-form 
 \begin{align}\label{original.form}
 (\tau'_{\vec{m}})^{-1}(p_1^{(d)})^*\sigma_X
 \end{align} on the $p_1^{(d)}$-preimage of the open strata on  the base 
 $T'(\subset U_{\sigma'}^{(d)})$. 

Write $N'=(\R \xi'(k)\cap N)\oplus N''$ 
with a complement sublattice $N''(\subset N')$ of rank $r-1$ 
and set $T''=N''\otimes \G_m (\subset T')$. There is a natural 
$\G_m\times T''$-equivariant morphism 
$\A_t^1\times T''\to U_{\sigma'}^{(d)}$ 
which corresponds to 
 $\Z_{\ge 0}\hspace{0.6mm} d\xi'(k)\times N''\to (\sigma'\cap N')\times N''$ (resp., $(\Z_{\ge 0}\hspace{0.6mm} \tilde{\xi}'(k))\times N'' \to (\sigma'\cap N')\times N''$), 
and the pullback of $\tau'_{\vec{m}}$ to $\X_{d\xi'(k)}\times T''$ (resp., $\X_{\tilde{\xi}'(k)}\times T''$) coincides with $t^{pd}\cdot \tau'_{\vec{m}}|_{T''}$ (resp., $t^{pd(k)}\cdot \tau'_{\vec{m}}|_{T''}$) 
by the construction. 
We denote the relatively ($p_1^{(d)}$-)smooth locus of $\mathcal{U}^{(d)}$ as 
 $\mathcal{U}^{{\rm rsm}, (d)}$ and 
 denote the union of the open strata of $U_{\sigma'}^{(d)}$ 
 and codimension $1$ toric strata as $U_{\sigma'}^{(d),o}$. 

Then, by the niceness of the approximations 
$\xi'(k)$ (Definition \ref{def:nice}) 
and (the proof of) Theorem \ref{thm:extendtoW} 
\eqref{extension}, 
the pullback of 
$(\tau'_{\vec{m}})^{-1}(p_1^{(d)})^*\sigma_X$ to 
$\X_{d\xi'(k)}\times T''$ (resp., $\X_{\tilde{\xi}'(k)}\times T''$) is globally defined, not only a meromorphic section. 
In other words, 
 $ (\tau'_{\vec{m}})^{-1}(p_1^{(d)})^*\sigma_X$
 extends to 
 $(p_1^{(d)})^{-1}U_{\sigma'}^{(d),o}\cap \mathcal{U}^{{\rm rsm}, (d)}$ 
 as a section of 
 $\Omega^2_{\mathcal{U}^{{\rm rsm}, (d)}/U_{\sigma'}^{(d)}}$ 
 which we denote as $\tilde{\sigma}_{\mathcal{U}}$. 
 On the other hand, note that codimension of 
 $U_{\sigma'}^{(d)}\setminus U_{\sigma'}^{(d),o}$ in 
 $U_{\sigma'}^{(d)}$ is $2$. Hence, by the normality 
 of $\mathcal{U}^{(d)}$, the relative $p$-form $\tilde{\sigma}_{\mathcal{U}}$ further 
 extends to a section of 
  $\Omega^2_{\mathcal{U}^{{\rm rsm}, (d)}/U_{\sigma'}^{(d)}}$ over whole 
  $\mathcal{U}^{{\rm rsm}, (d)}$ which we denote by 
  $\overline{\tilde{\sigma}_{\mathcal{U}}}$. Since the codimension of 
  $\mathcal{U}^{(d)}\setminus \mathcal{U}^{{\rm rsm}, (d)}$ in 
  $\mathcal{U}^{(d)}$ is at least $2$, similarly 
 the top exterior power 
 $(\tilde{\sigma}_{\mathcal{U}})^{\wedge (n/2)}$ extends to a global section of 
 $\mathcal{O}_{\mathcal{U}^{(d)}}(K_{\mathcal{U}^{(d)}/U_{\sigma'}^{(d)}})$, 
 which we simply denote by $\overline{\tilde{\sigma}_{\mathcal{U}}}^{\wedge \frac{n}{2}}$. 
 We denote the restriction of 
 $\overline{\tilde{\sigma}_{\mathcal{U}}}$  
 to $W=(p_1^{(d)})^{-1}(p)$ for the unique 
 $N'\otimes \G_m$-invariant point $p\in U_{\sigma'}^{(d)}$, 
 as $\sigma_W$. 
 From the construction, $\overline{\tilde{\sigma}_{\mathcal{U}}}^{\wedge \frac{n}{2}}$ does not vanish along any divisor in $\mathcal{U}^{(d)}$, 
 it is non-vanishing globally on $\mathcal{U}^{{\rm rsm}, (d)}$. 
 Hence, $\sigma_W$ is non-degenerate in the sense of the statements of Theorem \ref{thm:extendtoW}. 

 We again consider $T'$-action on $\mathcal{U}^{(d)}$. 
 Since $\overline{\tilde{\sigma}_{\mathcal{U}}}$ is the extension of 
 \eqref{original.form} and that for any $t'\in N'\otimes \C^* \colon 
 \mathcal{U}^{(d)}\to \mathcal{U}^{(d)}$, we have 
 $(t')^*\overline{\tilde{\sigma}_{\mathcal{U}}}=(\tau'_{\vec{m}}(t'))^{-1}\cdot \overline{\tilde{\sigma}_{\mathcal{U}}}$ i.e., it is $T$-homogeneous. 
 As $\sigma_W$ is the restriction of $\overline{\tilde{\sigma}_{\mathcal{U}}}$ 
 on $W$, which is a $T'$-invariant subscheme of $\mathcal{U}$, $\sigma_W$ is also $T'$-homogeneous in such a way that 
 \begin{align}\label{sigmaeigen}
     (t')^*\sigma_W=(\tau'_{\vec{m}}(t'))^{-1}\cdot \sigma_W 
 \end{align}
 for any $t'\in N'\otimes \C^*$. 
 Note that $\sigma_W(\xi')=\sigma_W$ for any $\xi'\in \sigma'\cap \Q^l$. 
Hence, we complete the proof of  \eqref{canonicity.sigmaW} 
of Theorem \ref{thm:extendtoW2}. 
 \end{proof}

Note that from the above arguments, we at least 
observe the following (for general case with 
non-degenerate $p(\ge 2)$-forms). 

\begin{Prop}\label{rands}
In our setup of Theorem \ref{thm:extendtoW} 
(see also Notation \ref{ntt1}), $1\in \sum_{i=1}^{l}\Q w_i$. 
In other words, we have $r(\xi)=s(\xi)+1$ in the notation of the 
proof of the previous Theorem \ref{thm:extendtoW2}. 
\end{Prop}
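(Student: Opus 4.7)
The plan is to derive $1\in\sum_{i=1}^{l}\Q w_i$ from the rationality of the $T$-weight of the canonical symplectic form $\sigma_W$ on $W$ constructed in Theorem~\ref{thm:extendtoW2}\eqref{canonicity.sigmaW}, combined with a density argument for nice approximants of $\xi$.

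First, I would invoke Theorem~\ref{thm:extendtoW2}\eqref{canonicity.sigmaW} to obtain the canonical $T=N\otimes\G_m$-eigensection $\sigma_W$ on $W^{\rm sm}$, and let $\vec{m}\in M\otimes\Q$ denote its rational weight, so that $(t')^*\sigma_W=\tau'_{\vec{m}}(t')^{-1}\sigma_W$ for $t'\in T$. Unwinding the $\G_m$-equivariance of the construction $\sigma_W(\xi')=(t^{-pD}p_1^*\sigma_X)|_W$ on the scale-up family $\X_{\tilde\xi'}\to\A^1$ from Theorem~\ref{thm:extendtoW}\eqref{extension}---using that $p_1$ is $\G_m$-invariant while $t^{-pD}$ has $\G_m$-weight $-pD$ under the one-parameter subgroup corresponding to $\tilde\xi'=D\xi'$---one reads off $\langle\vec{m},\xi'\rangle=p$ for each rational ray $\xi'$ used in such a family. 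By the canonicity (independence of $\xi'$) asserted in Theorem~\ref{thm:extendtoW2}\eqref{canonicity.sigmaW}, this very same $\vec{m}$ must satisfy $\langle\vec{m},\xi'\rangle=p$ simultaneously for all nice approximants $\xi'$ of $\xi$ (Definition~\ref{def:nice}).

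Next, I would invoke the Kronecker--Weyl--Dirichlet density of nice approximants already used in the proof of Theorem~\ref{thm:extendtoW2}\eqref{extension.cone}: since $1,w_1,\ldots,w_{s(\xi)}$ are $\Q$-linearly independent, one can find a sequence of nice approximants $\xi'(k)\in V_\xi$ converging to $\xi$ in $\R^l$. Continuity of the rational linear functional $\langle\vec{m},\cdot\rangle$ then forces $\langle\vec{m},\xi\rangle=p$. Identifying $\xi$ with $w(\xi)=(w_1,\ldots,w_l)$ and choosing any rational lift $(m_1,\ldots,m_l)\in\Q^l$ of $\vec{m}$ via the surjection $\Q^l\twoheadrightarrow M\otimes\Q$ dual to the injection $w$ in \eqref{def:w}, the pairing reads $p=\sum_i m_i w_i\in\sum_{i=1}^l\Q w_i$. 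Since $p=2\ne 0$, this gives $1\in\sum_{i=1}^l\Q w_i$, equivalently $r(\xi)=s(\xi)+1$.

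The main technical obstacle is the first step: rigorously establishing that the $\G_m$-equivariance yields the equation $\langle\vec{m},\xi'\rangle=p$ for a \emph{common} rational $\vec{m}\in M\otimes\Q$ as $\xi'$ ranges over all nice approximants of $\xi$. This rests on the canonicity assertion in Theorem~\ref{thm:extendtoW2}\eqref{canonicity.sigmaW}, whose normalization must be tracked carefully across the different families $\X_{\tilde\xi'}\to\A^1$ (each a priori giving a different identification of $\sigma_W(\xi')$ with $\sigma_W$). Once this is in place, the density of nice approximants and the continuity argument close the proof routinely.
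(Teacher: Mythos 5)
Your argument reaches the same conclusion by essentially the same mechanism as the paper's proof: both hinge on the existence of the canonical $T$-weight $\vec{m}\in M\otimes\Q$ of $\sigma_W$ from Theorem~\ref{thm:extendtoW2}\eqref{canonicity.sigmaW}, together with the $\G_m$-equivariant weight computation forcing $\langle\vec{m},\xi'(k)\rangle = p$ for the rational rays used. The difference is in the final step that transports the equation $\langle\vec m,\cdot\rangle=p$ from the rational points to the irrational direction $\xi$. The paper's own proof argues directly that $\xi$ lies in the smallest affine $\Q$-linear subspace containing the generators $\xi'(k)$, so that the affine functional $\langle\vec m,\cdot\rangle - p$, vanishing at the $\xi'(k)$, must also vanish at $\xi$; this step is terse and a priori delicate, since the $s$ affinely independent points $\xi'(k)$ span only an $(s-1)$-dimensional affine subspace inside the $s$-dimensional $V_\xi$, and one must use that the barycentric coefficients of $\xi$ in the simplicial cone sum to one because the $\xi'(k)$ lie in the affine slice $V_\xi$ with its defining inhomogeneous relation. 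Your route replaces this with a Kronecker--Dirichlet density and continuity argument: any sequence of nice approximants $\xi'_j\to\xi$, which for $j$ large lies in the fixed cone $\sigma'$ (so the canonicity of Theorem~\ref{thm:extendtoW2}\eqref{canonicity.sigmaW} applies and the same $\vec m$ works), satisfies $\langle\vec m,\xi'_j\rangle = p$, and the limit gives $\langle\vec m,\xi\rangle = p$. This is a perfectly valid alternative and arguably spells out the implicit positivity/normalization detail more explicitly. You have also correctly isolated the key hypothesis one must guard — a single $\vec m$ valid across the families $\X_{\tilde\xi'}$ — and correctly attributed it to the $\xi'$-independence and $T$-homogeneity of $\sigma_W(\xi')$; restricting the sequence to approximants inside the fixed $\sigma'$, as you can always do since $\xi$ lies in its relative interior, resolves the concern you raised.
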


\begin{proof}
As the character $\tau'$ in the above 
proof of Theorem \ref{thm:extendtoW2}, 
$(\tau')^{-1} \overline{\tilde{\sigma}_{\mathcal{U}}}$ 
pulls back to $t^{-pd}p_1^* \sigma_X$ 
(resp., $t^{-pd(k)}p_1^* \sigma_X$) on $\X_{d\xi'(k)}$ (resp., $\X_{\tilde{\xi}'(k)}$), 
recall that $\vec{m}$ is taken so that 
$\langle \vec{m}, \xi'(k)\rangle =p$ 
 for $k=1,2,\cdots,s$. On the other hand, 
 the arguments in the previous Theorem 
 \ref{thm:extendtoW2} 
 implies that 
 $\xi$ lies in the smallest {\it affine}  
 $\Q$-linear subspace $w^{-1}(V_\xi)$ 
 which contains all 
 $\xi'(k) (k=1,\cdots,s)$. Thus, $\langle \frac{\vec{m}}{p},\xi\rangle=1$ holds. Since 
 $w(\xi)=(w_1,\cdots,w_l)$, the assertion follows. 
\end{proof}

After the above analysis of $r(\xi)$ in Proposition \ref{rands}, and 
examining various examples of symplectic singularities, 
we propose the following conjecture. 
\begin{conj}\label{r1conj}
For any symplectic singularity $x\in X$, 
the valuation $v_X$ in Theorem \ref{AGlc} is always 
(integer multiple of) divisorial valuation 
and the rational rank $r(\xi)$ of Theorems \ref{DS.maps}, \ref{AGlc} is 
always $1$. 
\end{conj}

In the last subsection \S \ref{subsec:last}, we 
prove the conjecture among others, under some conditions. 


\section{Proof of $W=C$}\label{sec:WC}

In this section, we prove that $W=C_x(X)$ 
as ($\Q$-)Fano cones under the assumptions of Theorems \ref{Mthm.intro} 
or \ref{Mthm2.intro}. 
Recall that the former is a priori only 
K-semistable Fano cone while the latter is 
K-polystable Fano cone in the sense of \cite{CS, CS2} (also cf., 
\cite[\S 2]{Od24a}). 
We add the following notation 
while keeping the previous ones. 

\begin{ntt}\label{ntt5}
\begin{enumerate}
    \item 
In the setup of Theorems \ref{DSII.thm},  \ref{DS.maps}, 
the tensors  on $\Phi_i(X^{\rm sm})$ 
defined as $2^i(\Phi_i^{-1})^* g|_{X^{\rm sm}}$ 
(resp., $2^i(\Phi_i^{-1})^* \sigma_X|_{X^{\rm sm}}$) 
is denoted by 
$g_i$ (resp., $\sigma_i$). 

\item 
Let us consider $W, C\subset \C^l$ in 
Lemma \ref{XtoW.C}. The torus $T$ acts on $\C^l$ preserving both $C$ and $W$.    
We take a multi-graded Hilbert scheme $B$ 
(\cite{HaiStu}, also cf. \cite[\S 3.3]{DSII}) 
parametrizing the subvarieties of $\C^l$ with $T$-actions, which includes $[W]$ and $[C]$. 
Let $G_\xi$ be the commutator of $T$ in ${\rm GL}(l)$. 
Then both $G_{\xi}$ and $T$ act on the 
universal family $\tilde{U} \to B$.  $T$ acts on the universal family fiberwisely, but $G_{\xi}$ acts nontrivially on the base $B$ (compatible with Notation \ref{ntt1}). 

\end{enumerate}
\end{ntt}

\begin{Thm}\label{ssps}
In the above setup, the metric $g_C$ on $C^{\rm sm}$ 
is a hyperK\"ahler metric and 
there is a Poisson deformation of 
$C$ to $W$, with the natural 
fiberwise $T$-action which extends the ones on $W$ and $C$. 
Furthermore, $W=C$ as affine conical symplectic varieties with respect to their good 
$T$-actions. 
 
\end{Thm}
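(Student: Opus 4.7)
The plan is to address the three assertions in the following order: (i) the hyperKähler character of $g_C$ via the Donaldson-Sun convergence, (ii) the construction of a $T$-equivariant Poisson deformation of $C$ having general fibres isomorphic to $W$, and (iii) the identification $W \cong C$ via Namikawa's theory of universal Poisson deformations.

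For (i), I would exploit the parallelness of $\sigma_X$ with respect to $g_X$, which holds in the setting of Theorem \ref{Mthm.intro} by a Bochner-Weitzenböck argument (cf.\ \cite{Henri.etc}) and is imposed as a hypothesis in the setting of Theorem \ref{Mthm2.intro}. Parallelness forces $\|\sigma_i\|_{g_i}$ to be a strictly positive constant independent of $i$, where $\sigma_i := 2^i(\Phi_i^{-1})^*\sigma_X$ and $g_i := 2^i(\Phi_i^{-1})^*g_X$. Combined with the smooth Cheeger-Gromov convergence $g_i \to g_C$, $J_i \to J_C$ of Theorem \ref{DS.maps}\eqref{DSiiib} on relatively compact subsets of $C^{\mathrm{sm}}$, a standard Arzelà-Ascoli extraction produces a subsequential $C^\infty$-limit $\sigma_C$ on $C^{\mathrm{sm}}$ that is a holomorphic symplectic form parallel with respect to $g_C$; this upgrades $g_C$ to a (singular) hyperKähler cone metric in the usual way.

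For (ii), I would promote to a Poisson family the $T$-equivariant test configuration $\mathcal{T}$ of $W$ with central fibre $C$ constructed inside $\A^l \times \A^1$ in the proof of Lemma \ref{XtoW.C}\eqref{C.wtedblowup}, arising from a one-parameter subgroup $\lambda$ of the commutator of $T$. Since $W$ already carries the $T$-homogeneous symplectic form $\sigma_W$ of Theorem \ref{thm:extendtoW2}, the task reduces to extending $\sigma_W$ across the central fibre of $\mathcal{T}$ as a relative, non-degenerate, $T$-homogeneous $2$-form. I would replicate the scheme of Section \ref{sec:XW}: the approximation $(E_j \circ \cdots \circ E_1)(W) \to C$ from Theorem \ref{DS.maps}\eqref{DSii} replaces the role of $\Phi_i(X) \to C$ played earlier, and pairing the ambient cone-metric comparison of Lemma \ref{SZlem} (with $W$ in place of $X$ and $C$ still the limiting cone) with a suitable Diophantine approximation of the Reeb $\xi$ in the enlarged ambient direction including $\lambda$ extracts a nowhere-vanishing relative symplectic form across the central fibre. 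Base-changing to a curve gives the sought-after $\mathcal{W}_D \to D$.

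For (iii), I would appeal to Namikawa's universal Poisson deformation $f^{\mathrm{univ}}\colon \mathcal{X}^{\mathrm{univ}}_C \to \A^{d_C}$ of the conical symplectic variety $C$ recalled in Section \ref{sec:PD}. The $\G_m$-action associated to the Reeb field acts on $\A^{d_C}$ with only positive weights, whereas the analogous action on the base $D$ of $\mathcal{W}_D$ has non-positive weight in the direction of degeneration to $C$. Therefore the classifying morphism $D \to \A^{d_C}$ must be constant and collapse to the origin, so $\mathcal{W}_D$ is formally $T$-equivariantly trivial as a Poisson deformation of $C$; together with conicity and the good $T$-actions, an Artin-type approximation in the spirit of Corollary \ref{formal.isom} then yields an equivariant isomorphism $W \cong C$ of affine conical symplectic varieties. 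The principal difficulty I anticipate is step (ii): a version of the metric-comparison and Diophantine-approximation machinery of Section \ref{sec:XW} has to be adapted to an ambient torus enlarged by the commutator direction $\lambda$, and the non-degeneracy and correct $T$-weight of the extended form must be verified simultaneously.
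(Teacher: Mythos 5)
Your plan for parts (i) and (iii) lines up with the paper, but part (ii) proposes a route the paper explicitly rules out at the outset of its proof. Right after stating the theorem, the paper observes: if the $T$-equivariant test configuration $\mathcal T$ of $W$ degenerating to $C$ could be enhanced to a Poisson deformation, one could apply \cite[\S 3]{Namf} directly; but ``at the moment, we do not have such construction but only a weaker version i.e.\ isotrivial degeneration $\mathcal W$ of $W$ to $C$ of Poisson type a priori without $\C^*$-action.'' Your proposal to extend $\sigma_W$ across the central fibre of $\mathcal T$ by replicating Section \ref{sec:XW} with ``$W$ in place of $X$'' runs into an obstacle the paper is navigating around: the machinery of Lemma \ref{SZlem} crucially feeds on a singular Ricci-flat Kähler metric with Donaldson-Sun degeneration data on the degenerating object, which for the step $X\rightsquigarrow W$ is provided by $g_X$. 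For the step $W\rightsquigarrow C$, the cone $W$ is only \emph{K-semistable}, not K-polystable, and is not a priori known to carry a Ricci-flat Kähler cone metric — indeed, that $W$ \emph{does} admit one is essentially equivalent to $W=C$, the conclusion you are trying to prove. So the ambient-metric comparison cannot be bootstrapped onto $W$ directly, and moreover the degeneration $W\rightsquigarrow C$ is along a rational one-parameter subgroup $\lambda$ in the commutator of $T$, so the Diophantine approximation of the Reeb field, which for $X\rightsquigarrow W$ controlled the scaling exponent, has no analogous role here.

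The paper's actual argument does the following: it keeps working with the \emph{original} metric $g_X$ on $X$ and the conjugate forms $\sigma_{W_i} := ((E_i\circ\cdots\circ E_1)^{-1})^*\sigma_W$ on the $W_i$. Through the comparison Claims \ref{sigmaconv2}--\ref{sigmaconv4} (which use the biholomorphisms $\varphi_i$ obtained by applying $\C^*$-equivariant Artin approximation to the formal isomorphism $(X,x)^{\hat{}}\cong(W,0)^{\hat{}}$ of Corollary \ref{formal.isom}, together with the metric estimates of Lemma \ref{SZlem} applied to $g_X$, not to any metric on $W$), it shows $\sup_{U_C}|(\Psi_i'')^*\sigma_{W_i}-\sigma_C|_{g_C}\to 0$. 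One is then left with a \emph{sequence} $\{([W_i],\sigma_{W_i})\}$ of points in a fiber bundle over $G_\xi\cdot[W]$ in the multi-graded Hilbert scheme, accumulating at some $\mathbf 0$ over $[C]$. To promote this to a one-parameter Poisson family, the paper performs a partial compactification $\overline{G_\xi[(W,\sigma_W)]}\to B$, resolves the indeterminacy of the induced meromorphic relative $2$-form, flattens via \cite{RG}, and extracts a pointed smooth curve $D\ni q$ with a Poisson deformation $\mathcal W_D\to D$ whose central fiber is $(C,\sigma_C)$ and whose other fibers are $(W,\sigma_W)$. Only then is the universal-Poisson-deformation rigidity invoked, as in your (iii). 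The rigidity step itself also needs a small wrinkle you glossed over: because the $T$-action on $\mathcal W_D$ is fiberwise (i.e.\ $T$ acts trivially on $D$) while the action on the base $\A^d$ of the universal deformation is good (positive weights), the classifying map $S_n\to\A^d$ must be constant; passing through the $T$-semi-invariant subalgebras and \cite[Cor.\ 3.2]{Namf} then gives the isomorphism. So: part (i) of your plan is essentially what the paper does; part (iii) is correct in spirit; but part (ii) as stated hits a wall and needs to be replaced by the sequential-convergence plus Hilbert-scheme flattening argument.
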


\begin{proof}[proof of Theorem \ref{ssps}]
     By \cite{DSII, LWX}, there is a ($T$-equivariant)  affine test configuration 
     (see \cite[Definition 5.1]{CS}, \cite[\S 2, Definition 2.15 (i)]{Od24a}) whose general fiber is $W$ and the central fiber is $C$. 
If one can take such test configuration 
as a Poisson deformation, then 
we can apply the rigidity result \cite[\S 3]{Namf} to show 
that it is actually trivial, which implies the 
latter statement of the theorem. 
At the moment, we do not have such construction but only a 
weaker version i.e., isotrivial degeneration 
$\mathcal{W}$ 
of $W$ to $C$ 
of Poisson type a priori without $\C^*$-action (i.e., 
not necessarily a test configuration). Nevertheless, it suffices for our purpose for the same rigidity reason. 

\vspace{2mm}
 
Now we construct the limit holomorphic symplectic form 
$\sigma_C$ on $C$ and 
discuss 
how $\sigma_W$ and $\sigma_C$ are related. 
That is, using (singular) hyperK\"ahler metric on $X$ and the local diffeomorphisms $\Psi_i$s, 
we have two 
{\it differential geometric} 
construction (or description) of holomorphic 
symplectic form on $C^{\rm sm}$ as follows. 

We start with the 
holomorphic symplectic form $\sigma_X(=:\sigma_0)$ on $X^{\rm sm}$. 
Fix a smooth point $y(\neq 0)$ in $C_x(X)$. 
As remarked in Theorem \ref{DS.maps}, 
$C := C_x(X)$ is the polarized limit space (\cite[p330]{DSII}) of 
$(x\in X,J,c^2 g)$ for $c\to \infty$. For any (big)  compact subset $K\subset C^{\rm sm}$, its open neighborhood $U_C$ has a sequence of 
open $C^{\infty}$-embeddings 
$\Psi_i$ to $(X,J,c^2g_X)$ which approximates enough 
in the sense of {\it loc.cit}. 
Suppose $K\ni y$. 
Then consider 
$$\frac{\Psi_i^* \sigma_i|_y}{|\Psi_i^* 
\sigma_i(y)|_{g_C(y)}}$$ 
which have norm $1$ for any $i$, so that for some 
subsequence of $\{i\}$, 
it converges to a vector $\sigma_y \in 
\Omega^2_{X,y}$ of norm $1$. 
We denote the  denominator 
${|\Psi_i^* 
\sigma_i(y)|_{g(y)}}$ as $c_i$. 
We replace  $\{i\}$ by such a subsequence. 
Note that for any choice of $U_C$ and any 
other point $y'(\neq y)\in U_C$, 
there is a further subsequence whose corresponding 
$$\frac{\Psi_i^* \sigma_i|_{y'}}{|\Psi_i^* \sigma_i(y)|_{g_C(y')}}$$ has a limit, which we denote 
by $\sigma_{y'}$. 
On the other hand, 
recall that 
$\nabla_{g_i}\sigma_i=0$ (so that the holonomy of $\Psi_i^* g_i$ at $y$ 
sits inside some conjugate of the unitary compact symplectic group ${\rm Sp}(n))$)  due to a Bochner type theorem  (\cite[Theorem A]{Henri.etc}). 
Let us take a continuous  
path $\gamma\colon [0,1]\to C^{\rm sm}$ as  $\gamma(0)=y, \gamma(1)=y'$ and 
denote the parallel transport 
with respect to the metric $\Psi_i^* g_i$ 
(resp., $g_C$) 
along it from $y$ to $y'$ as 
$P_\gamma(\Psi_i^* g_i)$ (resp., $P_\gamma(g_C)$). 
Then, $\nabla_{g_i}\sigma_i=0$ implies 
\begin{align}\label{par.tr}
\frac{\Psi_i^* \sigma_i|_{y'}}{c_i}=
(P_\gamma(\Psi_i^* g_i))\biggl(\frac{\Psi_i^* \sigma_i|_y}{c_i}\biggr). 
\end{align}
By taking limit of the right hand side, 
Theorem \ref{DS.maps} \eqref{DSiii}  
implies that 
\begin{align}
\sigma_{y'}&=\lim_{i\to \infty} (P_\gamma(\Psi_i^* g_i))\biggl(\frac{\Psi_i^* \sigma_i|_y}{c_i}\biggr) \\
&=(P_\gamma(g_C))(\sigma_C|_y), 
\end{align}
where the left hand side is independent of $\gamma$ 
and the right hand side is independent of  
subsequence of $\{i\}$ once $y'$ is fixed. 
Thus, the 
$g_C$-parallel 
transform of 
$\sigma_y$ 
is well-defined 
on the smooth locus of $C$ 
which we denote by $\sigma_C$, 
and coincides with the limit of 
$\frac{\Psi_i^* \sigma_i}{c_i}$. 
From these, $\sigma_C|_{y'}=\sigma_{y'}$ for any 
$y'$ 
and $\sigma_C$ is holomorphic by 
Montel's theorem. In particular, 
the metric $g_C$ is hyperK\"ahler metric 
on the smooth locus $C^{\rm sm}$. 

By the Hartogs-Koecher extension 
principle, it gives an (a priori analytic) 
global section of 
$(\Omega_{C^{\it an}}^2)^{**}$, 
where $C^{\it an}$ refers to the complex analytification of 
$C=C_x(X)$ and ${}^{**}$ refers to 
the double dual. 

Now we consider the behaviour of the constants $c_i$s: 
\begin{align}
c_i&:=|\Psi_i^* \sigma_i|_{g_C(p)}\\
   &\label{4..} \sim |\Psi_i^* \sigma_i|_{\Psi_i^* (2^i g_X)(p)} \text{ still at }p\in C (\because \text{Theorem }\ref{DS.maps} \eqref{DSiii})\\ 
   &=|\sigma_i|_{2^i g_X(\Psi_i(p))} \text{ at }\Psi_i(p)\in X_i\\ 
   &=|2^i (\Phi_i^{-1})^* \sigma_X|_{2^i g_X(\Psi_i(p))} \text{ at }\Psi_i(p)\in X_i\\
   &\label{10..}=|\sigma_X|_{g_X(p_i)} \text{ at }p_i\in X\\ 
   &=\sqrt[\frac{n}{2}]{|\sigma_X^{\wedge \frac{n}{2}}|}_{({\rm det}(g_X))(p_i)}
   \\ 
   &\to \sqrt[\frac{n}{2}]{|\sigma_X^{\wedge \frac{n}{2}}|}_{({\rm det}(g_X))(x)}=:c \in \R_{>0}  \hspace{2mm} (i\to \infty). 
\end{align}
Here, $\sim$ in the item \eqref{4..}
means the ratio converges to $1$ as $i\to \infty$ and 
$p_i\in X$ in the item \eqref{10..} refers to $\Phi_i^{-1}(\Psi_i(p))$, 
which clearly converges to $x\in X$. 
By multiplying a constant to $\sigma_X$, 
we can and do assume 
$|\sigma_X^{\frac{n}{2}}|_{g_X}=1$ so that $c_i\to 1$. 
From the above discussion, it follows that 
\begin{Claim}[$\sigma_i$ vs $\sigma_C$]\label{XCcomp}
$\sigma_i$ on $X_i$ smoothly converges (in the $C^{\infty}$-sense) to $\sigma_C$ on $C$ 
at the smooth locus i.e., 
$$\sup_{U_C}\hspace{1.5mm}|\Psi_i^*\sigma_i-\sigma_C|_{g_C}\to 0 \quad (i\to \infty).$$
\end{Claim}
Since $\Phi_{i}=(E_i\cdot \Lambda)\circ \Phi_{i-1}$ for $i\ge 1$ 
with $E_i\to {\rm Id}\in G_\xi \hspace{2mm} (i\to \infty)$, 
the above claim implies that 
$\Lambda^*\sigma_C=2\sigma_C$. 
So, from the Fourier expansion 
(cf., \cite[pp.340-342]{DSII}), 
$\sigma_C$ is 
homogeneous with respect to the $\R_{>0}$-action 
with weight $2$. 
Recall also (cf., \cite{CS, CS2, 
LLX}): 
\begin{align}\label{weight 2}    
r_c^* \omega_C=c^2 \omega_C \:\: \text{and} 
\end{align}
\begin{align}\label{wedge}
\omega_C ^n=
n! \sqrt{-1}^{n^2}(\sigma_C^{n/2}\wedge  
\overline{\sigma_C}^{n/2}).
\end{align}
On the other hand, again by the same arguments as 
\cite[the proof of Lemma 2.17]{DSII}, 
$\sigma_C$ is an algebraic form. 
Moreover, 
this $\sigma_C$ is parallel with respect to $g_C$ from the 
construction via the parallel transform. 
From the above algebraicity of $\sigma_C$ and its $\R_{>0}$-homogeneity, 
the following holds. 
\begin{Claim}[$T$-homogeneity of $\sigma_C$]\label{sigmahomog}
$\sigma_C$ is a $T$-homogeneous algebraic $2$-form 
with respect to a character $\tau_{\vec{m}}$ of $T$ for $\vec{m}\in M$
in the sense that for any 
$t\in T(\C)$, we have $t^* \sigma_C=\tau_{\vec{m}}(t)\sigma_C$. 
\end{Claim}

Now we want to compare these $\sigma_i, \sigma_C$ with $\sigma_{W_i}$. 

For that, first we 
temporarily assume $r(\xi)=1$ for simplicity of the 
exposition, 
until near the end of the proof of Claim \ref{sigmaconv3}. 
At the end of the proof of Claim \ref{sigmaconv3}, 
we explain how to modify the arguments for $r(\xi)>1$ 
by using the nice approximant of $\xi$ (Definition \ref{def:nice}) 
constructed in the previous section. 

To continue the proof of Theorem \ref{ssps}, 
we need some preparation for local identifications between $X$ and $W$. 
Note that $\C^*$-equivariant version of Artin's (relative)  analytic 
approximation theorem holds (also cf., 
\cite[\S A.6]{AHR} for even algebraic equivariant 
approximation, though we do not need this strong result 
here). 
That is, in the notations of the original 
\cite[Theorem 1.5a (ii)]{Artin}, 
if $A, B, C$ have $\C^*$-actions 
with which $v,w,\overline{u}$ are $\C^*$-equivariant, $u$ can be 
also taken $\C^*$-equivariant. 

Now we explain its proof. Firstly, 
recall that Artin's original proof of the non-equivariant version 
is an almost direct consequence of 
\cite[Theorem 1.2]{Artin} by a short reduction argument to it, written 
in {\it op.cit} p.281 bottom five lines. 
Now, that \cite[Theorem 1.2]{Artin} is generalized to the  
equivariant version as \cite[Theorem A]{BM}, and 
the reduction arguments (\cite{Artin} p.281 bottom five lines) 
also generalizes equivariantly verbatim because 
$f_i, \overline{\alpha}_{\mu i}, \overline{\beta}_{ji}, g_j$s in 
its notation, can be taken as $\C^*$-semiinvariant functions 
respectively. 

Now we apply such $\C^*$-equivariant version of the 
Artin's analytic approximation 
to the $\G_m$-equivariant {\it formal 
isomorphism} obtained by Theorem \ref{formallocaltriv} 
between the completed stalk 
$\widehat{\mathcal{O}}_{\X_{D\xi},(x,0)}$ of 
$(x,0)\in \X_{D\xi}\to \A^1$ (Lemma \ref{XtoW.C}), 
for $D\in \Z_{>0}$ 
so that $D\xi\in N$, 
and that of 
$(0,0)\in W\times \A^1\to \A^1$, i.e., 
$\widehat{\mathcal{O}}_{D\times \A^1,(0,0)}$. 
This equivariant formal isomorphism is furthermore taken as 
$\widehat{\mathcal{O}}_{\A^1,0}$-algebra hence the obtained 
equivariant analytic isomorphism of germs of $(x,0)\in \X_{D\xi}^{\rm an}$ 
and $(0,0)\in W\times \A^1$ are $(\A^1)^{\rm an}$-morphism. 
Here, $(\A^1)^{\rm an}$ just means the complex analytification i.e., 
the complex line $\C$. 
In particular, for large enough $a\in \Z_{>0}$, 
there is an open neighborhood $U_W$ of $0\in W$ and 
biholomorphism $f\colon U_W\to X$ on to the image. Then, 
after shifting (replacing) $i$ by $i-a$ henceforth, 
we can define local biholomorphisms 
\begin{align}
\varphi_i:=\Lambda^i \circ f\circ \Lambda^{-i}\colon \Lambda^i(U_W)\to 
\Lambda^i(X) \text{ for each }i=0,1,\cdots,
\end{align}
onto the images which automatically satisfy that 

\begin{Property}\label{varphi.property}
\begin{enumerate}
\item $\varphi_i\to {\rm Id}$ smoothly for $i\to \infty$,   
\item \label{commute}
$\Lambda_{\tau_i}\circ \varphi_i=\varphi_0\circ \Lambda_{\tau_i}\colon \varphi_i^{-1}(V_i)\to X$ 
with $\tau_i=2^{-\frac{i}{2}}$, 
where $\Lambda_{\tau_i}$ in the left hand side  maps 
$X_{\tau_i}\to X_0$ while $\Lambda_{\tau_i}$ on the right hand side means the rescale down in $W$. 
\end{enumerate}
\end{Property}
Then, note that 
for any fixed open subset $U'_X\Subset X^{\rm sm}$, 
there is some $c>0$ such that 
\begin{align}\label{almostisometry}
c^{-1}
\varphi_0^* (g_\xi|_{U'_X})
<g_\xi|_{\varphi_0^{-1}(U'_X)}
<c
\varphi_0^* (g_\xi|_{\varphi_0^{-1}(U'_X)}), 
\end{align}
on $\varphi_0^{-1}(U'_X)$ 
as it follows straightforward from the 
diffeomorphismness of $\varphi_0$. 
We also define conjugates of $\{\varphi_i\}_i$ as 
$$\Psi''_i:=(E_i\circ\cdots E_1)\circ \varphi_i \circ(E_i\circ\cdots E_1)^{-1}\colon ((E_i\circ\cdots E_1)(\Lambda^i(U_W)))\to X_i$$ 
which are again diffeomorphisms 
onto the images. We consider 
$\Psi'_i :=(\Psi''_i)^{-1}\circ \Psi_i$. 
(The facts that these local diffeomorphisms are somewhat non-canonical and 
choices will not affect our proof resembles \cite[Lemma 3.9]{HS}, as 
pointed out by Junsheng Zhang.) 
Also, note that 
since $U_W$ contains the neighborhood of $0_W$ in $W^{\rm sm}$, 
for large enough $i$, the domain of $\Psi'_i$ 
still remains the same as that of $\Psi_i$ i.e., $U_C$. 

By Theorem \ref{thm:extendtoW2} 
\eqref{canonicity.sigmaW}, we know 
\begin{Claim}\label{sigmaconv2}
\begin{enumerate}
\item \label{simgaconv2.1}
$(\Lambda^{-j})^* (2^j\sigma_X)$ on $\Lambda^j(X)$ converges to 
$\sigma_W$ on $W^{\rm sm}$ as smooth convergence via $\Psi''_i$. Similarly, for each fixed positive integer $i$, 
$2^j (\Lambda^{-j})^* \sigma_X$ on $\Lambda^j (X_i)$ smoothly converges to 
$$((E_i\circ E_{i-1}\circ \cdots \circ E_1)^{-1})^* \sigma_W=:\sigma_{W_i}$$ 
as $j\to \infty$ at the smooth locus. 
$\sigma_{W_i}$ is again a $T$-homogeneous 
algebraic form with the same character $\tau_{\vec{m}}$ by the same theorem. 

\item \label{sigmaconv2.2}
$(\Lambda^{-j})^* (\Psi_i)^* (2^j\sigma_i)$ converges to 
$(\Psi'_i)^{*} \sigma_{W_i}$ for $j\to \infty$, as the pullback of \eqref{simgaconv2.1} 
by $\Psi'_i$. 

\end{enumerate}
\end{Claim}

We take an(analytically) open subset 
$W''\Subset {\rm Arg}_\xi(W^{\rm sm})$ 
and set $W':={\rm Arg}_\xi^{-1}(W'')$. 
In $W'$, $g_\xi$ is smooth and 
take a relatively compact open subset $U_C\Subset C^{\rm sm}$ so that 
we can assume that 
$\varphi_i(\overline{\Lambda_i^{-1}(\psi_i(U_C))})$ is contained in $W'$ for each $i$. 
For this $U_C$, we prove the following by using the estimates of 
subsection \ref{subsec:extendtensor}. 

\begin{Claim}\label{sigmaconv3}
For the above $U_C\Subset C^{\rm sm}$, we have 
$\sup_{\Psi_i(U_C)}|((\Psi''_i)^{-1})^*\sigma_{W_i}-
\sigma_{i}|_{g_i}\to 0$. 
\end{Claim}

\begin{proof}
We first slightly enlarge $U_C$ as 
$\overline{U_C}\subset V_C\subset C^{\rm sm}$, 
where $\overline{U_C}$ denotes the closure of 
$U_C$ in $C^{\rm sm}$. We can and do assume that 
$\Psi_i$s are defined over $V_C$ for all $i$s. 
Suppose that the geodesic distance of the vertex $0_C$ and any point of 
$\overline{V_C}$, the closure of $V_C$, 
with respect to $g_C$ on $C^{\rm sm}$, 
takes the value in the open interval $(d'_1,d'_2)$ for some $0<d'_1<d'_2$. 
Because of the convergence of $\Lambda_i(X)\to C$ and $\Psi_i^*g_i\to g_C$, 
it follows that for large enough $i$, 
the geodesic distance of any point of 
$\Lambda_i^{-1}(\Psi_i(V_C))$ to $x\in X$ with respect to $g_X$ 
takes its value in $\bigl(\frac{d_1}{\sqrt{2}^i}, \frac{d_2}{\sqrt{2}^i}\bigr)$ i.e., 
\begin{align}\label{diam.bd}
d_X(\Lambda_i^{-1}(\Psi_i(V_C)))\subset \bigl(\frac{d_1}{\sqrt{2}^i}, \frac{d_2}{\sqrt{2}^i}\bigr), 
\end{align}
for some $d_1,d_2$ with $0<d_1<d'_1<d'_2<d_2$. 
Now we want to apply Lemma \ref{SZlem}, so we follow its notation. 
We take a sufficiently large enough $B_0\Subset((\C^*)^l/\R_{>0})\setminus 
{\rm Arg}_{\xi}({\rm Sing}(C)))$, 
so that ${\rm Arg}_{\xi}^{-1}(B_0)\cap V_C\supset 
\overline{U_C}$. 
Then, 
$X^o$ of Lemma \ref{SZlem} is large enough so that 
\begin{align}\label{Uidef}
U_i:=X^o\cap \Lambda_i^{-1}(\Psi_i(V_C))   
\end{align}
contains $\Lambda_i^{-1}(\Psi_i(U_C))$ for $i\gg 0$. 
Now we apply Lemma \ref{SZlem} to $U_i$. 
Thus, it follows that 
\begin{align}
\label{met.comp2}    \frac{1}{2^{\epsilon i} C_{\epsilon}} g_X|_{U_i}
\le &g_\xi|_{U_i}
\le 2^{\epsilon i}C_{\epsilon} g_X|_{U_i} \text{ and} \\ 
\label{dist.comp2}   \frac{1}{\sqrt{2}^{\epsilon i} D_{\epsilon}} d_X|_{U_i}
\le &d_\xi|_{U_i}
\le \sqrt{2}^{\epsilon i}D_{\epsilon} d_X|_{U_i}, 
\end{align}
with certain positive real constants $C_{\epsilon}$ and $D_\epsilon$, 
for any $i\gg 0$. 
These properties hold for large enough $i$, 
but by shifting the index $i$ to $i-c$ for some constant 
$c$ 
if necessary, one can still assume it starts with $i=1$ 
(just for notational convenience). 

Consider $V_i:=\Lambda^{i}(U_i)\subset \Lambda^i(X)$. 
Then, by the homogeneity of $d_\xi$ with respect to $\Lambda$ (cf., 
Notation \ref{ntt3}), 
above \eqref{diam.bd} and \eqref{dist.comp2} imply that 
\begin{align}
r_\xi(V_i)&\subset (D_\epsilon^{-1} \frac{d_1}{\sqrt{2}^{i\epsilon}}, \sqrt{2}^{i\epsilon}d_2 D_\epsilon)\\ 
&\subset 
\biggl(\frac{1}{\sqrt{2}^{i\epsilon} D'_\epsilon}, \sqrt{2}^{i\epsilon} D'_\epsilon \biggr),\\
\label{diam.bd2} r_\xi(\varphi_i^{-1}(V_i))&\subset \biggl(\frac{1}{\sqrt{2}^{i\epsilon} D'_\epsilon}, \sqrt{2}^{i\epsilon} D'_\epsilon \biggr), 
\vspace{-5mm}
\end{align}
\noindent
for some $D'_\epsilon>0$. 
To prove Claim \ref{sigmaconv3}, it is enough 
to give upper bounds of the following functions 
for each $i$, 
which converge to $0$. 

\begin{align}
&\sup_{\Psi_i(U_C)}|(E_i \circ \cdots \circ E_1)^{*}(((\Psi''_i)^{-1})^*\sigma_{W_i}-
\sigma_{i})|_{2^i (\Phi_i^{-1})^* g_X}\\
\le&\sup_{V_i}|((\varphi_i)^{-1})^* \sigma_W-2^i (\Lambda^{-i})^{*}\sigma_X|_{2^i (\Lambda^{-i})^* g_X}\\
=&\label{difffun}\sup_{U_i}|\frac{((\varphi_i\circ \Lambda^i)^{-1})^* \sigma_W}{2^i}-\sigma_X|_{g_X}.
\end{align}
Here, we use \eqref{met.comp2} to observe that 
\begin{align}
&\frac{1}{2^{\epsilon i} C_\epsilon}
\sup_{U_i}|\frac{((\varphi_i\circ \Lambda^i)^{-1})^* \sigma_W}{2^i}-\sigma_X|_{g_\xi}\\
\le 
&\sup_{U_i}|\frac{((\varphi_i\circ \Lambda^i)^{-1})^* \sigma_W}{2^i}-\sigma_X|_{g_X}\\
\le 
&2^{\epsilon i} C_\epsilon 
\sup_{U_i}|\frac{((\varphi_i\circ \Lambda^i)^{-1})^* \sigma_W}{2^i}-\sigma_X|_{g_\xi}. 
\end{align}
By scaling up again, the homogeneity of $g_\xi$ implies that 
\begin{align}
&\sup_{U_i}|\frac{((\varphi_i\circ \Lambda^i)^{-1})^* \sigma_W}{2^i}-\sigma_X|_{g_\xi}\\
=2^i &\sup_{V_i}|\frac{(\varphi_i^{-1})^* \sigma_W}{2^i}-(\Lambda^{-i})^*\sigma_X|_{g_\xi} \\
=&\sup_{V_i}|\frac{(\Lambda^{-i})^* \sigma_X}{\tau_i^{2}}-((\varphi_i)^{-1})^* \sigma_W|_{g_\xi} \text{ for }\tau_i=2^{-\frac{i}{2}}\\
=&\sup_{\varphi_i^{-1}(V_i)}
|\frac{\varphi_i^*(\Lambda^{-i})^* \sigma_X}{\tau_i^{2}}-\sigma_W|_{\varphi_i^* g_\xi} \text{ for }\tau_i=2^{-\frac{i}{2}}\\
=\label{diff.fun}&\sup_{\varphi_i^{-1}(V_i)}
|\frac{\Lambda_{\tau_i}^*(\varphi_0^*\sigma_X)}{\tau_i^{2}}-\sigma_W|_{\varphi_i^* (g_\xi|_{V_i})} 
\text{ for }\tau_i=2^{-\frac{i}{2}}, 
\end{align}
\noindent 
where the last equality uses 
Property \ref{varphi.property} \eqref{commute}). 
Further, by 
\eqref{almostisometry}, 
we have 
\begin{align}
&(C')^{-1}\sup_{\Lambda^i(\varphi_0^*(U_0))}|\frac{\Lambda_{\tau_i}^*(\varphi_0^*\sigma_X)}{\tau_i^{2}}-\sigma_W|_{(\varphi_i^* g_\xi|_{\Lambda^i(\varphi_0^*(U_i))})}\\
=&\label{86}
(C')^{-1}\sup_{\varphi_i^{-1}(V_i)}
|\frac{\Lambda_{\tau_i}^*(\varphi_0^*\sigma_X)}{\tau_i^{2}}-\sigma_W|_{\varphi_i^* (g_\xi|_{V_i})} \\
\le 
&\label{diff.fun2}\sup_{\varphi_i^{-1}(V_i)}
|\frac{\Lambda_{\tau_i}^*(\varphi_0^*\sigma_X)}{\tau_i^{2}}-\sigma_W|_{(g_\xi|_{\varphi_i^* (V_i)})} \\
\label{diff.fun2.5}\le 
&C' \sup_{\varphi_i^{-1}(V_i)}
|\frac{\Lambda_{\tau_i}^*(\varphi_0^*\sigma_X)}{\tau_i^{2}}-\sigma_W|_{\varphi_i^* (g_\xi|_{V_i})}\\ 
=\label{diff.fun3}
&C' \sup_{\Lambda^i(\varphi_0^*(U_0))}
|\frac{\Lambda_{\tau_i}^*(\varphi_0^*\sigma_X)}{\tau_i^{2}}-\sigma_W|_{(\varphi_i^* g_\xi|_{\Lambda^i(\varphi_0^*(U_i))})}
\end{align}
for some $C'>0$, 
where \eqref{86} and \eqref{diff.fun3} use 
that $\varphi_i^{-1}(V_i)=\Lambda^i(\varphi_0^*(U_0))$ 
by 
Property \ref{varphi.property} \eqref{commute}, 
and \eqref{diff.fun2}, \eqref{diff.fun2.5} use the comparison of
$\varphi_i^* g_\xi$ and $g_\xi$ 
(
\eqref{almostisometry}). Thus, 
our proof of Claim \ref{sigmaconv3} is now reduced to 
estimate of $\sup_{\varphi_i^{-1}(V_i)}
|\frac{\Lambda_{\tau_i}^*(\varphi_0^*\sigma_X)}{\tau_i^{2}}-\sigma_W|_{(g_\xi|_{\varphi_i^* (V_i)})}$. 

By \eqref{diam.bd2} and the way we took $U_i$s 
(see before the Claim \ref{sigmaconv3} and \eqref{Uidef}),  
it follows that 
$$V'_i:=
\bigcup_{\frac{1}{\sqrt{2}^{i\epsilon}D'_\epsilon}\le \tau'\le \sqrt{2}^{i\epsilon} D'_\epsilon} 
\Lambda_{\tau'}\bigl(\varphi_i^{-1}(V_i)\cap 
r_\xi^{-1}(1/\tau' )\bigr)$$
is inside $W'\cap r_\xi^{-1}(\frac{1}{D'_\epsilon},D'_{\epsilon})$, 
hence in a relativley compact bounded region in $W^{\rm sm}$ 
where $g_\xi$ is smooth. 

Note that 
by the homogeneity of $\sigma_W$ and $g_\xi$, 
for general $\tau'\in \R_{>0}$, 
\begin{align}\label{translate}
|\Lambda_{\tau'}^*\biggl(\frac{\Lambda_{\tau_i}^*(\varphi_0^*\sigma_X)}{\tau_i^{2}}-\sigma_W\biggr)|(q)_{g_\xi}=
(\tau')^2\cdot |\biggl(\frac{\Lambda_{\tau'\tau_i}^*(\varphi_0^*\sigma_X)}{(\tau'\tau_i)^{2}}-\sigma_W\biggr)|(\Lambda_{\tau'}(q))_{g_\xi}. 
\end{align}
The above together with \eqref{diam.bd2} implies that 
\begin{align}
&\frac{1}{E_\epsilon}2^{-\epsilon i}
\sup_{V'_i}
|\frac{\Lambda_{\tau_i}^*(\varphi_0^*\sigma_X)}{\tau_i^{2}}-\sigma_W|_{(g_\xi|_{V'_i})}\\ 
\le 
&\sup_{\varphi_i^{-1}(V_i)}
|\frac{\Lambda_{\tau_i}^*(\varphi_0^*\sigma_X)}{\tau_i^{2}}-\sigma_W|_{(g_\xi|_{\varphi_i^* (V_i)})}\\
\le 
&E'_\epsilon 2^{\epsilon i}
\sup_{V'_i}
|\frac{\Lambda_{\tau_i}^*(\varphi_0^*\sigma_X)}{\tau_i^{2}}-\sigma_W|_{(g_\xi|_{V'_i})}. 
\end{align}
for some $E'_\epsilon>0$. 
On the other hand, Theorem \ref{thm:extendtoW} implies that 
\begin{align}
\sup_{V'_i}
|\frac{\Lambda_{\tau_i}^*(\varphi_0^*\sigma_X)}{\tau_i^{2}}-\sigma_W|_{g_\xi}=O\biggl(\frac{1}{\sqrt{2}^{\frac{i}{D}}}\biggr)
\end{align}
since $\frac{\Lambda_{\tau}^*(\varphi_0^*\sigma_X)}{\tau^{2}}
\rightsquigarrow \sigma_W (\tau\to 0)$ fits into a 
family of $2$-forms on 
$\frac{\Lambda_\tau^*(\sigma_X)}{\tau^{2D}}$ on $\X_{D\xi}\to \A^1$, 
and the relative compactness of $\cup_i V'_i$s in $W^{\rm sm}$. 
Summing up, 
we completed the proof of 
the desired claim \ref{sigmaconv3} for $r(\xi)=1$ case. 

For $r(\xi)>1$ case, recall from the previous section 
\S \ref{sec:XW} that, one can take a nice approximant 
$\xi'=\frac{\tilde{\xi}'}{D}$ of 
$\xi$ as in the sense of Definition \ref{def:nice} such that 
$d(\xi,\xi')$ is arbitrarily small. This is proved in 
Claim \ref{claim:dio}. Hence, if we replace 
$\xi$, $\Lambda_\tau$ and $\X_{D\xi}$ 
in the above arguments 
in the Claims \ref{sigmaconv2} and \ref{sigmaconv3} 
by $\xi'$, $\Lambda^{(\xi')}$, $\X_{D\xi'}$ respectively, 
the desired estimates still hold because of the smallness of the 
exponents of $\tau$ caused by \eqref{ggcomp} (by 
Claim \ref{claim:dio}). Hence, 
the desired claim \ref{sigmaconv3} follows the same proof also 
for $r(\xi)>1$ case. 
\end{proof}

Now we prove that $\sigma_{W_i}$ satisfies 
\begin{Claim}[$\sigma_{W_i}$ vs $\sigma_C$]\label{sigmaconv4}
$\sigma_{W_i}$ on $W_i$ (in the Claim \ref{sigmaconv2}) 
converges to $\sigma_C$ 
on $C^{\rm sm}$ as $i\to \infty$ as smooth convergence 
with respect to $\Psi'_i$. 
\end{Claim}

\begin{proof}
By Claim \ref{sigmaconv3}, pulling back by $\Phi_i$, 
it follows that 
$\sup_{U_C}|\Psi_i^*(((\Psi''_i)^{-1})^*\sigma_{W_i}-
\sigma_{i})|_{\Psi_i^* g_i}\to 0$ for $i\to \infty$. 
On the other hand, 
from Theorem \ref{DS.maps} (as the recap of \cite{DSII}), 
$\Psi_i^* g_i\to g_C$ for $i\to \infty$. 
Hence, combining together, we obtain 
\begin{align}\label{sigmaconv5}
\sup_{U_C}|\Psi_i^*(((\Psi''_i)^{-1})^*\sigma_{W_i}-
\sigma_{i})|_{g_C}\to 0
\end{align}
for $i\to \infty$. The above \eqref{sigmaconv5} and 
Claim \ref{sigmaconv3} imply 
$\sup_{U_C}|(\Psi''_i)^* \sigma_{W_i}-\sigma_C|_{g_C}\to 0 (i\to \infty)$ 
by the triangle inequality. This completes the proof of Claim 
\ref{sigmaconv4}. 
\end{proof}

Now, we want to use the smooth convergence 
in the above 
Claim \ref{sigmaconv4} to construct a Poisson 
deformation 
$(W,\sigma_W)\rightsquigarrow (C,\sigma_C)$ 
as an algebro-geometric enhancement. 

We consider the multi-graded Hilbert scheme in Notation 
\ref{ntt5} and the universal family 
$\tilde{\pi}\colon \tilde{\mathcal{U}}\to B$. We restrict 
it 
to $\overline{G_\xi \cdot [W]}$ and obtain a family over 
$\overline{G_\xi \cdot [W]}$. 
We newly put $B:=\overline{G_\xi \cdot [W]}$ 
and denote the obtained family simply by 
$\pi\colon \mathcal{U}\to B$. 
We put $B^o:=G_\xi[W]$. 
Let $\V_B^{(2,\tau)}$ be the 
$\tau$-eigen-subsheaf of $\pi_* \Omega^2_{\mathcal{U}^{\rm sm}/B}$. 
Then $\V_B^{(2,\tau)}|_{B^o}$ is 
locally free and $G_\xi[(W,\sigma_W)]\subset \V_B^{(2,\tau)}|_{B^o}$ 
is a fiber bundle over $B^o:=G_\xi[W]$. 
We partially compactify 
$G_\xi[(W,\sigma_W)] \to B^o$ to a proper morphism 
$$\overline{G_\xi [(W,\sigma_W)]} \to B$$ 
so that a subsequence of the sequence 
$\{([W_i], \sigma_{W_i})\}_i$ 
has a limit point in $\overline{G_\xi [(W,\sigma_W)]}$ 
with respect to the complex analytic topology, 
say ${\bf 0}$. 
Obviously, ${\bf 0}$ is mapped to $[C] \in B$ 
by the map $\overline{G_\xi [(W,\sigma_W)]} \to B$. 
We pull back $\mathcal{U} \to B$ by the map 
$\overline{G_\xi [(W,\sigma_W)]} \to B$ to get 
$$\pi'\colon\mathcal{U}'\to\overline{G_\xi [(W,\sigma_W)]}.$$ Define the sheaf $\V'(2,\tau)$ on $\overline{G_\xi [(W,\sigma_W)]}$ 
as the $\tau$-eigensubsheaf of $\pi'_*\Omega^2_{\mathcal{U}'^{\rm sm}/\overline{G_\xi [(W,\sigma_W)]}}$. 
By definition, there is a canonical section
$$s_{\rm can} \in \Gamma (G_\xi[(W,\sigma_W)],\V'(2,\tau)).$$ 
Let ${\bf 0} \in U \subset \overline{G_\xi [(W,\sigma_W)]}$ 
be an open neighborhood and put $U^o := U \cap  G_\xi[(W,\sigma_W)]$. Then $s_{\rm can}$ 
determines an element $s^o \in \Gamma(U^o,\V'(2,\tau))$. 
We may assume that $U$ is smooth, $U \setminus U^o$ 
is a divisor of $U$ with simple normal crossing, and $U^o$ 
is affine. 

In the following we write $\pi': \mathcal{U}' \to U$ for $\pi' \vert_{(\pi')^{-1}(U)}: (\pi')^{-1}(U) \to U$.  
We take a $\pi'$-smooth open subset inside $\mathcal{U}'$ and denote it by 
$(\mathcal{U}')^{\rm sm}$. Then we take a (small enough) 
affine open subset $\mathcal{W}$ of $(\mathcal{U}')^{\rm sm}$ 
which still intersects the $\pi'$-fiber over ${\bf 0}$ i.e., $C$, and 
$\Omega^2_{\mathcal{W}/U}$ is trivial bundle i.e., 
\begin{align}\label{loctriv}
i\colon \Omega^2_{\mathcal{W}/U}\xrightarrow{\simeq}
\mathcal{O}_{\mathcal{W}}^{\oplus n(n-1)/2}
\end{align}
We fix such trivialization and denotes its restriction 
over ${\bf 0}$ as $i_{\bf 0}\colon \Omega^2_{C^o}
\xrightarrow{\simeq}
\mathcal{O}_{C^o}^{\oplus n(n-1)/2}$. 
We denote the restriction of $\pi'$ simply as $p\colon  
\mathcal{W}\to U$. 
Denote by $C^o$ the Zariski open subset $p^{-1}({\bf 0})$ of $C$. By abuse of notation, we simply write $W_i$ for the Zariski open subset $p^{-1}([W_i, \sigma_{W_i}])$ of $W_i$.  

Put $\mathcal{W}^o:=\mathcal{W}\cap p^{-1}(U^o)$. 
After the local trivialization \eqref{loctriv}, the canonical 
section $s_{\rm can}$ gives a morphism 
$$f\colon \mathcal{W}^o \to \A^{n(n-1)/2}.$$ 
We regard it as a rational map 
$\mathcal{W}\dashrightarrow (\mathbb{P}^1)^{n(n-1)/2}$.  
We resolve its indeterminancy by a blow up 
$\widetilde{\mathcal{W}}\to \mathcal{W}$,  
so that we obtain a morphism 
$$\tilde{f}\colon \widetilde{\mathcal{W}}\to (\P^1)^{n(n-1)/2}.$$
Then, take a 
flattening of $\widetilde{\mathcal{W}}\to U$ (\cite[5.2.2]{RG}) 
which we denote as $p'\colon \mathcal{W}'\to U'$ 
with a birational proper morphism 
$U'\to U$ (so-called $U^o$-admissible blow up) as $q_U$. 
We denote the obtained 
birational proper morphism $\mathcal{W}'\to 
\mathcal{W}$ as $q$. 

Since $q_U$ is proper and their images in $U$ 
converge to ${\bf 0}\in U$, $[W_i, \sigma_{W_i}]\in q_U^{-1}(U^o)$ also have a subsequence which 
converges to point which we denote as ${\bf 0}'\in U'$. 
Note $q_U({\bf 0}')={\bf 0}$. 
We set $C':=(p')^{-1}({\bf 0}')$, which maps to $C$ 
by $q$. 
Now we analyze $\tilde{f}$ restricted to $C'$. 
Because $q$ is birational proper, it is surjective and in 
particular $C'\to C^o$ is a surjection. 

Now, let us consider what our smooth convergence result 
(Claim \eqref{sigmaconv4}) implies. Roughly put, 
$f\colon \mathcal{W}\to \A^{n(n-1)/2}$
restricted to $[W_i,\sigma_{W_i}]$ encodes $\sigma_{W_i}$ 
via the above local trivialization $i$ in \eqref{loctriv}. 
On the other hand, 
we have constructed $\sigma_C$ in an earlier argument 
and the Claim \eqref{sigmaconv4} says 
the above data converges to that of $\sigma_C$. 

To give precise 
arguments, consider any 
closed point 
$\tilde{y}_\infty$ in $C'$. Since $p'$ is flat, it is an open map in the classical complex analytic topology (\cite[Theorem 2.12]{B-S}, \cite[p73. Corollary]{Douady}). By using this property, we can take a sequence of 
points $\tilde{y}_i\in W_i\subset \mathcal{W}'$ which 
converges to $\tilde{y}_\infty$ as $i\to \infty$. 
Then, since $q$ is continuous, 
if we set $y_i:=q(\tilde{y}_i)$ and 
$y_\infty:=q(\tilde{y}_\infty)$, 
$y_i\in \mathcal{W}$ converges to $y_\infty\in C^o$ as 
$i\to \infty$. Now, we have 
\begin{align}
\tilde{f}(\tilde{y}_\infty)
&=\lim_{i\to \infty}\tilde{f}(\tilde{y}_i)\\
&=\lim_{i\to \infty}f(y_i)\\ 
&=i_{\bf 0}(\sigma_C|_{C^o})(y_\infty), 
\end{align}
where the last equality crucially uses 
Claim \ref{sigmaconv4}. In summary, we have 
\begin{align}\label{sigmaconv4.5}
q^* i_{\bf 0}(\sigma_C|_{C^o})=\tilde{f}|_{C'}. 
\end{align}
In particular, the right hand side descends to 
$C^o$ and takes only finite value i.e., 
$C'\subset \tilde{f}^{-1}(\A^{n(n-1)/2})$. 
Since the right hand side also contains 
$(p')^{-1}(q_U^{-1}(U^o))$ due to the presence of $f$, 
there is a (small enough) smooth affine curve 
$(U'\supset) D'\ni {\bf 0}'$ with which $D'\cap q_U^{-1}
(U^o)=D'\setminus {\bf 0}'$. 
We set $\mathcal{W}^o_{D'}:=(p')^{-1}(D')$ 
and $\bar{D}:=q_U(D')$. Take the normalization 
$\nu: D \to \bar{D}$ and let $q \in D$ be a point 
such that $\nu(q) = 0$.

By \eqref{sigmaconv4.5}, it follows that 
$$\tilde{f}\colon \mathcal{W}^o_{D'}\to \A^{n(n-1)/2}$$ 
exists and further it descends to 
$p^{-1}(\bar{D})$, which we denote by 
$\bar{f}: \mathcal{W}^o_{\bar D} \to \A^{n(n-1)/2}$.  
Note that $i|_{\mathcal{W}^o_{\bar D}}^{-1}\circ \overline{f}$ 
gives a family of fiberwise algebraic (hence holomorphic) 
$2$-forms on $\mathcal{W}^o_{\bar D}$ which 
are translates of $\sigma_W$ generically and 
$\sigma_C|_{C^o}$ on the fiber over ${\bf 0}$.
By pulling back the family $\mathcal{W}^o_{\bar D} \to \bar{D}$ by $\nu: D \to \bar{D}$, we have a family 
$\mathcal{W}^o_D \to D$, which admits a relative symplectic form. 

Now let us make the situation a little bit global. 
For $\pi': \mathcal{U}' \to U$, 
we newly put $\mathcal{W}_D := \mathcal{U}' \times_{U} D$. Let $(\mathcal{W})^{sm}_D \subset \mathcal{W}_D$ 
be the open subset where the map $\mathcal{W}_D \to D$ 
is smooth. 
The canonical section $s_{\mathrm{can}}$ gives a meromorphic relative 2-form of $(\mathcal{W})^{\mathrm  sm}_D \to D$, 
which may possibly have a pole along the central fiber 
$C^{\mathrm sm}$ over $q \in D$. However, by the argument just above, we see that the relative $2$-form does not have a pole and actually is a regular relative 2-form.    
This relative $2$-form extends to whole $\mathcal{W}_D$ 
by the reflexibility of 
$(\Omega^2_{\mathcal{W}_D/D})^{**}$ 
and is a family of symplectic forms on its 
relative smooth locus. Clearly, the restriction to 
its generic fiber $W$ is $\sigma_W$ while the restriction 
to the special fiber $C$ is $\sigma_C$ as they are 
in their open dense subsets. 

In summary, 
we obtain a pointed smooth curve $(D\ni q)$ 
with an isotrivial family of $\Q$-Fano cones  $p\colon \mathcal{W}_D\to D$ with fiberwise $T$-action. 
From our construction, 
it is a Poisson deformation with $p^{-1}(q)=C$ 
and other fibers are all isomorphic to $W$. 

Now, we prove that 
$(W, \sigma_W) \cong (C, \sigma_C)$ by using the above family $\mathcal{W}_D$. 
Let $\mathcal{C}^{\rm univ} \to \mathbb{A}^d$ be the universal Poisson deformation of $C$. The $T$-action on $C$ naturally induces a $T$-action on 
the base space $\mathbb{A}^d$, which turns out be a good action. 
Let $t$ be a local parameter of $D$ at $q$ and let $S_n := \mathrm{Spec}\: \mathcal{O}_{D,q}/(t^{n+1})$. 
Put $\mathcal{W}_n := \mathcal{W}_D \times_D S_n$. Then $\mathcal{W}_n$ is a Poisson deformation of $C$ over $S_n$. Moreover, $T$ acts on $\mathcal{W}_n$ fiberwise. 
By the universality, it uniquely determines a $T$-equivariant map $S_n \to \mathbb{A}^d$. Here the $T$-action on the left hand side is trivial, but the $T$-action on the right hand side is good. Then we see that this map must be the constant map to the origin $0 \in\mathbb{A}^d$.
Using an argument similar to Lemma \ref{Lem3}, we have a 
$T$-equivariant isomorphism of Poisson schemes ${\mathcal W}^{\hat{}} := \{\mathcal{W}_n\}$ 
and $(C \times \mathbb{A}^1)^{\hat{}} := \{C \times S_n\}$. 
On the right hand side, $T$ acts trivially on 
$\mathbb{A}^1$.  
The $T$-equivariant isomorphisms of the formal schemes determines a $T$-equivariant isomorphism of the $\C[[t]]$-algebras 
$\Gamma (C, \mathcal{O}_{{\mathcal W}^{\hat{}}})$
and $\Gamma (C, \mathcal{O}_{(C \times \mathbb{A}^1)^{\hat{}}})$. 

Let us consider the $\C[[t]]$-subalgebras of these algebras 
generated by the $T$-eigenvectors. The isomorphism identifies these two subalgebras 
and we have an isomorphism 
$$\mathcal{W}_D \times_D \mathrm{Spec}\: \C[[t]] 
\cong C \times_{\C} \mathrm{Spec}\:\C[[t]].$$
Then the same argument in \cite[Corollary 3.2]{Namf} can be applied and we have an isomorphism  
$$(W, \sigma_W) \cong (C, \sigma_C)$$ of affine conical symplectic varieties with respect to the $T$-actions.  
We complete the proof of Theorem \ref{ssps}.   
\end{proof}

\begin{Rem}
As noted in
\cite[3.6]{HeinSun} and \cite[5.1]{Zha24}, 
Theorem \ref{ssps} implies that
we can retake Donaldson-Sun degeneration data so that 
$W=W_i=C$ for all $i$. 
\end{Rem}

\begin{Rem}
Also note that 
$W=C$ is proved for the case of affine toric varieties 
in \cite{FOW}, \cite{Ber.toric}, \cite[\S 1]{CS2}, 
\cite[\S 2]{Od24a}. 
\end{Rem}
\section{The Proof of the main theorems}\label{sec:sum}

In this section, we summarize our whole arguments to show the 
main theorems on the symplectic singularities. The first subsection 
is still a preparation and the second subsection provides the proofs of the main theorems (Theorems \ref{Mthm.intro}, 
\ref{Mthm2.intro}). 

\subsection{Symplectic resolution vs smoothability}

Before discussing Theorem \ref{Mthm}, as a preparation, 
we show the equivalence of 
smoothability and existence of 
symplectic resolutions in the 
global polarized setup, 
which may be of independent interest. 

\begin{Thm}\label{smoothing}
Let $(Z, L)$ be a polarized projective symplectic variety of even dimension $n$. Then, it has 
a symplectic projective resolution $\pi\colon Y \to Z$ if and only if 
there is a polarized smoothing
$(\mathcal{Z}_{\Delta}, \mathcal{L}) \to \Delta$ where $(\mathcal{Z}_t, \mathcal{L}_t)$ is a polarized symplectic manifold for $t \in \Delta - \{0\}$ and $(\mathcal{Z}_0, \mathcal{L}_0) = (Z, L)$.  
\end{Thm}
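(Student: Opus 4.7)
The plan is to pass to a $\Q$-factorial terminalization $f\colon Y' \to Z$
and to use Namikawa's deformation theory of projective symplectic varieties
(\cite{Nama, Namb, Namf})
to compare smoothability of $(Z, L)$ with that of $(Y', f^*L)$.
Here $f$ is a crepant projective birational morphism and $Y'$ is a projective
$\Q$-factorial symplectic variety with only terminal singularities.
By Namikawa, polarized deformations of these objects are unobstructed with smooth
formal bases, and the induced morphism $\mathrm{Def}(Y', f^*L) \to \mathrm{Def}(Z, L)$
is finite and surjective.
The proof is thus reduced to showing that $(Y', f^*L)$ admits a polarized smoothing if
and only if $Y'$ itself is smooth (i.e., $f$ is already a resolution).

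For the forward direction, assume a symplectic projective resolution
$\pi\colon Y \to Z$ exists. Then $Y$ is itself a smooth $\Q$-factorial
terminalization of $Z$, so I take $Y' = Y$. Polarized deformations of the projective
hyperk\"ahler manifold $(Y, \pi^*L)$ are unobstructed and form a smooth universal
family; along a generic direction in the base the $\pi$-contracted rational curves do
not deform to the generic fiber (their homology classes generically become non-algebraic, as is standard for deformations of projective hyperk\"ahler manifolds), so
$\pi^*L_t$ becomes ample and $\pi_t\colon Y_t \to Z_t$ is an isomorphism.
Pushing through the finite surjection of polarized deformation spaces and
algebraizing gives the desired polarized smoothing of $(Z, L)$.

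Conversely, given a polarized smoothing $(\mathcal{Z}_\Delta, \mathcal{L}) \to \Delta$
of $(Z, L)$, by surjectivity of the deformation morphism it lifts, after a ramified
finite base change $\Delta' \to \Delta$ if necessary, to a polarized flat family
$(\mathcal{Y}'_{\Delta'}, \widetilde{\mathcal{L}}) \to \Delta'$ of $(Y', f^*L)$
together with a crepant birational morphism $\mathcal{Y}'_{\Delta'} \to
\mathcal{Z}_{\Delta'} := \mathcal{Z}_\Delta \times_\Delta \Delta'$ extending $f$.
For $t \ne 0$ the target $\mathcal{Z}_t$ is smooth, so the crepant morphism
$\mathcal{Y}'_t \to \mathcal{Z}_t$ from a $\Q$-factorial terminal symplectic variety to a smooth one is forced to be an isomorphism, and in particular $\mathcal{Y}'_t$ is smooth.
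The local rigidity of $\Q$-factorial terminal symplectic singularities
(\cite{Namb, Namf}) then implies that polarized flat deformations of $Y'$ preserve
the analytic type of its singular locus, so smoothness of the nearby fibers forces
$Y' = \mathcal{Y}'_0$ to be smooth, making $Y' \to Z$ the desired symplectic
projective resolution.

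The main obstacle is the converse direction. It relies on two substantive inputs:
(i) the algebraization of the formal lift
$\mathrm{Def}(Z, L) \to \mathrm{Def}(Y', f^*L)$ to an honest polarized family
$\mathcal{Y}'_{\Delta'} \to \Delta'$, which uses the polarization
$\widetilde{\mathcal{L}}$ via Grothendieck's existence theorem (or Artin
approximation), and (ii) the local rigidity of $\Q$-factorial terminal symplectic
singularities, which is what ultimately forbids their disappearance in a flat
family with smooth generic fibers and thereby forces $Y'$ to be smooth under the
smoothability hypothesis.
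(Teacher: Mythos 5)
Your proposal reverses the order of difficulty compared to what actually happens. The ``if'' direction (smoothing $\Rightarrow$ resolution) is the easy part: the paper disposes of it in one line by citing \cite[Corollary 2]{Nam.def} together with \cite{BCHM}, and your sketch via $\Q$-factorial terminalizations is essentially an unpacking of the same result, so that part is fine (modulo the slight imprecision that ``local rigidity preserves the analytic type of the singular locus'' is not quite the formulation of Namikawa's theorem, which is closer to: a $\Q$-factorial terminal projective symplectic variety admitting a flat projective smoothing is already smooth).

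The genuine gap is in the ``only if'' direction, which is where the paper spends almost all its effort. You write that ``the $\pi$-contracted rational curves do not deform to the generic fiber (their homology classes generically become non-algebraic, as is standard for deformations of projective hyperk\"ahler manifolds).'' This parenthetical is doing an enormous amount of work, and it is not standard in the generality you need. A projective symplectic resolution $Y$ of a projective symplectic variety need \emph{not} be an irreducible symplectic (hyperk\"ahler) manifold: by Beauville--Bogomolov, its universal cover decomposes into a product of irreducible symplectic manifolds and a torus, with a nontrivial deck transformation action permuting the factors. The ``standard'' argument you invoke --- that a contracted curve class $b$ lies in the negative part of the Beauville--Bogomolov--Fujiki form (\cite[Lemma~3.5]{Bakker-Lehn}) and one can find a polarized deformation direction $v_b \in c_1(\pi^*L)^\perp$ with $\langle v_b, b\rangle' \neq 0$ --- only works out of the box when $Y$ is irreducible. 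That is precisely Step 1 of the paper's proof. Steps 2--4 are devoted to propagating this to self-products, products of distinct irreducible factors, and the case with a torus factor, by averaging the class $v_b$ over the finite deck group $G$ and carefully tracking how $b$, $c_1(\pi^*L)$, and the pairings split across the factors. Without that reduction your proof simply does not apply to, say, a symplectic quotient singularity whose resolution is covered by a product of $K3$'s, or one with a torus factor. Your proposal also silently assumes, without argument, that the locus $\mathrm{Def}(Y,\pi^*L)$ where the polarization extends is smooth; in the non-irreducible case this requires the surjectivity argument of Steps 2--4 (showing $\langle c_1(\pi^*L)_s, \cdot\rangle_s$ stays surjective on $G$-invariants), which you never supply. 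In short: your route and the paper's route are the same in outline, but you have elided the heart of the forward direction, namely the Beauville--Bogomolov/equivariant-BBF analysis, by appealing to a ``standard'' fact that is only standard in the irreducible case.
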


\begin{proof}
Firstly, we show the only if direction. 
We note that, based on \cite[Theorem 4.8]{Fujiki},  
\cite[Theorem (2.2), Claim 3]{Nam.def0} shows the same statement without a polarization. 
Below, we closely follow Fujiki's idea in 
\cite[Theorem 4.8]{Fujiki} and 
check that the smoothing can be chosen together with the 
polarization. 
We divide the proof in the following four steps. 

\begin{description}
\item [{\bf Step 1}] (proves the theorem in the case) when $Y$ is irreducible 
\item [{\bf Step 2}] when the  universal cover is the self-product of 
some irreducible symplectic manifold 
\item [{\bf Step 3}] when the universal covering decomposes into irreducible symplectic manifolds
\item [{\bf Step 4}] general case
\end{description}

\begin{Step}\label{s1} 
This first step treats 
 the case when $Y$ is an {\it irreducible} symplectic manifold.
\vspace{0.2cm}

Let $S$ be the Kuranishi space of $Y$, which is smooth by the 
unobstructedness theorem of Bogomolov, Tian and Todorov (\cite{Bogo}, \cite{Tian}, \cite{Todorov}). 
Let $f\colon  \mathcal{Y} \to S\ni 0$ be the universal family. 
For $s \in S$, we denote by $Y_s$ the fiber $f^{-1}(s)$. Note that $Y_0 = Y$.  
There is a relative holomorphic symplectic 
form $\tilde{\sigma} \in \Gamma (S, f_*\Omega^2_{\mathcal{Y}/S})$ 
which restricts to a holomorphic symplectic form $\sigma_s$ on $Y_s$ for $s \in S$. 
By using variuos cohomological 
comparison theorems mainly due to 
Fujiki (\cite{Fujiki, Fujiki 2}, cf., also \cite{Huy99}), 
we prove the  following claim: 
\begin{Claim}\label{cd}
We have the following commutative diagram 
\begin{equation} 
\begin{CD} 
H^1(Y_s, \Omega^1_{Y_s}) \times H^1(Y_s, \Theta_{Y_s}) @>{\langle \:, \:\rangle_s}>> H^2(Y_s, \mathcal{O}_{Y_s}) \\
@V{id \: \times \: \bigl(-\frac{n}{2}  \sigma_s^{\frac{n}{2}-1}\bar{\sigma}_s^{\frac{n}{2}-1}\: \cup\: {\sigma_s}\bigr)}VV  @V{ \sigma_s^{\frac{n}{2}}\bar{\sigma}_s^{\frac{n}{2}-1} \cup }VV \\
H^1(Y_s, \Omega^1_{Y_s}) \times H^{n-1}(Y_s, \Omega^{n-1}_{Y_s}) @>{(\: , \:)_s}>> H^{n}(Y_s, \Omega^n_{Y_s}) \\ 
@A{-\sigma_s \times id}AA 
@A{\sigma_s \: \cup}AA \\ 
H^1(Y_s, \Theta_{Y_s}) \times 
H^{n-1}(Y_s, \Omega^{n-1}_{Y_s}) 
@>{\langle \: ,  \:  \rangle'_s}>> 
H^n(Y_s, \Omega_{Y_s}^{n-2})
\end{CD}
\end{equation}
Here the map 
$-\frac{n}{2}  \sigma_s^{\frac{n}{2}-1}\bar{\sigma}_s^{\frac{n}{2}-1}\: \cup\: {\sigma_s}$ means the composite $$H^1(Y_s, \Theta_{Y_s}) \stackrel{\sigma_s}\to H^1(Y_s, \Omega^1_{Y_s}) 
\stackrel{-\frac{n}{2}  \sigma_s^{\frac{n}{2}-1}\bar{\sigma}_s^{\frac{n}{2}-1}\: \cup}\longrightarrow H^{n-1}(Y_s, \Omega^{n-1}_{Y_s}).$$
Moreover, all vertical maps are isomorphisms, and the 
horizontal pairing maps are all perfect. 
\end{Claim}
\begin{proof}[proof of Claim \ref{cd}]
Let us check the commutativity of the first square. For $\eta \in H^1(Y_s, \Omega^1_{Y_s})$ and $v \in  H^1(Y_s,\Theta_{Y_s})$ 
with the holomorphic tangent sheaf $\Theta_{Y_s}$, we consider $\eta \cup \sigma_s^{\frac{n}{2}}\bar{\sigma_s}^{\frac{n}{2} -1}$, which is clearly zero. Then we compute 
\begin{align*}
0 &= v \rfloor (\eta \cup \sigma_s^{\frac{n}{2}}\bar{\sigma_s}^{\frac{n}{2} -1}) \\
&= (v \rfloor \eta) \cup 
\sigma_s^{\frac{n}{2}}\bar{\sigma_s}^{\frac{n}{2} -1} + \eta \cup (v \rfloor \sigma_s^{\frac{n}{2}}\bar{\sigma_s}^{\frac{n}{2} -1})\\ 
&= (v \rfloor \eta) \cup 
\sigma_s^{\frac{n}{2}}\bar{\sigma_s}^{\frac{n}{2} -1} + \eta \cup \frac{n}{2}(v \rfloor \sigma_s) \cup  
\sigma_s^{\frac{n}{2}-1}\bar{\sigma_s}^{\frac{n}{2} -1}.
\end{align*}
In the last equality, we use the fact that 
\begin{align*}
v \rfloor \sigma_s^{\frac{n}{2}}\bar{\sigma_s}^{\frac{n}{2} -1} &= \frac{n}{2}(v \rfloor \sigma_s) \cup  
\sigma_s^{\frac{n}{2}-1}\bar{\sigma_s}^{\frac{n}{2} -1} + \bigl(\frac{n}{2} - 1\bigr)(v \rfloor \bar{\sigma_s}) \cup \sigma_s^{\frac{n}{2}} \bar{\sigma_s}^{\frac{n}{2} - 2}\\
&= \frac{n}{2}(v \rfloor \sigma_s) \cup  
\sigma_s^{\frac{n}{2}-1}\bar{\sigma_s}^{\frac{n}{2} -1}
\end{align*}
because 
$v \rfloor \bar{\sigma_s} = 0$. 
Then we have 
$$-\frac{n}{2}(v \rfloor \sigma_s) \cup  
\sigma_s^{\frac{n}{2}-1}\bar{\sigma_s}^{\frac{n}{2} -1} \cup \eta = 
(v \rfloor \eta) \cup 
\sigma_s^{\frac{n}{2}}\bar{\sigma_s}^{\frac{n}{2} -1},$$ which implies the commutativity. The commutativity 
of the second square is similar. 

Now we look at the vertical maps on the right hand side 
of the diagram. 
There is an identification 
$H^n(Y_s, \Omega^n_{Y_s}) \cong \mathbb{C}$ determined 
by the natural orientation $H^{2n}(Y_s, \mathbb{Z}) \cong \mathbb{Z}$. The map 
$H^n(Y_s, \Omega^{n-2}_{Y_s}) \stackrel{\sigma_s}\to H^n(Y_s, \Omega^n_{Y_s})$ is an isomorphism 
by the irreducibility of $Y_s$ 
together with the Serre duality. 
Moreover, the map  
$H^2(Y_s, \mathcal{O}_{Y_s}) \stackrel{\sigma_s^{\frac{n}{2} }\bar{\sigma_s}^{\frac{n}{2}-1}}\longrightarrow H^n(Y_s, \Omega^n_{Y_s})$ is also an isomorphism because the composite  
$$H^0(Y_s, \mathcal{O}_{Y_s}) \stackrel{\bar{\sigma}_s}\to H^2(Y_s, \mathcal{O}_{Y_s})
\stackrel{\sigma_s^{\frac{n}{2} }\bar{\sigma_s}^{\frac{n}{2}-1}}\to H^n(Y_s, \Omega^n_{Y_s})$$ is an isomorphism and the first map  
is an isomorphism. By these isomorphisms, 
we identify 
$H^2(Y_s, \mathcal{O}_{Y_s})$ and $H^n(Y_s, \Omega^{n-2}_{Y_s})$ respectively with $H^n(Y_s, \Omega_{Y_s}^n)$, hence with $\mathbb{C}$. 

We next look at the vertical maps on the left hand side. The second one is an isomorphism because $\sigma_s$ is non-degenerate and the first one is an isomorphism by the holomorphic hard Lefschetz theorem (\cite{Fujiki 2}, Theorem 4.5) together with this fact. 

Since $(\:, \:)_s$ is a perfect pairing, 
the horizontal pairings 
$\langle \:, \:\rangle_s$ and $\langle \:, \:\rangle'_s$ are also perfect. 
We complete the proof of Claim \ref{cd}. 
\end{proof}

\noindent
For simplicity of notation for $s=0$ case, we write respectively $\langle \:, \: \rangle$ for $\langle \:, \: \rangle_0$, $(\:, \:)$ for 
$(\:, \:)_0$ and $\langle \:, \: \rangle'$ for $\langle \:, \: \rangle'_0$. Finally, we write $\sigma$ for 
$\sigma_0$.

\vspace{2mm}

Let $\mathrm{Def}(Y, \pi^*L)$ be the locus of $S$ where $\pi^*L$ extends sideways. 
Note that the Kuranishi space $S$ 
can be assumed to be small enough and simply connected (e.g., polydisk). 
By the identification
$$H^2(Y, \mathbb{Q}) \cong 
\Gamma (S, R^2f_*{\mathbb Q}) 
\cong H^2(Y_s, \mathbb{Q}),$$
the first Chern class $c_1(\pi^*L) \in H^2(Y, \mathbb{Q})$ determines a cohomology class of $H^2(Y_s, \mathbb{Q})$, which we denote by 
$c_1(\pi^*L)_s$. 
For $s \in \mathrm{Def}(Y, \pi^*L)$, the cohomology class $c_1(\pi^*L)_s \in H^2(Y_s, \mathbb{Q})$ is of type $(1,1)$. Then the tangent space 
$T_s\mathrm{Def}(Y, \pi^*L)$ of $\mathrm{Def}(Y, \pi^*L)$ at $s$ is isomorphic to 
$$c_1(\pi^*L)_s^{\perp} := 
\{\eta \in H^1(Y_s, \Theta_{Y_s})\: \vert \: 
\langle c_1(\pi^*L)_s, \eta \rangle_s = 0\},$$
which is a hyperplane of $H^1(Y_s, \Theta_{Y_s})$. Hence 
$\mathrm{Def}(Y, \pi^*L)$ is a 
smooth hypersurface of $S$ passing through $0 \in S$. 

We next consider an element $b \in H^{2n-2}(Y, \mathbb{Q})$.  By the  
identification $$H^{2n-2}(Y, \mathbb{Q}) \cong 
\Gamma (S, R^{2n-2}f_*{\mathbb Q}) 
\cong H^{2n-2}(Y_s, \mathbb{Q}),$$ we have an element $b_s \in H^{2n-2}(Y_s, \mathbb{Q})$.  
Let $R_b \subset S$ be the locus 
where $b_s$ is an element of $H^{2n-2}(Y_s, \mathbb{Q})$ of type $(n-1, n-1)$. 
Then the tangent space $T_sR_b$ 
at $s \in R_b$ coincides with 
$$b_s^{\perp} := \{\eta \in H^1(Y_s, \Theta_{Y_s})\: \vert \: \langle \eta, b_s \rangle'_s = 0\},$$ which is a
hyperplane of $H^1(Y_s, \Theta_{Y_s})$. Hence $R_b$ is a 
smooth hypersurface of $S$. 
In the remainder we do not use the smoothness of $R_b$, but only use the information on $b^{\perp}$. 

For the origin $0 \in S$,  
we compare two tangent spaces  $c_1(\pi^*L)^{\perp}$ and  
$b^{\perp}$ in $H^1(Y, \Theta_Y)$.

Let us consider $\pi: Y \to Z$. Let $q$ be the Beauville-Bogomolov-Fujiki form of $Y$. Then by Lemma 3.5 of 
\cite{BakkerLehn}, 
we have an orthogonal decomposition  $H^2(Y, \mathbb{R}) = \pi^*H^2(Z, \mathbb{R}) \oplus N$ with respect to $q$ and 
$q\vert _N$ is negative definite.
Take $b\in H^{2n-2}(Y,\Q)\cap H^{n-1,n-1}(Y)$ which is represented by an effective algebraic $1$-cycle of $Y$ contracted by $\pi$. 
Since $\cup 
\sigma^{\frac{n}{2}-1}
\bar{\sigma}^{\frac{n}{2}-1}\colon H^{1,1}(Y)\to H^{n-1,n-1}(Y)$ is 
isomorphic by \cite{Fujiki 2}, 
for such $b$, there is a unique element $v_b \in H^1(Y, \Theta_Y)$ such that 
$$b = (v_b \rfloor \sigma) \sigma^{\frac{n}{2}-1}\bar{\sigma}^{\frac{n}{2}-1}
.$$ 
Since $\bar{b} = b$,    
we have $v_b \rfloor \sigma \in H^2(Y, \mathbb{R})$. 
For any element $\alpha \in \pi^*H^2(Z, \mathbb{R})$, we have $q(v_b \rfloor \sigma, \alpha) = 0$. 
In fact, with a suitable positive constant $c$, we have $$q(v_b \rfloor \sigma, \alpha) = c (v_b \rfloor \sigma) \alpha  
\sigma^{\frac{n}{2}-1}\bar{\sigma}^{\frac{n}{2}-1} = c \alpha b = 0.$$
Then $v_b \rfloor \sigma \in N$, and $q(v_b \rfloor \sigma) < 0$. This means that 
$(v_b \rfloor \sigma)^2 \sigma^{\frac{n}{2}-1}\bar{\sigma}^{\frac{n}{2}-1} < 0$.  
We then compute 
\begin{align*}
\langle v_b, b \rangle' &= \langle v_b, (v_b \rfloor \sigma) \sigma^{\frac{n}{2}-1}\bar{\sigma}^{\frac{n}{2}-1} \rangle'\\
&= (-\sigma \rfloor v_b, (v_b \rfloor \sigma) \sigma^{\frac{n}{2}-1}\bar{\sigma}^{\frac{n}{2}-1}) \\
&= - c^{-1}q(\sigma \rfloor v) > 0.
\end{align*}
On the other hand, we have 
\begin{align*}
\langle c_1(\pi^*L),  v_b \rangle &= (c_1(\pi^*L), -\frac{n}{2}(v_b \rfloor \sigma) \sigma^{\frac{n}{2}-1}\bar{\sigma}^{\frac{n}{2}-1}) \\ &=  - \frac{n}{2} (c_1(\pi^*L), b) \\ &= 0.
\end{align*}
Hence $v_b \in c_1(\pi^*L)^{\perp}$ but $v_b \notin b^{\perp}$, 
which is important for our proof. 
We put $H_b := c_1(\pi^*L)^{\perp} \cap b^{\perp}$. 
Then $H_b$ is a hyperplane of $T_0\mathrm{Def}(Y, \pi^*L)$.  Let $\Gamma  \subset H^{2n-2}(Y, \mathbb{Q})$ be the subset 
consisting of all elements $b$ such that $b$ are represented by effective algebraic 1-cyles of $Y$ which are contracted by 
$\pi$ to points. 
Note that $\Gamma$ is a countable set.   
Now we take a smooth complex curve $0\in \Delta \subset \mathrm{Def}(Y, \pi^*L)$ so that 
$T_0\Delta$ is not contained in any $H_b$ with $b \in \Gamma$.  
If we restrict the universal family $\mathcal{Y} \to S$ to 
$\Delta$, then we get a flat deformation $\mathcal{Y}_{\Delta} \to \Delta$ of $Y$. 
Recall that $\pi\colon Y \to Z$ induces a map of Kuranishi spaces $S := \mathrm{Def}(Y) \to \mathrm{Def}(Z)$ (cf., \cite[11.4]{KM}, 
\cite[\S 2, 2.1, 2.2]{Nam.def0}). 
We pull back the universal family $\mathcal{Z} \to \mathrm{Def}(Z)$ by the composite $\Delta \to \mathrm{Def}(Y) \to \mathrm{Def}(Z)$ and get a flat deformation $\mathcal{Z}_{\Delta} \to \Delta$. 

We prove that this is the desired smoothing of $Z$. 
In fact, there is a birational map $\Pi: \mathcal{Y}_{\Delta} \to \mathcal{Z}_{\Delta}$ over 
$\Delta$. For each $s \in \Delta$, $\Pi$ induces a birational map $\Pi_s: Y_s \to Z_s$ of the fibers. 
Then the expeptional locus 
$\mathrm{Exc}(\Pi)$ is mapped onto a closed analytic subset 
$F \subset \Delta$ because $\mathcal{Y}_{\Delta} \to \Delta$ is a proper map. 
We want to prove that $F = \{0\}$ if we shrink $\Delta$ enough. If not, we may assume that $F = \Delta$. Then, for any 
$s \in \Delta$, we have $\mathrm{Exc}(\Pi_s) \ne \emptyset$. By the Chow lemma \cite{Hironaka}, the map $\Pi_s$ is dominated by a projective birational morphism $\tilde{Z}_s \to Z_s$. Hence $\mathrm{Exc}(\Pi_s)$ must contain a curve $C$ such that $\Pi_s(C)$ is a point of $Z_s$. We consider the relative Douady space $D(\mathcal{Y}_{\Delta}/\mathcal{Z}_{\Delta})$ 
parametrizing compact curves  on $\mathcal{Y}_{\Delta}$ contracted to points on $\mathcal{Z}_{\Delta}$. 
By \cite{Fujiki 3}, there are countably many irreducible components of $D(\mathcal{Y}_{\Delta}/\mathcal{Z}_{\Delta})$. By our assumption, 
there is an irreducible component $D$ of $D(\mathcal{Y}_{\Delta}/\mathcal{Z}_{\Delta})$ which dominates $\Delta$. On the other hand,  
each irreducible component of $D(\mathcal{Y}_{\Delta}/\mathcal{Z}_{\Delta})$ is proper over $\Delta$ by \cite{Fujiki 4}. Hence 
$D \to \Delta$ 
is a surjection. Let $C \subset Y$ a curve corresponding to a point of the central fiber $D_0$. Then $C$ extends sideways in 
$\mathcal{Y}_{\Delta} \to \Delta$. This $C$ determines a class $[C] \in H^{2n-2}(Y, \mathbb{Q})$. Moreover, $[C] \in \Gamma$. 
This contradicts the choice of $\Delta$. Therefore, $F = \{0\}$ and $\Pi_s$ is an isomorphism for any $s \in \Delta - \{0\}$. 
Then $Y_s \cong Z_s$ and since $Y_s$ is smooth, $Z_s$ is smooth. Since $\pi^*L$ extends sideways in the flat deformation $\mathcal{Y}_{\Delta} \to \Delta$, we see that $L$ extends sideways in the flat deformation $\mathcal{Z}_{\Delta} \to \Delta$. 
\vspace{0.2cm}

\end{Step}
\begin{Step}\label{s2} The second step treats 
the case when the  universal covering  $Y'$ of $Y$ decomposes into a direct product  $Y_1 \times \cdots \times Y_r$ of 
the {\it isomorphic} irreducible symplectic manifold 
$Y_1\simeq \cdots \simeq Y_r$ i.e., the self-product. 
\vspace{0.2cm}

Let $\nu\colon Y' \to Y$ be the 
universal covering. 
For the simplicity of notation, 
in this step, we identify a differential form on each $Y_i$ 
and its pullback by the $i$-th projection $Y'\to Y_i$. 
Following such convention, we write $$\nu^*\sigma 
= \sigma_1 + \cdots  + \sigma_r$$  
with (the pullback of) 
holomorphic symplectic form $\sigma_i$ on $Y_i$. 
For $g \in  \pi_1(Y)$, we have $g^*(\nu^*\sigma) = \nu^*\sigma$. 
By the uniqueness of the Beauville-Bogomolov decomposition,  
there is a permutation $u: \{1, \cdots , r\} \to \{1, \cdots , r\}$ and 
symplectic isomorphisms $g_i: (Y_i, \sigma_i) \to (Y_{u^{-1}(i)}, \sigma_{u^{-1}(i)})$ such that $g$ acts on $Y'$ as 
\begin{align*}
Y_1 \times \cdots  \times Y_r &\to Y_1 \times \cdots  \times Y_r, \:\: \\ 
(x_1, \cdots , x_r) &\mapsto (g_{u(1)}(x_{u(1)}), \cdots , 
g_{u(r)}(x_{u(r)})).
\end{align*}
We assume that $\pi_1(Y)$ 
permutes the factors $Y_i$ transitively.  

We put $m := \dim Y_i$ and  $$\tau_i := \sigma_1^{\frac{m}{2}}\bar{\sigma_1}^{\frac{m}{2}} \cdots \sigma_{i-1}^{\frac{m}{2}}\bar{\sigma}_{i-1}^{\frac{m}{2}} \sigma_{i+1}^{\frac{m}{2}}\bar{\sigma}_{i+1}^{\frac{m}{2}} \cdots \sigma_r^{\frac{m}{2}}\bar{\sigma_r}^{\frac{m}{2}}.$$
For each $i$ we have a commutative diagram  

\begin{equation} 
\begin{CD} 
H^1(Y_i, \Omega^1_{Y_i}) \times H^1(Y_i, \Theta_{Y_i}) @>{\langle \:, \:\rangle_i}>> H^2(Y_i, \mathcal{O}_{Y_i}) \\
@V{id \: \times \: ((-\frac{m}{2}  \sigma_i^{\frac{m}{2}-1}\bar{\sigma}_i^{\frac{m}{2}-1} \: \cup\: {\sigma_i}) \otimes \tau_i)}VV  
@V{ \cup \sigma_i^{\frac{m}{2}}\bar{\sigma}_i^{\frac{m}{2}-1} \otimes \tau_i }VV \\
H^1(Y_i, \Omega^1_{Y_i}) \times H^{m-1}(Y_i, \Omega^{m-1}_{Y_i}) \otimes \mathbb{C}\tau_i @>{(\: , \:)_i \otimes id}>> H^{m}(Y_i, \Omega^m_{Y_i}) \otimes \mathbb{C}\tau_i \\ 
@A{-\sigma_i \times id }AA 
@A{\cup \sigma_i \otimes id}AA \\ 
H^1(Y_i, \Theta_{Y_i}) \times 
H^{m-1}(Y_i, \Omega^{m-1}_{Y_i}) \otimes \mathbb{C}\tau_i
@>{\langle \: ,  \:  \rangle'_i \otimes id}>> 
H^m(Y_s, \Omega_{Y_i}^{m-2}) \otimes \mathbb{C}\tau_i 
\end{CD}
\end{equation}
By taking the direct sum of these commutative diagrams, we have 

\begin{equation} 
\begin{CD} 
H^1(Y', \Omega^1_{Y'}) \times H^1(Y', \Theta_{Y'}) @>{\langle \:, \:\rangle}>> H^2(Y', \mathcal{O}_{Y'}) \\
@V{\cong}VV  @V{\cong}VV \\
H^1(Y', \Omega^1_{Y'}) \times H^{n-1}(Y', \Omega^{n-1}_{Y'}) @>{(\: , \:)}>> \bigoplus_{1 \le i \le r} (H^{m}(Y_i , \Omega^m_{Yi}) \otimes \mathbb{C}\tau_i) \\ 
@A{\cong}AA 
@A{\cong}AA \\ 
H^1(Y', \Theta_{Y'}) \times 
H^{n-1}(Y', \Omega^{n-1}_{Y'}) 
@>{\langle \: ,  \:  \rangle'}>> 
H^n(Y', \Omega_{Y'}^{n-2})
\end{CD}
\end{equation}

Let $b \in H^{2n-2}(Y, \mathbb{Q})$ be a class determined by an effective algebraic 1-cycle contracted by $\pi$ to a point. In other words, 
$b \in \Gamma$.   
We write $$\nu^*c_1(\pi^*L) = l_1 + \cdots  + l_r$$ with $l_i \in H^1(Y_i, \Omega^1_{Y_i})$ and 
$$\nu^*b = b_1 \otimes \tau_1 + \cdots  + b_r \otimes \tau_r$$ with $b_i \in H^{m-1}(Y_i, \Omega^{m-1}_{Y_i}) \cap H^{2m-2}(Y_i, \mathbb{Q})$. 
Consider the map $Y' \to Y \to Z$ and take its Stein factorization $Y' \stackrel{\pi'}\to Z' \to Z$. 
Then $\nu^*b$ is represented by an effective  algebraic 1-cycle which is contracted to a point by $\pi'$. 

By \cite[Lemma 4.6]{Druel}, we can write $Z' = Z_1 \times \cdots  \times Z_r$ and there are birational morphisms  
$\pi_i : Y_i \to Z_i$ such that $\pi' = \pi_1 \times \cdots  \times \pi_r$. Let $p_i: Z' \to Z_i$ be the $i$-th projection. 
Take an element $\alpha_i \in H^2(Z_i, \mathbb{R})$. Write $c_1(\nu_Z^*L) = \bar{l}_1 + \cdots  + \bar{l}_r$ with 
$\bar{l}_i \in H^2(Z_i, \mathbb{Q})$. Then we have $l_i = \pi_i^*{\bar{l}_i}$.   
Since $\nu^*b$ is represented by an effective algebraic 1-cycle which is contracted by $\pi'$, we have 
$(\nu^*b, (\pi')^*p_i^*\alpha_i) = 0$. We identify $H^m(Y_i, \Omega^m_{Y_i})$ with $\mathbb{C}$ by using the natural orientation of 
$Y_i$. Then $\sigma_i^{\frac{m}{2}}\bar{\sigma}_i^{\frac{m}{2}} = d_i$ for a positive number $d_i$.  
Then we have  
\begin{align*}
(\nu^*b, (\pi')^*p_i^*\alpha_i) &= (b_i, \alpha_i)_{Y_i} \otimes \tau_i\\ 
&= (b_i, \pi_i^* \alpha_i)_{Y_i}\cdot d_1 \cdots  d_{i-1}d_{i+1}\cdots  d_r \\
&= 0.
\end{align*}
Therefore $(b_i, \pi_i^*\alpha_i)_{Y_i} = 0$.  Since $l_i = \pi^*\bar{l_i}$, we have $(b_i, l_i)_{Y_i} = 0$ for any $i$. 
For $b_i$, there is a unique element $v_{b_i} \in 
H^1(Y_i, \Theta_{Y_i})$ such that $$b_i = (v_{b_i} \rfloor 
\sigma_i) \sigma_i^{\frac{m}{2}-1}\bar{\sigma_i}^{\frac{m}{2}-1}.$$ 
Let $q_{Y_i}$ be the Beauvill-Bogomolov-Fujiki form of $Y_i$.  
Then this means that $q_{Y_i}(v_{b_i} \rfloor \sigma_i, l_i) = 0$.    
Since $\nu^*b \ne 0$, we have $b_{i_0} \ne 0$ for some $i_0$. 
By applying again Lemma 3.5 of \cite{BakkerLehn} 
to $\pi_{i_0}: Y_{i_0} 
\to Z_{i_0}$, we see that $q_{Y_{i_0}}(v_{b_{i_0}} \rfloor \sigma_{i_0}) < 0$.  

The fundamental group $\pi_1(Y)$ acts on $Y'$.
Note it is a finite group. In fact, if it is infinite, then, by the Beauville-Bogomolov decomposition (cf. \cite[Th\'{e}or\`{e}me 1]{Beauville83}), the  universal cover $Y'$ of $Y$ is not compact, which contradicts our assumption.  
We put $G = \pi_1(Y)$ and define $v^G_{b_{i_0}} := \sum_{g \in G} g_*v_{b_{i_0}}$. 
By definition $v^G_{b_{i_0}} \in H^1(Y, \Theta_Y)$. 
Then we prove that $v^G_{b_{i_0}} \in c_1(\pi^*L)^{\perp}$ 
and $v^G_{b_{i_0}} \notin b^{\perp}$. 

Since $q_{Y_{i_0}}(v_{b_{i_0}} \rfloor \sigma_{i_0}, l_{i_0}) = 0$, we have $\langle l_{i_0}, v_{b_{i_0}} \rangle_{i_0} = 0$; hence we have 
$$\langle l_1 + \cdots  + l_r, v_{b_{i_0}} \rangle = \langle \nu^*\pi^*L, v_{b_{i_0}} \rangle = 0.$$ 
Note that $\langle \:, \: \rangle$ is $G$-invariant and $\nu^*\pi^*L$ is $G$-invariant. Then $\langle \nu^*\pi^*L, g_*v_{b_{i_0}} \rangle = 0$ 
for any $g \in G$. As a consequence, we have $\langle \nu^*\pi^*L, v_{b_{i_0}}^G \rangle = 0$ and  $v_{b_{i_0}}^G \in c_1(\nu^*\pi^*L)^{\perp}$.      
Recall that 
$g \in G$ acts on $Y'$ as 
\begin{align*}
Y_1 \times \cdots  \times Y_r &\to Y_1 \times \cdots  \times Y_r, \:\:\\ (x_1, \cdots , x_r) &\mapsto (g_{u(1)}(x_{u(1)}), \cdots , 
g_{u(r)}(x_{u(r)}))
\end{align*}
for some permutation $u \in S_r$ and some 
symplectic isomorphisms $g_i$. 

For the element  
\begin{align*}
\omega_i &:= (0, \cdots ,  \sigma_i^{\frac{m}{2}-1} \bar{\sigma}_i^{\frac{m}{2}} \otimes \tau_i, 0, \cdots , 0)\\ 
&\in  H^n(Y', \Omega^{n-2}_{Y'}) 
= \bigoplus_{1 \le i \le r} H^m(Y_s, \Omega_{Y_i}^{m-2}) \otimes \mathbb{C}\tau_i, 
\end{align*}
we have 
$g^*\omega_i =  \omega_{u(i)}$. 
Since $q_{Y_{i_0}}(v_{b_{i_0}} \rfloor \sigma_{i_0}) <0$, we have $\langle  v_{b_{i_0}}, b_{i_0} \rangle'_{i_0} > 0$. More exactly 
$\langle  v_{b_{i_0}}, b_{i_0} \rangle'_{i_0} = c_{i_0}\omega_{i_0}$ with a positive number $c_{i_0}$.
Then $\langle v_{b_{i_0}}, \nu^*b \rangle'  =  \langle  v_{b_{i_0}}, b_{i_0} \rangle'_i  = c_{i_0}\omega_{i_0}$.  
Since $\nu^*b$ is $G$-invariant, we have $\langle g_*v_{b_{i_0}}, \nu^*b \rangle' = c_{i_0} g^*\omega_{i_0} = c_{i_0}\omega_{u(i_0)}$. 
This means that $\langle v^G_{b_{i_0}}, \nu^*b \rangle' \ne 0$, and hence $v^G_{b_{i_0}} \notin (\nu^*b)^{\perp}$. 
Recall that $v^G_{b_{i_0}} \in H^1(Y, \Theta_Y)$. Therefore, we have proven that $v^G_{b_{i_0}} \in c_1(\pi^*L)^{\perp}$ 
and $v^G_{b_{i_0}} \notin b^{\perp}$.

Finally we check that $\mathrm{Def}(Y, \pi^*L) \subset S$ is smooth (possibly of high codimension).  In order to do that, we must check 
that dimension of the tangent spaces $T_s\mathrm{Def}(Y, \pi^*L)$ is constant when $s \in \mathrm{Def}(Y, \pi^*L)$. 
Let $\mathcal{Y} \to S$ be the universal family. Take the universal covering $\mathcal{Y}'$ of $\mathcal{Y}$. Then 
$\mathcal{Y}' \to S$ is a flat deformation of $Y'$ and each fiber $Y'_s$ decompose as $Y_{1,s} \times \cdots  \times Y_{r,s}$ and 
each factor $Y_{i,s}$ is a deformation of $Y_i$. 
By the natural identification $H^2(Y_i) \cong H^2(Y_{i,s})$, the cohomology class $l_i \in H^2(Y_i, \mathbb{Q})$ determines a cohomology class $l_{i,s} \in H^2(Y_{i,s}, \mathbb{Q})$. We note that $l_i \ne 0$ for all $i$ because $\pi^*L$ is nef and big. Hence $l_{i,s} \ne 0$. 
Assume that $s \in \mathrm{Def}(Y, \pi^*L)$. Then we have 
$$c_1(\pi^*L)_s^{\perp} = (c_1(l_{1,s})^{\perp} \oplus \cdots  \oplus c_1(l_{r,s})^{\perp})^G.$$
Here, note that $c_1(l_{1,s})^{\perp} \oplus \cdots  \oplus c_1(l_{r,s})^{\perp}$ lies in the exact sequence 
$$0 \to c_1(l_{1,s})^{\perp} \oplus ... \oplus c_1(l_{r,s})^{\perp} \to 
\bigoplus_{1 \le i \le r} H^1(Y_{i,s}, \Theta_{Y_{i,s}}) 
\stackrel{\oplus \langle c_1(l_{i,s}), \: \rangle_i}\to \bigoplus_{1 \le i \le r} H^2(Y_{i,s}, \mathcal{O}_{Y_{i,s}}) \to 0.$$ 
We take the $G$-invariant parts of the exact sequence. Then we have 
$$H^1(Y_s, \Theta_{Y_s})=
(\bigoplus_{1 \le i \le r} H^1(Y_{i,s}, \Theta_{Y_{i,s}}))^G $$ and 
$$H^2(Y_s, \mathcal{O}_{Y_s})=
(\bigoplus_{1 \le i \le r}H^2(Y_{i,s}, \mathcal{O}_{Y_{i,s}}))^G.$$ 
 
Since the map $\oplus \langle c_1(l_{i,s}), \cdot \rangle_i$ is surjective, the map 
$H^1(Y_s, \Theta_{Y_s}) \stackrel{\langle c_1(\pi^*L)_s, \: \rangle_s}\to H^2(Y_s, \mathcal{O}_{Y_s})$ of the $G$-invariant parts is also  surjective. The dimensions of the spaces on the both sides are constant when $s \in \mathrm{Def}(Y, \pi^*L)$. 
Now we see that the dimensions of 
$c_1(\pi^*L)_s^{\perp}$ are constant. 

The rest of the proof is the same as in Step \ref{s1}. 
\vspace{0.2cm}

\end{Step}
\begin{Step}\label{s3} 
Next, we treat 
 the case when the universal covering  $Y'$ of $Y$ decomposes as a direct product of irreducible symplectic manifolds. 
 \vspace{0.2cm}
 
In this case we can write $Y' = Y^{(1)} \times \cdots  \times Y^{(q)}$ so that each factor $Y^{(j)}$ is a self-product of an  irreducible 
symplectic manifold and each deck transformation 
$g \in G := \pi_1(Y)$ acts diagonally on $Y'$ as 
$$Y^{(1)} \times \cdots  \times Y^{(q)} \stackrel{g_1 \times \cdots  \times g_q}\longrightarrow Y^{(1)} \times \cdots  \times Y^{(q)}.$$
The self-product $Y^{(j)}$ is already discussed in Step \ref{s2}. 
We consider the composite $Y' \to Y \to Z$ and take its Stein factorization $Y' \stackrel{\pi'}\to Z' \to Z$. 
We write $c_1(\nu^*L) = l^{(1)} + \cdots  + l^{(q)}$ with $l^{(j)} \in H^2(Y^{(j)}, \mathbb{Q})$. Note that $Y^{(j)}$ is the direct product 
$Y_1^{(j)} \times \cdots  Y_{r(j)}^{(j)}$ of the (same) irreducible symplectic manifold. Correspondingly,  
we write $l^{(j)} = l_1^{(j)} + \cdots  + l_{r(j)}^{(j)}$ with $l_i^{(j)} \in H^2(Y_i^{(j)}, \mathbb{Q})$. 
 
Take $b \in \Gamma \subset H^{2n-2}(Y, \mathbb{Q})$. Put $m_j := \dim Y^{(j)}$. As in 
Step \ref{s2}, 
$\nu^*b$ is written in terms of $\{b^{(j)}\}$, where $b^{(j)}$ is an element of $H^{2m_j -2}(Y^{(j)}, \mathbb{Q})$. Furthermore, $b^{(j)}$ is written in terms of $\{b_i^{(j)} \}$, where 
$b_i^{(j)} \in H^{2m_{i,j} -2}(Y_i^{(j)}, \mathbb{Q})$. Here $m_{i,j} = \dim Y_i^{(j)}$.   

By the same argument as in Step  \ref{s2}, 
we can find an irreducible factor 
$Y_{i_0}^{(j_0)}$ of $Y^{(j_0)}$ and an element $b_{i_0}^{(j_0)}$ such that $q_{Y_{i_0}}(v_{b_{i_0}^{(j_0)}}, l_{i_0}^{(j_0)}) = 0$ and 
$q_{Y_{i_0}}(v_{b_{i_0}^{(j_0)}}) < 0$. By Step \ref{s2}, 
we see that $v^G_{b_{i_0}^{(j_0)}} \in (l^{(j_0)})^{\perp}$ and 
$v^G_{b_{i_0}^{(j_0)}} \notin (b^{(j_0)})^{\perp}$.
As a result, we see that 
\begin{align*}
v^G_{{b_{i_0}^{(j_0)}}} 
&\in (l^{(1)})^{\perp} \oplus \cdots \oplus (l^{(q)})^{\perp} \\ 
&= (\nu^*c_1(\pi^*L))^{\perp}, \:\:\:\:  \\
v^G_{{b_{i_0}^{(j_0)}}} &\notin (b^{(1)})^{\perp} \oplus \cdots  \oplus (b^{(q)})^{\perp}\\ 
&= (\nu^*b)^{\perp}. 
\end{align*}
Note that $v^G_{{b_{i_0}^{(j_0)}}} \in H^1(Y, \Theta_Y)$. Then this means that 
$$v^G_{{b_{i_0}^{(j_0)}}} \in c_1(\pi^*L)^{\perp}, \:\:\: 
v^G_{{b_{i_0}^{(j_0)}}} \notin b^{\perp}.$$
By the same argument as in 
Step \ref{s2}, 
we can prove that $\mathrm{Def}(Y, \pi^*L)$ is smooth. Now the rest of the proof is the same as in 
Step \ref{s1}. 
\vspace{0.2cm}

\end{Step}
\begin{Step}\label{s4} 

In the general case, we have an  \'etale covering $\nu\colon Y' \to Y$ such that 
$Y' =  Y^{(1)} \times \cdots  \times Y^{(q)} \times T$, where $T$ is an abelian variety of even dimension, 
and each $Y^{(i)}$ as in Step \ref{s3}. 
We may assume that $\nu$ is a Galois covering. Let $G={\rm Gal}(Y'/Y)$ 
be the Galois group of $\nu$. 
We consider the composite $Y' \to Y \to Z$ and take its Stein factorization $Y^{(1)} \times \cdots  \times Y^{(q)} \times T \stackrel{\pi'}\to Z' \to Z.$. By \cite[Lemma 4.6]{Druel}, 
we can write $Z' = Z'_1 \times T$ and there is a birational morphism $\pi'_1: Y^{(1)} \times \cdots  \times Y^{(q)} \to Z'_1$ so 
that $\pi' = \pi'_1 \times id$. Take $b \in \Gamma \subset H^{2n-2}(Y, \mathbb{Q})$. Then this means that $\nu^*b$ is written only 
in terms of $\{b^{(j)}\}$ with $b^{(j)} \in H^{2m_j -2}(Y^{(j)}, \mathbb{Q})$. 
This means that $$\nu^*b^{\perp} = (b^{(1)})^{\perp} \oplus \cdots  \oplus (b^{(q)})^{\perp} \oplus H^1(T, \Theta_T).$$ 
Write $c_1(\nu^*\pi^*L) = l^{(1)} + \cdots  + l^{(q)} + l_T$ with $l_T \in H^1(T, \Omega^1_T) \cap H^2(T, \mathbb{Q})$. 
Then $$c_1(\nu^*\pi^*L)^{\perp} = (l^{(1)})^{\perp} \oplus \cdots  \oplus (l^{(q)})^{\perp} \oplus l_T^{\perp}.$$
As in Step \ref{s3}, 
we can find an element $v^G_{{b_{i_0}^{(j_0)}}} \in H^1(Y, \Theta_Y)$ such that 
$v^G_{{b_{i_0}^{(j_0)}}} \in c_1(\pi^*L)^{\perp}$ and $v^G_{{b_{i_0}^{(j_0)}}} \notin b^{\perp}$.
 
Put $d := \dim T$. Since $\nu^*\pi^*L$ is nef and big, we see that $(l_T)^d > 0$ and $l_T$ is ample. Then the map induced by the 
cup product 
$$H^1(T, \Theta_T) \stackrel{\langle l_T, \: \cdot \:  \rangle_T}\longrightarrow H^2(T, \mathcal{O}_T)$$ is a surjection. 
Together with the results in the previous steps, we see that the 
similar map for $Y$-factor 
$$H^1(Y', \Theta_{Y'}) \stackrel{\langle c_1(\nu^*\pi^*L), \: \cdot \: \rangle_{Y'}}\longrightarrow H^2(Y', \mathcal{O}_{Y'})$$ is a surjection. Then we can prove that $\mathrm{Def}(Y, \pi^*L)$ is smooth 
by the same argument as in 
Step \ref{s2}. 

Now the rest of the proof is the same as in Step \ref{s1}. 

\end{Step}

Conversely, we show ``if direction" 
of Theorem \ref{smoothing}. 
If $(Z, L)$ has a polarized smoothing $(\mathcal{Z}_{\Delta}, \mathcal{L}) \to \Delta$, then 
$Z$ has a projective symplectic 
resolution $\pi: Y \to Z$ by 
\cite[Corollary 2]{Nam.def} together with \cite{BCHM}.  
We complete the proof of 
Theorem \ref{smoothing}. 
\end{proof}


\subsection{Proofs of main theorems: existence of canonical 
torus action}\label{subsec:6.2}

Now, we are ready to prove 
our main theorems on the 
algebraic torus action 
on symplectic singularities. 

\begin{Thm}\label{Mthm}
Let $(\bar{X},L)$ be a polarized 
projective symplectic variety.  Suppose that 
$(\bar{X},L)$
satisfies either of the  following equivalent conditions  (cf. Theorem \ref{smoothing}): 
\begin{enumerate}
\item $\bar{X}$ has a 
symplectic projective resolution, or 
\item \label{1.1ii.sec6} $(\bar{X},L)$ has a smoothing (as a polarized variety). 
\end{enumerate}

Then, the analytic germ of $x\in \bar{X}$ is that of a 
(canonical) 
conical affine symplectic variety $C$ at the vertex $0\in C\curvearrowleft (\mathbb{G}_m)^r$ with $r\ge 1$. 

Furthermore, $0\in C$ has a (singular) hyperK\"ahler cone  metric, 
which in particular has a canonical rescaling action of 
the multiplicative group $\R_{>0}$ (as a real Lie subgroup of the $(\C^*)^r$) with positive weights 
of generators of $\mathcal{O}_{C,0}$.

\end{Thm}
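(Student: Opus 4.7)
The plan is to collect the outputs of Sections \ref{sec:2}--\ref{sec:WC} and glue them together, combining a formal identification of $(X,x)$ with $(W,0)$ in the Poisson category with an algebraic identification of $W$ with $C$ as affine conical symplectic varieties, then upgrade the formal statement to an analytic one via Artin approximation. First I would fix an affine open neighborhood $x\in X\subset\bar X$, and invoke either the resolution hypothesis plus \cite{Yau,RZ,DS,JianSong,EGZ} (or in the smoothability case the Ricci-flat K\"ahler metrics on the smooth fibers and their polarized limit) to produce a singular Ricci-flat K\"ahler metric $g_{\bar X}$ on $\bar X$ whose restriction $g_X$ satisfies the Donaldson-Sun hypotheses of Theorem \ref{DSII.thm}. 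Since $\sigma_{\bar X}$ is parallel with respect to $g_{\bar X}$ (by the Bochner-Weitzenb\"ock argument of \cite{Henri.etc}), we are in the setting in which Conjecture \ref{conj:genDS} unconditionally holds.

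Next I would apply the Donaldson-Sun conification (Theorems \ref{DS.maps}, \ref{AGlc}) to produce the two-step degeneration $(x\in X)\rightsquigarrow(0\in W)\rightsquigarrow(0\in C)$, together with the embedded data $\Phi_i\colon X\hookrightarrow\C^l$, weights $w_1,\dots,w_l$, the torus $T\simeq(\G_m)^r$ and the Reeb vector field $\xi\in N_{\R}$. By Theorem \ref{thm:extendtoW2} (selecting a nice approximant $\xi'$ in the sense of Definition \ref{def:nice}), the scale-up degeneration $\mathcal{X}_{\tilde\xi'}\to\A^1$ of Lemma \ref{XtoW.C} acquires a relative symplectic form and becomes a scale-up Poisson deformation of $W$ in the sense of Definition \ref{def:SPD}. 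Corollary \ref{Cor:noscaleup} then yields a $\G_m$-equivariant isomorphism of formal Poisson schemes $(X,x)^{\hat{}}\cong(W,0)^{\hat{}}$ (this is Corollary \ref{formal.isom}). On the other side, Theorem \ref{ssps} gives a $T$-equivariant isomorphism $W\cong C$ of affine conical symplectic varieties, and in particular endows $C$ with a singular hyperK\"ahler cone metric. Composing, we obtain a formal Poisson isomorphism $(X,x)^{\hat{}}\cong(C,0)^{\hat{}}$.

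To pass from formal to analytic germs, I would apply Artin's approximation theorem \cite{Artin} (in its analytic form) to the system of equations defining such an isomorphism; since both $X$ and $C$ are of finite type over $\C$, the formal solution can be approximated by an analytic one, producing a biholomorphism of analytic germs $(X,x)^{\mathrm{an}}\cong(C,0)^{\mathrm{an}}$. The canonicity of $C$ (as an affine variety) and of the $T$-action has already been established in \cite{LWX} via the uniqueness of the normalized volume minimizer (Theorem \ref{AGlc}), so the statement about canonical $(\G_m)^r$ depends only on the analytic germ at $x$. Finally, the claim on $\R_{>0}$ follows from the Reeb vector field $\xi$: the hyperK\"ahler cone metric on $C$ supplied by Theorem \ref{ssps} has a rescaling $\R_{>0}$-action which, after untwisting by $J$, coincides with the real one-parameter subgroup of $T^{\mathrm{an}}$ generated by $\xi$; the positivity of weights $w_i$ in Notation \ref{ntt1} then gives the embedding $\R_{>0}\hookrightarrow(\C^*)^r$ with positive weights on $\mathcal{O}_{C,0}$.

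The main obstacle is not in this final assembly (which is largely bookkeeping) but rather in the fact that the approximant $\xi'$ producing the Poisson scale-up in Theorem \ref{thm:extendtoW2} is not $\xi$ itself. What must be checked carefully is that the \emph{formal} Poisson isomorphism furnished by Corollary \ref{Cor:noscaleup} is independent of the choice of nice approximant and compatible with the $T$-action constructed from $\xi$; this compatibility is precisely what allows us to combine it with the $T$-equivariant isomorphism $W\cong C$ of Theorem \ref{ssps}. A secondary, more minor point is to verify that Artin approximation preserves the $\G_m$-equivariance enough to transfer the conical structure, for which I would invoke the equivariant version of Artin's theorem used in the proof of Theorem \ref{ssps}.
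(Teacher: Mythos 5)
Your proposal follows the same route as the paper: construct the singular Ricci-flat metric, invoke the Donaldson--Sun two-step degeneration, use the nice approximant to build a scale-up Poisson deformation $X\rightsquigarrow W$, apply the formal rigidity (Corollary~\ref{Cor:noscaleup} / Corollary~\ref{formal.isom}) to get $(X,x)^{\hat{}}\cong(W,0)^{\hat{}}$, then $W=C$ from Theorem~\ref{ssps}, and finish with Artin approximation. The paper itself first uses Theorem~\ref{smoothing} to pass to the smoothing case before invoking Theorem~\ref{DSII.thm}, whereas you treat the two hypotheses more symmetrically, but this is only cosmetic.

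However, the ``main obstacle'' you flag is not actually an obstacle, and the paper makes no such check. To compose the two isomorphisms one does not need the formal Poisson isomorphism $(X,x)^{\hat{}}\cong(W,0)^{\hat{}}$ to be $T$-equivariant, nor to be independent of the choice of nice approximant: it cannot be $T$-equivariant, since $(X,x)$ carries no a priori $T$-action. Any single choice of nice approximant yields some formal Poisson isomorphism, and since $W=C$ (canonically, as a conical symplectic variety with good $T$-action), composing gives a formal Poisson isomorphism $\hat{f}\colon (X,x)^{\hat{}}\cong(C,0)^{\hat{}}$. Artin approximation (not required to be equivariant either) then produces an analytic germ isomorphism $f'$, and the $(\C^*)^r$-action on $(X,x)$ is \emph{defined} as the pullback of the $T$-action on $C$ through $f'$. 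The paper is explicit that $(f')^*\sigma_C$ need not coincide with the original $\sigma_X$; this is harmless because the target $(C,0,\sigma_C)$ with its $T$-action and hyperK\"ahler cone metric is canonical (depending only on the analytic germ at $x$, by \cite{DSII,LWX}), which is what secures the canonicity claim --- not any compatibility of the intermediate choices.
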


\begin{proof}
Applying Theorem \ref{smoothing} to $\pi\colon \tilde{X} \to \bar{X}$, we have a polarized smoothing 
$\bar{\mathcal X} \to \Delta$ 
of $(\bar{X}, L)$. We are now in a situation of Theorem \ref{DSII.thm}. Let $x \in X \subset \bar{X}$ be an open neighborhood of $x$ and start with $(X, \sigma_X)$. By Theorem 
\ref{thm:extendtoW}, we have a scale up Poisson deformation of $X$ degenerating to $W$. By Corollary \ref{formal.isom}, there is an isomorphism of Poisson formal schemes 
$(X, x)^{\hat{}} \cong (W, 0)^{\hat{}}$. Next, by Theorem 
\ref{ssps}, there is a Poisson deformation of $W$ degenerating to $C$, and actually $W = C$. 
By combining these two degenerations, we have an isomorphism of Poisson formal schemes $\hat{f}: (X, x)^{\hat{}} \cong (C, 0)^{\hat{}}$. By Artin's approximation theorem (\cite{Artin}, Corollary (1.6), 
also \cite{HironakaRossi} for isolated singularities case), we have an isomorphism of complex analytic germs $f': (X, x) \cong (C, 0)$, which induces an isomorphism 
$((X, x), (f')^*\sigma_C) \cong 
((C,0), \sigma_C)$ of symplectic singularities. Since $f'$ is an approximation of $\hat{f}$, the symplectic form $(f')^*\sigma_C$ 
does not necessarily coincide with the original $\sigma_X$. 
The $T(\mathbb{C}) (\cong (\mathbb{C}^*)^r)$-action on the right hand side induces a local action of $(\mathbb{C}^*)^r$ on 
$(X, x)$ and we have the result. 
\end{proof}

\begin{Rem}\label{Kaledin's conj}
Let $X_i^0 \subset X$ be the symplectic leaf of $X$ passing through $x$ (cf. \cite[Theorem 2.3]{Kaledin}).  
Then the Poisson structure of $X$ induces a non-degenerate Poisson structure of $X_i^0$. By the Poisson structure,  
$X_i^0$ is a holomorphic symplectic manifold.  
By {\it loc.cit}, 
there is a product decomposition of formal Poisson schemes 
$$(X, x)^{\hat{}} \cong (X_i^0, x)^{\hat{}} \: \hat{\times} \: Y_x$$ where $Y_x$ is a transversal slice for 
$x \in X_i^0$ 
(cf., also \cite[Appendix]{KapSch} for its enhancement to 
analytic statements). 

The original Kaledin's conjecture (cf. \cite[Conjecture 1.8]{Kaledin 2}) claims 
that $Y_x$ admits a good $\mathbb{G}_m$-action so that its Poisson bracket has a 
negative weight (or equivalently, its symplectic form has a positive weight). 
This conjecture holds true. In fact, by the proof of the Theorem, $(X, x)^{\hat{}}$ admits a good $\mathbb{G}_m$-action (which depends on the choice of a homomorphism 
$\mathbb{G}_m \to T$), where the Poisson bracket has a negative weight. In this situation one can take the 
product decomposition above in a $\mathbb{G}_m$-equivariant way so that both 
$(X_i^0, x)^{\hat{}}$ and $Y_x$ have good $\mathbb{G}_m$-actions.      
\end{Rem}

\begin{ex}[Singularities of O'Grady's $10$-folds]\label{O'Grady10}

Let $S$ be a projective K3 surface. For a coherent sheaf $E$ on $S$, 
the so-called Mukai vector 
$v(E) \in H^0(S) \oplus H^2(S) \oplus H^4(S)$ means 
$$v(E) := ch(E)\sqrt{td(S)} = (r(E), c_1(E), \frac{1}{2}(c_1^2(E) - 2c_2(E)) + r(E)).$$
Here $r(E)$ is the rank of $E$ and $c_i(E)$ are the $i$-th Chern classes of $E$. 
For $v = (1, 0, -1)$, take a $2v$-generic ample divisor $H$ and consider the moduli space $M_{2v}$ of $H$-semistable 
torsion free sheaves $E$ on $S$ with Mukai vector $2v$ (equivalently that $(r(E), c_1(E), c_2(E)) = (2, 0, 4)$). 
O'Grady \cite{O} showed that $M_{2v}$ has a projective symplectic resolution $\tilde{M}_{2v} \to M_{2v}$ and $\tilde{M}_{2v}$ 
is a 10 dimensional irreducible symplectic manifold with $b_2 = 23$. There is a stratification 
$M_{2v} \supset S^2M_v \supset \Delta_{M_v}$ where $S^2M_v = \mathrm{Sing}(M_{2v})$.  
Here $S^2M_v$ parametrizes the sheaves $E$ of the form $E = F_1 \oplus F_2$, where $F_i$ are stable sheaves with 
$v(F_i) = v$. The smallest stratum $\Delta_{M_v}$ parametrizes the sheaves $E$ of the form $E = F \oplus F$ with 
$v(F) = v$. $M_{2v}$ has $A_1$-singularities along $S^2M_v - \Delta_{M_v}$. Lehn and Sorger \cite{LS} has proved that, 
for any point $[E] \in \Delta_{M_v}$, the complex anaytic germ $(M_{2v}, [E])$ is isomorphic to $(\bar{O}_{[2,2]}, 0) \times (\mathbf{C}^4, 0)$. 
Here $\bar{O}_{[2,2]}$ is the closure of the nilpotent orbit $O_{[2,2]} \subset \mathfrak{sp}_4$ of Jordan type $(2,2)$.       

The result of \cite{LS} can be re-interpreted by using our theory. Let us briefly recall the arguments of \cite{LS}. 
Take $[E] \in \Delta_{M_{2v}}$ and let $\mathcal{O}_{{M_{2v}}, [E]}$ and $\hat{\mathcal{O}}_{{M_{2v}}, [E]}$ be the local ring of $M_{2v}$ at 
$[E]$ and its completion with respect to the maximal ideal. 
Let $A = \mathbb{C}[\mathrm{Ext}^1(E, E)]$ be the polynomial ring over $\mathrm{Ext}^1(E, E)$ and let $\hat{A}$ be 
the completion of $A$ with respect to the maximal ideal $\mathfrak{m}$ consisting of the polynomials vanishing at the 
origin. Put $$\mathrm{Ext}^2(E, E)_0 := \mathrm{Ker}[\mathrm{Ext}^2(E, E) \stackrel{tr}\to H^2(S, \mathcal{O}_S)].$$
Since $E = F \oplus F$ with a stable sheaf $F$ with $v(F) = v$, we have $\mathrm{Ext}^1(E, E) = \mathfrak{gl}_2 \otimes 
\mathrm{Ext}^1(F, F)$. In particular, $\dim \mathrm{Ext}^1(E, E) = 16$.  
Moreover, $\mathrm{Ext}^2(E, E)_0 = \mathfrak{sl}_2$ and $\mathrm{Aut}(E) = GL_2$. Then $\hat{\mathcal{O}}_{M_{2v}, [E]}$ can 
be described by means of the Kuranishi map. More precisely, there is an element $f \in \mathrm{Ext}^2(E, E)_0 \otimes 
\mathfrak{m}^2\hat{A}$ which satsifies the following properties: 

(i) $f$ is $\mathrm{Aut}(E)$-equivariant.   

(ii) The initial part $f_2 \in \mathrm{Ext}^2(E, E)_0 \otimes \mathfrak{m}^2/\mathfrak{m}^3$ of $f$ is given by 
the cup product $f_2(e) := e \cup e$ with $e \in \mathrm{Ext}^1(E, E)$. 

(iii) Let $\mathfrak{a} \subset \hat{A}$ be the ideal generated by the image of the map 
$f: \mathrm{Ext}^2(E, E)_0^* \to \mathfrak{m}^2\hat{A}$. Then $\hat{\mathcal{O}}_{{M_{2v}}, [E]} \cong (\hat{A}/\mathfrak{a})^{\mathrm{Aut}(E)}$. 

Note that $\mathrm{Aut}(E)$ does not act on $\hat{A}$ effectively, but $P\mathrm{Aut}(E)$ acts effectively on 
$\hat{A}$. Hence we have $(\hat{A}/\mathfrak{a})^{\mathrm{Aut}(E)} = (\hat{A}/\mathfrak{a})^{SL_2}$. Moreover, if we put $\mathfrak{a}^0 := \mathfrak{a} \cap \hat{A}^{SL_2}$, then  $(\hat{A}/\mathfrak{a})^{SL_2} = \hat{A}^{SL_2}/\mathfrak{a}_0$. 

We introduce a decreasing filtration $F^{\cdot}$ of $\hat{A}$ as follows. Writing a nonzero element $h \in \hat{A}$ as a (infinite) sum  
$h = h_0 + h_1 + \cdots $ of homogeneous elements 
$h_\nu$ 
with degree $\nu$. 
Define $\mathrm{ord}(h) := \mathrm{min}\{\nu \:  \vert  \: h_{\nu} \ne 0\}.$ 
Then we define $F^i = \{h \in \hat{A}\: \vert \: \mathrm{ord}(h) \geq i\}$. This filtration induces a filtration $\bar{F}^{\cdot}$ of $\hat{A}^{SL_2}$, which also induces a filtration $\bar{\bar{F}}^{\cdot}$ of $\hat{A}^{SL_2}/\mathfrak{a}_0$. In fact, we only have to put 
$\bar{\bar{F}}^i := \bar{F}^i /(\bar{F}^i \cap \mathfrak{a}_0)$. By the inclusion $\mathcal{O}_{{M_{2v}}, [E]} \subset 
\hat{\mathcal{O}}_{{M_{2v}}, [E]}$, we can also introduce a decreasing filtration $\mathcal{F}^{\cdot}$ of    
$\mathcal{O}_{{M_{2v}}, [E]}$.  Lehn and Sorger showed in \cite[\S 4]{LS} that 
$$\mathrm{Spec}(\gr_{\mathcal{F}}\mathcal{O}_{{M_{2v}}, [E]}) = \mathrm{Spec}(\gr_{\bar{\bar{F}}}\hat{\mathcal{O}}_{{M_{2v}}, [E]})$$ is isomorphic to the cone 
$\bar{O}_{[2,2]} \times \mathbb{C}^4 \subset \mathfrak{sp}_4 \times \mathbb{C}^4 (= \mathbb{C}^{10} \times \mathbb{C}^4)$. 
Note that the coordinates of the first factor $\mathbb{C}^{10}$ have weights $2$ and the coordinates of the second factor 
$\mathbb{C}^4$ have weights 1. Afterwards, they proved that $(M_{2v}, [E]) \cong (\mathrm{Spec}(\mathrm{gr}_{\mathcal F}(\mathcal{O}_{{M_{2v}}, [E]})), 0)$ by direct calculations (cf. \cite{LS}, \S 3, \S 5). 

This final part can be explained as follows by using our theory. 
For the filtration $\mathcal{F}^{\cdot}$, we get an associated order function $$v_{\mathcal{F}} : \mathcal{O}_{{M_{2v}}, [E]} - \{0\} 
\to \mathbb{Z}_{\geq 0}$$ by putting 
$$v_{\mathcal{F}}(h) := \mathrm{max}\{m \:\: \vert \:\: h \in \mathcal{F}^m\}$$ for $h \in \mathcal{O}_{{M_{2v}}, [E]} - \{0\}$. 
In our case, $\mathrm{gr}_{\mathcal{F}}\mathcal{O}_{{M_{2v}}, [E]}$ is an integral domain with the same Krull dimension as 
$\mathcal{O}_{{M_{2v}}, [E]}$. This implies that $v_{\mathcal F}$ is a quasi-monomial valuation by  \cite{Tei03}, \cite{Tei14} (cf. \cite{LLX}, Lemma 2.7).    
On the other hand, the cone $\bar{O}_{[2,2]} \times \mathbb{C}^4$ is a hyperK\"ahler cone by \cite[Theorem 1.1]{KS} (cf., also \S \ref{subsec:HKQ}). 
In particular, it is a K-polystable Fano cone. By  \cite[Theorem 1.3]{LX} (cf.,\cite[Theorem 4.14]{LLX}), the degeneration of $([E] \in M_{2v})$  to $(0 \in \mathrm{Spec}(\mathrm{gr}_{\mathcal F}(\mathcal{O}_{{M_{2v}}, [E]})))$ 
coincides with the $2$-step degeneration of \cite{DSII} 
(recall Theorems \ref{DSII.thm}, \ref{DS.maps} and 
\ref{AGlc}). 
Then our main theorem implies that 
$(M_{2v}, [E]) \cong (\mathrm{Spec}(\mathrm{gr}_{\mathcal F}(\mathcal{O}_{{M_{2v}}, [E]})), 0)$. 
    
\end{ex}

\vspace{3mm}
Now we would like to discuss generalization of 
Theorem \ref{Mthm} to more general symplectic 
singularities. 
From our proof of Theorem \ref{Mthm} above, 
we see that only problem is 
the validity of Donaldson-Sun theory for 
a certain nice Ricci-flat (singular) K\"ahler metric. 
Thus, our methods naturally extend to show the following. 

\begin{Thm}\label{Mthm2}
Suppose a symplectic singularity 
$x\in X$ has 
a singular hyperK\"ahler metric $g_X$ and 
a holomorphic symplectic form $\sigma_X$ which is 
parallel with respect to $g_X$ on $X^{\rm sm}$, 
with which Conjecture \ref{conj:genDS} holds. 

Then, the analytic germ of $x\in X$ is that of some 
(canonical) 
affine conical symplectic variety $C$ at the vertex $0\in C\curvearrowleft (\mathbb{G}_m)^r$. 
Furthermore, $0\in C$ has a (singular) hyperK\"ahler cone  metric, 
which in particular has 
a canonical rescaling action of 
$\R_{>0}$ (as a real Lie subgroup of the the $(\C^*)^r$) with positive weights 
of generators of $\mathcal{O}_{C,0}$. 
\end{Thm}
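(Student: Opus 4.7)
The plan is to follow the same structural route as the proof of Theorem \ref{Mthm} above, but replacing the appeal to Theorem \ref{smoothing} and \cite{DSII} (which gave us Donaldson–Sun degeneration data from the global smoothing/resolution) by the direct hypothesis that Conjecture \ref{conj:genDS} holds for $(x\in X,g_X)$. Thus we immediately obtain a re-embedding $\Phi_0\colon X\hookrightarrow \C^l$, weights $w_1,\dots,w_l$, and sequences $\Lambda_j=\Lambda\cdot E_j$ as in Theorem \ref{DS.maps}, together with the diffeomorphisms $\Psi_j$ realizing the Cheeger–Gromov convergence $X\rightsquigarrow W\rightsquigarrow C$ at the regular locus. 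The algebraic data $X\rightsquigarrow W\rightsquigarrow C$ is then exactly the algebraic local conification of Theorem \ref{AGlc}, and the ambient cone-type metric $\omega_\xi$ of Notation \ref{ntt3} is constructed from the weights.

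Next I would apply the two key extension results from Section \ref{sec:XW}. Since $\sigma_X$ is assumed parallel with respect to $g_X$ on $X^{\rm sm}$, the norm estimate $|\sigma_X|_{g_X}=\text{const}$ used in deriving \eqref{est.tensor} holds verbatim, so the metric comparison Lemma \ref{SZlem} together with the Diophantine approximation in Claim \ref{claim:dio} yields Theorem \ref{thm:extendtoW}, i.e.\ a nice approximant $\xi'=\tilde\xi'/D$ of $\xi$ for which $t^{-pD}p_1^*\sigma_X$ extends across $W$ as a non-degenerate $T$-homogeneous relative symplectic form $\sigma_W$. Refining the approximation as in Theorem \ref{thm:extendtoW2} shows $\sigma_W$ is canonical (independent of the approximant) and $T$-homogeneous. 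The resulting family $\mathcal{X}_{\tilde\xi'}\to\A^1$ is then a scale-up Poisson deformation of $W$ in the sense of Definition \ref{def:SPD}, so Corollary \ref{Cor:noscaleup} (equivalently Corollary \ref{formal.isom}) produces a $\G_m$-equivariant isomorphism of formal Poisson schemes $(X,x)^{\hat{}}\cong (W,0)^{\hat{}}$.

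I would then invoke Theorem \ref{ssps} to identify $W$ with the metric tangent cone $C=C_x(X)$. The proof of that theorem uses only: the parallel transport argument constructing $\sigma_C$ on $C^{\rm sm}$ from $\sigma_X$ (which uses precisely the Bochner–Weitzenb\"ock parallelness of $\sigma_X$ that we are assuming), the Donaldson–Sun smooth convergence $\Psi_i^*g_i\to g_C$, the Artin-type local identifications from Corollary \ref{formal.isom}, and the metric comparison of Lemma \ref{SZlem}; all of these are available under our hypotheses. The rigidity result \cite{Namf} then upgrades the isotrivial Poisson degeneration $W\rightsquigarrow C$ to an isomorphism of conical symplectic varieties $(W,\sigma_W)\cong(C,\sigma_C)$ with respect to the $T$-actions, and shows $g_C$ is a (singular) hyperK\"ahler cone metric.

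Composing gives a $T$-equivariant isomorphism of formal Poisson schemes $(X,x)^{\hat{}}\cong(C,0)^{\hat{}}$, and Artin's approximation theorem \cite{Artin} promotes this to an isomorphism of analytic germs $(X,x)\cong(C,0)$, yielding the local $(\C^*)^r$-action together with the canonical $\R_{>0}$-rescaling from the hyperK\"ahler cone metric; the last fact that $r\ge 1$ and generators have positive weights follows from positivity of the $w_i$ in the Donaldson–Sun data. The main obstacle I expect is bookkeeping: one must verify that every use of the global smoothing hypothesis in the proof of Theorem \ref{Mthm} was routed through either Donaldson–Sun degeneration data or the parallelness of $\sigma_X$ (both of which are now hypotheses), and in particular that the norm-constancy step leading to \eqref{eqn.norm}–\eqref{sigmanorm} and the parallel-transport construction of $\sigma_C$ in the proof of Theorem \ref{ssps} continue to work without any global Bochner input beyond the local parallelness we assume.
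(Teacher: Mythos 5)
Your proposal is correct and follows essentially the same approach as the paper: the paper states that Theorem \ref{Mthm2} follows from the proof of Theorem \ref{Mthm} once the global smoothing/resolution hypothesis (used only to invoke Donaldson--Sun theory and the Bochner-type parallelness of $\sigma_X$) is replaced by the direct hypotheses that Conjecture \ref{conj:genDS} holds and $\sigma_X$ is parallel. You have correctly identified exactly which inputs (Lemma \ref{SZlem}, Theorems \ref{thm:extendtoW}, \ref{thm:extendtoW2}, Corollary \ref{Cor:noscaleup}, Theorem \ref{ssps}, Artin approximation) are needed and verified that each one is already stated or proved in a form that applies under these weaker local hypotheses.
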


\begin{Rem}[$r=1$]
We recall again that in the setups of Theorems \ref{Mthm} and \ref{Mthm2}, 
the rank $r=r(\xi)$ of the torus is also proven to be $1$ conditionally, 
in Theorem \ref{r1prop} (cf., Conjecture \ref{r1conj}). 
See Remark \ref{LSconj} for its further consequence. 
\end{Rem}

Now we turn to more differential geometric side of our results. 
From our main theorems \ref{Mthm} and \ref{Mthm2}, it follows that 
as the singularity of hyperK\"ahler {\it metric}, 
symplectic singularity is close to (Riemannian) 
cone in the following differential geometric sense. 
Indeed, from \cite[1.1]{ChiuSzek} and \cite[1.2, 1.4]{Zha24}, 
Theorem \ref{ssps} 
imply the following differential geometric asymptotics of $g_X$. 

\begin{Cor}[Conicity of the hyperK\"ahler metric]\label{cor:HSZha}

\begin{enumerate}
\item (cf., \cite[1.1]{ChiuSzek}) \label{HSZha1}
In the setup of Theorem \ref{Mthm.intro}, 
there are positive constants $c, \alpha, r_0\in \R_{>0}$ 
and a biholomorphism $\Phi^{(r_0)}$ 
from $B(0\in C,r_0)$ into 
open neighborhood of $x\in X$ which maps $0\mapsto x$, 
such that for any $0<r<r_0$, there is a 
real function $u_r\colon B(0\in C,r)\to \R$ 
which is $C^\infty$ on the regular locus 
$B(0\in C,r)\cap C^{\rm sm}$ 
such that 
$(\Phi^{(r_0)})^* \omega_X-\omega_C=\sqrt{-1}\partial 
\overline{\partial}u_r$ on $B(0\in C,r)\cap C^{\rm sm}$ and 
\begin{align*}
\sup_{B(0\in C,r)}|u_r|\le cr^{2+\alpha}. 
\end{align*}

\item (cf., \cite{HeinSun}, \cite[4.3]{ChiuSzek}, \cite[1.1, 1.4]{Zha24}) \label{HSZha2}
In the setup of Theorem \ref{Mthm.intro} 
with only isolated singularity $x\in X$, 
there is a local diffeomorphism  $\Psi\colon U_C \to X, 0\mapsto x$ on to their images where 
$U_C \subset C$ is an open neighborhood of $0\in C$
(similarly as Theorem \ref{ssps} \eqref{DSiii}) and a 
positive real number $\delta$ 
which satisfy the following: 
$$|\nabla^k_{g_C}(\Psi ^*g_X -g_C)|_{g_C}=O(r^{\delta-k}),$$ 
for any $k\in \Z_{\ge 0}$. Here, $r$ denotes the 
distance function on $C$ from the vertex with respect to 
$g_C$. 
\end{enumerate}
\end{Cor}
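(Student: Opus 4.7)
The plan is to reduce both statements to the polynomial convergence results of \cite{ChiuSzek, Zha24, HeinSun}, using our main Theorems \ref{Mthm} (or \ref{Mthm2}) together with Theorem \ref{ssps}. These results jointly provide three inputs: (a) an analytic germ isomorphism $\Psi \colon (C, 0) \xrightarrow{\sim} (X, x)$, (b) the equality $W = C$ as K-polystable $\Q$-Fano cones (so the Donaldson-Sun two-step degeneration collapses to a single step), and (c) a singular hyperK\"ahler cone metric $g_C$ on $C$. Input (b) is precisely the K-polystability hypothesis on the metric tangent cone that the cited convergence theorems require.

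For part \eqref{HSZha1}, the strategy is to pull back the local singular Ricci-flat K\"ahler form $\omega_X$ by $\Psi$ to a neighborhood of $0 \in C$. Both $\Psi^*\omega_X$ and $\omega_C$ are (weak) Ricci-flat K\"ahler forms on $(C, 0)$ and by (b) they share the same metric tangent cone, namely $C$ itself. On the smooth locus $C^{\rm sm}$ standard pluripotential theory gives a local K\"ahler potential $u_r$ with $\Psi^*\omega_X - \omega_C = \sqrt{-1}\partial \bar{\partial} u_r$. Then \cite[Theorem 1.1]{ChiuSzek}, whose hypotheses are met thanks to (b) and the boundedness of potentials in our singular K\"ahler setting, produces constants $c, \alpha > 0$ and $r_0 > 0$ for which the polynomial potential bound $\sup_{B(0, r)} |u_r| \leq c r^{2 + \alpha}$ holds, yielding the first statement.

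For part \eqref{HSZha2}, the isolated singularity assumption ensures that the link $S = \partial(B(0, 1) \cap C)$ is smooth away from the vertex, so that one can promote the $C^0$-potential bound from part \eqref{HSZha1} to $C^\infty$ tensorial estimates via a scaling and elliptic bootstrap argument along geodesic annuli in $C^{\rm sm}$. This mechanism, originally due to Hein-Sun \cite{HeinSun} in the smooth-link Calabi-Yau case, is carried out in our singular-cone setting by \cite[Theorem 1.2]{Zha24} (see also \cite[Theorem 4.3]{ChiuSzek}), and directly yields the claimed derivative bounds $|\nabla^k_{g_C}(\Psi^* g_X - g_C)| = O(r^{\delta - k})$ for every $k \in \Z_{\geq 0}$ with a uniform $\delta > 0$.

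The main conceptual difficulty has already been dispatched by Theorem \ref{ssps}: without the identification $W = C$ and the resulting K-polystability of the algebraic metric tangent cone, neither \cite{ChiuSzek} nor \cite{Zha24} would apply, since their polynomial convergence rates crucially exploit the rigidity of the Donaldson-Sun data in the K-polystable regime. Once this input is granted, the remainder of the corollary is essentially a direct citation, modulo verifying that the normalization of $\Psi$ and the choice of exhausting neighborhoods can be arranged consistently, which is routine.
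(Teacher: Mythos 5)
Your proof is correct and takes essentially the same route as the paper, which simply cites \cite[Theorem 1.1]{ChiuSzek} together with Theorem \ref{Mthm} for part \eqref{HSZha1} and \cite[Corollary 4.3]{ChiuSzek} for part \eqref{HSZha2}. The one genuine addition you make is to isolate explicitly where the identification $W=C$ of Theorem \ref{ssps} enters — namely as the K-polystability of the algebraic tangent cone that the polynomial convergence results of \cite{ChiuSzek, Zha24} require; this is implicit in the paper (since Theorem \ref{Mthm} is proved via Theorem \ref{ssps}) but your unpacking is accurate and helpful. The minor citation slip (you point to Zha24 Theorem 1.2 where the corollary statement and the paper's later use point to 1.1 and 1.4) does not affect the argument.
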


\begin{proof}
\eqref{HSZha1} follows from \cite[Thereom 1.1]{ChiuSzek} 
combined with our Theorem \ref{Mthm}. 
\eqref{HSZha2} follows from \cite[Corollary 4.3]{ChiuSzek} 
at least in the setup of Theorem \ref{Mthm}. 
\end{proof}
\begin{Rem}\label{Zha1exp}
The latter \eqref{HSZha2} is expected to generalize, 
without any new difficulty, also to 
the setup of Theorem \ref{Mthm2} as discussed in 
\cite[\S 1]{Zha24} (cf., \cite{HS, ChiuSzek}). 
\end{Rem}


\subsection{HyperK\"ahler quotients case}\label{subsec:HKQ}
In this subsection, we discuss a particular class of examples where 
our main Theorem \ref{Mthm2} applies, i.e., 
hyperK\"ahler quotients under some conditions. For many of 
classical or known examples, 
some standard local $\C^*$-actions were known and 
our subsection aims at fitting them to the framework of this paper 
in more general cases. 

We make a setup of more general hyperK\"ahler quotients as follows, 
recalling \cite[\S 3]{HKLR} (cf., also \cite{Nakajima, Kir}). 

\begin{Setup}\label{set:HKLR}

For a smooth complex affine variety $M$ with a 
hyperK\"ahler structure $(g,I_1,I_2,I_3)$,  
we write the K\"ahler forms as $\omega_i (i=1,2,3)$. 
We consider a compact Lie group $K$ and 
its complexification of $K$, which we denote 
as $K^{\bf C}=G$ and assume it is an affine 
(automatically reductive) algebraic group. 
Suppose $M$ is complete with respect to $g$ and 
there is a tri-Hamitonian action of a 
compact Lie group $K$ on $M$ which preserves the hyperK\"ahler structure on 
$M$, and its action extends to an algebraic action of 
$G$. In particular, there is a hyperK\"ahler moment map 
$\mu=(\mu_1,\mu_2,\mu_3)\colon M\to \mathfrak{k}^*\otimes_{\R}\R^3$, 
where $\mathfrak{k}^*$ means the dual of Lie algebra $\mathfrak{k}$ 
of $K$. 
Denote the $K$-invariant part of $\mathfrak{k}^*$
with respect to the co-adjoint action as $(\mathfrak{k}^{*})^K$. 
For any 
$\zeta_i\in (\mathfrak{k}^{*})^K (i=1,2,3)$, 
we put $\zeta:=(\zeta_1,\zeta_2,\zeta_3)$ and set 
\begin{align}
X_\zeta:=\mu^{-1}
(\zeta)/K.     
\end{align}
Denote the locus in $\mu^{-1}(\zeta)$ where $K$ acts freely, 
by $\mu^{-1}(\zeta)^{\rm sm}$ and set $X_\zeta^{\rm sm}:=\mu^{-1}
(\zeta)/K$. This is known to admit a natural hyperK\"ahler structure 
$(g_\zeta,I_1,I_2,I_3)$ 
by \cite[\S 3 D]{HKLR}, where $g_\zeta$ is the 
hyperK\"ahler metric 
and $I_i$ are complex structures as they descend from that of $M$. 
We denote the corresponding K\"ahler forms as $\omega_i$ or 
$\omega_i(\zeta)$. 
There are also general results on the structure of 
whole $X_{\zeta}$ with singularities (cf., e.g., \cite{LSj, DancerSwann, May}). 

We consider a complex variable $\zeta_\C:=\zeta_2+\sqrt{-1}\zeta_3$ and 
also define the map $\mu_\C:=\mu_2+\sqrt{-1}\mu_3 
\colon M\to \mathfrak{k}\otimes_{\R} \C$, 
which we assume to be algebraic. 

If there is a 
character $\chi\colon G\to \C^*$ whose derivation 
gives $\chi_*\colon \mathfrak{k}\to \mathfrak{u}(1)=\sqrt{-1}\R$, which we regard as 
an element of $i\mathfrak{k}^*$, and $\zeta_1=\sqrt{-1}\chi_*$ holds, 
one can also consider the GIT quotient 
\begin{align}
X_\zeta^{\rm GIT}:=\mu_\C^{-1}(\zeta_2+\sqrt{-1}\zeta_3)//_{\chi}G. 
\end{align}
\end{Setup}
By \cite[Theorem 1.4 (i)]{May} (generalizing the classical 
Kempf-Ness theorem), we have a canonical homeomorphism 
$X_\zeta \simeq X_\zeta^{\rm GIT}$ in this case. 

\vspace{2mm}
Below, we assume mild 
conditions to make the structure of $X_\xi$ more tractable 
and show basic properties. In particular,  Donaldson-Sun theorey (\cite{DSII}) 
extends to this setup. 

\begin{Prop}[Basic local structure of hyperK\"ahler quotients]\label{DSHKQ}
In the above Setup \ref{set:HKLR}, 
take any character $\chi\colon G\to \C^*$ and the derivative 
$\chi_*$, which we regard as 
an element of $i\mathfrak{k}^*$. 
Fix $\zeta_1:=\sqrt{-1}\chi_*$ and some other 
$\zeta_2, \zeta_3\in (\mathfrak{k}^{*})^K$, 
we make the following standard assumptions: 
\vspace{-6mm}
\begin{quote}
\begin{Ass*}
\begin{enumerate}[label=(\alph*)]
\item \label{as:hodge}
$[\omega_i] (i=1,2,3)$ is an integral class 
i.e., lies in ${\rm Im}(H^2(M,\Z)\to H^2(M,\R))$. 
\item \label{as:equidim}
$\mu_\C$ have equi-dimensional fibers with smooth image 
(or alternatively, $X_\zeta$ is Cohen-Macaulay). 
\item \label{asi} (Connectedness) 
There is an analytic 
neighborhood $U$ of $\zeta_\C$ in $(\mathfrak{k}^{*})^K\otimes_{\R} 
\C$ 
such that for {\bf any} pair of 
$\eta_\C=(\eta_2, \eta_3)\in U$, 
$X_{\zeta=(\zeta_1,\eta_2,\eta_3)}$ is connected. 
\item \label{asii} (Generic regularity I) 
For {\bf some} pair of 
$\eta_2, \eta_3\in (\mathfrak{k}^{*})^K$, we have  $X_{\zeta=(\zeta_1,\eta_2,\eta_3)}=X_{\zeta=(\zeta_1,\eta_2,\eta_3)}^{\rm sm}\neq \emptyset$. 
\item \label{asiii} (Generic regularity II) 
For our 
$\zeta_2, \zeta_3\in \mathfrak{k}^*$ fixed above, 
the $K$-action on 
an open subset of $\mu^{-1}(X_\zeta)$ 
has finite stabilizer groups. 
\end{enumerate}
\end{Ass*}
\end{quote}
Then, it follows that for {\bf any} pair of 
$\zeta_2, \zeta_3\in (\mathfrak{k}^{*})^K$, 
the hyperK\"ahler quotient 
$$X_{\zeta=(\sqrt{-1}\chi_*,\zeta_2,\zeta_3)}$$ 
with its any point $x\in X_\zeta$ satisfies all of the following: 
\begin{enumerate}
    \item (Symplectic singularity) it is a symplectic singularity, 
    \item \label{SAHKQ} (Standard $\C^*$-action cf., \cite{May}) its analytic germ  is isomorphic to that of the vertex of 
    a conical symplectic variety 
    with a standard $\C^*$-action
    \item ($r(\xi)=1$) \label{R1HKQ} the $\C^*$-action in \eqref{SAHKQ} 
    corresponds to the volume-minimizing $\xi$ (in the sense of 
    \cite{MSY2, Li}), which gives affirmative answer to 
 Conjecture \ref{r1conj} (review our \S \ref{sec:2} or \cite[\S 2]{Od24a} 
 for more details of the context) 
\item \label{DSHKQiii} (Donaldson-Sun theory) 
the singular hyperK\"ahler metric $g_{X_\zeta}$ 
on $X_\zeta\ni x$ 
which comes from the 
hyperK\"ahler quotient description of Setup \ref{set:HKLR} 
satisfies 
Conjecture \ref{conj:genDS} (Theorem \ref{DS.maps}) 
i.e., Donaldson-Sun theory \cite{DSII} on the local metric tangent cone 
holds. 
\end{enumerate}
\end{Prop}

We will also prove more properties on this $X_\zeta$ later cf., 
Theorem \ref{r1prop} (case \eqref{r1prop:HKQcase}). 

\begin{ex}
There are many 
examples which appear as above $X_\zeta$, e.g.,  
A-type nilpotent orbit closures, A-type Slodowy slices, 
many of Nakajima quiver varieties (\cite{KobSwann}, \cite{Nakajima} but 
also cf., \cite{BS}) 
and many of 
toric hyperK\"ahler varieties (\cite{Goto, BD, HSt}). 
\end{ex}

\begin{proof}

Firstly, note that 
the following is essentially known to experts 
on recent developments of K\"ahler geometry, after \cite{DSII}. 

\begin{Claim}[cf., {\cite[\S 3.1]{Uniqueness}, also \cite{Zha24} and the  original \cite{DSII}}]\label{completeDSII}
Take any sequence of Ricci-flat K\"ahler manifolds $(x_i\in  M_i,J_i,L_i,h_i,g_i,\omega_i) (i=1,\cdots)$ where $(L_i,h_i)$ are 
Hermitian metrized line bundles on $(M_i,J_i)$ 
whose curvature form $\omega_i$ 
is K\"ahler and corresponds to the K\"ahler metrics $g_i$ of $M_i$, 
which are all {\it complete}. 
We also assume there is a sequence of parallel holomorphic volume form
\footnote{This mild condition is put just to prove 
log terminality of the polarized limit space and often automatically holds 
(as in our Setup \ref{set:HKLR}). 
We thank S.Sun for pointing out this subtle issue.}
$\Omega_i$ on $M_i$ for each $i$. 

Suppose it has a (pointed) polarized limit space 
$(x\in X,L,h,g_X,\omega_X)$
in the sense of 
\cite[\S 2.1]{DSII} (cf., also \cite{DS}), which in particular implies 
the smooth convergence of 
$J_i, g_i$ outside the singular locus of $X$. 

Then, 
$(x\in X,J,g_X)$ is log terminal and  has a unique local metric tangent 
cone 
with natural affine algebraic variety structure, and further 
satisfies Conjecture \ref{conj:genDS} (Theorem \ref{DS.maps}) 
verbatim as \cite{DSII}. 
\end{Claim}

\begin{proof}[proof of Claim \ref{completeDSII}]
Let us begin by recalling that \cite{DSII} firstly proves that the local metric tangent cone $C_x(X)$ of $x\in X$ 
is a complex analytic space ({\it op.cit} Theorem 1.1), by constructing some holomorphic sections on $L_i$ by the theory of L.~H\"ormander on solutions of $\bar{\partial}$-equation with $L^2$-estimates, as a technical core. Then they use  that to obtain the analytic local 
realization of open subsets of $C_x(X)$ 
inside $\C^N$ for some positive integer $N$. 
Since the H\"ormander theory with $L^2$-estimates works also for 
{\it complete} non-compact K\"ahler manifolds as well (cf., e.g., \cite[Chap 
VIII \S  4 Theorem 4.5 (p.370)]{DemaillyAGbook}, 
also \cite{Uniqueness}, \cite{Zha24}), 
the proof of \cite[Theorem 1.1]{DSII} extend to our setup. 

Then as in {\it op.cit} subsection \S 2.3, one can prove that 
$C_x(X)$
is an affine algebraic variety ($\Q$-Fano cone or log terminal cone) 
in the same way. 
Further, for the realization of the 2-step degeneration (Theorem 
\ref{DS.maps}), 
one can again just follow their arguments of subsection 
\S 3.3 (and somewhat before that) of \cite{DSII} in the same way. Hence, Claim \ref{completeDSII} holds.     
\end{proof}

Now we consider the application of the above, to prove 
Proposition \ref{DSHKQ}. We consider the set 
$\{X_{(\sqrt{-1}\chi_*,\eta_2,\eta_3)}\}_{\eta_\C\in U}$ 
with the first complex structure $I_1$. 
These form a family of connected complex analytic spaces 
by Assumption \ref{asi}. 
For generic $\eta_2,\eta_3$, 
they are smooth due to Assumption \ref{asii} and the 
hyperK\"ahler metrics are complete, 
since the K\"ahler reduction is 
Riemannian submersion at the locus where $\mu$ is (topologically) 
submersive 
(see e.g., \cite[\S 3 A, C, D]{HKLR}). 
Also note that 
this can be regarded as the family of the GIT 
quotient by Kempf-Ness type theorem (cf., \cite[1.4(i)]{May}, \cite[9.66]{Kir} etc). 
We denote the Ricci-flat K\"ahler form on $X_{\zeta=(\zeta_1,\eta_\C)}$ 
for $I_1$ as 
$\omega_{1}(\zeta)$. Then, from the fact that $X_\zeta$ can be interpretted 
as GIT quotients with the twist $\chi$ by Assumption \ref{as:hodge}, 
a general theory in GIT implies 
that for some positive integer $m$, there is a (descended) 
complex algebraic 
line bundle $L_{\zeta}(m)$ on $X_\zeta$ for each 
$\zeta=(\zeta_1,\eta_2,\eta_3)$ 
which forms a holomorphic family 
with respect to the variation of $\eta_\C=(\eta_2,\eta_3)$ 
such that $m\omega_{1}(\zeta_1,\eta_2,\eta_3)$ 
are the curvature of $L_{\zeta}(m)$ with its Hermitian metrics. 
(If $\chi$ is trivial, $m$ can be taken as $1$ and one can take 
the line bundles $\mathcal{O}_{X_\zeta}$.) 

By Assumption \ref{asiii}, 
$\omega_1(\zeta)$ forms a continuous family 
in an open subset of the family 
$\cup_{\zeta=(\zeta_1,\eta_2,\eta_3)}(X_\zeta,I_1)$ which is defined as the quotient of 
the locus in $\mu^{-1}(\zeta)$s 
with finite stabilizers of the $K$-action. 
It easily follows by locally 
taking vertical slice to the $K$-orbit in $\mu^{-1}(\zeta)$ 
by the Riemannian submersiveness. 
If we apply Claim \ref{completeDSII} to these 
$X_{\zeta_1,\eta_2,\eta_3}$ 
for $(\eta_2, \eta_3)$ goes to $(\zeta_2, \zeta_3)$, 
we obtain some 
polarized limit space $X_\zeta^{\rm DS}$ which contains $X_\zeta^{\rm sm}$. 
Since $X_\zeta^{\rm DS}$ is a in particular complete as a metric space, this coincides with 
the metric completion of $\mu^{-1}(0)^{\rm sm}/K$. 

Now we define the distance $\overline{d}$ on the latter 
$\mu^{-1}(\zeta)/K$ 
as $\overline{d}(xK,yK):=\min\{d(x',y')\mid x'\in xK, y'\in yK\}$, 
where $d(-,-)$ is induced by restriction of the hyperK\"ahler metric on 
$M$ to $\mu^{-1}(\zeta)$. 
From the Riemannian submersion condition on the regular locus 
(cf., e.g., \cite[\S 3A]{HKLR}), combined with 
the completeness of $M$ and 
compactness of $K$, 
it follows that this $(X_\zeta^{\rm GIT},\overline{d})$ 
is the metric completion of  
$(X_\zeta^{\rm sm},\overline{d}|_{X_\zeta^{\rm sm}})$. 
By \cite[Theorem 2.1]{DancerSwann}, \cite[Theorem 1.4, \S 2.5]{May} (cf., 
also \cite{LSj}, \cite[\S 6]{Nakajima}), 
$X_\zeta^{\rm GIT}$ admits a natural topological stratification, along 
which we have a smooth, stratum-wise hyperK\"ahler 
description. 
Therefore, there is a bijective (finite) 
birational morphism 
$f\colon X_\zeta^{\rm DS}\to X_\zeta^{\rm GIT}$, 
which extends the identity on their common 
open dense subsets $X_\zeta^{\rm sm}$. Thus, $f$ is the 
(semi-)normalization. Since 
$\mu_\C^{-1}(\zeta_2+\sqrt{-1}\zeta_3)$ is locally complete intersection 
(hence Gorenstein) by Assumption \ref{as:equidim}, 
$X_\zeta^{\rm GIT}$ is also Cohen-Macaulay (\cite{HR}) so that 
the semi-normalization map 
$f$ is automatically a biholomorphism. 

Hence, 
$X_{\zeta=(\sqrt{-1}\chi_*,\zeta_\C)}$ with its any point $x$ 
is an example of the (pointed) polarized limit space in the 
Claim \ref{completeDSII}. 
In particular, $x\in X_\zeta$ is a 
log terminal singularity 
by \cite{DSII} due to the local volume finiteness of the 
adapted measure (cf., also e.g., \cite{EGZ}) 
but, since it has holomorphic volume form, 
it is canonical Gorenstein singularity. Hence, 
from \cite[Theorem 4]{Nam.ex}, it follows that 
$x\in X_\zeta$ is a symplectic singularity. 

On the other hand, \cite[Theorems 1.3, 1.4(iv), \S 4.3]{May} 
(cf., also \cite{LSj}, 
\cite[Proposition 4.4]{NakajimaTakayama} for related similar 
arguments) 
identify the complex analytic germ of $x\in X_\zeta$ 
with that of the 
origin of the hyperK\"ahler quotient of a holomorphic symplectic 
representation ($H\curvearrowright V$ in {\it loc.cit}). Here, $V$ is equipped with a 
standard flat (hyperK\"ahler) metric $g_{\rm flat}$ and an algebraic 
symplectic form. 
Since $V$ is a complex 
vector space, it is acted by 
a standard $\C^*$-action i.e., 
the multiplication and its subaction of $\R_{>0}$, 
which rescales the metric $g_{\rm flat}$. 
The $\C^*$-action descends to 
the quotient 
which gives the standard $\C^*$-action 
claimed in \eqref{SAHKQ}. Furthermore,  $g_{\rm flat}$ 
descends to 
singular hyperK\"ahler metric by \cite[\S 3]{HKLR}, 
which is a 
Calabi-Yau cone metric in the sense of 
\cite[around 2.9]{Linotes} i.e., with 
locally bounded potential. Indeed, by the same 
construction as $\overline{d}$ on whole $X_\zeta$, 
one can take the 
K\"ahler potential of the reduction metric of 
$g_{\rm flat}$, of the form $(r_{V//H})^2$ 
where $r_{V//H}$ denotes the distance function of 
the origin of $V$ and the $K\cap H$-orbit in the 
$0$-level set of $V$ (cf., e.g., \cite{MSY2, CS, Od24a}). 
By definition, it clearly takes  finite values and continuous. 
It implies \eqref{R1HKQ}, by (a weaker version of) the Yau-Tian-Donaldson 
type correspondence (e.g., by \cite[1.3 or 1.5]{LX} or 
\cite[Theorem 2.9]{Linotes}). 
Note that the above proofs of \eqref{SAHKQ} and \eqref{R1HKQ} do not use Assumptions in Proposition \ref{DSHKQ}. 

Finally, we finish the proof of the remained item \eqref{DSHKQiii} on 
Donaldson-Sun theory (Conjecture \ref{conj:genDS}) for $X_\zeta$ in 
Proposition \ref{DSHKQ}, by applying Claim \ref{completeDSII}. \end{proof}

For the above 
proof of symplectic singularity-ness, 
there are also different approaches by 
cf., e.g., \cite{BS, HSS, BS2, HSS2} under other (more explicit) conditions. 
Given above Proposition \ref{DSHKQ}, 
our methods in this paper imply the following finer information on 
the reduction metric. 

\begin{Cor}\label{torus.HKQ}
Under Setup \ref{set:HKLR} and Assumption in Proposition \ref{DSHKQ}, 
any analytic germ of the hyperK\"ahler quotient 
$x\in X_\zeta$ with the hyperK\"ahler metric 
$g_{X_\zeta}$ 
(induced by the hyperK\"ahler reduction description) 
satisfies the following: 
\begin{enumerate}
\item \label{lastcor1}
The analytic germ 
$x\in X_\zeta$ 
is that of (canonical) 
affine conical symplectic variety $C_x(X_{\zeta})$ 
at the vertex $0\in C_x(X_{\zeta})\curvearrowleft \mathbb{G}_m$. 
Furthermore, $0\in C$ has a (singular) hyperK\"ahler cone  metric 
$g_{C_x(X_{\zeta})}$, 
which in particular induces a canonical action 
of 
the multiplicative group $\R_{>0}$, 
as rescaling up of the metric. This action is the restriction of  
the algebraic action of $\C^*$ 
via the natural embedding $\R_{>0}\hookrightarrow \C^*$ 
as Lie groups. 

\item \label{lastcor2}
If $x\in X_\zeta$ is an isolated singularity, 
then there is a local diffeomorphism  $\Psi\colon U_{C_x(X_{\zeta})} \to X_\zeta, 0\mapsto x$ on to their images where 
$U_{C_x(X_{\zeta})} \subset C_x(X_{\zeta})$ is an open neighborhood of $0\in C_x(X_{\zeta})$
(similarly as Theorem \ref{ssps} \eqref{DSiii}) and a 
positive real number $\delta$ 
which satisfy the following: 
$$|\nabla^k_{g_{C_x(X_{\zeta})}}(\Psi ^*g_{X_\zeta} -g_{C_x(X_{\zeta})})|_{g_{C_x(X_{\zeta})}}=O(r^{\delta-k}),$$ 
for any $k\in \Z_{\ge 0}$. Here, $r$ denotes the 
distance function on $C_x(X_{\zeta})$ from the vertex with respect to 
$g_{C_x(X_{\zeta})}$. 
\end{enumerate}
\end{Cor}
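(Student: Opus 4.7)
The plan is to reduce both assertions to Theorem \ref{Mthm2} and to Corollary \ref{cor:HSZha} respectively, using Proposition \ref{DSHKQ} as the key verification of hypotheses. First I would check that the assumptions of Theorem \ref{Mthm2} are met pointwise at any $x\in X_\zeta$. Proposition \ref{DSHKQ} already tells us that $x\in X_\zeta$ is a symplectic singularity and that Conjecture \ref{conj:genDS} holds for the induced (singular) hyperK\"ahler metric $g_\zeta$. What remains to verify for Theorem \ref{Mthm2} is that the holomorphic symplectic form is parallel with respect to $g_\zeta$ on the regular locus; this is automatic from the hyperK\"ahler descent of \cite[\S 3 D]{HKLR}, since on $X_\zeta^{\rm reg}$ the complex structures $I_2,I_3$ and their corresponding K\"ahler forms $\omega_2(\zeta),\omega_3(\zeta)$ are all $g_\zeta$-parallel, hence $\sigma_{X_\zeta}=\omega_2(\zeta)+\sqrt{-1}\omega_3(\zeta)$ is $g_\zeta$-parallel as well.

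With these hypotheses verified, part \eqref{lastcor1} is a direct invocation of Theorem \ref{Mthm2}: we obtain the (canonical) affine conical symplectic variety $C_x(X_\zeta)$ with the good $(\G_m)^r$-action ($r\ge 1$), a singular hyperK\"ahler cone metric $g_{C_x(X_\zeta)}$ on it realized as the local metric tangent cone of $(x\in X_\zeta,g_\zeta)$, and the canonical $\R_{>0}$-rescaling action embedded into $(\C^*)^r$ as real Lie subgroups. The parenthetical remark about $r=1$ is just observational, referring back to Question \ref{r1conj}.

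For part \eqref{lastcor2}, the plan is to appeal to Corollary \ref{cor:HSZha}\eqref{HSZha2} together with Remark \ref{Zha1exp}. The statement of Corollary \ref{cor:HSZha}\eqref{HSZha2} was originally phrased in the setup of Theorem \ref{Mthm.intro}, but Remark \ref{Zha1exp} explains that, invoking \cite[\S 1]{Zha24} (cf.\ \cite{HS,ChiuSzek}), the same $C^k$-polynomial decay estimates extend without additional difficulty to the setup of Theorem \ref{Mthm2}. Since the isolated singularity hypothesis in part \eqref{lastcor2} is exactly what triggers the hypothesis of Corollary \ref{cor:HSZha}\eqref{HSZha2}, and since Proposition \ref{DSHKQ} has already placed us within the Theorem \ref{Mthm2} framework, the desired estimate $|\nabla^k_{g_{C_x(X_\zeta)}}(\Psi^*g_{X_\zeta}-g_{C_x(X_\zeta)})|=O(r^{\delta-k})$ follows.

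The main (essentially sole) obstacle is the parallelness check for $\sigma_{X_\zeta}$ at singular points of $X_\zeta$: \emph{a priori} the descent argument only produces a parallel symplectic form on $X_\zeta^{\rm reg}$, but this is enough because Theorem \ref{Mthm2} only requires parallelness on $X^{\rm sm}$. Aside from that, everything reduces cleanly to the two cited results, so the proof is short.
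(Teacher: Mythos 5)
Your handling of part \eqref{lastcor1} is essentially identical to the paper's: combine Proposition \ref{DSHKQ} with Theorem \ref{Mthm2}. Making the parallelness check for $\sigma_{X_\zeta}$ explicit (via hyperK\"ahler descent of $\omega_2(\zeta),\omega_3(\zeta)$ from \cite[\S 3 D]{HKLR}) is a reasonable addition, since it is a hypothesis of Theorem \ref{Mthm2}; the paper treats this as implicit.

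For part \eqref{lastcor2}, however, there is a genuine gap. You route the argument through Corollary \ref{cor:HSZha}\eqref{HSZha2} plus Remark \ref{Zha1exp}. But re-read Remark \ref{Zha1exp}: it says the polynomial-decay statement ``is \emph{expected} to generalize'' to the Theorem \ref{Mthm2} setup. The paper's own proof of Corollary \ref{cor:HSZha}\eqref{HSZha2} explicitly restricts to the setup of Theorem \ref{Mthm.intro} (``at least in the setup of Theorem \ref{Mthm}''), so you are converting an acknowledged expectation into a proven fact. The paper avoids this by taking a different route: instead of invoking Corollary \ref{cor:HSZha}\eqref{HSZha2}, it applies \cite[Theorem 1.4]{Zha24} directly, and then verifies the required bounded-local-K\"ahler-potential hypothesis using the explicit hyperK\"ahler quotient construction of \cite[\S 3 E, especially (3.55)--(3.58)]{HKLR}. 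That last verification is the substantive step your proposal is missing: it is precisely what makes the estimate apply in the hyperK\"ahler quotient setting without appealing to an unproven extension of \cite{ChiuSzek}. To repair your argument, replace the appeal to Remark \ref{Zha1exp} with a direct citation of \cite[Theorem 1.4]{Zha24} and add the check that $g_\zeta$ has bounded local K\"ahler potential near $x$, which \cite[\S 3 E]{HKLR} supplies.
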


\begin{proof}
A simple combination of 
Proposition \ref{DSHKQ} and our Theorem \ref{Mthm2.intro} 
implies \eqref{lastcor1}. 
Similarly, a simple combination of 
Proposition \ref{DSHKQ} and our Theorem \ref{Mthm2.intro}, 
together with \cite[Theorem 1.4]{Zha24} 
implies \eqref{lastcor2}. For the latter, note that the local 
K\"ahler potential around $x\in X_\zeta$ can be taken as continuous (hence bounded) 
by Proposition \ref{DSHKQ} \eqref{DSHKQiii} 
(cf., also \cite[\S 3 E]{HKLR}). 
\end{proof}

\begin{Rem}
This remark reflects 
discussions with H.Nakajima and C.Spotti, who had related thoughts 
and we are grateful for sharing them. 
Note that 
the cone metric $g_{C_x(X_{\zeta})}$ is obtained as the 
metric tangent cone of the hyperK\"ahler reduction metric 
$g_{X_\zeta}$ 
on $X_\zeta$ near 
$x$. A natural but subtle differential geometric 
question is if 
$g_{X_\zeta}$ is actually equivalent to $g_{C_x(X_{\zeta})}$. 
The above Corollary \eqref{lastcor2} only partially answers to it 
(in isolated singularities cases). 
A stronger conjecture would be that 
the local normal form in \cite[1.4(iv)]{May} 
(cf., also \cite[4.4]{NakajimaTakayama}) 
can be enhanced to the level of equivalence of 
hyperK\"ahler structures. 
Furthermore, if (expected) uniqueness of Ricci-flat cone metric on 
K-polystable $\Q$-Fano cone holds (cf., \cite{BandoMabuchi}, 
\cite{CS2}, \cite[\S 4.4]{HanLi}, \cite[2.9]{Linotes}), that 
would imply that  $g_{C_x(X_\zeta)}$ is nothing but the 
reduction metric of $g_{\rm flat}$ on the regular locus. 
We leave to future for discussion on this topic in further depth. 
\end{Rem}

\begin{Rem}
The main discussion in this subsection also 
shows a new way of understanding 
the local behaviour of singular hyperK\"ahler metric e.g.,  
in the hyperK\"ahler quotient, using the theories 
of \cite{DSII, ChiuSzek, Zha24}, combining with 
our main discussions via our formal local rigidity 
of Poisson deformation (\S \ref{sec:PD}). 
In general, we hope that if one can extend \cite{DSII, ChiuSzek, Zha24} 
to more general case 
(see also Remark \ref{Zha1exp}), or relate with \cite{Od24c} for instance, 
we anticipate that the approach in this paper will develop with further 
fruits. 
\end{Rem}


\subsection{Quasi-regularity and other 
consequences}\label{subsec:last}

In this subsection, we 
enhance our results by showing 
Conjecture \ref{r1conj} (hence $r=1$ in 
particular) conditionally, and also 
write 
some other consequences: such as  
existence of local 
$SU(2)$- or $\mathbb{H}^*$-symmetry, and construction of log K-polystable 
{\it contact} orbifolds, 
with singular conical K\"ahler-Einstein metric, 
in the spirit of 
LeBrun-Salamon conjecture (see Remark 
\ref{LSconj}). 

\begin{Thm}\label{r1prop}
Assume that a symplectic singularity germ 
$x\in X$ satisfies at least one of the following conditions. 
\begin{enumerate}
\item \label{r1prop:isolcase}
    $x\in X$ is an isolated symplectic singularity which 
satisfies Conjecture \ref{conj:genDS}, 
or 
\item \label{r1prop:withisotrop}
$x\in X$ is realized in a smoothable projective irreducible 
symplectic variety 
$\overline{X}$ as in Theorem~\ref{Mthm.intro}, such that
$c_1(L)^{\perp}\subset H^2(\overline{X},\mathbb{Z})$
contains a non-zero isotropic class with respect to the
Beauville--Bogomolov form; or similarly 

\item \label{r1prop:bigb2case}
$x\in X$ is realized in a smoothable projective irreducible 
symplectic variety 
$\overline{X}$ as in Theorem~\ref{Mthm.intro}, with 
$b_2(\overline{X})\ge 6$, or 

\item \label{r1prop:HKQcase}
$x\in X=X_\zeta$ in the setup of Proposition \ref{DSHKQ} 
($X_\zeta$ is a hyperK\"ahler reduction). 
\end{enumerate}
Under any of the above assumptions, 
Conjecture \ref{r1conj} holds so that 
$r(\xi)=1$ (quasi-regularity), and 
furthermore, all of 
the following properties hold for $x\in 
X$: 
\begin{enumerate}[label=(\alph*)]
   \item (Quasi-regularity) \label{r1propcons.a} the Reeb vector field $\xi=J(r\partial_r)$ 
   on $C_x(X)^{\rm sm}$ is periodic with 
   the length $\frac{2\pi}{m}$ for some $m\in \Z_{>0}$. In 
   particular, it is quasi-regular and provides canonical algebraic $\C^*$-action on $C_x(X)$. 
   \item ($SU(2)$-action and $\mathbb{H}^*$-action) \label{r1propcons.b} The (canonical) 
   $\C^*$-action extends to (non-holomorphic) action of $SU(2)$ and $\mathbb{H}^*$ on 
   $C_x(X)^{\rm sm}$. Here, $\mathbb{H}^*$ means the multiplicative 
   group of the quarternion $\mathbb{H}$. 
    \item (divisoriality) \label{r1propcons.c}
    the (volume-minimizing cf., 
Theorem \ref{AGlc}) valuation $v_X$ is multiple of divisorial valuation 
over $x\in X$ i.e., $v_X=m\cdot {\rm ord}_F$ for some divisor $F$ over $X$ 
with its center $x$. 
\item (Plt blow up) \label{r1propcons.d}
Moreover, $F$ is the (only) exceptional divisor of a plt blow up
\footnote{in the sense of 
\cite[3.1]{Shokurov}, 
\cite[Definition 2.1]{Prokhorov}. The exceptional divisor is 
also called Koll\'ar component.}
$B\to X$ 
of $x\in X$ i.e., 
$(B,F)$ is purely log terminal 
(cf., \cite[\S 2.3]{KM}) 
and 
$-(K_B+F)$ is relatively ample. 
Further, the log 
discrepancy satisfies 
$A_X(v_X)=m A_X(E)=\dim_\C(X)=n$. 
\item (weight is $2$) \label{r1propcons.e}
The weight 
\footnote{see the next Remark \ref{nonfaithful}} 
of canonical $\C^*$-action 
on the holomorphic symplectic form 
$\sigma_C$ on $C_x(X)^{\rm sm}$ is always 
$2$. 
\item (Contact K\"ahler-Einstein variety) \label{r1propcons.f} 
The quotient of $C_x(X)$ by the $\C^*$-action has a natural structure of 
$(n-1)$-dimensional 
log K-polystable Fano pair with complex contact orbifold structure 
(\cite{Name}) 
and conical singular 
K\"ahler-Einstein metrics (with natural associated angles along each 
prime branch divisor). 
\end{enumerate}
\end{Thm}
\begin{Rem}\label{nonfaithful}
Regarding \ref{r1propcons.e}, 
note that our $\C^*$-action may {\it not} be faithful 
and may factor through power map $t\mapsto t^l$ of $\C^*_t$ ($t$ is the 
coordinates of $\C^*$ and $l$ is a positive integer) in general, 
so that the weight of the holomorphic symplectic form for 
``natural" $\C^*$-action 
may first look to be $\frac{2}{l}$. 
Indeed, it corresponds to a possibly non-primitive element of $N$ 
that identifies with the Reeb vector field 
e.g., $-1\in \C^*$ acts trivially on the $2$-dimensional 
$A_{m}$-singularities with odd $m$. For instance, for the simplest surface 
$A_1$-singularity $X=(xy=z^2)\subset \C^3$ case, 
the $\C^*_t$-action is $x\mapsto t^2x, y\mapsto t^2y, z\mapsto t^2z$ (as a 
quotient of that of $\C^2$). 
\end{Rem}

\begin{Rem}
As the following proof shows, the main reason of 
our temporary assumption - \eqref{r1prop:isolcase} or 
\eqref{r1prop:withisotrop} or  \eqref{r1prop:bigb2case} - 
is to ensure some natural 
completeness of Reeb vector fields, 
obtained by 
hyperK\"ahler rotations (see e.g., Claim \ref{claim:completevf3}). 
Hence, as in Theorem \ref{Mthm2}(=Theorem\ref{Mthm2.intro}), we have: 
\begin{Thm}
For any symplectic singularity germ $x\in X$, all the above statements 
\ref{r1propcons.a}-\ref{r1propcons.f} hold 
if we assume the existence of hyperK\"ahler metric $g_X$ 
on $X^{\rm reg}$ near $x$ 
that satisfies Conjecture \ref{conj:genDS} and such natural 
completeness of the hyperK\"ahler rotations of the 
Reeb vector fields (see Claim \ref{claim:completevf3}). 
\end{Thm}
\end{Rem}
We expect there are always such hyperK\"ahler singular metric $g_X$, 
slightly strengthening Conjecture \ref{conj:genDS}. 

The possibility of this type of arguments is kindly 
pointed out by C.Spotti and 
goes back to the work of S.~Tanno \cite{Tanno}
(cf., also Kuo-Tachibana \cite{KuoT}, 
Boyer-Galicki 
\cite[Chapter 13]{BoyerGalicki}) in the context of 
$3$-Sasakian geometry. 
\begin{proof}[proof of Theorem \ref{r1prop}]
Firstly, recall from our Theorem \ref{ssps} and its proof that 
the metric tangent cone $C_x(X)^{\rm sm}$ is hyperK\"ahler 
(or equivalently, gives $3$-Sasakian structure on the regular locus of the link). Further, 
from Theorem \ref{Mthm}, its analytic germ is the same as 
$x\in X$. 
From these, 
we obtain the action of $\mathfrak{sp}(1)$ on $C_x(X)^{\rm sm}$
(cf., \cite[13.2.5, 13.3.1]{BoyerGalicki}). 
Recall that $\mathfrak{sp}(1)$ is the Lie algebra of 
$Sp(1):=\{q\in \mathbb{H}\mid |q|=1\}$ and hence 
generated by $I,J,K$ with $[I,J]=IJ-JI=2K$ and 
$[J,K]=2I,[K,I]=2J$ similarly. 
The action means a Lie algebra homomorphism 
$\phi\colon \mathfrak{sp}(1)\to 
\mathcal{X}(C_x(X)^{sm})$, the space of 
$C^\infty$ real vector fields with Lie brackets. 
This sends $I, J, K$ to 
$\xi_I:=I(r\partial_r), \xi_J:=J(r\partial_r), 
\xi_K:=K(r\partial_r)$ respectively. 
Here, $I$ is the original complex structure of $\overline{X}^{sm}$ 
so that $\xi=\xi_I$ 
and $J,K$ are those whose 
corresponding K\"ahler forms are 
${\rm Re}(\sigma_X), {\rm Im}(\sigma_X)$ respectively. 

For the case \eqref{r1prop:isolcase}, 
since the link of $C_x(X)$ is compact and $Sp(1)\simeq SU(2)$ 
is simply connected, 
$d\xi_I+e\xi_J+f\xi_K$ on $C_x(X)^{sm}$ is complete 
for any $d,e,f\in \R^3$ so that $\phi$ can be integrated 
to $Sp(1)$-action on $C_x(X)$ by the 
Palais integrability theorem. 
This $Sp(1)\simeq SU(2)$ 
should contain $r$-dimensional compact torus by the 
definition of the rank $r$ (review our \S \ref{sec:2}), so that 
$\xi=\xi_I$ is periodic with ${\rm exp}(2\pi \xi)={\rm Id}$. 
Further, this $Sp(1)$-action is isometric and preserves the link, say 
$L(C_x(X)^{sm})$, 
of $C_x(X)^{sm}$, so it naturally extends to a (non-holomorphic) action of 
$\mathbb{H}^*=\mathbb{R}_{>0}\times Sp(1)$ on 
$C_x(X)^{sm}=\mathbb{R}_{>0}\times L(C_x(X)^{sm})$. 
Thus,  
\ref{r1propcons.a} and \ref{r1propcons.b} follows (in this case of 
\eqref{r1prop:isolcase}). 
Using this $\G_m(\C)$-action, if we write $C_x(X)={\rm Spec}
\oplus_{m\ge 0}R_m$, then its grading degree gives the valuation $v_X$ and 
one can take the plt blow up explicitly as 
${\rm Proj}(\oplus_m I^{(m)})\to C_x(X)$ 
where $I^{(m)}:=\oplus_{l\ge m}R_l$. 
(For more general case, see \cite{LX.qreg, Blum}). 
Note that its exceptional divisor is 
${\rm Proj}\oplus_{m\ge 0} I^{(m)}/I^{(m+1)}\simeq 
{\rm Proj}(\oplus_{m\ge 0}R_m)$ which is 
integral because $\oplus_{m\ge 0}R_m$ is integral. 
These show 
\ref{r1propcons.c} and the former half of \ref{r1propcons.d}. 

On the other hand, recall that 
${\rm det}(\sigma_C):=\sigma_C^{\wedge \frac{n}{2}}$ is a nowhere-vanishing (algebraic) 
holomorphic 
volume form on $C_x(X)^{sm}$ 
which satisfies the gauge-fixing condition 
\begin{align}\label{gaugefix}
\mathcal{L}_\xi({\rm det}(\sigma_C))=n\sqrt{-1}
{\rm det}(\sigma_C)
\end{align}
(\cite{MSY2}, \cite[\S 6]{CS2}, \cite{Li}). 
This weight $n$ is also identified with the log discrepancy 
(see \cite[3.1, 3.13]{Kol13}, \cite[\S 4]{LSs}, \cite{Li}, \cite[3.6]{LLX}). 
From \eqref{gaugefix}, the constant $c$ with 
$\mathcal{L}_\xi(\sigma_C)=c\sqrt{-1}\sigma_C$ 
should be automatically $2$. 
This implies \ref{r1propcons.e}. 
Or alternatively, if one prefers more algebro-geometric arguments, 
we can use our Theorem \ref{thm:extendtoW} \eqref{canonicity.sigmaW} with $\xi'=\xi, D=1$ in its notation (cf., also Theorem \ref{thm:extendtoW2} \eqref{extension}) combined with the above 
that we $t^* \sigma_W=t^2\sigma_W$ with respect to the action 
\ref{r1propcons.a} of $\C^*\ni t$ and combination with the arguments in Theorem 
\ref{ssps} imply the same result \ref{r1propcons.e}. 
In the case when the $\C^*$-action on $C_x(X)$ is free outside the vertex (regular), 
see also \cite[3.1, 3.14]{Kol13} which also implies that 
$C_x(X)$ is a cone for the ample line bundle $L$ on $F$ 
with $L^{\otimes (-2)}\equiv_\Q (K_F+\Delta)$. 

Given the above statements 
\ref{r1propcons.a}-\ref{r1propcons.e}, 
the last statements \ref{r1propcons.f} 
follow directly from general theories. 
Indeed, 
the quotient of $C_x(X)$ 
naturally underlies a 
klt log $\Q$-Fano pair of the form 
$(F,\Delta=\sum_{i}(1-\frac{1}{m_i})\Delta_i)$ 
for a normal projective variety $F={\rm Proj}(\oplus_m R_m)$, with 
prime divisors $\Delta_i$ and $m_i\in \Z_{>0}$ 
(see e.g., \cite[\S 1, Lemma]{Namd}, \cite[\S 3.1, \S 9.4]{Kol13}). 
In the notation of 
\ref{r1propcons.d}, 
$-(K_F+\Delta)=-(K_B+F)|_F$. 
Moreover, the klt log $\Q$-Fano pair 
comes with the 
{\it contact orbifold structure} 
(\cite[4.4.1]{Name}, \cite[\S 2]{Namf}, see also 
earlier \cite[\S 2]{Le}, \cite[C.16]{Buc} and \cite{Smi}). 
Finally, the restriction of the 
hyperK\"ahler metric on the cone $C_x(X)^{sm}$ to the link descend to 
K\"ahler-Einstein metric on $F$ 
as the Reeb orbit space (cf., 
\cite{BoyerGalicki}) with the conical 
singularity of angles $\frac{2\pi}{m_i}$ 
around general points of each prime divisor 
$\Delta_i$. Alternative algebro-geometric  
arguments are as follows: 
as the extracted divisor of $v_X$, 
$(F,\Delta)$ is 
log K-semistable 
(in the sense of \cite{Don11, OS}
\footnote{note that in more recent literature, the prefix ``log" is 
often omitted but we should keep in mind that the definition 
depends on the boundary $\Q$-divisor $\Delta$}) 
by \cite{LX} and its log K-polystability 
also follows once we combine with 
our Theorem \ref{ssps}. This finishes the proof of \ref{r1propcons.f}. 


\vspace{2mm}

Next, we turn to prove the same results 
for the case \eqref{r1prop:withisotrop}. Basic methods are the same but 
we need more involved arguments to ensure the desired completeness of the 
vector fields, because the link of $C_x(X)^{sm}$ 
is {\it not} necessarily compact. 

Let us set 
$\Lambda^:=H^2(\overline{X},\Z)$ with the Beauville-
Bogomolov-Fujiki 
form as a quadratic form on it and also write 
$\Lambda_{\Q}:=\Lambda\otimes_{\Z}\Q$. 
As we follow the 
setup of Theorem \ref{Mthm.intro}, there is an ample line bundle 
$L$ on $\overline{X}$ with which $(\overline{X},L)$ is smoothable and a holomorphic symplectic form $\sigma_{\overline{X}}$ 
on $\overline{X}^{sm}$. We also take a symplectic resolution 
$\tilde{X}\to X$ 
by \cite[Theorem 4.8]{Fujiki} or Theorem 
\ref{smoothing}, 
with $\tilde{\Lambda}:=H^2(\overline{X},\Z)$. 
We set $c_1(L)=\lambda\in \Lambda$, $a:=\lambda^2$,  
$\tilde{\Lambda}_\lambda:=\{v\in \tilde{\Lambda}\mid 
\langle \lambda, v\rangle=0\}$, and 
$\Lambda_\lambda:=\{v\in \Lambda \mid 
\langle \lambda, v\rangle=0\}$. 
Consider the period domains as 
\[\mathcal{D}(\Lambda_\lambda):=\{[\sigma]\in 
\mathbb{P}_*(\Lambda_\lambda\otimes \C)\mid [\sigma]^2=0, 
\langle[\sigma],\overline{[\sigma]}\rangle>0 \}^o,\]
\[\mathcal{D}(\tilde{\Lambda}_\lambda):=\{[\sigma]\in 
\mathbb{P}_*(\tilde{\Lambda}_\lambda\otimes \C)\mid [\sigma]^2=0, 
\langle[\sigma],\overline{[\sigma]}\rangle>0 \}^o,\]
where the superscript 
$o$ means connected components (in a compatible manner). 
Recall that the pair of conditions 
$[\sigma]^2=0, 
\langle[\sigma],\overline{[\sigma]}\rangle>0$ is 
equivalent to 
$({\rm Re}[\sigma])^2=({\rm Im}[\sigma])^2>0$, 
$({\rm Re}[\sigma]), {\rm Im}[\sigma]) = 0$. 
By multiplying a suitable nonzero scalar to $\sigma_{\bar X}$, we may assume that $[{\rm Re}(\sigma_{\bar X})]^2 = a$.
Now we prepare the following rather arithmetic statement. 
\begin{Claim}\label{claim:approxperiod}
There is 
a close enough 
approximation of 
$([{\rm Re}(\sigma_{\overline{X}})],[{\rm Im}(\sigma_{\overline{X}})])
\in (\Lambda_\lambda\otimes \R)^2$ 
as 
\footnote{$(\Lambda_\lambda\otimes 
\Q,\Lambda_\lambda\otimes \R)$ is not a 
typo, 
only rationality of $v_1$ is used below  
where 
flexibility of $v_2$ is also conveniently 
used.} 
$(v_1,v_2)\in 
(\Lambda_\lambda\otimes \Q,\Lambda_\lambda\otimes \R)$ 
which satisfies 
\[v_1\in \Lambda_\lambda\otimes \Q, 
v_1^2=v_2^2=a, v_1\perp v_2.\]
Further, it is achieved as the period of 
(marked, locally trivial polarized) small deformation of $(\overline{X},L)$. 
\end{Claim}
\begin{proof}[proof of Claim \ref{claim:approxperiod}]
Firstly, following the assumption of \eqref{r1prop:withisotrop}, 
there is an isotropic vector in $\Lambda_\lambda$. 
Thus, there is 
$v'_1\in \Lambda_\lambda\otimes \Q$ 
with $(v'_1)^2=a$ by 
Proposition 3' (p33) in \cite{Serre.arith}.

Take a close enough approximation 
$v''_1\in \Lambda_\lambda\otimes \Q$ 
of $[{\rm Re}(\sigma_{\overline{X}})]$ and define the 
(rational) affine 
line $l$ of $\Lambda_\lambda\otimes \Q$ 
which passes through $v'_1$ and $v''_1$. 
Set $v_1:=(l\cap 
\{v\in \Lambda_\lambda\otimes \Q\mid v^2=a\})\setminus v'_1$. 
Then, this $v_1$ is 
automatically in $\Lambda_\lambda\otimes \Q$ 
and it approximates $[{\rm Re}(\sigma_{\overline{X}})]$ enough 
from the construction. 
Next, we take $v_2\in (\Lambda_\lambda\otimes \R)\cap 
v_1^\perp$ with $v_2^2=a$, 
which approximates $[{\rm Im}(\sigma_{\overline{X}})]$ enough. 

Then, 
by \cite[1.3]{BakkerLehn} (cf., also 
\cite[\S 8.6]{BakkerLehn2}) or simply 
by the local Torelli theorem as 
a much weaker statement, 
the period map image of 
the Kuranishi space of (marked) 
locally trivial deformations of 
$(\overline{X},L)$, contains small neighborhood of 
$[\sigma_{\overline{X}}]$. In particular, that 
period image contains $[v_1+\sqrt{-1}v_2]$. 
We end the proof of Claim \ref{claim:approxperiod}. 
\end{proof}

From the above claim, we may and do assume that
$[{\rm Re}(\sigma_{\overline{X}})] \in \Lambda_\lambda \otimes \Q$ henceforth,
since the rank $r$ is determined by the singularity germ,
which is preserved under locally trivial deformations. 
Take $x$ in the new $\overline{X}$ with the isomorphic 
analytic germ. 
Now, we take a sequence 
of polarized smooth projective irreducible symplectic 
manifolds $(X_i,L_i)$ for $i=1,2,\cdots$ 
which are 
marked, non-locally trivial polarized small deformation 
of $(\overline{X},L)$ and converges to 
$(\overline{X},L)$. We denote the corresponding 
hyperK\"ahler metric and hyperK\"ahler triple 
to $(\overline{X},L)$ 
as $g_X$ and $(\omega_{I,\infty}=\omega_X, 
\omega_{J,\infty}={\rm Re}(\sigma_{\bar X}), \omega_{K,\infty}={\rm Im}(\sigma_{\bar X}))$ respectively. 
Similarly, we denote the corresponding hyperK\"ahler metric of 
$(X_i,L_i)$ and its hyperK\"ahler triple as 
$g_i$ and $(\omega_{I,i},\omega_{J,i},\omega_{K,i})$ 
respectively. We can assume $[\omega_{I,i}]=\lambda$, 
$[\omega_{J,i}]=[{\rm Re}(\sigma_{\bar X})]\in 
\Lambda_\lambda\otimes \Q$ and only $[\omega_{K,i}]$ 
varies. 

\begin{Claim}\label{claim:completevf12}
$\xi_I, \xi_J$ are complete vector fields, 
whose integration extends to isometry on (the link of) 
$C_x(X)$. The same holds for 
$a\xi_I+b\xi_J$ for any $a,b\in \R$. 
\end{Claim}
\begin{proof}[proof of Claim \ref{claim:completevf12}] 
Firstly, the completeness of $\xi_I$ follows from 
\cite[Lemma 2.17]{DSII}, since 
$(\overline{X},L)$ (with the complex structure $I$ on 
the regular locus) is a polarized limit space of 
$(X_i,L_i,g_i,\omega_{I,i})$. 

Now, 
at the level of 
compact 
manifolds with hyperK\"ahler triple structures, 
we take a hyperK\"ahler rotation 
of $(X_i,\omega_{I,i},\omega_{J,i},\omega_{K,i})$ 
whose hyperK\"ahler triple becomes  
$(\omega_{J,i}, -\omega_{I,i}, \omega_{K,i})$, 
for each $i$ and simply denote as 
$X_i^{\vee}$. 
Since $[\omega_{J,i}]\in \Lambda_\lambda\otimes \Q$, 
$X_i^{\vee}$ is again 
naturally equipped with a structure of 
polarized irreducible symplectic manifold. 
By the definition of polarized limit space in 
\cite[\S 2]{DSII}, if we denote 
the polarized limit space $X_\infty^{\vee}$ of 
any subsequence\footnote{we do not change the index set for the simplicity of exposition} of 
$X_i^{\vee}$ as $X_\infty^{\vee}$, 
its hyperK\"ahler structure on the regular locus 
coincides with the hyperK\"ahler rotation of 
$\overline{X}^{sm}$ whose hyperK\"ahler triple is 
$({\rm Re}(\sigma_{\bar X}), -\omega_{I,\infty}, 
{\rm Im}(\sigma_{\bar X}))$. 
If we again apply \cite[Lemma 2.17]{DSII} 
to the $X_i^{\vee}\to X_\infty^{\vee} (i\to \infty)$, 
it follows that $\xi_J$ on the original 
$C_x(X)^{\rm sm}$ is complete. 

Now we proceed to prove the latter statement of 
Claim \ref{claim:completevf12}. 
Take any point $y\in C_x(X)^{sm}$. 
From the 
former statement of Claim \ref{claim:completevf12}, 
$d({\rm exp}(\xi_I)\cdot y,{\rm Sing}(C_x(X)))$ and 
$d({\rm exp}(\xi_J)\cdot y,{\rm Sing}(C_x(X)))$ are both 
constant. 
Here, $d(-,-)$ denotes the distance for the 
metric $g_X$ and its (metric) completion. 
In particular, 
it follows that for any shortest geodesic $\gamma 
\colon [0,1]\to C_x(X)$ 
between $y$ and the singular locus 
${\rm Sing}(C_x(X))$ 
of $C_x(X)$, with $\gamma(0)=y, \gamma(1)\in {\rm Sing}
(C_x(X))$, $(d\gamma/dt)|_{t=0}\perp \xi_I(y)$ 
and $(d\gamma/dt)|_{t=0}\perp \xi_J(y)$. 
Thus, $(d\gamma/dt)|_{t=0}\perp (a\xi_I+b\xi_J)(y)$ 
for any $a,b$. Therefore, since $y$ is taken 
arbitrarily, 
$(a\xi_I+b\xi_J)$ is again complete on $C_x(X)^{sm}$. 
We end the proof of Claim \ref{claim:completevf12}. 
\end{proof}

\begin{Claim}\label{claim:completevf3}
For any $d,e,f\in \R$, 
$d\xi_I+e\xi_J+f\xi_K$ on $C_x(X)^{sm}$ is complete. 
\end{Claim}
\begin{proof}
For the general case of Claim \ref{claim:completevf3} 
(possibly with $f \neq 0$), we employ the well-known isomorphism 
$\mathfrak{sp}(1) \simeq \mathfrak{o}(3)$ as follows
to make the arguments more intuitively 
understandable. 
We recall its basic as follows. 
Write the quaternion as $\mathbb{H} = \R \oplus \R I \oplus \R J \oplus \R K$, 
define $Sp(1) = \{q \in \mathbb{H}\: \vert \: q\bar{q} = 1\}$ and consider its 
the adjoint action of $Sp(1)$ on $\mathfrak{sp}(1) :=\R I \oplus \R J \oplus \R K$ as $q\colon x \mapsto qxq^{-1}$ for $x \in \mathfrak{sp}(1)$. 
Through the natural identification $\R^3 = \R I \oplus \R J \oplus \R K$, 
the adjoint action induces a $2:1$ 
covering $Sp(1) \twoheadrightarrow SO(3)$ of Lie groups and an isomorphism 
$\mathfrak{sp}(1) \cong \mathfrak{o}(3)$ of Lie algebras.

The proof then reduces to the previous case by the 
following trick. 

Below, we show 
that for any $d,e,f\in \R$, 
$dI+eJ+fK \in \R^3$ can be written as 
$({\rm exp}(aI+bJ))\cdot (cI)$ with the exponential map 
${\rm exp}\colon \mathfrak{sp}(1)\simeq \mathfrak{o}(3)
\to SO(3)$. Here, this $\cdot$ means the standard 
action. 
Indeed, $a,b,c$ can be taken using $d,e,f$ explicitly as follows. 
First, $c$ must represent the norm of the vector, i.e., 
$c = \sqrt{d^2 + e^2 + f^2}$. Set $d':=\frac{d}{c}, e':=\frac{e}{c}, 
f':=\frac{f}{c}$. 
We also want to also introduce $\theta$ as angle required to rotate the $x$-axis 
to the direction of $\R(d, e, f)$ with 
the rotation axis as $\R(a,b,0)$. 
We can define them explicitly: 
let $(a',b')\in \R^2$ as 
$a'^2+b'^2=1$ and $(a',b',0)\perp (1-
d',e',f')$. Explicitly, 
$a':=\frac{e'}{\sqrt{(1-d')^2+e'^2}}, 
b':=\frac{d'-1}{\sqrt{(1-d')^2+e'^2}}$. 
Then, since the distance between 
$(a',b')$ and $(1,0,0)$ is the same as 
that between $(d',e',f')$ and $(a',b')$, 
there is $\theta\in [0,2)$ 
so that angle $\pi\theta (\in [0, 2\pi\theta))$ rotation 
along the axis $\R_{\ge 0}(a',b')$ brings 
$(1,0,0)$ to $(d',e',f')$. Accordingly, 
we set 
$a:= a' \theta, \quad b:=b' \theta$. 
Then, the element $dI+eJ+fK$ is realized 
as the image of $cI$ 
under the adjoint action of $\exp(aI+bJ)$. 
Thus, 
\begin{align*}
\phi(dI+eJ+fK)&=\phi(({\rm exp}(aI+bJ))(cI))\\
&=({\rm exp}(aI+bJ))_*\phi(cI)
\end{align*}
is still complete on $C_x(X)^{sm}$ because 
it is simply the pushforward of a complete vector field by a (global) 
diffeomorphism. We finish the proof of 
Claim \ref{claim:completevf3}. 
\end{proof}

We are now ready to conclude the proof of 
Theorem \ref{r1prop} \eqref{r1prop:withisotrop}. 
Thanks to 
Claim \ref{claim:completevf3} and the simply-
connectedness of $Sp(1)$, 
the $\mathfrak{sp}(1)$-action on $C_x(X)^{sm}$ 
can be integrated to $Sp(1)$-action due to the 
Palais integrability theorem again. 
Thus, as in the proof of 
Theorem \ref{r1prop} 
\eqref{r1prop:isolcase}, 
$Sp(1)\supset (U(1))^{r(\xi)}$ holds, so that $r(\xi)=1$ and all other consequences 
\ref{r1propcons.a}-\ref{r1propcons.f} 
follow in the same manner as 
\eqref{r1prop:isolcase}. 

For the case of \eqref{r1prop:bigb2case}, 
since ${\rm rank}(\Lambda_\lambda)\ge 
5$, there exists an isotropic vector $e$ in $\Lambda_\lambda$ 
due to 
\cite[Chapter IV, Corollary 2 (p43)]{Serre.arith}. 
Thus, the proofs of our assertions 
are reduced to that of \eqref{r1prop:withisotrop}. 
For the case of \eqref{r1prop:HKQcase}, 
due to the existence of hyperK\"ahler rotations, 
Claim \ref{claim:completevf3} and all other assumptions similarly hold. 
We end the proof of Theorem \ref{r1prop}. 
\end{proof}

It might also be interesting to compare with 
Weinstein's conjecture 
\cite{Weinstein} (also cf., Remark 
\ref{LSconj}) in the real setup.


\begin{ack}
The possibility of this research has been 
intermittently discussed since 
the second KTGU Mathematics Workshop for Young Researchers in 
Kyoto (February, 2017) 
as well as Algebraic and Complex geometry session of 
the Eighth Pacific Rim Conference in Mathematics, the latter 
organized with S.Sun in August, 2020. 

The authors thank G.Bellamy, L.Foscolo, C.Lehn, M.Mauri, H.Nakajima, T.Schedler, S.Sun, C.Spotti, and 
J.Zhang for many helpful discussions during this research and comments. 
YN is partially supported by the Grant-in-Aid for 
Grant-in-Aid for Scientific Research (A) 21H04429. 
YO was partially supported by the 
Grant-in-Aid for Scientific Research (A) 21H04429, 
20H00112, 
the Grant-in-Aid for  Scientific Research (B) 23H01069, 
21H00973, 
and Fund for the Promotion of Joint International Research 
(Fostering Joint International Research) 23KK0249 
and the National Science 
Foundation under Grant No. DMS-1928930, while he was in
residence at the Simons Laufer Mathematical Sciences Institute
(formerly MSRI) in Berkeley, California, during the Fall 2024. 
This work was also 
partially supported by the Research Institute for Mathematical Sciences, 
an International Joint Usage/Research Center located in Kyoto University. 
\end{ack}


\vspace{3mm}
\footnotesize 
\noindent
Contact: {\tt namikawa@kurims.kyoto-u.ac.jp} \\
RIMS, Kyoto University, Kyoto 606-8285. JAPAN \\

\noindent
Contact: {\tt yodaka@math.kyoto-u.ac.jp} \\
Department of Mathematics, Kyoto University, Kyoto 606-8285. JAPAN \\

\end{document}